\newtheorem{thm}{Theorem}[section]
\newtheorem{prop}[thm]{Proposition}
\newtheorem{cor}[thm]{Corollary}
\newtheorem{lem}[thm]{Lemma}
\newtheorem{defn}[thm]{Definition}
\newtheorem{remark}[thm]{Remark}
\newtheorem{example}[thm]{Example}
\newtheorem{assumption}{Assumption}
\makeatletter \@addtoreset{equation}{section} \makeatother
\renewcommand{\P}{\mathbb{P}}
\newcommand{\E}{\mathbb{E}}
\newcommand{\R}{\mathbb{R}}
\newcommand{\N}{\mathbb{N}}
\newcommand{\ep}{\varepsilon}
\renewcommand{\d}{\mathrm{d}}
\newcommand{\m}{\mathfrak{m} }
\newcommand{\DD}{{\Delta\hspace{-0.29cm}\Delta}^{\rm\! HK}}
\newcommand{\loc}{{\rm loc}}
\newcommand{\1}{{\bf 1}}
\newcommand{\eps}{{\varepsilon}}
\newcommand{\wt}{\widetilde}
\renewcommand{\wt}{\widetilde}
\title{\large\bf Hess-Schrader-Uhlenbrock inequality for the heat semigroup on differential forms over 
Dirichlet spaces tamed by distributional curvature lower bounds}
\author{
Kazuhiro Kuwae\footnote{\Letter Kazuhiro Kuwae ({\tt kuwae@fukuoka-u.ac.jp})
Department of Applied Mathematics, Fukuoka University,
Fukuoka 814-0180, Japan. % ({\sf kuwae@fukuoka-u.ac.jp}). 
Supported in part by JSPS Grant-in-Aid for Scientific Research (S) (No. 22H04942) and fund (No.~215001) from the Central Research Institute of Fukuoka University.}
}
\date{}
\begin{document}
\maketitle

\begin{abstract}
The notion of tamed Dirichlet space was proposed by Erbar, Rigoni, Sturm and Tamanini~\cite{ERST} as a Dirichlet space having a weak form of Bakry-\'Emery curvature lower bounds in distribution sense. 
After their work, Braun~\cite{Braun:Tamed2021} established  a vector calculus for it, in  particular, the space of $L^2$-normed $L^{\infty}$-module describing vector fields, 
$1$-forms, Hessian in $L^2$-sense. 
In this framework,    
we establish the Hess-Schrader-Uhlenbrock inequality for $1$-forms as an element of $L^2$-cotangent module  
(an $L^2$-normed $L^{\infty}$-module), which  extends
the Hess-Schrader-Uhlenbrock inequality by Braun~\cite{Braun:Tamed2021} under an additional condition.  
\end{abstract}

{\it Keywords}:  Strongly local Dirichlet space, tamed space, Bakry-\'Emery condition, smooth measures of (extended) Kato class, smooth measures of Dynkin class, 
$L^p$-normed $L^{\infty}$-module, Hilbert module, tensor product of Hilbert module, 
exterior product of $L^{\infty}$-module, 
$L^2$-differential forms, 
$L^2$-vector fields,  
Littlewood-Paley-Stein inequality, Riesz transform, Bochner Laplacian, Hodge-Kodaira Laplacian, 
Hess-Schrader-Uhlenbrock inequality, intertwining property.  
%Wiener space, 
%path space with Gibbs measure, RCD-space, weighted Riemannian manifold, 
%Riemmanian manifold with boundary, configuration space,  

{\it Mathematics Subject Classification (2020)}: Primary 31C25, 60H15, 60J60 ; Secondary 30L15, 53C21, 58J35,
35K05, 42B05, 47D08
%%%%%%%%%%%%%%%%%%%%%
%%%%%%%%%%%%%%%%%%%%%
%%%%%%%%%%%%%%%%%%%%%
\section{Statement of Main Theorem}\label{sec:StatementMain}
%%%%%%%%%%%%%%%%%%%%%

\subsection{Framework}\label{subsec:Frame}
Let $(M,\tau)$ be a topological Lusin space, i.e., a continuous injective image of a Polish 
space, endowed with a $\sigma$-finite Borel measure $\m$ on $M$ with full topological support. 
Let $(\mathscr{E},D(\mathscr{E}))$ be a quasi-regular symmetric strongly local Dirichlet space on $L^2(M;\m)$ 
and $(P_t)_{t\geq0}$ (resp.~$(G_{\alpha})_{\alpha>0}$) the associated symmetric sub-Markovian strongly continuous semigroup (resp.~resolvent) on $L^2(M;\m)$ with the relation 
$G_{\alpha}f:=\int_0^{\infty}e^{-\alpha t}P_tf\d t$ for $f\in L^2(M;\m)$
(see \cite[Chapter IV, Definition~3]{MR} for the quasi-regularity and see \cite[Theorem~5.1(i)]{Kw:func} for the strong locality). 
Then there exists an $\m$-symmetric special standard process ${\bf X}=(\Omega, X_t, \P_x)$ 
associated with $(\mathscr{E},D(\mathscr{E}))$, i.e. for $f\in L^2(M;\m)\cap \mathscr{B}(M)$, 
$P_tf(x)=p_tf(x)$ $\m$-a.e.~$x\in M$ (see \cite[Chapter IV, Section~3]{MR}). Here $p_tf(x):=\E_x[f(X_t)]$ and $\mathscr{B}(M)$ denotes the family of Borel measurable functions on $M$ (the symbol $\mathscr{B}(M)$ is also used for the family of Borel measurable subsets of $M$ in some context). 
It is known that $(P_t)_{t\geq0}$ (resp.~$(G_{\alpha})_{\alpha>0}$) under $p\in[1,+\infty]$ can be extended to bounded contractive operators (still denoted by the same symbol) on $L^p(M;\m)$ and 
is strongly continuous on $L^p(M;\m)$ under $p\in[1,+\infty[$ in the sense that 
$\lim_{t\to0}\|P_tf-f\|_{L^p(M;\m)}=0$ (resp.~$\lim_{\alpha\to\infty}\|\alpha G_{\alpha}f-f\|_{L^p(M;\m)}=0$), and weakly* continuous on $L^{\infty}(M;\m)$ in the sense that $\lim_{t\to0}\int_M(P_tf-f)g\d\m=0$ 
(resp.~$\lim_{\alpha\to\infty}\int_M(\alpha G_{\alpha}f-f)g\d\m=0$) for $f\in L^{\infty}(M;\m)$ and $g\in L^1(M;\m)$. 
An increasing sequence $\{F_n\}$ of closed subsets is called an \emph{$\mathscr{E}$-nest} if 
$\bigcup_{n=1}^{\infty}D(\mathscr{E})_{F_n}$ is $\mathscr{E}_1$-dense in $D(\mathscr{E})$, where 
$D(\mathscr{E})_{F_n}:=\{u\in D(\mathscr{E})\mid u=0\;\m\text{-a.e.~on }M\setminus F_n\}$. 
A subset $N$ is said to be \emph{$\mathscr{E}$-exceptional} or \emph{$\mathscr{E}$-polar} if 
there exists an $\mathscr{E}$-nest $\{F_n\}$ such that $N\subset \bigcap_{n=1}^{\infty}(M\setminus F_n)$. 
For two subsets $A,B$ of $M$, we write $A\subset B$ $\mathscr{E}$-q.e. if $A\setminus B$ is $\mathscr{E}$-exceptional, hence we write $A=B$ $\mathscr{E}$-q.e. if $A\setminus B$ $\mathscr{E}$-q.e. and 
$B\setminus A$ $\mathscr{E}$-q.e. hold. 
It is known that any $\mathscr{E}$-exceptional set is $\m$-negligible and for any $\m$-negligible 
$\mathscr{E}$-quasi-open set is $\mathscr{E}$-exceptional, in particular, for an $\mathscr{E}$-quasi continuous function $u$, $u\geq0$ $\m$-a.e. implies $u\geq0$ $\mathscr{E}$-q.e.   
For a statement $P(x)$ with respect to $x\in M$, we say that $P(x)$ holds $\mathscr{E}$-q.e.~$x\in M$ (simply $P$ holds $\mathscr{E}$-q.e.) if the set $\{x\in M\mid P(x)\text{ holds }\}$ is $\mathscr{E}$-exceptional. 
A subset $G$ of $M$ is called an \emph{$\mathscr{E}$-quasi-open set} if there exists an $\mathscr{E}$-nest $\{F_n\}$ such that $G\cap F_n$ is an open set of $F_n$ with respect to the relative topology on 
$F_n$ for each $n\in\mathbb{N}$. A function $u$ on $M$ is said to be \emph{$\mathscr{E}$-quasi continuous} if there exists an $\mathscr{E}$-nest $\{F_n\}$ such that $u|_{F_n}$ is continuous on $F_n$ for each $n\in\mathbb{N}$.

Denote by $(\mathcal{E},D(\mathcal{E})_e)$ the extended Dirichlet space of $(\mathscr{E},D(\mathscr{E}))$ defined by 
\begin{align*}
\left\{\begin{array}{rl}D(\mathcal{E})_e&=\{u\in L^0(M;\m)\mid \exists \{u_n\}\subset D(\mathscr{E}): \mathscr{E}\text{-Cauchy sequence}\\
&\hspace{6cm}  \text{ such  that }\lim_{n\to\infty}u_n=u\;\m\text{-a.e.}\}, \\
\mathscr{E}(u,u)&=\lim_{n\to\infty}\mathscr{E}(u_n,u_n),\end{array}\right.
\end{align*}
(see \cite[p.~40]{FOT} for details on extended Dirichlet space). 
Denote by $\dot{D}(\mathscr{E})_{\loc}$, the space of functions locally in $D(\mathscr{E})$ in the broad sense defined 
by 
\begin{align*}
\dot{D}(\mathscr{E})_{\loc}&=\{u\in L^0(M;\m)\mid \exists \{u_n\}\subset D(\mathscr{E})\text{ and }\exists \{G_n\}:\mathscr{E}\text{-nest of }\mathscr{E}\text{-quasi-open sets}\\ 
&\hspace{8cm}  \text{ such  that }u=u_n\; \m\text{-a.e.~on }G_n\}.
\end{align*}
Here an increasing sequence $\{G_n\}$ of $\mathscr{E}$-quasi open sets is called an 
\emph{$\mathscr{E}$-nest} if $M=\bigcup_{n=1}^{\infty}G_n$ $\mathscr{E}$-q.e. 
For $u\in \dot{D}(\mathscr{E})_{\loc}$ and an $\mathscr{E}$-nest $\{G_n\}$ 
of $\mathscr{E}$-quasi-open sets satisfying $u|_{G_n}\in D(\mathscr{E})|_{G_n}$, we write $\{G_n\}\in \Xi(u)$. It is known that $D(\mathscr{E})\subset D(\mathscr{E})_e\subset \dot{D}(\mathscr{E})_{\loc}$ and 
each $u\in \dot{D}(\mathscr{E})_{\loc}$ admits an $\mathscr{E}$-quasi continuous $\m$-version (see \cite{Kw:func}).

It is known that for $u,v\in D(\mathscr{E})\cap L^{\infty}(M;\m)$ there exists a unique signed finite Borel 
measure $\mu_{\langle u,v\rangle }$ on $M$ such that 
\begin{align*}
2\int_M\tilde{f}\d\mu_{\langle u,v\rangle }=\mathscr{E}(uf,v)+\mathscr{E}(vf,u)-\mathscr{E}(uv,f)\quad \text{ for }\quad u,v\in D(\mathscr{E})\cap L^{\infty}(M;\m).
\end{align*}
Here $\tilde{f}$ denotes the $\mathscr{E}$-quasi-continuous $\m$-version of $f$ (see \cite[Theorem~2.1.3]{FOT}, \cite[Chapter IV, Proposition~3.3(ii)]{MR}). 
We set $\mu_{\langle f\rangle }:=\mu_{\langle f,f\rangle }$ for $f\in D(\mathscr{E})\cap L^{\infty}(M;\m)$. 
Moreover, 
for $f,g\in D(\mathscr{E})$, there exists a signed finite measure $\mu_{\langle f,g\rangle }$ on $M$ such that 
$\mathscr{E}(f,g)=\mu_{\langle f,g\rangle }(M)$, hence $\mathscr{E}(f,f)=\mu_{\langle f\rangle }(M)$. 
We assume $(\mathscr{E},D(\mathscr{E}))$ admits a carr\'e-du-champ $\Gamma$, i.e. 
$\mu_{\langle f\rangle }\ll\m$ for all $f\in D(\mathscr{E})$ and set $\Gamma(f):=\d\mu_{\langle f\rangle }/\d\m$. 
Then $\mu_{\langle f,g\rangle }\ll\m$ for all $f,g\in D(\mathscr{E})$ and $\Gamma(f,g):=
\d\mu_{\langle f,g\rangle }/{\d\m}\in L^1(M;\m)$ is expressed by $\Gamma(f,g)=\frac14(\Gamma(f+g)-\Gamma(f-g))$ for $f,g\in D(\mathscr{E})$. 

We can extend the singed smooth $\mu_{\langle f,g\rangle }$ or carr\'e-du-champ $\Gamma(f,g)$ from $f,g\in D(\mathscr{E})$ to 
$f,g\in \dot{D}(\mathscr{E})_{\loc}$ by polarization (see \cite[Lemmas~5.2 and 5.3]{Kw:func}). In this case, $\mu_{\langle f,g\rangle}$ (resp.~$\Gamma(f,g)$) 
is no longer a finite signed measure (resp.~an $\m$-integrable function). For $u\in \dot{D}(\mathscr{E})_{\loc}$ and 
an $\mathscr{E}$-nest $\{G_n\}$ 
of $\mathscr{E}$-quasi-open sets satisfying $u|_{G_n}\in D(\mathscr{E})|_{G_n}$, then we can define $\mathscr{E}(u,v)$ 
for $v\in\bigcup_{n=1}^{\infty}D(\mathscr{E})_{G_n}$ by 
\begin{align*}
\mathscr{E}(u,v)=\mu_{\langle u,v\rangle}(M).
\end{align*}
Here $D(\mathscr{E})_{G_n}:=\{u\in D(\mathscr{E})\mid \tilde{u}=0 \;\mathscr{E}\text{-q.e.~on }G_n^c\}$.

Fix $q\in\{1,2\}$. Let $\kappa$ be a signed smooth measure with its Jordan-Hahn decomposition $\kappa=\kappa^+-\kappa^-$. 
We assume that $\kappa^+$ is of Dynkin class smooth measure ($\kappa^+\in S_D({\bf X})$ in short) 
and $2\kappa^-$ is of extended Kato class smooth measure ($2\kappa^-\in S_{E\!K}({\bf X})$ in short). 
More precisely, $\nu\in S_D({\bf X})$ (resp.~$\nu\in S_{E\!K}({\bf X})$) if and only if $\nu\in S({\bf X})$ and 
$\m\text{-}\sup_{x\in M}\E_x[A_t^{\nu}]<\infty$ for any/some $t>0$ 
(resp.~$\lim_{t\to0}\m\text{-}\sup_{x\in M}\E_x[A_t^{\nu}]<1$) (see \cite{AM:AF}). 
Here $S({\bf X})$ denotes the family of smooth measures with respect to ${\bf X}$ (see \cite[Chapter VI, Definition~2.3]{MR}, \cite[p.~83]{FOT} for the definition of smooth measures) and $\m$-$\sup_{x\in M}f(x)$ denotes the $\m$-essentially supremum for a function $f$ on $M$.  
For $\nu\in  S({\bf X})$ and set $U_{\alpha}\nu(x):=\E_x\left[\int_0^{\infty}e^{-\alpha t}\d A_t^{\nu}\right]$ with its $\m$-essentially supremum $\|U_{\alpha}\nu\|_{\infty}:=\m$-$\sup_{x\in M}U_{\alpha}\nu(x)$, $\|U_{\alpha}\nu\|_{\infty}<\infty$ for some/any $\alpha>0$ 
(resp.~$\lim_{\alpha\to\infty}\|U_{\alpha}\nu\|_{\infty}=0$, $\lim_{\alpha\to\infty}\|U_{\alpha}\nu\|_{\infty}<1$)
if and only if $\nu\in S_D({\bf X})$ (resp.~$\nu\in  S_{E\!K}({\bf X})$, $\nu\in S_K({\bf X})$).  
For $\nu\in S_D({\bf X})$, the following inequality holds
\begin{align}
\int_M\tilde{f}^2\d\nu\leq\|U_{\alpha}\nu\|_{\infty}\mathscr{E}_{\alpha}(f,f),\label{eq:StollmannVoigt}
\end{align}
which is called the Stollmann-Voigt's inequality. Here $\mathscr{E}_{\alpha}(f,f):=
\mathscr{E}_{\alpha}(f,f)+\alpha\|f\|_{L^2(M;\m)}^2$.

Then, for $q\in\{1,2\}$, the quadratic form 
\begin{align*}
\mathscr{E}^{q\kappa}(f):=\mathscr{E}(f)+\langle 2\kappa, \tilde{f}^2\rangle 
\end{align*}
with finiteness domain $D(\mathscr{E}^{q\kappa})=D(\mathscr{E})$ is closed, lower semi-bounded, moreover, 
there exists $\alpha_0>0$ and $C>0$ such that 
\begin{align*}
C^{-1}\mathscr{E}_1(f,f)\leq \mathscr{E}^{q\kappa}_{\alpha_0}(f,f)\leq C\mathscr{E}_1(f,f)\quad \text{ for all }\quad f\in D(\mathscr{E}^{q\kappa})=D(\mathscr{E})
\end{align*}
by \eqref{eq:StollmannVoigt}
(see \cite[(3.3)]{CFKZ:Pert} and \cite[Assumption of Theorem~1.1]{CFKZ:GenPert}).   
The Feynman-Kac semigroup $(p_t^{q\kappa})_{t\geq0}$ defined by 
\begin{align*}
p_t^{q\kappa}f(x)=\E_x[e^{-qA_t^{\kappa}}f(X_t)],\quad f\in \mathscr{B}_b(M)
\end{align*}
is $\m$-symmetric, i.e. 
\begin{align*}
\int_M p_t^{q\kappa}f(x)g(x)\m(\d x)=\int_M g(x)p_t^{q\kappa}g(x)\m(\d x)\quad\text{ for all }\quad f,g\in \mathscr{B}_+(M)
\end{align*} 
and coincides with the strongly continuous semigroup $(P_t^{q\kappa})_{t\geq0}$ on $L^2(M;\m)$ associated with 
$(\mathscr{E}^{q\kappa}, D(\mathscr{E}^{q\kappa}))$ (see \cite[Theorem~1.1]{CFKZ:GenPert}). 
Here $A_t^{q\kappa}$ is a continuous additive functional (CAF in short) associated with the signed smooth measure $q\kappa$ under Revuz correspondence. 
Under $\kappa^{-}\in S_{K}({\bf X})$ and $p\in[1,+\infty]$, 
$(p_t^{\kappa})_{t\geq0}$ can be extended 
to be a bounded operator on $L^{p}(M;\m)$ denoted by  $P_t^{\kappa}$ such that 
there exist finite constants $C(\kappa)>0, C_{\kappa}\geq0$ depending only on $\kappa^-$ such that 
for every $t\geq0$
\begin{align}
\|P_t^{\kappa}\|_{p,p}\leq C(\kappa)e^{C_{\kappa}t}.\label{eq:KatoContraction}
\end{align}
Here $C(\kappa)=1$ under $\kappa^-=0$. 
$C_{\kappa}\geq0$  
can be taken to be $0$ under $\kappa^-=0$. We define $R_{\alpha}^{\kappa}$ on $L^p(M;\m)$ for 
$\alpha>C_{\kappa}$ by 
\begin{align*}
R_{\alpha}^{\kappa}f:=\int_0^{\infty}e^{-\alpha t}P_t^{\kappa}f\,\d t.
\end{align*}
Let $\Delta^{q\kappa}$  be the $L^2$-generator associated with $(\mathscr{E}^{q\kappa},D(\mathscr{E}))$ 
defined by   
\begin{align}
\left\{\begin{array}{ll} D(\Delta^{q\kappa})&:=\{u\in D(\mathscr{E})\mid \text{there exists } w\in L^2(M;\m)\text{ such that }\\
&\hspace{3cm}\mathscr{E}^{q\kappa}(u,v)=-\int_M wv\,\d\m\quad \text{ for any }\quad v\in D(\mathscr{E})\}, 
\\ \Delta^{q\kappa} u&:=w\quad\text{ for } w\in L^2(M;\m)\quad\text{specified as above,} \end{array}\right. \label{eq:generatorL2}
\end{align}
called the {\it Schr\"odinger operator} with potential $q\kappa$.  
Formally, $\Delta^{q\kappa}$ can be understood as \lq\lq$\Delta^{q\kappa}=\Delta-q\kappa$\rq\rq, 
where $\Delta$ is the $L^2$-generator associated with $(\mathscr{E},D(\mathscr{E}))$.

\begin{defn}[$q$-Bakry-\'Emery condition]
{\rm 
Suppose that $q\in\{1,2\}$, $\kappa^+\in S_D({\bf X})$, $2\kappa^-\in S_{E\!K}({\bf X})$ and $N\in[1,+\infty]$. We say that $(M,\mathscr{E},\m)$ or simply $M$ satisfies the $q$-Bakry-\'Emery condition, briefly 
${\sf BE}_q(\kappa,N)$, if for every $f\in  D(\Delta)$ with $\Delta f\in D(\mathscr{E})$ 
and every nonnegative $\phi\in D(\Delta^{q\kappa})$ with 
$\Delta^{q\kappa}\phi\in L^{\infty}(M;\m)$ (we further impose $\phi\in L^{\infty}(M;\m)$ for $q=2$), we have 
\begin{align*}
\frac{1}{q}\int_M \Delta^{q\kappa}\phi \Gamma(f)^{\frac{q}{2}}\d\m-\int_M \phi
\Gamma(f)^{\frac{q-2}{2}}
\Gamma(f,\Delta f)\d\m
\geq \frac{1}{N}\int_M \phi\Gamma(f)^{\frac{q-2}{2}}(\Delta f)^2\d\m.
\end{align*}
The latter term is understood as $0$ if $N=\infty$.
}
\end{defn}

\begin{assumption}\label{asmp:Tamed}
{\rm We assume that $M$ satisfies ${\sf BE}_{2}(\kappa,N)$ condition for a given signed smooth measure  
$\kappa$ with $\kappa^+\in S_D({\bf X})$ and $2\kappa^-\in S_{E\!K}({\bf X})$ and $N\in[1,\infty]$.
}
\end{assumption}
Under Assumption~\ref{asmp:Tamed}, we say that $(M,\mathscr{E},\m)$ or simply $M$ is {\it tamed}. 
In fact, under $\kappa^+\in S_D({\bf X})$ and $2\kappa^-\in S_{E\!K}({\bf X})$, the condition ${\sf BE}_2(\kappa,\infty)$ is {\it equivalent} 
to ${\sf BE}_1(\kappa,\infty)$ (see Lemma~\ref{lem:BakryEmeryEquivalence} below), in particular, the heat flow $(P_t)_{t\geq0}$ 
satisfies 
\begin{align}
\sqrt{\Gamma(P_tf)}\leq P_t^{\kappa}\sqrt{\Gamma(f)}\quad\m\text{-a.e.~for any }f\in D(\mathscr{E})\quad \text{ and }\quad t\geq0\label{eq:gradCont}
\end{align}
(see \cite[Definition~3.3 and Theorem~3.4]{ERST}).
The inequality \eqref{eq:gradCont} plays a crucial role in our paper. 
Note that our condition $\kappa^+\in S_D({\bf X})$, $\kappa^-\in S_{E\!K}({\bf X})$ (resp.~$\kappa^+\in S_D({\bf X})$, $2\kappa^-\in S_{E\!K}({\bf X})$) is stronger than the $1$-moderate (resp.~$2$-moderate) condition treated in \cite{ERST} for the definition of tamed space.

\bigskip

Denote by $L^2(T^*\!M)$ the space of $L^2$-cotangent module defined as an $L^2$-normed $L^{\infty}$-module 
in Definition~\ref{df:CotangentModule} below,  
and by $W^{1,2}(T^*\!M)$ the Sobolev space of $L^2$-$1$-forms defined by 
$W^{1,2}(T^*\!M):=D({\rm d})\cap D({\rm d}_*)$, where $D({\rm d})$ (resp.~$D({\rm d}_*)$) is the domain of exterior differential (resp.~codifferential) defined in Definition~\ref{def:d} (resp.~Definition~\ref{def:d*}) below.  
We endow $W^{1,2}(T^*\!M)$ with the norm $\|\cdot\|_{W^{1,2}(T^*\!M)}$ given by 
\begin{align*}
\|\omega\|_{W^{1,2}(T^*\!M)}^2:=\|\omega\|_{L^2(T^*\!M)}^2+\|\d \omega\|_{L^2(\Lambda^{2}T^*\!M)}^2+
\|\d_*\omega\|_{L^2(M;\m)}^2.
\end{align*}
Here $L^2(\Lambda^{2}T^*\!M):=\Lambda^2 L^2(T^*\!M)$ denotes the space of $2$-fold exterior products 
of $L^2(T^*\!M)$ defined in Definition~\ref{def:exteriorproducts}. 
We define the \emph{contravariant} functional $\mathscr{E}_{\rm con}: L^2(T^*\!M)\to[0,+\infty]$ by 
\begin{align*}
\mathscr{E}_{\rm con}(\omega):=\left\{\begin{array}{cc}\displaystyle{\int_M\left[|\d\omega|^2+|\d_*\omega|^2 \right]\d\m} & \text{ if }\omega\in W^{1,2}(T^*\!M), \\\infty & \text{otherwise.} \end{array}\right.
\end{align*}
We define 
the subspace $H^{1,2}(T^*\!M)\subset W^{1,2}(T^*\!M)$ by 
the closure of ${\rm Reg}(T^*\!M)$ with respect to $\|\cdot\|_{W^{1,2}(T^*\!M)}$: 
\begin{align*}
H^{1,2}(T^*\!M):=\overline{{\rm Reg}(T^*\!M)}^{\|\cdot\|_{W^{1,2}(T^*\!M)}}.
\end{align*}
Here ${\rm Reg}(T^*\!M)$ is the space of regular $1$-forms defined in Definition~\ref{def:Test1Forms} below. 
The space $D(\DD)$ is defined to consist of all $\omega\in H^{1,2}(T^*\!M)$ for which there exists $\alpha\in L^2(T^*\!M)$ such that for every $\eta\in  H^{1,2}(T^*\!M)$,
\begin{align*}
\int_M \langle \alpha,\eta\rangle \d\m=-\int_M \left[\langle \d\omega,\d\eta\rangle +\langle \d_*\omega,\d_*\eta\rangle  \right]\d\m.
\end{align*}
In case of existence, the element $\alpha$ is unique, denoted by $\DD\omega$ and called the 
\emph{Hodge Laplacian}, \emph{Hodge-Kodaira Laplacian} or \emph{Hodge-de~\!\!Rham Laplacian} of $\omega$. 
Formally $\DD\omega$ can  be written \lq\lq$\DD\omega=-(\d\d_*+\d_*\d)\omega$\rq\rq.  
 
Let $P_t^{\rm HK}$ be the heat flow on $1$-forms associated to the functional $\widetilde{\mathscr{E}}_{\rm con}:L^2(T^*\!M)\to [0,+\infty]$ with 
\begin{align*}
\widetilde{\mathscr{E}}_{\rm con}(\omega):=\left\{\begin{array}{cc}\displaystyle{\int_M \left[|\d\omega|^2+|\d_*\omega|^2 \right]\d\m} & \text{ if }\omega\in H^{1,2}(T^*\!M), \\ \infty & \text{ otherwise.}\end{array}\right.
\end{align*}
We can write $\mathscr{E}^{\rm HK}$ instead of $\widetilde{\mathscr{E}}_{\rm con}$. Let $(P_t^{\rm HK})_{t\geq0}$ be the heat semigroup of bounded linear and self-adjoint operator on $L^2(T^*\!M)$ formally written by 
\begin{align*}
\text{\lq\lq $P_t^{\rm HK}:=e^{t\,\DD}$\rq\rq}.
\end{align*}
We define $R_{\alpha}^{\rm HK}\omega$ for $\omega\in L^2(T^*\!M)$ by 
\begin{align*}
R_{\alpha}^{\rm HK}\omega:=\int_0^{\infty}e^{-\alpha t}P_t^{\rm HK}\omega \,\d t.
\end{align*}

\subsection{Main Results}\label{subsec:Main}

Our main theorem under Assumption~\ref{asmp:Tamed} is the following: 

\begin{thm}[{Hess-Schrader-Uhlenbrock inequality}]\label{thm:HessShcraderUhlenbrock}
We have the following: 
\begin{enumerate}
\item For every $\omega\in L^2(T^*\!M)$ and $\alpha>C_{\kappa}$, 
\begin{align}
|R_{\alpha}^{\rm HK}\omega|\leq R_{\alpha}^{\kappa}|\omega|\quad\m\text{-a.e.}\label{eq:ResolventHSU}
\end{align}
\item For every $\omega\in L^2(T^*\!M)$ and every $t\geq0$, 
\begin{align}
|P_t^{\rm HK}\omega|\leq P_t^{\kappa}|\omega|\quad\m\text{-a.e.}\label{eq:HSU}
\end{align}
\end{enumerate}
\end{thm}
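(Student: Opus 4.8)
The plan is to deduce the semigroup inequality \eqref{eq:HSU} from the resolvent inequality \eqref{eq:ResolventHSU} via the exponential formula, so the bulk of the work concentrates on part (i). The key ingredient is the pointwise gradient estimate \eqref{eq:gradCont}, $\sqrt{\Gamma(P_tf)}\leq P_t^{\kappa}\sqrt{\Gamma(f)}$, which is the $1$-form analogue on exact forms; the strategy is to promote this to all of $L^2(T^*\!M)$ using the density of regular (test) $1$-forms, which are finite sums $\sum_i g_i\,\d f_i$, and the fact that $H^{1,2}(T^*\!M)$ is the closure of ${\rm Reg}(T^*\!M)$. Concretely, for $\omega\in L^2(T^*\!M)$ set $\eta:=R_\alpha^{\rm HK}\omega\in D(\DD)\subset H^{1,2}(T^*\!M)$, so that $\alpha\eta-\DD\eta=\omega$. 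I would test this identity against a suitable $\eta$-dependent $1$-form (essentially $\eta/|\eta|$, regularized where $|\eta|$ is small, à la Bochner), integrate against a nonnegative $\phi\in D(\Delta^{q\kappa})$, and use the Bochner-type / Weitzenböck inequality that is the analytic content of ${\sf BE}_1(\kappa,\infty)$: namely, for regular $\omega$, $-\tfrac12\Delta|\omega|^2 + \langle\DD\omega,\omega\rangle/|\omega|\cdot(\text{sign})$ is controlled below by $-\langle\kappa,|\omega|^2\rangle$-type terms, so that $|\omega|$ is a subsolution of the Schrödinger heat equation with potential $\kappa$. This yields the differential inequality $\Delta^\kappa|P_t^{\rm HK}\omega| \geq \tfrac{d}{dt}|P_t^{\rm HK}\omega|$ in the weak sense, and integrating in $t$ against $e^{-\alpha t}$ gives $|R_\alpha^{\rm HK}\omega|\leq R_\alpha^\kappa|\omega|$.

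More carefully, for part (i) I would argue by duality: it suffices to show, for every nonnegative $g\in L^2\cap L^\infty(M;\m)$ (or in an appropriate dense class with $R_\alpha^\kappa g$ well-behaved), that $\int_M |R_\alpha^{\rm HK}\omega|\,g\,\d\m \leq \int_M |\omega|\,R_\alpha^\kappa g\,\d\m$, using symmetry of $P_t^\kappa$. Writing $\psi:=R_\alpha^\kappa g \geq 0$, which lies in the domain of $\Delta^\kappa$ and satisfies $\alpha\psi - \Delta^\kappa\psi = g$, the task reduces to $\int_M |R_\alpha^{\rm HK}\omega|\,(\alpha\psi-\Delta^\kappa\psi)\,\d\m \leq \int_M |\omega|\,\psi\,\d\m$. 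Now expand $\omega = \alpha\eta - \DD\eta$ with $\eta=R_\alpha^{\rm HK}\omega$, so the right side is $\int_M |\alpha\eta-\DD\eta|\,\psi\,\d\m \geq \alpha\int_M|\eta|\psi\,\d\m - \int_M\langle \DD\eta, \operatorname{sgn}\eta\rangle\,\psi\,\d\m$ where $\operatorname{sgn}\eta$ denotes (a regularization of) $\eta/|\eta|$; the left side is $\alpha\int_M|\eta|\psi\,\d\m - \int_M|\eta|\,\Delta^\kappa\psi\,\d\m$. So after cancellation I need the Kato-type inequality $\int_M |\eta|\,\Delta^\kappa\psi\,\d\m \leq \int_M \langle \DD\eta,\operatorname{sgn}\eta\rangle\,\psi\,\d\m$, i.e. $\Delta^\kappa|\eta|\geq \langle\DD\eta,\operatorname{sgn}\eta\rangle$ weakly against nonnegative test functions $\psi$. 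This is exactly the Bochner/Weitzenböck inequality encoding ${\sf BE}_1(\kappa,\infty)$, which via Lemma~\ref{lem:BakryEmeryEquivalence} and \eqref{eq:gradCont} is available in this tamed setting; I would first establish it for $\eta\in{\rm Reg}(T^*\!M)$ by the standard chain-rule computation for $|\eta|$ combined with the defining inequality of the Bakry-\'Emery condition applied with test function $\phi=\psi$, then pass to general $\eta\in D(\DD)$ by approximation in $H^{1,2}(T^*\!M)$ together with $L^2$-continuity of $\DD$ on $R_\alpha^{\rm HK}$-images.

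For part (ii), the passage from resolvent to semigroup is routine: $P_t^{\rm HK}\omega = \lim_{n\to\infty}(\tfrac{n}{t}R_{n/t}^{\rm HK})^n\omega$ in $L^2(T^*\!M)$ (Euler/Post-Widder type exponential formula, valid for the self-adjoint contraction-type semigroup $P_t^{\rm HK}$), and iterating \eqref{eq:ResolventHSU} gives $|(\tfrac{n}{t}R_{n/t}^{\rm HK})^n\omega| \leq (\tfrac{n}{t}R_{n/t}^{\kappa})^n|\omega|$ pointwise $\m$-a.e., since each $R_\alpha^\kappa$ is positivity preserving on $L^p$. Passing to the limit (along a subsequence converging $\m$-a.e. on the left, and using $(\tfrac{n}{t}R_{n/t}^{\kappa})^n|\omega|\to P_t^\kappa|\omega|$ in $L^1_{\rm loc}$ or $L^2$ by \eqref{eq:KatoContraction} and strong continuity) yields \eqref{eq:HSU}. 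A minor point to handle is the range $\alpha>C_\kappa$: for fixed $t>0$ one has $n/t>C_\kappa$ for $n$ large, so the iteration is legitimate for all large $n$, which is all the exponential formula needs. The main obstacle is therefore entirely in part (i): making the Bochner/Kato inequality $\Delta^\kappa|\eta|\geq\langle\DD\eta,\operatorname{sgn}\eta\rangle$ rigorous at the level of the abstract $L^\infty$-module $L^2(T^*\!M)$ — in particular justifying the chain rule for $|\eta|$ and the choice of the ``sign'' $1$-form for non-test $\eta$ where $|\eta|$ may vanish, which requires the regularization $|\eta|_\varepsilon:=\sqrt{|\eta|^2+\varepsilon^2}-\varepsilon$ and a careful limit $\varepsilon\downarrow 0$ exploiting that $\Gamma(|\eta|_\varepsilon)\leq\Gamma(|\eta|)$ in the module sense and the locality of $\Gamma$.
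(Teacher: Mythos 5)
Your overall architecture coincides with the paper's: part (ii) is obtained from part (i) by the exponential formula exactly as you describe, and part (i) is proved by the duality computation you sketch, namely testing $\alpha\eta-\DD\eta=\omega$ with $\eta=R_\alpha^{\rm HK}\omega$ against a regularized sign form $g\,\eta/|\eta|_\eps$ and reducing everything to the weak Kato-type inequality $\bigl(-\DD\eta,g\,\eta/|\eta|\bigr)_{L^2(T^*\!M)}\geq\mathscr{E}^\kappa(|\eta|,g)$ for nonnegative test functions $g$ (the paper's Lemma~\ref{lem:HKEnergyCont***}), which yields $\mathscr{E}^\kappa_\alpha(|\eta|,g)\leq\int_M g\,|(\alpha-\DD)\eta|\,\d\m$ and then the resolvent domination. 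However, there are two concrete gaps. First, your proposed source for the Kato/Bochner inequality is too weak: \eqref{eq:gradCont} only dominates $P_t^{\rm HK}$ on \emph{exact} forms, and pointwise domination does not pass to linear combinations $\sum_i g_i\,\d f_i$ (the pointwise norm of a sum is not the sum of pointwise norms, and $P_t^{\rm HK}$ does not commute with multiplication by $g_i$), so "promoting by density of regular forms" fails; likewise the defining inequality of ${\sf BE}_1$ applies only to $\Gamma(f)$ for functions and does not by itself give the form-level Bochner inequality. What the paper actually uses is the measure-valued curvature bound ${\bf Ric}^{\kappa}\geq0$ together with its tensoriality ${\bf Ric}^{\kappa}(fX,Y)=\tilde f\,{\bf Ric}^{\kappa}(X,Y)$ and the module Kato inequality $|\nabla|X||\leq|\nabla X|_{\rm HS}$; these come from Braun's self-improvement machinery (Theorem~\ref{thm:Theorem8.9}), not from a density argument on exact forms.

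Second, and more seriously, your argument needs the test form $g\,\eta/|\eta|_\eps$ to lie in $H^{1,2}(T^*\!M)$ so that it can be paired with $\DD\eta$; by the Leibniz rules this requires either $\d(g/|\eta|_\eps)$ or $\eta$ itself to be ($\m$-essentially) bounded. For a general $\eta=R_\alpha^{\rm HK}\omega$ this is not automatic, and the paper must first prove the independent a priori estimate $|P_t^{\rm HK}\omega|^2\leq P_t^{-2\kappa^-}|\omega|^2$ (Lemma~\ref{lem:HessSchradarUhlenbrock2}), whence $\|R_\alpha^{\rm HK}\omega\|_{L^\infty(T^*\!M)}\lesssim\|\omega\|_{L^\infty(T^*\!M)}$; this lemma is the longest and most delicate part of the whole proof (a Gr\"onwall-type argument on $s\mapsto\int P_{t-s}^{2\kappa}g\cdot P_{1/n}G_n|P_s^{\rm HK}\omega|^2\,\d\m$ using ${\bf Ric}^\kappa\geq0$ and the finiteness of $\int_0^t\int\widetilde{|P_s^{\rm HK}\omega|}^2\,\d\kappa^-\,\d s$). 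Without it, your reduction is circular: the admissibility of the sign test form presupposes an $L^\infty$-domination that is itself a weak version of the theorem. You would need to supply this estimate (or an equivalent substitute) before your duality argument closes.
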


The assertion of Theorem~\ref{thm:HessShcraderUhlenbrock}(ii) is known as 
Hess-Schrader-Uhlenbrock inequality for $L^2$-differential forms, which was firstly established in the case that $M$ is a compact Riemannian manifold without boundary by Hess-Schrader-Uhlenbrock~\cite{HSU*, HSU} and by Simon~\cite{Simon}. 
This inequality was extended to the case of compact Riemannian manifold with convex boundary by Ouhabaz~\cite{Ouhabaz} and by Shigekawa~\cite{ShigekawaIntertwining,ShigekawaSemigroupDomination}.  
For general compact Riemannian manifolds with boundary, Hess-Schrader-Uhlenbrock inequality 
was proved by Hsu~\cite{Hsu:2002} in probabilistic way. 
For non-compact Riemannian manifolds without boundary, it was proved by G\"uneysu~\cite{Guneysu} in analytic way, and by Driver-Thalmaier~\cite{DriverTalmaier} and Elworthy-Le Jan-Li~\cite{ElworthyLeJanLi,XMLi} in stochastic way. $L^p$-properties of $(P_t^{\rm HK})_{t\geq0}$ and related heat kernel estimates on Riemannian manifolds have been studied by Magniez-Ouhabaz~\cite{MagniezOuhabaz} under the Ricci lower bounds having Kato class condition. 
Very recently, Theorem~\ref{thm:HessShcraderUhlenbrock}(ii) is proved by Braun~\cite[Theorem~A]{Braun:HeatFlow} in the framework of RCD$(K,\infty)$-space for $K\in\R$, and also 
proved by Braun~\cite[Theorem~8.41]{Braun:Tamed2021} in the present framework with an extra assumption \cite[Assumption~8.37]{Braun:Tamed2021}. Our Theorem~\ref{thm:HessShcraderUhlenbrock}(ii) extends 
\cite[Theorem~8.41]{Braun:Tamed2021} completely. 
Theorem~\ref{thm:HessShcraderUhlenbrock}(ii) also plays a crucial role to prove the $L^p$-boundedness of Riesz operator for $p\in[2,+\infty[$ (see \cite{EXKRiesz}).  

\bigskip

As a corollary of Theorem~\ref{thm:HessShcraderUhlenbrock}, we can establish the strong continuity of 
the semigroup $(P_t^{\rm HK})_{t\geq0}$ on $L^p(T^*\!M)$ under Kato class Ricci lower bounds. 

\begin{cor}[{$C_0$-property of $(P_t^{\rm HK})_{t\geq0}$ on $L^p(M;\m)$}]\label{cor:HessShcraderUhlenbrock}
 Suppose $p\in[2,+\infty]$, or $\kappa^-\in S_K({\bf X})$ and $p\in[1,+\infty]$.  
Then the heat flow $(P_t^{\rm HK})_{t\geq0}$ can be extended to a semigroup on $L^p(T^*\!M)$ and and 
for each $t>0$
\begin{align}
\|P_t^{\rm HK}\omega\|_{L^p(T^*\!M)}\leq 
C(\kappa)e^{C_{\kappa}t}\|\omega\|_{L^p(T^*\!M)},\quad \omega\in L^p(T^*\!M).\label{eq:LpContHKHF}
\end{align}
Moreover, if $\kappa^-\in S_K({\bf X})$ and $p\in[1,+\infty[$, then  
$(P_t^{\rm HK})_{t\geq0}$ is strongly continuous on $L^p(T^*\!M)$, i.e., 
$(P_t^{\rm HK})_{t\geq0}$ is a $C_0$-semigroup on $L^p(T^*\!M)$, and further  
$(P_t^{\rm HK})_{t\geq0}$ is weakly* continuous on $L^{\infty}(T^*\!M)$.
\end{cor}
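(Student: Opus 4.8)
The plan is to derive Corollary~\ref{cor:HessShcraderUhlenbrock} as a routine consequence of the Hess-Schrader-Uhlenbrock inequality \eqref{eq:HSU} combined with the $L^p$-contraction bound \eqref{eq:KatoContraction} for the Feynman-Kac semigroup $(P_t^\kappa)_{t\ge0}$. First I would establish the $L^p$-boundedness estimate \eqref{eq:LpContHKHF}. For $\omega\in L^2(T^*\!M)\cap L^p(T^*\!M)$, pointwise inequality \eqref{eq:HSU} gives $|P_t^{\rm HK}\omega|\le P_t^\kappa|\omega|$ $\m$-a.e.; taking $L^p$-norms and using \eqref{eq:KatoContraction} yields $\|P_t^{\rm HK}\omega\|_{L^p(T^*\!M)}\le\|P_t^\kappa|\omega|\|_{L^p(M;\m)}\le C(\kappa)e^{C_\kappa t}\|\omega\|_{L^p(M;\m)}=C(\kappa)e^{C_\kappa t}\|\omega\|_{L^p(T^*\!M)}$. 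Note that \eqref{eq:HSU} requires only $\omega\in L^2(T^*\!M)$, so when $p\in[2,+\infty]$ every $\omega\in L^p(T^*\!M)$ with $|\omega|\in L^2\cap L^p$ is directly covered after a truncation/density argument, whereas for $p\in[1,2[$ one uses the additional hypothesis $\kappa^-\in S_K({\bf X})$ (so that \eqref{eq:KatoContraction} is available for all $p\in[1,+\infty]$) together with the density of $L^2(T^*\!M)\cap L^p(T^*\!M)$ in $L^p(T^*\!M)$. Since $L^2(T^*\!M)\cap L^p(T^*\!M)$ is dense in $L^p(T^*\!M)$ (for $p<\infty$; for $p=\infty$ one passes to the weak* closure), the operator $P_t^{\rm HK}$ extends uniquely by continuity to all of $L^p(T^*\!M)$ with the stated bound, and the semigroup property $P_t^{\rm HK}P_s^{\rm HK}=P_{t+s}^{\rm HK}$ passes to the extension by density.

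Next I would prove strong continuity on $L^p(T^*\!M)$ for $p\in[1,+\infty[$ under $\kappa^-\in S_K({\bf X})$. The strategy is the standard $3\varepsilon$-argument: it suffices to check $\|P_t^{\rm HK}\omega-\omega\|_{L^p(T^*\!M)}\to0$ as $t\to0$ on a dense subset of $L^p(T^*\!M)$, and then to extend using the uniform bound \eqref{eq:LpContHKHF} which ensures equi-continuity of the family $(P_t^{\rm HK})_{t\in[0,1]}$ on $L^p$. A convenient dense class is $\mathrm{Reg}(T^*\!M)$ (or ${\rm Reg}(T^*\!M)\cap L^p(T^*\!M)$), which lies in $H^{1,2}(T^*\!M)$ and hence in $L^2(T^*\!M)$. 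For such $\omega$, $P_t^{\rm HK}\omega\to\omega$ in $L^2(T^*\!M)$ as $t\to0$ by strong continuity of the $L^2$-heat flow associated with $\widetilde{\mathscr{E}}_{\rm con}$ (spectral theorem), so along a subsequence the convergence is $\m$-a.e.; combined with the domination $|P_t^{\rm HK}\omega-\omega|\le|P_t^{\rm HK}\omega|+|\omega|\le P_t^\kappa|\omega|+|\omega|$ and an $L^p$-dominated convergence argument — using that $P_t^\kappa|\omega|\to|\omega|$ in $L^p(M;\m)$ by strong continuity of $(P_t^\kappa)_{t\ge0}$ on $L^p(M;\m)$ under $\kappa^-\in S_K({\bf X})$, hence (again via Vitali/dominated convergence with the dominating sequence $P_t^\kappa|\omega|+|\omega|$ which converges in $L^p$) one gets $\|P_t^{\rm HK}\omega-\omega\|_{L^p(T^*\!M)}\to0$. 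The $3\varepsilon$-argument then transfers this to arbitrary $\omega\in L^p(T^*\!M)$.

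For the weak* continuity on $L^\infty(T^*\!M)$, the argument is by duality: for $\omega\in L^\infty(T^*\!M)$ and $\eta$ in the predual (an $L^1$-type cotangent module, or more simply testing against $g\,\sigma$ with $g\in L^1$ and $\sigma$ a bounded form), one writes the pairing $\langle P_t^{\rm HK}\omega,\eta\rangle$ and uses self-adjointness of $P_t^{\rm HK}$ on $L^2$ together with the just-established strong continuity on $L^1(T^*\!M)$ to move the semigroup onto $\eta$ and pass to the limit; the uniform $L^\infty$-bound from \eqref{eq:LpContHKHF} controls the error terms. The main obstacle I anticipate is not any single deep step but rather the bookkeeping of module-valued integration and the justification that the various a.e./$L^2$/$L^p$ convergences interact correctly — in particular, producing a single $L^p$-convergent dominating sequence for the dominated-convergence step so that the a.e.\ convergence $P_t^{\rm HK}\omega\to\omega$ upgrades to $L^p$-convergence; this is where the hypothesis $\kappa^-\in S_K({\bf X})$ is essential, since it is exactly what guarantees $\|P_t^\kappa|\omega|-|\omega|\|_{L^p(M;\m)}\to0$ rather than mere boundedness, and without it (for $p\in[1,2[$) strong continuity can genuinely fail.
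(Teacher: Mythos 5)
Your proposal is correct in outline and, for the bound \eqref{eq:LpContHKHF}, coincides with the paper's argument: combine \eqref{eq:HSU} with \eqref{eq:generalpKatoEst} on the dense class $L^2(T^*\!M)\cap L^p(T^*\!M)$ and extend by continuity. Two places diverge from the paper and deserve comment. First, for strong continuity you invoke a generalized dominated convergence (Pratt/Vitali) argument with the dominating family $P_t^{\kappa}|\omega|+|\omega|$, which requires knowing that $(P_t^{\kappa})_{t\geq0}$ is \emph{strongly continuous} on $L^p(M;\m)$ under $\kappa^-\in S_K({\bf X})$; this is a true and standard fact, but the paper never states it and it would itself need a proof or citation. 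The paper instead circumvents it: from \eqref{eq:StrongH12continuity} one gets a.e.\ convergence along a subsequence, Fatou gives $\|\omega\|_{L^p}\leq\varliminf\|P_{t_n}^{\rm HK}\omega\|_{L^p}$, and the key observation is that under the Kato hypothesis the constant $C(\kappa)=(1-\|\E_{\cdot}[A_{t_0}^{\kappa^-}]\|_{\infty})^{-1}$ in \eqref{eq:LpContHKHF} can be taken arbitrarily close to $1$, so that $\lim\|P_{t_n}^{\rm HK}\omega\|_{L^p}=\|\omega\|_{L^p}$ and the Riesz--Scheff\'e theorem upgrades a.e.\ convergence to $L^p$-convergence. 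Both routes work; yours is more self-contained modulo the unproved strong continuity of the Feynman--Kac semigroup, while the paper's exploits the explicit form of the Kato constant. Second, your treatment of $p=\infty$ (``pass to the weak* closure'') is too quick: one cannot extend $P_t^{\rm HK}$ from $L^2\cap L^{\infty}$ to $L^{\infty}$ by weak* density without already knowing weak* continuity of the operator. The paper constructs the extension concretely by truncating $\omega_n:=\1_{M_n}\omega$ and showing $(P_t^{\rm HK}\omega_n)_n$ is Cauchy for the $L^0$-metric ${\sf d}_{L^0(T^*\!M)}$, using $|P_t^{\rm HK}(\omega_n-\omega_m)|\leq P_t^{\kappa}|\omega_n-\omega_m|$ and symmetry of $P_t^{\kappa}$; your weak* continuity argument at the end is in the same spirit as the paper's (truncation of both $X$ and $\omega$, $L^2$-pairing for the truncated pieces, uniform bounds for the errors) but should be carried out at that level of detail rather than by ``moving the semigroup onto $\eta$'', since the adjoint identification on the module preduals is exactly what is being established.
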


\section{Smooth measures of extended Kato class}\label{sec:ExtendeKato}
In this section, we summarize results on smooth measures of extended Kato class. 
Denote by $S({\bf X})$ the family of (non-negative) smooth measures on $(M,\mathscr{B}(M))$ (see \cite[p.~83]{FOT} for the precise 
definition of smooth measures). Then there exists a PCAF $A^{\nu}$ admitting exceptional set satisfying 
\begin{align*}
\E_{h\m}\left[\int_0^{\infty}e^{-\alpha t}f(X_t)\d A_t^{\nu} \right]=\E_{f\nu}\left[\int_0^{\infty}e^{-\alpha t}f(X_t)\d t \right],
\quad f,h\in\mathscr{B}_+(M)
\end{align*}  
(see \cite[(5.14)]{FOT}). 
A measure $\nu\in S({\bf X})$ is said to be of \emph{Dynkin class} (resp.~\emph{Kato class}) if $\|\E_{\cdot}[A_t^{\nu}]\|_{L^{\infty}(M;\m)}<\infty$ for some/all $t>0$ (resp.~$\|\E_{\cdot}[A_t^{\nu}]\|_{L^{\infty}(M;\m)}=0$). 
A measure $\nu\in S({\bf X})$ is said to be of \emph{extended Kato class} if $\lim_{t\to0}\|\E_x[A_t^{\nu}]\|_{L^{\infty}(M;\m)}<1$. Denote by $S_D({\bf X})$ (resp.~$S_K({\bf X})$, $S_{E\!K}({\bf X})$) the family of 
(non-negative)  smooth measures of Dynkin class (resp.~Kato class, extended Kato class). For $\nu\in S_D({\bf X})$, 
$x\mapsto \E_x[A_t^{\nu}]$ is $\mathscr{E}$-quasi-continuous, hence $\|\E_{\cdot}[A_t^{\nu}]\|_{L^{\infty}(M;\m)}$ actually coincides with a quasi-essentially supremum: 
\begin{align*}
\|\E_{\cdot}[A_t^{\nu}]\|_{L^{\infty}(M;\m)}=q\text{-}\sup_{x\in M}\E_x[A_t^{\nu}]:=\inf_{N:\mathscr{E}\text{-exceptional}}\sup_{x\in N^c}\E_x[A_t^{\nu}].
\end{align*}
For $\nu\in S_{E\!K}({\bf X})$, there exist positive constants $C=C(\nu)$ and $C_{\nu}>0$ such that 
\begin{align}
\|\E_{\cdot}[e^{A_t^{\nu}}]\|_{L^{\infty}(M;\m)}\leq Ce^{C_{\nu}t}.\label{eq:unifKato}
\end{align}
Indeed, we can choose $C(\nu):=(1-\|\E_{\cdot}[A_{t_0}^{\nu}]\|_{L^{\infty}(M;\m)})^{-1}$ and $C_{\nu}:=\frac{1}{t_0}\log C(\nu)$. 
for some sufficiently small $t_0\in]0,1[$ satisfying $\|\E_{\cdot}[A_{t_0}^{\nu}]\|_{L^{\infty}(M;\m)}<1$ (see \cite[Proof of Theorem~2.2]{Sznitzman}). Denote by $P_t^{-\nu}f(x)$ the Feynman-Kac semigroup defined by 
$P_t^{-\nu}f(x):=\E_x[e^{A_t^{\nu}}f(X_t)]$, $f\in \mathscr{B}_+(M)$. \eqref{eq:unifKato} is nothing but 
$\|P_t^{-\nu}\|_{\infty,\infty}\leq Ce^{C_{\nu}t}$. Moreover, for the conjugate exponent 
$q:=p/(p-1)\in[1,+\infty]$ we can deduce 
\begin{align}
\|P_t^{-\nu}\|_{p,p}\leq C(q\nu)e^{\frac{C_{q\nu}}{q}t}\quad\text{ for }\quad p\in]1,+\infty].\label{eq:pKatoEst}
\end{align} 
Here $\|P_t^{-\nu}\|_{p,p}:=\sup\{\|P_t^{-\nu}f\|_{L^p(M;\m)}\mid f\in L^p(M;\m), \|f\|_{L^p(M;\m)}=1\}$ stands for the operator norm from 
$L^p(M;\m)$ to $L^p(M;\m)$. 
For each $\beta\geq1$ satisfying $\beta\nu\in S_{E\!K}({\bf X})$, we can choose
\begin{align}
C_{\beta\nu}\in]0,C_{\nu}]\quad\text{ and }\quad C(\nu)=C(\beta\nu).\label{eq:KatoCoincidence}
\end{align}
Indeed $C_{\nu}$ and $C(\nu)$ can be given by 
\begin{align}
C_{\nu}=\frac{1}{t_0}\log\left(1-\|\E_{\cdot}[A_{t_0}^{\nu}]\|_{\infty} \right)^{-1},\quad 
C(\nu)=\left(1-\|\E_{\cdot}[A_{t_0}^{\nu}]\|_{\infty} \right)^{-1}\label{eq:KatoExpression}
\end{align}
for some small $t_0\in]0,1]$ satisfying $\|\E_{\cdot}[A_{t_0}^{\nu}]\|_{\infty}<1$ (see \cite[Proof of Theorem~2.2]{Sznitzman}). 
If we choose $s_0\in]0,t_0]$ 
satisfying $\|\E_{\cdot}[A_{s_0}^{\nu}]\|_{\infty}=\|\E_{\cdot}[A_{t_0}^{\beta\nu}]\|_{\infty}<1$, then we see $C_{\beta\nu}\leq C_{\nu}$ and $C(\nu)=C(\beta\nu)$ for such selection of $s_0$ (resp.~$t_0$) to $C_{\nu}$ (resp.~$C_{\beta\kappa}$). 
If $\beta\nu\in S_{E\!K}({\bf X})$ for $\beta\geq1$, then we can deduce from \eqref{eq:pKatoEst} 
and \eqref{eq:KatoCoincidence} that 
for $p\in[\beta/(\beta-1),+\infty]$
\begin{align}
\|P_t^{-\nu}\|_{p,p}\leq C(\nu)e^{C_{\nu}t}.\label{eq:generalpKatoEst}
\end{align}
In particular, if $\nu\in S_K({\bf X})$, then \eqref{eq:generalpKatoEst} holds for any $p\in[1,+\infty]$ 
(the proof for $p=1$ can be directly deduced). 
When $\nu=-R\m$ for a constant $R\in \R$, $C_{\nu}$ (resp.~$C(\nu)$) can be taken to be $R\lor 0$ (resp.~$1$).

It is known that $\nu\in S_D({\bf X})$ (resp.~$\nu\in S_K({\bf X})$) if and only if $\|\E_{\cdot}\left[\int_0^{\infty}e^{-\alpha t}\d A_t^{\nu}\right]\|_{L^{\infty}(M;\m)}<\infty$ for some/all $\alpha>0$ (resp.~$\lim_{\alpha\to\infty}\|\E_{\cdot}\left[\int_0^{\infty}e^{-\alpha t}\d A_t^{\nu} \right]\|_{L^{\infty}(M;\m)}=0$), and $\nu\in S_{E\!K}({\bf X})$ if and only if 
$\lim_{\alpha\to\infty}\|\E_{\cdot}\left[\int_0^{\infty}e^{-\alpha t}\d A_t^{\nu} \right]\|_{L^{\infty}(M;\m)}<1$. 

\section{Test functions}

Under Assumption~\ref{asmp:Tamed}, we now introduce ${\rm Test}(M)$ the set of test functions: 

\begin{defn}\label{def:TestFunc}
{\rm 
Let $(M,\mathscr{E},\m)$ be a tamed space. Let us define the set of {\it test functions} by 
\begin{align*}
{\rm Test}(M):&=\{f\in D(\Delta)\cap L^{\infty}(M;\m)\mid \Gamma(f)\in L^{\infty}(M;\m),\Delta f\in D(\mathscr{E})\},\\
{\rm Test}(M)_{fs}:&=\{f\in {\rm Test}(M)\mid \m({\rm supp}[f])<\infty\}.
\end{align*}
It is easy to see that ${\rm Test}(M)_{fs}\subset \mathscr{D}_{1,p}\subset H^{1,p}(M)$ for any $p\in]1,+\infty[$.
}
\end{defn}

The following lemmas hold. 
\begin{lem}[{\cite[Proposition~6.8]{ERST}}]\label{lem:boundedEst}
Under Assumption~\ref{asmp:Tamed}, ${\sf BE}_2(-\kappa^-,+\infty)$ holds. Moreover, for every $f\in L^2(M;\m)\cap L^{\infty}(M;\m)$ and $t>0$, it holds 
\begin{align}
\Gamma(P_tf)\leq\frac{1}{2t}\|P_t^{-2\kappa^-}\|_{\infty,\infty}\cdot\|f\|_{L^{\infty}(M;\m)}^2.\label{eq:BoundedEst}
\end{align}
In particular, for $f\in L^2(M;\m)\cap L^{\infty}(M;\m)$, then $P_tf\in {\rm Test}(M)$. 
\end{lem}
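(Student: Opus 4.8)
As noted in the statement, the first two assertions are \cite[Proposition~6.8]{ERST}, so the plan is to reproduce (and slightly simplify, since our Dynkin/extended--Kato hypotheses are stronger than the moderateness used there) that argument in three steps, after which the membership $P_tf\in{\rm Test}(M)$ is immediate. First, the curvature bound ${\sf BE}_{2}(-\kappa^-,+\infty)$. Because the right-hand side $\tfrac1N\int_M\phi\,\Gamma(f)^{\frac{q-2}{2}}(\Delta f)^{2}\,\d\m$ of the defining inequality is nonnegative for $\phi\ge0$, Assumption~\ref{asmp:Tamed} already yields ${\sf BE}_{2}(\kappa,+\infty)$. One cannot descend to the smaller potential $-\kappa^-$ by monotonicity directly, since the admissible test functions $\phi$ now range over $D(\Delta^{-2\kappa^-})$ instead of $D(\Delta^{2\kappa})$; I would instead route through the semigroup. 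By Lemma~\ref{lem:BakryEmeryEquivalence} together with \cite[Theorem~3.4]{ERST}, for any signed smooth $\mu$ with $\mu^+\in S_D({\bf X})$ and $2\mu^-\in S_{E\!K}({\bf X})$ the condition ${\sf BE}_{2}(\mu,+\infty)$ is equivalent to $\sqrt{\Gamma(P_tf)}\le P_t^{\mu}\sqrt{\Gamma(f)}$. Applying this with $\mu=\kappa$ gives \eqref{eq:gradCont}; since $-A_t^{\kappa}=A_t^{\kappa^-}-A_t^{\kappa^+}\le A_t^{\kappa^-}$, the Feynman--Kac weights satisfy $e^{-A_t^{\kappa}}\le e^{A_t^{\kappa^-}}$, hence $P_t^{\kappa}g\le P_t^{-\kappa^-}g$ for $g\ge0$ and so $\sqrt{\Gamma(P_tf)}\le P_t^{-\kappa^-}\sqrt{\Gamma(f)}$; applying the equivalence in the other direction with $\mu=-\kappa^-$ (whose positive part is $0\in S_D({\bf X})$, and with $2\kappa^-\in S_{E\!K}({\bf X})$) converts this back into ${\sf BE}_{2}(-\kappa^-,+\infty)$.

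Next, the quantitative bound \eqref{eq:BoundedEst}. From ${\sf BE}_{2}(-\kappa^-,+\infty)$ one extracts, in weak form, the Bochner inequality $-\mathscr{E}^{-2\kappa^-}(\phi,\Gamma(v))\ge 2\int_M\phi\,\Gamma(v,\Delta v)\,\d\m$ for regular $v$ and admissible $\phi\ge0$ (this is the defining inequality rewritten via $\int_M\Delta^{-2\kappa^-}\phi\,\Gamma(v)\,\d\m=-\mathscr{E}^{-2\kappa^-}(\phi,\Gamma(v))$, valid when $\phi\in D(\Delta^{-2\kappa^-})$ and $\Gamma(v)\in D(\mathscr{E})$); testing this against $\phi=P_\sigma^{-2\kappa^-}\psi$ and using symmetry of $P_\sigma^{-2\kappa^-}$ shows that $\sigma\mapsto P_\sigma^{-2\kappa^-}\big(\Gamma(P_{\tau-\sigma}u)\big)$ is non-decreasing on $[0,\tau]$, whence the $L^2$-gradient estimate $\Gamma(P_\tau u)\le P_\tau^{-2\kappa^-}\big(\Gamma(u)\big)$ $\m$-a.e.\ for $u\in D(\mathscr{E})$, $\tau\ge0$. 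Now fix $f\in L^2(M;\m)\cap L^\infty(M;\m)$ and $t>0$, put $h_s:=P_{t-s}f$ and $\Lambda(s):=P_s^{-2\kappa^-}(h_s^{2})$ for $s\in[0,t]$. Using $\tfrac{\d}{\d s}h_s=-\Delta h_s$ and ``$\Delta^{-2\kappa^-}(h^2)=2h\,\Delta h+2\,\Gamma(h)+2\kappa^-h^2$'', a Duhamel computation yields, for $s\in(0,t)$,
\[
\Lambda'(s)=P_s^{-2\kappa^-}\!\big(2\,\Gamma(h_s)+2\kappa^-h_s^{2}\big)\ \ge\ 2\,P_s^{-2\kappa^-}\big(\Gamma(h_s)\big)\ \ge\ 2\,\Gamma(P_sh_s)=2\,\Gamma(P_tf),
\]
the first inequality by $\kappa^-\ge0$ and positivity preservation, the second by the $L^2$-gradient estimate with $u=h_s$. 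Integrating over $[0,t]$ (the endpoints $s\to0$, $s\to t$ being handled by strong continuity of $(P_s)$ and $(P_s^{-2\kappa^-})$ and continuity of $s\mapsto\|P_s^{-2\kappa^-}\|_{\infty,\infty}$) gives $2t\,\Gamma(P_tf)\le \Lambda(t)-\Lambda(0)=P_t^{-2\kappa^-}(f^{2})-(P_tf)^{2}\le\|P_t^{-2\kappa^-}\|_{\infty,\infty}\,\|f\|_{L^\infty(M;\m)}^{2}$, which is \eqref{eq:BoundedEst}.

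Finally, $P_tf\in{\rm Test}(M)$ for $f\in L^2(M;\m)\cap L^\infty(M;\m)$: sub-Markovianity gives $P_tf\in L^\infty(M;\m)$ with $\|P_tf\|_{L^\infty(M;\m)}\le\|f\|_{L^\infty(M;\m)}$; writing $P_tf=P_{t/2}(P_{t/2}f)$ and using $P_{t/2}\big(L^2(M;\m)\big)\subset\bigcap_n D(\Delta^{n})$ (the heat semigroup is analytic, being generated by a self-adjoint operator) gives $P_tf\in D(\Delta)$ with $\Delta P_tf\in D(\Delta)\subset D(\mathscr{E})$; and $\Gamma(P_tf)\in L^\infty(M;\m)$ by \eqref{eq:BoundedEst}, because $2\kappa^-\in S_{E\!K}({\bf X})$ forces $\|P_t^{-2\kappa^-}\|_{\infty,\infty}<\infty$ by \eqref{eq:unifKato}. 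Hence $P_tf\in{\rm Test}(M)$.

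The main difficulty lies in Step~2: passing from the integrated inequality ${\sf BE}_{2}(-\kappa^-,+\infty)$ to the monotonicity of the interpolating curves. This requires that $\Gamma(v)\in D(\mathscr{E})$ for the regular functions $v=P_{t-s}f$ appearing above, that the test functions $\phi=P_\sigma^{-2\kappa^-}\psi$ indeed lie in $D(\Delta^{-2\kappa^-})\cap L^\infty(M;\m)$ with $\Delta^{-2\kappa^-}\phi\in L^\infty(M;\m)$ (obtained from analyticity of $(P_\sigma^{-2\kappa^-})_{\sigma\ge0}$), and a Duhamel expansion justifying the formula for $\Lambda'$ and the nonnegativity of its $\kappa^-$-term; all of this is carried out in \cite[Section~6]{ERST}, whose hypotheses are weaker than ours.
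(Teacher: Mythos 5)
The paper gives no proof of this lemma beyond the citation of \cite[Proposition~6.8]{ERST}, and your argument is a correct reconstruction of precisely that proof: the passage from ${\sf BE}_2(\kappa,\infty)$ to ${\sf BE}_2(-\kappa^-,\infty)$ via the ${\sf BE}_2\Leftrightarrow{\sf BE}_1$ equivalence and the Feynman--Kac domination $e^{-A_t^{\kappa}}\leq e^{A_t^{\kappa^-}}$, the Bakry--Ledoux interpolation $\Lambda(s)=P_s^{-2\kappa^-}\bigl((P_{t-s}f)^2\bigr)$ yielding the reverse Poincar\'e bound \eqref{eq:BoundedEst}, and sub-Markovianity plus analyticity of $(P_t)_{t\geq0}$ together with $\|P_t^{-2\kappa^-}\|_{\infty,\infty}<\infty$ (from $2\kappa^-\in S_{E\!K}({\bf X})$ and \eqref{eq:unifKato}) for the membership $P_tf\in{\rm Test}(M)$. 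The signs, the direction of the monotonicity of $\sigma\mapsto P_\sigma^{-2\kappa^-}\bigl(\Gamma(P_{\tau-\sigma}u)\bigr)$, and the endpoint evaluations $\Lambda(t)-\Lambda(0)=P_t^{-2\kappa^-}(f^2)-(P_tf)^2$ all check out, and the only formal steps (the Duhamel computation with the measure-valued potential) are exactly those rigorously justified in the cited source.
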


\begin{lem}[{\cite[Theorems~3.4 and 3.6, Proposition~3.7 and Theorem~6.10]{ERST}}]\label{lem:BakryEmeryEquivalence}\quad\\
Under $\kappa^+\in S_D({\bf X})$ and $2\kappa^-\in S_{E\!K}({\bf X})$. 
the condition ${\sf BE}_2(\kappa,\infty)$ is equivalent 
to ${\sf BE}_1(\kappa,\infty)$. In particular, we have \eqref{eq:gradCont}. 
\end{lem}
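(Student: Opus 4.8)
The plan is to factor the equivalence through the pair of \emph{gradient estimates}
\begin{align*}
{\rm GE}_q:\qquad \Gamma(P_tf)^{q/2}\le P_t^{q\kappa}\big(\Gamma(f)^{q/2}\big)\quad\m\text{-a.e.},\qquad f\in D(\mathscr{E}),\ t\ge0,\ q\in\{1,2\},
\end{align*}
so that ${\rm GE}_1$ is precisely \eqref{eq:gradCont} and ${\rm GE}_2$ its $L^2$-counterpart. First I would recall the standard equivalence ${\sf BE}_q(\kappa,\infty)\Leftrightarrow{\rm GE}_q$, obtained by Bakry's interpolation argument: rewrite ${\sf BE}_q(\kappa,\infty)$, after integration by parts against nonnegative test functions, in the weak ``$\Gamma_2\ge\kappa\,\Gamma$'' form
\begin{align*}
\Delta\,\Gamma(f)^{q/2}-q\,\kappa\,\Gamma(f)^{q/2}\ \ge\ q\,\Gamma(f)^{\frac{q-2}{2}}\Gamma(f,\Delta f)
\end{align*}
(legitimate on the rich class ${\rm Test}(M)$ of Definition~\ref{def:TestFunc}, where $\Gamma(f)\in L^{\infty}(M;\m)$ and $\Delta f\in D(\mathscr{E})$), then differentiate $s\mapsto\int_M P_s^{q\kappa}\phi\cdot\Gamma(P_{t-s}f)^{q/2}\,\d\m$ on $]0,t[$ and check that its derivative is nonnegative; for $q=1$ one first regularizes $\Gamma(\cdot)^{1/2}$ to $(\Gamma(\cdot)+\eps)^{1/2}$ and uses the chain rule $\Delta\varphi(u)=\varphi'(u)\Delta u+\varphi''(u)\Gamma(u)$, $\Gamma(\varphi(u))=\varphi'(u)^2\Gamma(u)$ for the strongly local $\Gamma$. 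Integrating in $s$ (and letting $\eps\downarrow0$ when $q=1$) gives ${\rm GE}_q$ on ${\rm Test}(M)$, which then extends to all of $D(\mathscr{E})$ by the contraction bound \eqref{eq:KatoContraction} and Lemma~\ref{lem:boundedEst}; conversely a first-order expansion of ${\rm GE}_q$ at $t=0^+$ returns ${\sf BE}_q(\kappa,\infty)$. The hypotheses $\kappa^+\in S_D({\bf X})$, $2\kappa^-\in S_{E\!K}({\bf X})$ are used here, via \eqref{eq:unifKato} and \eqref{eq:KatoContraction}, to ensure that the semigroups $(P_t^{q\kappa})_{t\ge0}$ and the forms $\mathscr{E}^{q\kappa}$ are well behaved; this part is \cite[Theorems~3.4 and 3.6]{ERST}. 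It thus remains to prove ${\rm GE}_1\Leftrightarrow{\rm GE}_2$.

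For the easy implication ${\rm GE}_1\Rightarrow{\rm GE}_2$ (equivalently ${\sf BE}_1(\kappa,\infty)\Rightarrow{\sf BE}_2(\kappa,\infty)$) I would square ${\rm GE}_1$ and use the Cauchy--Schwarz inequality for the Feynman--Kac expectation: since $\P_x$ is a probability measure,
\begin{align*}
\big(P_t^{\kappa}h(x)\big)^2=\big(\E_x[\e^{-A_t^{\kappa}}h(X_t)]\big)^2\le\E_x\big[\e^{-2A_t^{\kappa}}h(X_t)^2\big]=P_t^{2\kappa}(h^2)(x)\qquad(h\ge0),
\end{align*}
whence $\Gamma(P_tf)\le\big(P_t^{\kappa}\sqrt{\Gamma(f)}\big)^2\le P_t^{2\kappa}(\Gamma(f))$.

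The converse ${\rm GE}_2\Rightarrow{\rm GE}_1$ is the substantive direction, and the hard part will be the \emph{self-improvement} of the Bochner inequality in the spirit of Bakry and Savaré: from ${\sf BE}_2(\kappa,\infty)$ one has to extract the Kato-type inequality
\begin{align*}
\Gamma_2(u)-\kappa\,\Gamma(u)\ \ge\ \Gamma\big(\sqrt{\Gamma(u)}\big)\qquad\text{(weakly), }\ u\in{\rm Test}(M),
\end{align*}
the abstract analogue of $|{\rm Hess}\,u|^2\ge\big|\nabla|\nabla u|\big|^2$. I would establish it by testing ${\sf BE}_2(\kappa,\infty)$ against suitable nonlinear expressions built from $\Gamma(u,\cdot)$ and functions of $\Gamma(u)$, using the chain and Leibniz rules for the strongly local $\Gamma$ and optimizing, following the scheme of \cite[Proposition~3.7]{ERST}; the Dynkin/extended-Kato conditions on $\kappa^{\pm}$ and the calculus of ${\rm Test}(M)$ (Definition~\ref{def:TestFunc}, Lemma~\ref{lem:boundedEst}) are precisely what makes this argument available in the present distributional setting. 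Granting the Kato inequality, I would feed it back into the $q=1$ interpolation: with $g:=\Gamma(P_{t-s}f)$ and $\eps>0$, the chain rule gives
\begin{align*}
\frac{\d}{\d s}\,P_s^{\kappa}\sqrt{g+\eps}=P_s^{\kappa}\!\left[\frac{\Delta g-2\Gamma(P_{t-s}f,\Delta P_{t-s}f)+2\kappa g}{2\sqrt{g+\eps}}-\frac{\Gamma(g)}{4(g+\eps)^{3/2}}-\frac{\kappa\,\eps}{\sqrt{g+\eps}}\right],
\end{align*}
and the Kato inequality bounds the first numerator below by $2\,\Gamma(\sqrt g)=\Gamma(g)/(2g)$, so that the first two bracket terms combine to $\tfrac{\Gamma(g)}{4\sqrt{g+\eps}}\big(\tfrac1g-\tfrac1{g+\eps}\big)\ge0$ and only the harmless remainder $-\kappa\,\eps/\sqrt{g+\eps}$ survives; integrating over $s\in[0,t]$ and letting $\eps\downarrow0$ (the remainder vanishing thanks to the Stollmann--Voigt inequality \eqref{eq:StollmannVoigt}) yields $\sqrt{\Gamma(P_tf)}\le P_t^{\kappa}\sqrt{\Gamma(f)}$, i.e.\ ${\rm GE}_1$, equivalently ${\sf BE}_1(\kappa,\infty)$.

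Combining the two implications gives the asserted equivalence ${\sf BE}_2(\kappa,\infty)\Leftrightarrow{\sf BE}_1(\kappa,\infty)$. Finally, under Assumption~\ref{asmp:Tamed} the stronger ${\sf BE}_2(\kappa,N)$ holds, hence so does ${\sf BE}_2(\kappa,\infty)$ (the nonnegative $N^{-1}$-term is simply dropped), hence ${\sf BE}_1(\kappa,\infty)$, which is \eqref{eq:gradCont}. The step I expect to be the genuine obstacle is the self-improvement to the Kato inequality: in this abstract Dirichlet-form framework with a signed distributional potential $\kappa$ it is the technically heaviest ingredient, and it is exactly what the cited results \cite[Theorems~3.4 and 3.6, Proposition~3.7, Theorem~6.10]{ERST} provide.
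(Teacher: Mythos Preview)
The paper does not supply its own proof of this lemma: it is stated purely as a citation of \cite[Theorems~3.4 and 3.6, Proposition~3.7 and Theorem~6.10]{ERST}, with no accompanying argument. Your sketch is therefore not competing with a proof in the paper but rather reconstructing the cited one, and it does so faithfully: the factoring through the gradient estimates ${\rm GE}_q$, the Cauchy--Schwarz (Jensen) step $(P_t^{\kappa}h)^2\le P_t^{2\kappa}(h^2)$ for ${\rm GE}_1\Rightarrow{\rm GE}_2$, and the Savar\'e-type self-improvement to the pointwise Kato inequality $\Gamma_2(u)-\kappa\,\Gamma(u)\ge\Gamma(\sqrt{\Gamma(u)})$ followed by the $\eps$-regularized interpolation for ${\rm GE}_2\Rightarrow{\rm GE}_1$ is exactly the architecture of \cite{ERST}.

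One small slip worth flagging: in your displayed expression for $\frac{\d}{\d s}P_s^{\kappa}\sqrt{g+\eps}$ the sign in front of $2\kappa g$ should be negative, since $\Delta^{\kappa}=\Delta-\kappa$ gives $-\kappa\sqrt{g+\eps}=(-\kappa g-\kappa\eps)/\sqrt{g+\eps}$. With $-2\kappa g$ in the numerator the self-improved Bochner inequality $\Delta g-2\Gamma(u,\Delta u)-2\kappa g\ge \Gamma(g)/(2g)$ applies exactly as you then describe; with $+2\kappa g$ it would not. This is only a typo, and the remainder of your argument (the nonnegativity of $\tfrac{\Gamma(g)}{4\sqrt{g+\eps}}(\tfrac1g-\tfrac1{g+\eps})$ and the vanishing of the $\kappa\eps/\sqrt{g+\eps}$ term as $\eps\downarrow0$) is correct.
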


\begin{lem}[{\cite[Lemma~3.2]{Sav14}}]\label{lem:algebra}
Under Assumption~\ref{asmp:Tamed}, for every $f\in {\rm Test}(M)$, we have 
$\Gamma(f)\in D(\mathscr{E})\cap L^{\infty}(M;\m)$ and there exists $\mu=\mu^+-\mu^-$ with 
$\mu^{\pm}\in D(\mathscr{E})^*$ such that 
\begin{align}
-\mathscr{E}^{2\kappa}(u,\varphi)=\int_M \tilde{\varphi}\,\d \mu\quad\text{ for all }\quad \varphi\in D(\mathscr{E}).
\end{align}
Moreover, ${\rm Test}(M)$ is an algebra, i.e., for $f,g\in {\rm Test}(M)$, $fg\in {\rm Test}(M)$, 
if further {\boldmath$f$}$\in {\rm Test}(M)^n$, then $\Phi(${\boldmath$f$}$)\in {\rm Test}(M)$ for every smooth 
function $\Phi:\R^n\to\R$ with $\Phi(0)=0$.
\end{lem}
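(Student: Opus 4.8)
The plan is to follow the self-improvement scheme of \cite[Lemma~3.2]{Sav14}, systematically replacing the Laplacian by the Schr\"odinger operator $\Delta^{2\kappa}$: the point is that ${\sf BE}_2(\kappa,N)$ from Assumption~\ref{asmp:Tamed} (with $q=2$) is exactly the integrated Bochner inequality needed, while the Dynkin/extended-Kato smoothness of $\kappa$ keeps the functional-analytic tools available --- closedness and lower semi-boundedness of $\mathscr{E}^{2\kappa}$ with the comparability $C^{-1}\mathscr{E}_1\le\mathscr{E}^{2\kappa}_{\alpha_0}\le C\mathscr{E}_1$, the bound $\|P_t^{-2\kappa^-}\|_{\infty,\infty}\le Ce^{C_{2\kappa^-}t}$ of \eqref{eq:unifKato} together with Lemma~\ref{lem:boundedEst}, and the correspondence between positive functionals on $D(\mathscr{E})$ and positive smooth measures. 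Fix $f\in{\rm Test}(M)$. Then $\Gamma(f)\in L^\infty(M;\m)$ by definition, and $|\Gamma(f,\Delta f)|\le\|\sqrt{\Gamma(f)}\|_\infty\sqrt{\Gamma(\Delta f)}$ with $\Gamma(\Delta f)\in L^1(M;\m)$ gives $\Gamma(f,\Delta f)\in L^1(M;\m)\cap L^2(M;\m)$; hence
\begin{align*}
L_f(\phi):=\frac12\int_M\Delta^{2\kappa}\phi\;\Gamma(f)\,\d\m-\int_M\phi\;\Gamma(f,\Delta f)\,\d\m
\end{align*}
is well defined on the convex cone $\mathcal A$ of nonnegative $\phi\in D(\Delta^{2\kappa})\cap L^\infty(M;\m)$ with $\Delta^{2\kappa}\phi\in L^\infty(M;\m)$, and ${\sf BE}_2(\kappa,N)$, after discarding its nonnegative right-hand side, gives $L_f(\phi)\ge0$ for all $\phi\in\mathcal A$.

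The first real step is to show $\Gamma(f)\in D(\mathscr{E})$. For $n$ large put $\phi_n:=nR_n^{2\kappa}\Gamma(f)$. Since $\Gamma(f)\in L^2(M;\m)\cap L^\infty(M;\m)$ and $R_n^{2\kappa}$ is positivity preserving and bounded on $L^\infty(M;\m)$ (because $P_t^{2\kappa}\le P_t^{-2\kappa^-}$ on nonnegative functions and \eqref{eq:unifKato}), $\phi_n$ is a nonnegative element of $D(\Delta^{2\kappa})\cap L^\infty(M;\m)$ with $\Delta^{2\kappa}\phi_n=n(\phi_n-\Gamma(f))\in L^\infty(M;\m)$, hence $\phi_n\in\mathcal A$. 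Plugging $\phi_n$ into $L_f(\phi_n)\ge0$ yields
\begin{align*}
\frac{n}{2}\int_M(\phi_n-\Gamma(f))\,\Gamma(f)\,\d\m\ge\int_M\phi_n\,\Gamma(f,\Delta f)\,\d\m,
\end{align*}
and letting $n\to\infty$, the spectral theorem for the self-adjoint operator $\Delta^{2\kappa}$ shows that the left-hand side converges to $-\tfrac12\mathscr{E}^{2\kappa}(\Gamma(f),\Gamma(f))$, a priori a value in $(-\infty,+\infty]$ (monotone/dominated convergence of the spectral integrand), while $\phi_n\to\Gamma(f)$ in $L^2(M;\m)$ gives that the right-hand side converges to $\int_M\Gamma(f)\,\Gamma(f,\Delta f)\,\d\m$, a finite number. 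Therefore $\mathscr{E}^{2\kappa}(\Gamma(f),\Gamma(f))\le-2\int_M\Gamma(f)\,\Gamma(f,\Delta f)\,\d\m<\infty$, and the comparability $\mathscr{E}^{2\kappa}_{\alpha_0}\asymp\mathscr{E}_1$ forces $\Gamma(f)\in D(\mathscr{E})$.

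Now that $\Gamma(f)\in D(\mathscr{E})$, the functional $L_f(\varphi)=-\tfrac12\mathscr{E}^{2\kappa}(\varphi,\Gamma(f))-\int_M\varphi\,\Gamma(f,\Delta f)\,\d\m$ is $\mathscr{E}_1$-continuous and extends to all of $D(\mathscr{E})$; it is nonnegative on the cone of nonnegative elements of $D(\mathscr{E})$ by ${\sf BE}_2(\kappa,N)$ and $\mathscr{E}_1$-density, so it is represented by a nonnegative smooth measure $\bm{\Gamma}_2[f]\in D(\mathscr{E})^*$ with $L_f(\varphi)=\int_M\tilde\varphi\,\d\bm{\Gamma}_2[f]$. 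Setting $\mu:=2\bm{\Gamma}_2[f]+2\Gamma(f,\Delta f)\,\m$, i.e. $\mu=\mu^+-\mu^-$ with $\mu^+:=2\bm{\Gamma}_2[f]+2\Gamma(f,\Delta f)^+\m$ and $\mu^-:=2\Gamma(f,\Delta f)^-\m$, both in $D(\mathscr{E})^*$, one gets $-\mathscr{E}^{2\kappa}(\Gamma(f),\varphi)=\int_M\tilde\varphi\,\d\mu$ for all $\varphi\in D(\mathscr{E})$, which is the asserted identity. For the algebra statement, combine the Leibniz and chain rules $\Delta(fg)=f\Delta g+g\Delta f+2\Gamma(f,g)$, $\Gamma(fg)=f^2\Gamma(g)+g^2\Gamma(f)+2fg\,\Gamma(f,g)$, $\Delta(\Phi(\bm{f}))=\sum_i\partial_i\Phi(\bm{f})\Delta f_i+\sum_{i,j}\partial_{ij}\Phi(\bm{f})\Gamma(f_i,f_j)$ with the closure facts: boundedness of $\Gamma(f),\Gamma(g)$ and Cauchy--Schwarz give $fg,\Gamma(fg),\Phi(\bm{f}),\Gamma(\Phi(\bm{f}))\in L^\infty(M;\m)$, with $\Phi(0)=0$ ensuring $\Phi(\bm{f})\in L^2(M;\m)$ also when $\m(M)=\infty$; the right-hand sides of these formulas lie in $L^2(M;\m)$, so $fg,\Phi(\bm{f})\in D(\Delta)$; and each summand there is a product of a function in $\dot{D}(\mathscr{E})_{\loc}\cap L^\infty(M;\m)$ with bounded carr\'e du champ and a function in $D(\mathscr{E})$ --- in particular $\Gamma(f,g)\in D(\mathscr{E})\cap L^\infty(M;\m)$ by polarizing the first part of the lemma --- hence lies in $D(\mathscr{E})$, giving $\Delta(fg),\Delta(\Phi(\bm{f}))\in D(\mathscr{E})$.

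The main obstacle is the passage to the limit $n\to\infty$ in the second step, which is where the whole Bochner self-improvement is concentrated: it requires (i) that the regularizations $\phi_n=nR_n^{2\kappa}\Gamma(f)$ be genuinely admissible test functions for ${\sf BE}_2(\kappa,N)$, which rests on the extended-Kato perturbation theory (positivity and $L^\infty$-boundedness of the Schr\"odinger resolvent, via \eqref{eq:unifKato} and Lemma~\ref{lem:boundedEst}); (ii) the closedness and comparability of $\mathscr{E}^{2\kappa}$ to turn the resulting finite-energy bound into membership in $D(\mathscr{E})$; and (iii) that $\kappa$, hence $\bm{\Gamma}_2[f]$, charges no $\mathscr{E}$-exceptional set, so that all the quasi-continuous-version pairings $\int\tilde\varphi\,\d\kappa$ and $\int\tilde\varphi\,\d\bm{\Gamma}_2[f]$ are legitimate and define elements of $D(\mathscr{E})^*$.
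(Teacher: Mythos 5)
Your proposal is correct, and the paper itself offers no proof of this lemma --- it simply imports the result from \cite[Lemma~3.2]{Sav14} (in the form adapted to tamed spaces in \cite{Braun:Tamed2021}), which is exactly the self-improvement scheme you reproduce: test ${\sf BE}_2(\kappa,N)$ against $\phi_n=nR_n^{2\kappa}\Gamma(f)$, pass to the limit via the spectral approximation of $\mathscr{E}^{2\kappa}$ to get $\Gamma(f)\in D(\mathscr{E})$, represent the resulting positive $\mathscr{E}_1$-continuous functional by a smooth measure, and close the algebra via the Leibniz and chain rules. Your handling of the admissibility of $\phi_n$ (positivity and $L^\infty$-boundedness of $R_n^{2\kappa}$ under the extended-Kato hypothesis) and of the decomposition $\mu^{\pm}$ is sound.
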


\begin{lem}\label{lem:DensenessTestFunc}
${\rm Test}(M)\cap L^p(M;\m)$ is dense in $L^p(M;\m)\cap L^2(M;\m)$ both in $L^p$-norm 
and in $L^2$-norm. In particular, ${\rm Test}(M)$ is dense in $(\mathscr{E},D(\mathscr{E}))$. 
\end{lem}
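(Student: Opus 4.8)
The plan is to build the approximation explicitly from the heat semigroup, exploiting Lemma~\ref{lem:boundedEst}, which guarantees that $P_tf\in{\rm Test}(M)$ whenever $f\in L^2(M;\m)\cap L^\infty(M;\m)$. First I would fix $g\in L^p(M;\m)\cap L^2(M;\m)$ and reduce to bounded functions: the truncations $g_k:=(-k)\vee g\wedge k$ lie in $L^p\cap L^2\cap L^\infty$ and converge to $g$ both in $L^p$ and in $L^2$ by dominated convergence (using $|g_k|\le|g|$ and the two integrabilities separately). So it suffices to approximate a fixed $g\in L^p(M;\m)\cap L^2(M;\m)\cap L^\infty(M;\m)$. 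For such $g$, set $g_t:=P_tg$. By Lemma~\ref{lem:boundedEst}, $g_t\in{\rm Test}(M)$ for every $t>0$; moreover $g_t\in L^\infty(M;\m)$ with $\|g_t\|_{L^\infty}\le\|g\|_{L^\infty}$, and since $(P_t)_{t\ge0}$ is a contraction on $L^p(M;\m)$ (and on $L^2(M;\m)$), $g_t\in L^p(M;\m)\cap L^2(M;\m)$, so indeed $g_t\in{\rm Test}(M)\cap L^p(M;\m)$.

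It then remains to show $g_t\to g$ as $t\downarrow0$ both in $L^p(M;\m)$ and in $L^2(M;\m)$. The $L^2$-convergence is immediate from the strong continuity of $(P_t)_{t\ge0}$ on $L^2(M;\m)$ recalled in Section~\ref{subsec:Frame}. For the $L^p$-convergence, if $p\in[1,+\infty[$ one invokes the stated strong continuity of $(P_t)_{t\ge0}$ on $L^p(M;\m)$, namely $\lim_{t\to0}\|P_tg-g\|_{L^p(M;\m)}=0$, directly. (If one also wanted $p=+\infty$ there is no $L^\infty$-strong continuity, but the statement as phrased only concerns $L^p\cap L^2$ with the $L^p$- and $L^2$-norms, and the relevant range is $p<\infty$; weak$^*$ continuity on $L^\infty$ would be the substitute there, though it is not needed.) This establishes that ${\rm Test}(M)\cap L^p(M;\m)$ is dense in $L^p(M;\m)\cap L^2(M;\m)$ in both norms.

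For the final clause, density of ${\rm Test}(M)$ in $(\mathscr{E},D(\mathscr{E}))$: here I would again use $g_t=P_tg$ but now for $g\in D(\mathscr{E})$, and upgrade the convergence to the $\mathscr{E}_1$-norm. One has $g_t=P_tg\to g$ in $L^2(M;\m)$, and $\mathscr{E}(P_tg,P_tg)=\mathscr{E}(g,P_{2t}g)\le\mathscr{E}(g,g)$ with $\mathscr{E}(P_tg,P_tg)\to\mathscr{E}(g,g)$ as $t\downarrow0$ by spectral calculus, so $g_t\to g$ strongly in $(D(\mathscr{E}),\mathscr{E}_1)$; since each $g_t\in{\rm Test}(M)$ by Lemma~\ref{lem:boundedEst} (note $g\in D(\mathscr{E})\subset L^2(M;\m)$, and one first truncates as above so that $g\in L^2\cap L^\infty$, using that truncation is continuous on $D(\mathscr{E})$ for Dirichlet forms, then applies $P_t$), we get the claim. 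The only mild subtlety — the point I would be most careful about — is the interplay of the truncation and the heat flow in this last step: one should truncate \emph{before} applying $P_t$ so that Lemma~\ref{lem:boundedEst} applies verbatim, and then check that the truncated-then-smoothed functions still converge to $g$ in $\mathscr{E}_1$; this follows because truncation $g\mapsto(-k)\vee g\wedge k$ is a continuous (even contractive) operation on $(D(\mathscr{E}),\mathscr{E}_1)$ for Dirichlet forms and $P_t$ is $\mathscr{E}_1$-contractive, so a diagonal choice of $k=k(t)\to\infty$ as $t\downarrow0$ does the job.
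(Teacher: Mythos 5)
Your proposal is correct and follows essentially the same route as the paper's proof: truncate to reduce to bounded functions, apply $P_t$ and invoke Lemma~\ref{lem:boundedEst} to land in ${\rm Test}(M)$, use the strong continuity of $(P_t)_{t\geq0}$ on $L^p$ and $L^2$, and approximate $f\in D(\mathscr{E})$ by $P_tf^k$ for the last clause. Your additional care about truncating before smoothing and the $\mathscr{E}_1$-convergence of $P_tg\to g$ simply fills in details the paper leaves implicit.
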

\begin{proof}[{\bf Proof}]
Take $f\in L^p(M;\m)\cap L^2(M;\m)$. We may assume $f\in L^{\infty}(M;\m)$, because 
$f$ is $L^p$(and also $L^2$)-approximated by a sequence $\{f^k\}$ of 
$L^p(M;\m)\cap L^2(M;\m)\cap L^{\infty}(M;\m)$-functions defined 
by $f^k:=(-k)\lor f\land k$. If $f\in L^p(M;\m)\cap L^2(M;\m)\cap L^{\infty}(M;\m)$, 
$P_tf\in {\rm Test}(M)$ by Lemma~\ref{lem:boundedEst} and 
$\{P_tf\}\subset {\rm Test}(M)\cap L^p(M;\m)$ converges to 
$f$ in $L^p$ and in $L^2$ as $t\to0$. If $f\in D(\mathscr{E}))$, then $f$ can be approximated by 
$\{P_tf^k\}$ in $(\mathscr{E},D(\mathscr{E}))$. This shows the last statement.
\end{proof}
\begin{remark}\label{rem:TestFunction}
{\rm 
As proved above, ${\rm Test}(M)$ forms an algebra and dense in $(\mathscr{E},D(\mathscr{E}))$ under 
Assumption~\ref{asmp:Tamed}. However, ${\rm Test}(M)$ is not necessarily a subspace of $C_b(M)$. 
When the tamed space comes from ${\rm RCD}$-space, the Sobolev-to-Lipschitz property of RCD-spaces ensures 
${\rm Test}(M)\subset C_b(M)$. 
}
\end{remark}

\section{Vector space calculus for tamed space}
\subsection{$L^{\infty}$-module}\label{subsec:normedModule}

We need the notion of $L^p$-normed $L^{\infty}(M;\m)$-modules. 
\begin{defn}[$L^{\infty}$-module]\label{df:NormedModule}
{\rm 
Given $p\in[1,+\infty]$, a real Banach space $(\mathscr{M},\|\cdot\|_{\mathscr{M}})$, or simply, $\mathscr{M}$ is called an \emph{$L^p$-normed $L^{\infty}$-module} (over $(M,\m)$) if it satisfies 
\begin{enumerate}
\item[(a)] a bilinear map $\cdot$ : $L^{\infty}(M;\m)\times\mathscr{M}\to\mathscr{M}$ satisfying 
\begin{align*}
(fg)\cdot v&=f\cdot(gv),\\
\1_M \cdot v&=v,
\end{align*} 
\item[(b)] a nonnegatively valued map $|\cdot|_{\m}:\mathscr{M}\to L^p(M;\m)$ such that 
\begin{align*}
|f\cdot v|_{\m}&=|f||v|_{\m}\quad \m\text{-a.e.},\\
\|v\|_{\mathscr{M}}&=\||v|_{\m}\|_{L^p(M;\m)},
\end{align*}
for every $f,g\in L^{\infty}(M;\m)$ and $v\in\mathscr{M}$. If only (a) is satisfied, we call 
$(\mathscr{M},\|\cdot\|_{\mathscr{M}})$ or simply $\mathscr{M}$ an $L^{\infty}(M;\m)$-module. 
\end{enumerate} 
Throughout this paper, we always assume that for every $v\in \mathscr{M}$ its point-wise norm $|v|_{\m}$ is Borel measurable.  
We call $\mathscr{M}$ an \emph{$L^{\infty}$-module} if it is  $L^p$-normed 
for some $p\in[1,+\infty]$. $\mathscr{M}$ is called \emph{separable} 
 if it is a separable Banach space. We call $v\in\mathscr{M}$ is 
($\m$-essentially) bounded if $|v|_{\m}\in L^{\infty}(M;\m)$. $\mathscr{M}$ is called \emph{Hilbert module} if is an $L^2$-normed $L^{\infty}$-module, in this case, the point-wise norm $|\cdot|_{\m}$ satisfies a point-wise $\m$-a.e.~parallelogram identity, hence it induces a \emph{point-wise scalar product} $\langle \cdot,\cdot\rangle_{\m}: \mathscr{M}\times \mathscr{M}\to L^1(M;\m)$ which is $L^{\infty}$-bilinear, $\m$-a.e.~nonnegative definite, local in both components, satisfies the point-wise $\m$-a.e.~Cauchy-Schwarz inequality, and reproduces the Hilbertian scalar product on $\mathscr{M}$ by integration with respect to $\m$.
}
\end{defn}
\begin{defn}[Dual module]\label{df:DualModule}
{\rm Let $\mathscr{M}$ and $\mathscr{N}$ be $L^p$-normed $L^{\infty}$-module. Denote both point-wise norms by $|\cdot|_{\m}$. A map $T:\mathscr{M}\to\mathscr{N}$ is called \emph{module morphism} 
if it is a bounded linear map in the sense of functional analysis and 
\begin{align}
T(f\,v)=f\,T(v)\label{eq:morphism}
\end{align}
for every $v\in\mathscr{M}$ and every $f\in L^{\infty}(M;\m)$. The set of all module morphisms 
is written ${\rm Hom}(\mathscr{M},\mathscr{N})$ and equipped with the usual operator norm $\|\cdot\|_{\mathscr{M},\mathscr{N}}$. We call $\mathscr{M}$ and $\mathscr{N}$ \emph{isomorphic} (as $L^{\infty}$-module) if there exists  $T\in{\rm Hom}(\mathscr{M},\mathscr{N})$ and $S\in {\rm Hom}(\mathscr{N},\mathscr{M})$ such that $T\circ S={\rm Id}_{\mathscr{N}}$ and $S\circ T={\rm Id}_{\mathscr{M}}$. 
Any such $T$ is called \emph{module isomorphism}. If  in addition, such a $T$ is a norm isometry, it is called \emph{module isometric isomorphism}. In fact, by \eqref{eq:morphism} every module isometric isomorphism $T$ preserves pointwise norms $\m$-a.e., i.e. for every $v\in\mathscr{M}$, 
\begin{align*}
|T(v)|_{\m}=|v|_{\m}\quad \m\text{-a.e.}
\end{align*}
The dual module $\mathscr{M}^*$ is defined by 
\begin{align*}
\mathscr{M}^*:={\rm Hom}(\mathscr{M}, L^1(M;\m))
\end{align*}
and will be endowed with the usual operator norm. The point-wise paring between 
$v\in\mathscr{M}$ and $L\in\mathscr{M}^*$ is denoted by $L(v)\in L^1(M;\m)$. If $\mathscr{M}$ is $L^p$-normed, then $\mathscr{M}^*$ is an $L^q$-normed $L^{\infty}$-module, where $p,q\in[1,+\infty]$ with $1/p+1/q=1$ (see \cite[Proposition~1.2.14]{Gigli:NonSmoothDifferentialStr}) with naturally defined multiplications and, and by a slight abuse of notation, point-wise norm is given by 
\begin{align}
|L|_{\m}:=\m\text{\rm-esssup}\{|L(v)|\mid v\in\mathscr{M}, |v|_{\m}\leq1\; \m\text{-a.e.}\}.\label{eq:DualPointNorm}
\end{align}
By \cite[Corollary~1.2.16]{Gigli:NonSmoothDifferentialStr}, if $p<\infty$,
\begin{align*}
|v|_{\m}=\m\text{\rm-esssup}\{|L(v)|\mid L\in\mathscr{M}, |L|_{\m}\leq1\; \m\text{-a.e.}\}
\end{align*}
for every $v\in\mathscr{M}$. Moreover, under $p<\infty$, in the sense of functional analysis 
$\mathscr{M}^*$ and the dual Banach space $\mathscr{M}'$ of $\mathscr{M}$ are isometrically isomorphic 
\cite[Proposition~1.2.13]{Gigli:NonSmoothDifferentialStr}. In this case, the natural point-wise paring map $\mathscr{I}:\mathscr{M}\to\mathscr{M}^{**}$, where 
$\mathscr{M}^{**}:={\rm Hom}(\mathscr{M}^*,L^1(M;\m))$, belongs to ${\rm Hom}(\mathscr{M},\mathscr{M}^{**})$ and constitutes a norm isometry is $L^p$-normed for $p\in]1,+\infty[$, this is equivalent to $\mathscr{M}$ being reflexive as Banach space \cite[Corollary~1.2.18]{Gigli:NonSmoothDifferentialStr}, 
while for $p=1$, the implication from \lq\lq reflexive as Banach space\rq\rq\, to \lq\lq reflexive as $L^{\infty}$-module\rq\rq\, still holds \cite[Propositions~1.2.13 and 1.2.17]{Gigli:NonSmoothDifferentialStr}. In particular all Hilbert modules are reflexive in both senses. 
}
\end{defn}

\begin{defn}[$L^0$-module]\label{df:L0Module}
{\rm Fix an $L^{\infty}$-module $\mathscr{M}$. Let $(B_i)_{i\in\mathbb{N}}$ a Borel partition of $M$ such that $\m(B_i)\in]0,+\infty[$ for each $i\in\mathbb{N}$. Denote by $\mathscr{M}^0$ the completion of  
$\mathscr{M}$ with respect to $\mathscr{M}$ the distance ${\sf d}_{\mathscr{M}^0}:\mathscr{M}\times\mathscr{M}\to[0,+\infty[$ defined by 
\begin{align*}
{\sf d}_{\mathscr{M}^0}(v,w):=\sum_{i\in\mathbb{N}}\frac{1}{2^i\m(B_i)}\int_{B_i}(|v-w|_{\m}\land 1)\d\m.
\end{align*}
We call $\mathscr{M}^0$ as the \emph{$L^0$-module} associated with $\mathscr{M}$. The induced topology on $\mathscr{M}^0$ does not depend on  the choice of $(B_i)_{i\in\mathbb{N}}$ (see \cite[p.~31]{Gigli:NonSmoothDifferentialStr}). In addition, scalar and functional multiplication, and the point-wise norm $|\cdot|_{\m}$ extend continuously to $\mathscr{M}^0$, so that all $\m$-a.e.~properties mentioned for $L^{\infty}$-module hold for general elements in $\mathscr{M}^0$ and $L^0(M;\m)$ in place of $\mathscr{M}$ and $L^{\infty}(M;\m)$. The point-wise paring of $\mathscr{M}$ and $\mathscr{M}^*$extends uniquely and continuously to a bilinear map on $\mathscr{M}\times (\mathscr{M}^*)^0$ with values in $L^0(M;\m)$ such that for every $v\in\mathscr{M}^0$ and every $L\in(\mathscr{M}^*)^0$, 
\begin{align*}
|L(v)|\leq |L|_{\m}|v|_{\m}\quad\m\text{-a.e.}
\end{align*} 
}
\end{defn}
We have the following characterization of elements in $(\mathscr{M}^*)^0$:
\begin{prop}[{\cite[Proposition~1.3.2]{Gigli:NonSmoothDifferentialStr}}]\label{prop:L0characterization}
Let $T:\mathscr{M}^0\to L^0(M;\m)$ be a linear map for which there exists $f\in L^0(M;\m)$ such that 
for every $v\in\mathscr{M}$, 
\begin{align*}
|T(v)|\leq f|v|_{\m}\quad\m\text{-a.e.}
\end{align*}
Then there exists a unique $L\in(\mathscr{M}^*)^0$ such that for every $v\in\mathscr{M}$, 
\begin{align*}
L(v)=T(v)\quad\m\text{-a.e.,}
\end{align*}
and we furthermore have
\begin{align*}
|L|_{\m}\leq f\quad\m\text{-a.e.}
\end{align*}
\end{prop}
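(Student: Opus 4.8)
**Proof proposal for Proposition 4.5 (characterization of elements in $(\mathscr{M}^*)^0$).**

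The plan is to extend the given $L^0$-valued operator $T$ to an $L^\infty$-module morphism after multiplying by a suitable bounded weight, and then read off the element of $(\mathscr{M}^*)^0$ from the module-duality already established in Definition~\ref{df:DualModule}. First I would fix a Borel function $g\in L^\infty(M;\m)$ with $0<g\leq1$ $\m$-a.e.\ and $g\,f\in L^\infty(M;\m)$ (for instance $g:=(1+|f|)^{-1}$, or a variant adapted to the partition $(B_i)$ used to build $\mathscr{M}^0$), so that the rescaled map $v\mapsto g\,T(v)$ satisfies $|g\,T(v)|\leq (g f)\,|v|_{\m}$ $\m$-a.e.\ with $gf\in L^\infty$. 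Restricting this to $v\in\mathscr{M}$, one checks that $v\mapsto g\,T(v)$ is $L^\infty(M;\m)$-linear: additivity is clear, and homogeneity $g\,T(hv)=h\,g\,T(v)$ for $h\in L^\infty$ follows by approximating $h$ by simple functions and using that $T$ is linear together with the pointwise bound, which forces $T$ to be local (if $v=0$ on a Borel set $B$ then $|T(v)|\leq f|v|_{\m}=0$ on $B$). Moreover $g\,T$ is bounded from $\mathscr{M}$ into $L^1(M;\m)$ whenever $gf\in L^\infty$ by the estimate $\|g\,T(v)\|_{L^1}\leq \|gf\|_{L^\infty}\|\,|v|_{\m}\|_{L^1}$; if $\mathscr{M}$ is $L^p$-normed with $p>1$ one instead gets boundedness into $L^1$ after possibly shrinking $g$ further so that $gf\in L^\infty$ still holds and Hölder applies. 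Hence $g\,T\in{\rm Hom}(\mathscr{M},L^1(M;\m))=\mathscr{M}^*$, giving an element $L_0\in\mathscr{M}^*$ with $L_0(v)=g\,T(v)$ $\m$-a.e.\ for all $v\in\mathscr{M}$, and by \eqref{eq:DualPointNorm} one has $|L_0|_{\m}\leq gf$ $\m$-a.e.

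Next I would undo the rescaling inside the $L^0$-module. Since $g>0$ $\m$-a.e., the function $g^{-1}$ lies in $L^0(M;\m)$, and $L^0(M;\m)$ acts on $(\mathscr{M}^*)^0$ by the continuous extension of scalar multiplication described in Definition~\ref{df:L0Module}. Set $L:=g^{-1}\cdot L_0\in(\mathscr{M}^*)^0$. Then for every $v\in\mathscr{M}$,
\begin{align*}
L(v)=g^{-1}\,L_0(v)=g^{-1}\,g\,T(v)=T(v)\quad\m\text{-a.e.},
\end{align*}
which is the required identity, and
\begin{align*}
|L|_{\m}=g^{-1}|L_0|_{\m}\leq g^{-1}\,g\,f=f\quad\m\text{-a.e.},
\end{align*}
using that the pointwise norm is $L^0$-homogeneous on $(\mathscr{M}^*)^0$. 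For uniqueness, suppose $L'\in(\mathscr{M}^*)^0$ also satisfies $L'(v)=T(v)$ $\m$-a.e.\ for all $v\in\mathscr{M}$. Then $(L-L')(v)=0$ $\m$-a.e.\ for all $v$ in the $L^\infty$-module $\mathscr{M}$; multiplying $L-L'$ by $g$ lands it in $\mathscr{M}^*$, and an element of $\mathscr{M}^*$ annihilating all of $\mathscr{M}$ has $|g(L-L')|_{\m}=0$ $\m$-a.e.\ by \eqref{eq:DualPointNorm}, hence $|L-L'|_{\m}=0$ $\m$-a.e.\ since $g>0$, i.e.\ $L=L'$ in $(\mathscr{M}^*)^0$.

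The main obstacle is the verification that $g\,T$ is genuinely an $L^\infty$-module morphism into $L^1(M;\m)$ — in particular the $L^\infty$-homogeneity $g\,T(hv)=h\,g\,T(v)$, which is not part of the hypothesis and must be extracted from the mere pointwise domination $|T(v)|\leq f|v|_{\m}$ via the locality argument sketched above (first for $h$ an indicator, then simple functions, then a bounded-convergence passage). A secondary technical point is choosing the weight $g$ so that simultaneously $g>0$ $\m$-a.e., $gf\in L^\infty(M;\m)$, and the resulting operator is bounded $\mathscr{M}\to L^1(M;\m)$ in the appropriate sense for the given normalization $p$; this is routine but needs the $\sigma$-finiteness of $\m$ and, in the $p>1$ case, a Hölder estimate on each piece of a fixed partition. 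Everything else is a direct consequence of the $L^0$-module machinery recorded in Definitions~\ref{df:NormedModule}--\ref{df:L0Module}.
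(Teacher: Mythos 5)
The paper does not prove this proposition at all: it is imported verbatim from \cite[Proposition~1.3.2]{Gigli:NonSmoothDifferentialStr}, so there is no in-paper argument to compare against. Your proposal is correct and is essentially the standard proof from that reference: the decisive point — that the mere pointwise bound $|T(v)|\leq f|v|_{\m}$ forces locality and hence upgrades $\R$-linearity to $L^{\infty}(M;\m)$-linearity (first for indicators via $v=\1_Av+\1_{A^c}v$, then simple functions, then uniform approximation) — is exactly the step you identified and handled, and the reduction to the $L^1$-dual case by a strictly positive weight $g$ with $gf$ suitably integrable, followed by multiplication with $g^{-1}\in L^0(M;\m)$ inside $(\mathscr{M}^*)^0$, is the intended mechanism; the only detail worth writing out fully is the choice of $g$ on each piece of the $\sigma$-finite partition so that $gf$ lies in the correct $L^q$ space for the given normalization $p$.
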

\subsection{Tensor products}\label{subsec:TensorProducts}
\begin{defn}[Tensor products]\label{df:TensorProducts}
{\rm Let $\mathscr{M}_1$ and $\mathscr{M}_2$ be two Hilbert module. 
By a slight abuse of notation, 
we denote both point-wise scalar products by $\langle \cdot,\cdot\rangle_{\m}$. 
We consider the $L^0$-module $\mathscr{M}_i^0$ induced from $\mathscr{M}_i$, $i=1,2$. 
Let $\mathscr{M}_1^0\odot\mathscr{M}_2^0$ be the \lq\lq algebraic tensor product\rq\rq\,
consisting of all finite linear combinations of formal elements $v\otimes w$, $v\in\mathscr{M}_1^0$ and $w\in\mathscr{M}_2^0$, obtained by factorizing appropriate vector spaces (see \cite[Section~1.5]{Gigli:NonSmoothDifferentialStr}). It naturally comes with a multiplications $\cdot\,: L^0(M;\m)\times \mathscr{M}_1^0\odot\mathscr{M}_2^0 \to \mathscr{M}_1^0\odot\mathscr{M}_2^0$ defined through
\begin{align*}
f(v\otimes w):=(fv)\otimes w=v\otimes (fw)
\end{align*}
and a point-wise scalar product $\langle \cdot\,|\,\cdot\rangle_{\m}\,: (\mathscr{M}_1^0\odot\mathscr{M}_2^0)^2\to L^0(M;\m)$ given by 
\begin{align*}
\langle (v_1\otimes w_1)\,|\,(v_2\otimes w_2)\rangle:=\langle v_1,v_2\rangle_{\m}\langle w_1,w_2\rangle_{\m},
\end{align*}
both extended to $\mathscr{M}_1^0\odot\mathscr{M}_2^0$ by bi-linearity. 
Then $\langle \cdot\,|\,\cdot\rangle_{\m}$ is bilinear, $\m$-a.e.~non-negative definite, symmetric, and local in both components 
(see \cite[Lemma~3.2.19]{GPLecture}).    

The point-wise \emph{Hilbert-Schmidt norm} $|\cdot|_{{\rm HS}}: \mathscr{M}_1^0\odot\mathscr{M}_2^0\to L^0(M;\m)$ is given by 
\begin{align*}
|A|_{{\rm HS}}:=\sqrt{\langle A\,|\,A\rangle_{\m}}.
\end{align*} 
This map satisfies the $\m$-a.e. triangle inequality and is $1$-homogenous with respect to multiplication with $L^0(M;\m)$-functions (see \cite[p.~44]{Gigli:NonSmoothDifferentialStr}). 
Consequently, the map $\|\cdot\|_{\mathscr{M}_1\otimes\mathscr{M}_2}:\mathscr{M}_1^0\odot\,\mathscr{M}_2^0\to[0,+\infty]$ defined through
\begin{align*}
\|A\|_{\mathscr{M}_1\otimes\mathscr{M}_2}:=\||A|_{{\rm HS},\m}\|_{L^2(M;\m)}
\end{align*}
has all properties of a norm except that it might take the value $+\infty$. 

Now we define the \emph{tensor product} $\mathscr{M}_1\otimes\mathscr{M}_2$ the $\|\cdot\|_{\mathscr{M}_1\otimes\mathscr{M}_2}$-completion of the subspace that consists of all $A\in \mathscr{M}_1^0\odot\mathscr{M}_2^0$ such that $\|A\|_{\mathscr{M}_1\otimes\mathscr{M}_2}<\infty$.  
Denote by $A^{\top}\in \mathscr{M}^{\otimes2}$ the {\it transpose} of $A\in \mathscr{M}^{\otimes2}$ as defined in 
\cite[Section~1.5]{Gigli:NonSmoothDifferentialStr}. For instance, for bounded $v,w\in\mathscr{M}$ we have 
\begin{align}
(v\otimes w)^{\top}=w\otimes v.\label{eq:transpose}
\end{align}
}
\end{defn}

\subsection{Exterior products}\label{subsec:ExteriorProducts} 
Let $\mathscr{M}$ be a Hilbert module and $k\in\N\cup\{0\}$. 
Set $\Lambda^0\!\mathscr{M}^0:=L^0(M;\m)$ and, for $k\geq1$, let $\Lambda^k\!\mathscr{M}^0$ be the \lq\lq exterior product\rq\rq 
constructed by suitable factorizing $(\mathscr{M}^0)^{\odot k}$ (see \cite[Section~1.5]{Gigli:NonSmoothDifferentialStr}). 
The representative of $v_1\odot\cdots\odot v_k, v_1,\cdots, v_k\in\mathscr{M}^0$ in $\Lambda^k\!\mathscr{M}^0$ is written $v_1\wedge \cdots\wedge v_k$. $\Lambda^k\!\mathscr{M}^0$ naturally comes with a multiplication $\cdot :L^0(M;\m)\times\Lambda^k\!\mathscr{M}^0$ via
\begin{align*}
f(v_1\wedge \cdots\wedge v_k):=(fv_1)\wedge \cdots\wedge v_k=\cdots=v_1\wedge \cdots\wedge (fv_k)
\end{align*}
and a pointwise scalar product $\langle \cdot,\cdot\rangle_{\m}:(\Lambda^k\!\mathscr{M}^0)^2\to L^0(M;\m)$ defined by 
\begin{align}
\langle v_1\wedge\cdots \wedge v_k,w_1\wedge \cdots \wedge w_k\rangle_{\m}:={\rm det}\left[\langle v_i,w_j\rangle_{\m} \right]_{i,j\in\{1,2,\cdots,k\}}\label{eq:exteroprinnerproduct}
\end{align}
up to a factor $k!$, both extended to $\Lambda^k\!\mathscr{M}^0$ by (bi-)linearity. Then $\langle \cdot,\cdot\rangle_{\m}$ is bilinear, 
$\m$-a.e. non-negative definite, symmetric, and local in both components.

Given any $k,k'\in\N\cup\{0\}$, the map assigning to $v_1\wedge \cdots \wedge v_k\in\Lambda^k\!\mathscr{M}^0$ and 
$w_1\wedge \cdots \wedge w_{k'}\in \Lambda^{k'}\!\mathscr{M}^0$ the element 
$v_1\wedge \cdots \wedge v_k\wedge w_1\wedge \cdots \wedge w_{k'}\in \Lambda^{k+k'}\!\mathscr{M}^0$ can and will be uniquely extended by bilinearity and continuity to a bilinear map 
$\bigwedge:\Lambda^k\!\mathscr{M}^0\times\Lambda^{k'}\!\mathscr{M}^0$ termed \emph{wedge product} (see \cite[p.~47]{GPLecture}).
If $k=0$ or $k'=0$, it simply corresponds to multiplication of elements of $\Lambda^{k'}\!\mathscr{M}^0$ or 
$\Lambda^{k}\!\mathscr{M}^0$, respectively, with function in $L^0(M;\m)$ according to \eqref{eq:exteroprinnerproduct}.  

By a slight abuse of notation, define the map $|\cdot|_{\m}:\Lambda^k\!\mathscr{M}^0\to L^0(M;\m)$ by 
\begin{align*}
|\omega|_{\m}:=\sqrt{\langle \omega,\omega\rangle_{\m}}.
\end{align*}
It obeys the $\m$-a.e. triangle inequality and is  homogenous with respect to multiplication with $L^0(M;\m)$-functions (see \cite[p.~47]{GPLecture}). 

It follows that the map $\|\cdot\|_{\Lambda^k\!\mathscr{M}}:=\| |\cdot|_{\m}\|_{L^2(M;\m)}$
has all properties of a norm except that $\|w\|_{\Lambda^k\!\mathscr{M}}$ might be infinite. 
\begin{defn}\label{def:exteriorproducts}
{\rm The ($k$-fold) exterior product $\Lambda^k\!\mathscr{M}$ is defined as the completion with respect to 
$\|\cdot\|_{\Lambda^k\!\mathscr{M}}$ of the subspace consisting of all $\omega\in\Lambda^k\!\mathscr{M}^0$ such that $\|\omega\|_{\Lambda^k\!\mathscr{M}}$.
}
\end{defn}
The space $\Lambda^k\!\mathscr{M}$ naturally becomes a Hilbert module and, if $\mathscr{M}$ is separable, is separable as wel (see \cite[p.~47]{GPLecture}).

\subsection{Cotangent module}\label{subsec:CotangentModule}
Let $(\mathscr{E},D(\mathscr{E}))$ be a quasi-regular strongly local Dirichlet form on $L^2(M;\m)$. 
We define the \emph{cotangent module} $L^2(T^*\!M)$, i.e., the space of differential 
$1$-forms that are $L^2$-integrable in a certain \lq\lq universal\rq\rq\, sense. 
\begin{defn}[Pre-cotangent module]\label{df:premodule}
{\rm We define the \emph{pre-cotangent module} 
${\rm Pcm}$ by 
\begin{align*}
{\rm Pcm}:&=\Biggl\{\left.(f_i,A_i)_{i\in\mathbb{N}}\;\Biggr|\, (A_i)_{i\in\mathbb{N}}\text{ Borel partition of }X,
 \right.\\
&\hspace{3cm}
\left. (f_i)_{i\in\mathbb{N}}\subset D(\mathscr{E})_e,\; \sum_{i\in\mathbb{N}}\int_{A_i}\Gamma(f_i)\d\m<\infty \right\}
\end{align*}
Moreover, we define a relation $\sim$ on ${\rm Pcm}$ by $(f_i,A_i)_{i\in\mathbb{N}}\sim(g_j,B_j)_{j\in\mathbb{N}}$ if and only if $\int_{A_i\cap B_j}\Gamma(f_i-g_j)\d\m=0$ for every $i,j\in\mathbb{N}$. The relation, in fact forms an equivalence relation by \cite[\S2.1]{Braun:Tamed2021}. 
The equivalence class of an element $(f_i,A_i)_{i\in\mathbb{N}}\in{\rm Pcm}$ with respect to $\sim$ is denoted by $[f_i,A_i]$. The space ${\rm Pcm}/\!\!\sim$ of equivalence classes becomes a vector space via the well-defined operations 
\begin{align}
[f_i,A_i]+[g_j,B_j]:=[f_i+g_j,A_i\cap B_j],\qquad 
\lambda[f_i,A_i]:=[\lambda f_i,A_i]\label{eq:vectorSpaceequivalece}
\end{align}  
for every $[f_i,A_i],[g_j,B_j]\in {\rm Pcm}/\!\!\sim$ and $\lambda\in\R$.

Now we define the space ${\rm SF}(M;\m)\subset L^{\infty}(M;\m)$ of simple functions, i.e., each element 
$h\in {\rm SF}(M;\m)$ attains only a finite number values. For $[f_i,A_i]\in {\rm Pcm}/\!\!\sim$ 
and $h=\sum_{j=1}^{\ell}a_j\1_{B_j}\in {\rm SF}(M;\m)$ with a Borel partition $(B_j)$ of $M$, we define the product 
$h[f_i,A_i]\in {\rm Pcm}/\!\!\sim$ as 
\begin{align}
h[f_i,A_i]:=[a_jf_i,A_i\cap B_j],\label{eq:multiplicationRule1}
\end{align}
where we set $B_j:=\emptyset$ and $a_j:=0$ for every $j>{\ell}$. 
It is readily verified that this definition is well-posed and that the resulting multiplication is a bilinear map from ${\rm SF}(M;\m)\times{\rm Pcm}/\!\!\sim$ into ${\rm Pcm}/\!\!\sim$ such that for every 
$[f_i,A_i]\in {\rm Pcm}/\!\!\sim$ and every $h,k\in {\rm SF}(M;\m)$
\begin{align}
(hk)[f_i,A_i]=h(k[f_i,A_i]),\qquad 
\1[f_i,A_i]=[f_i,A_i].\label{eq:multiplicationRule2}
\end{align} 
Moreover, the map $\|\cdot\|_{L^2(T^*\!M)}:{\rm Pcm}/\!\!\sim\, \to[0,+\infty[$ given by 
\begin{align*}
\| [f_i,A_i]\|_{L^2(T^*\!M)}^2:=\sum_{i\in\mathbb{N}}\int_{A_i}\Gamma(f_i)\d\m<\infty
\end{align*}
constitutes a norm on ${\rm Pcm}/\!\!\sim$. 
}
\end{defn}

\begin{defn}[Cotangent module]\label{df:CotangentModule}
{\rm We define the Banach space $(L^2(T^*\!M),\|\cdot\|_{L^2(T^*\!M)})$ as the completion of $({\rm Pcm}/\!\!\sim, \|\cdot\|_{L^2(T^*\!M)})$. The pair $(L^2(T^*\!M),\|\cdot\|_{L^2(T^*\!M)})$  or simply 
$L^2(T^*\!M)$ is called \emph{cotangent module}, and the elements of $L^2(T^*\!M)$ are called \emph{cotangent vector fields} or (\emph{differential}) \emph{$1$-forms}. 
}
\end{defn}
The following are shown in \cite[Lemma~2.3 and Theorem~2.4]{Braun:Tamed2021}:

\begin{lem}[{\cite[Lemma~2.3]{Braun:Tamed2021}}]\label{lem:ModulePropoerty}
The map from ${\rm SF}(M)\times {\rm Pcm}/\!\!\sim$ into ${\rm Pcm}/\!\!\sim$ defined in \eqref{eq:multiplicationRule1} 
extends continuously and uniquely to a bilinear map from $L^{\infty}(M;\m)\times L^2(T^*\!M)$ into $L^2(T^*\!M)$ satisfying, for every $f,g\in L^{\infty}(M;\m)$ and every $\omega\in L^2(T^*\!M)$,
\begin{align*}
(fg)\omega=f(g\omega),\quad \1_M \omega=\omega,\quad \|f\omega\|_{L^2(T^*\!M)}\leq \|f\|_{L^{\infty}(M;\m)}\cdot\|\omega\|_{L^2(T^*\!M)}.
\end{align*}
\end{lem}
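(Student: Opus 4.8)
The plan is to extend the multiplication map $\cdot:{\rm SF}(M)\times{\rm Pcm}/\!\!\sim\,\to{\rm Pcm}/\!\!\sim$ from \eqref{eq:multiplicationRule1} to all of $L^\infty(M;\m)\times L^2(T^*\!M)$ by a two–stage density argument, using that $L^2(T^*\!M)$ is by Definition~\ref{df:CotangentModule} the completion of ${\rm Pcm}/\!\!\sim$ and that ${\rm SF}(M;\m)$ is dense in $L^\infty(M;\m)$ in a suitable sense. First I would establish the key quantitative estimate on the dense part: for $h=\sum_{j=1}^\ell a_j\1_{B_j}\in{\rm SF}(M;\m)$ and $[f_i,A_i]\in{\rm Pcm}/\!\!\sim$, a direct computation from the definition of the norm gives
\begin{align*}
\|h[f_i,A_i]\|_{L^2(T^*\!M)}^2=\sum_{i,j}\int_{A_i\cap B_j}a_j^2\,\Gamma(f_i)\d\m\leq\|h\|_{L^\infty(M;\m)}^2\sum_i\int_{A_i}\Gamma(f_i)\d\m=\|h\|_{L^\infty(M;\m)}^2\,\|[f_i,A_i]\|_{L^2(T^*\!M)}^2,
\end{align*}
so the bilinear map is bounded with the asserted operator bound $\|f\|_{L^\infty(M;\m)}$.

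Next I would carry out the extension. Fix $f\in L^\infty(M;\m)$ and choose simple functions $h_n\in{\rm SF}(M;\m)$ with $\|h_n\|_{L^\infty(M;\m)}\leq\|f\|_{L^\infty(M;\m)}$ and $h_n\to f$ $\m$-a.e.\ (for instance by quantizing the range). For a fixed $\omega=[f_i,A_i]\in{\rm Pcm}/\!\!\sim$, the sequence $h_n\omega=[h_n f_i,A_i]$ is Cauchy in $L^2(T^*\!M)$: indeed $\|h_n\omega-h_m\omega\|_{L^2(T^*\!M)}^2=\sum_i\int_{A_i}(h_n-h_m)^2\Gamma(f_i)\d\m$, and since $\sum_i\int_{A_i}\Gamma(f_i)\d\m<\infty$ and $(h_n-h_m)^2\leq(2\|f\|_{L^\infty(M;\m)})^2$ with $(h_n-h_m)^2\to0$ $\m$-a.e., dominated convergence (applied on the $\sigma$-finite measure $\sum_i\1_{A_i}\Gamma(f_i)\m$) forces this to $0$. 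Define $f\omega:=\lim_n h_n\omega$ in $L^2(T^*\!M)$; one checks independence of the approximating sequence by interlacing two sequences, and the bound $\|f\omega\|_{L^2(T^*\!M)}\leq\|f\|_{L^\infty(M;\m)}\|\omega\|_{L^2(T^*\!M)}$ passes to the limit. This defines a bounded bilinear map $L^\infty(M;\m)\times({\rm Pcm}/\!\!\sim)\to L^2(T^*\!M)$; a final extension in the second variable, using that ${\rm Pcm}/\!\!\sim$ is dense in $L^2(T^*\!M)$ and that for fixed $f$ the map $\omega\mapsto f\omega$ is Lipschitz, yields the bilinear map on $L^\infty(M;\m)\times L^2(T^*\!M)$.

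Finally I would verify the three algebraic identities. Each holds on the dense subset ${\rm SF}(M;\m)\times({\rm Pcm}/\!\!\sim)$ by \eqref{eq:multiplicationRule2}, and all three sides are continuous in every variable with respect to the relevant norms (the $L^\infty$-variables entering only through uniformly bounded approximations), so they extend by continuity; uniqueness of the extension is automatic from density. The main obstacle is not conceptual but a matter of bookkeeping: one must be careful that the a.e.-convergence $h_n\to f$ together with the uniform bound is genuinely enough to run dominated convergence against the (generally infinite, but $\sigma$-finite) reference measure $\sum_i\1_{A_i}\Gamma(f_i)\,\m$, and that the notion of convergence used to extend in the $L^\infty$-variable (pointwise a.e.\ with uniform bound, rather than norm convergence, since ${\rm SF}(M;\m)$ is not norm-dense in $L^\infty(M;\m)$) is handled consistently throughout; the associativity identity $(fg)\omega=f(g\omega)$ requires choosing the approximating simple functions for $fg$ compatibly with those for $f$ and $g$, which is the one spot deserving explicit care.
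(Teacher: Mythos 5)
The paper does not actually prove this lemma: it is quoted verbatim from Braun's work (\cite[Lemma~2.3]{Braun:Tamed2021}) and used as a black box, so there is no in-paper argument to compare against. Your proof is correct and is essentially the standard completion argument one would expect (and which Braun/Gigli use): the pointwise computation $\|h[f_i,A_i]\|_{L^2(T^*\!M)}^2=\sum_{i,j}a_j^2\int_{A_i\cap B_j}\Gamma(f_i)\,\d\m\leq\|h\|_{L^\infty(M;\m)}^2\|[f_i,A_i]\|_{L^2(T^*\!M)}^2$ is the whole point, and the two-stage extension plus verification of the identities on dense subsets goes through as you describe; your flagged subtlety about associativity (one needs $h_n(g\omega)\to f(g\omega)$ for the already-extended product on a general element $g\omega\in L^2(T^*\!M)$, handled by a three-epsilon argument through ${\rm Pcm}/\!\!\sim$) is exactly the right place to be careful. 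One factual slip worth correcting: ${\rm SF}(M;\m)$, being the set of all finitely-valued measurable functions, \emph{is} norm-dense in $L^{\infty}(M;\m)$ (quantize the range of $f$ into intervals of length $\varepsilon$ to get $\|h-f\|_{L^\infty(M;\m)}\leq\varepsilon$). Your route via uniformly bounded $\m$-a.e.\ convergence and dominated convergence against the finite measure $\sum_i\1_{A_i}\Gamma(f_i)\m$ still works, but it is more labor than needed, and your parenthetical justification for it is false; moreover, uniqueness of the extension is only ``automatic from density'' once one knows norm-density in both factors (or restricts attention to extensions continuous under bounded a.e.\ convergence), so acknowledging the norm-density makes the uniqueness claim clean rather than undermining it.
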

\begin{thm}[{Module property, \cite[Theorem~2.4]{Braun:Tamed2021}}]\label{thm:ModuleProperty}
The cotangent module $L^2(T^*\!M)$ is an $L^2$-normed $L^{\infty}$-module over $M$ with respect to $\m$ whose point-wise norm $|\cdot|_{\m}$ satisfies, for every $[f_i,A_i]\in{\rm Pcm}/\!\!\sim$,
\begin{align}
|[f_i,A_i]|_{\m}=\sum_{i\in\mathbb{N}}\1_{A_i}\Gamma(f_i)^{\frac12}\quad\m\text{-a.e.}\label{eq:ModuleProperty}
\end{align}
In particular, $L^2(T^*\!M)$ is a Hilbert module with respect to $\m$. 
\end{thm}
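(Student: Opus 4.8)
The plan is to prove Theorem~\ref{thm:ModuleProperty} by first establishing formula \eqref{eq:ModuleProperty} for the pointwise norm on the dense subspace ${\rm Pcm}/\!\!\sim$, and then showing that this pointwise norm map extends continuously to the completion $L^2(T^*\!M)$ in a way compatible with the $L^\infty$-module structure from Lemma~\ref{lem:ModulePropoerty}. First I would check that the proposed formula $|[f_i,A_i]|_\m:=\sum_{i\in\N}\1_{A_i}\Gamma(f_i)^{1/2}$ is well defined on equivalence classes: if $(f_i,A_i)_{i\in\N}\sim(g_j,B_j)_{j\in\N}$, then $\int_{A_i\cap B_j}\Gamma(f_i-g_j)\d\m=0$ for all $i,j$, which via the triangle inequality for the intrinsic metric $d\mu_{\langle\cdot\rangle}^{1/2}$ (i.e. $|\Gamma(f_i)^{1/2}-\Gamma(g_j)^{1/2}|\le\Gamma(f_i-g_j)^{1/2}$ $\m$-a.e.\ on $A_i\cap B_j$) forces $\Gamma(f_i)^{1/2}=\Gamma(g_j)^{1/2}$ $\m$-a.e.\ on $A_i\cap B_j$; summing over the common refinement of the two partitions gives that the two candidate pointwise norms agree $\m$-a.e. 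Next I would verify the defining identity $\||[f_i,A_i]|_\m\|_{L^2(M;\m)}=\|[f_i,A_i]\|_{L^2(T^*\!M)}$, which is immediate from $\int_M\big(\sum_i\1_{A_i}\Gamma(f_i)^{1/2}\big)^2\d\m=\sum_i\int_{A_i}\Gamma(f_i)\d\m$ since the $A_i$ are disjoint.

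Then I would establish the two algebraic properties (a) and (b) of Definition~\ref{df:NormedModule}. Property (a) — bilinearity of the multiplication, associativity $(fg)\cdot v=f\cdot(gv)$, and $\1_M\cdot v=v$ — follows by density from \eqref{eq:multiplicationRule1}, \eqref{eq:multiplicationRule2} and Lemma~\ref{lem:ModulePropoerty}, which already extended the simple-function multiplication to all of $L^\infty(M;\m)$ with the contraction bound. For property (b), the compatibility $|f\cdot v|_\m=|f||v|_\m$ $\m$-a.e.: I would first check it for $f\in{\rm SF}(M;\m)$ and $v=[f_i,A_i]\in{\rm Pcm}/\!\!\sim$ using the explicit formula \eqref{eq:ModuleProperty} and $\Gamma(a_jf_i)=a_j^2\Gamma(f_i)$ on $A_i\cap B_j$, then pass to general $f\in L^\infty(M;\m)$ and general $v\in L^2(T^*\!M)$ by approximating $f$ by simple functions (uniformly) and $v$ by elements of ${\rm Pcm}/\!\!\sim$ (in $\|\cdot\|_{L^2(T^*\!M)}$), using that $v\mapsto|v|_\m$ is $1$-Lipschitz from $L^2(T^*\!M)$ into $L^2(M;\m)$ — which itself is proved by the reverse triangle inequality $\big||v|_\m-|w|_\m\big|\le|v-w|_\m$ established first on ${\rm Pcm}/\!\!\sim$ via the pointwise triangle inequality for $\Gamma^{1/2}$ and then extended by density. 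Extracting a further $\m$-a.e.-convergent subsequence at each approximation step lets one pass the $\m$-a.e.\ identity to the limit. Finally, the last sentence of the theorem follows automatically: an $L^2$-normed $L^\infty$-module whose pointwise norm is built from the quadratic form $\Gamma$ satisfies the pointwise parallelogram identity, because $\Gamma$ is itself a (nonnegative, $\m$-a.e.\ defined) quadratic form — $\Gamma(f+g)+\Gamma(f-g)=2\Gamma(f)+2\Gamma(g)$ — so one checks $|v+w|_\m^2+|v-w|_\m^2=2|v|_\m^2+2|w|_\m^2$ on ${\rm Pcm}/\!\!\sim$ and extends by density, whence $L^2(T^*\!M)$ is a Hilbert module.

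The main obstacle I anticipate is the bookkeeping in passing $\m$-a.e.\ pointwise identities through the completion: the norm $\|\cdot\|_{L^2(T^*\!M)}$ only controls $|\cdot|_\m$ in $L^2(M;\m)$, so equalities like $|f\cdot v|_\m=|f||v|_\m$ must be obtained by choosing approximating sequences, extracting subsequences converging $\m$-a.e., and checking that the well-definedness (independence of the chosen representative/sequence) is preserved — together with handling the countable Borel partitions carefully so that the interchange of sum and integral, and the disjointness arguments, are rigorous. The rest is routine once the two pointwise triangle inequalities (ordinary and reverse) for $\Gamma^{1/2}$ on $\m$-a.e.\ sets are in place, and these are standard consequences of $\Gamma(\cdot)$ being a pointwise quadratic form with values in $L^1(M;\m)$.
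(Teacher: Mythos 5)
Your proposal is correct and follows the standard construction: the paper itself gives no proof of this statement but simply quotes \cite[Theorem~2.4]{Braun:Tamed2021}, and the argument there (going back to Gigli's construction of the cotangent module) is exactly the one you outline — well-definedness of $\sum_i\1_{A_i}\Gamma(f_i)^{1/2}$ via the pointwise inequality $|\Gamma(f)^{1/2}-\Gamma(g)^{1/2}|\le\Gamma(f-g)^{1/2}$, verification of (a) and (b) first on ${\rm SF}(M;\m)\times{\rm Pcm}/\!\!\sim$ and then by density using the $1$-Lipschitz continuity of $v\mapsto|v|_{\m}$, and the pointwise parallelogram identity inherited from the quadratic nature of $\Gamma$. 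No gaps.
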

\begin{defn}[$L^2$-differential]\label{df:differetial}
{\rm The $L^2$-differential $\d f$ of any function $f\in D(\mathscr{E})_e$ is defined by 
\begin{align*}
{\d f}:=[f,X]\in L^2(T^*\!M),
\end{align*}
where $[f,X]\in {\rm Pcm}/\!\!\sim\;\subset L^2(T^*\!M)$ 
is the representative of the sequence $(f_i,A_i)_{i\in\mathbb{N}}$ given by 
$f_i:=f$, $A_1:=X$, $f_i:=0$ and $A_i:=\emptyset$ for every $i\geq2$. 

\bigskip

As usual, we call a $1$-form $\omega\in  L^2(T^*\!M)$ \emph{exact} if, for some $f\in D(\mathscr{E})_e$,
\begin{align*}
\omega=\d f.
\end{align*}
The $L^2$-differential $\d$ is a linear operator on $D(\mathscr{E})_e$. By \eqref{eq:ModuleProperty}, the $L^{\infty}$-module structure induced by $\m$ according to Theorem~\ref{thm:ModuleProperty},
\begin{align*}
|\d f|_{\m}=\Gamma(f)^{\frac12}\quad\m\text{-a.e.}
\end{align*}
holds for every $f\in D(\mathscr{E})_e$. 
}
\end{defn}
\subsection{Tangent module}\label{subsec:TangentModule}
In this section, let $(M,{\sf d},\m)$ be a metric measure space and assume the 
infinitesimally Hilbertian condition for it. In particular, we are given a strongly local Dirichlet form 
$(\mathscr{E},D(\mathscr{E}))$ on $L^2(M;\m)$. 
We define the notion of \emph{tangent module}. 
\begin{defn}[Tangent module]\label{df:TangentModule}
{\rm The tangent module $(L^2(TM),\|\cdot\|_{L^2(TM)})$ or simply $L^2(TM)$ is 
\begin{align*}
L^2(TM):=L^2(T^*\!M)^*
\end{align*}
and it is endowed with the norm $\|\cdot\|_{L^2(TM)}$ induced by \eqref{eq:DualPointNorm}. 
The elements of $L^2(TM)$ will be called \emph{vector fields}.  
}
\end{defn}
 As in Subsection~\ref{subsec:normedModule}, the point-wise pairing between $\omega\in L^2(T^*\!M)$ and $X\in L^2(TM)$ is denoted by $\omega(X)\in L^1(M;\m)$, and, by a slight abuse of notation, 
 $|X|\in L^2(M;\m)$ denotes the point-wise norm of $M$. By \cite[Lemma~2.7 and Proposition~1.24]{Braun:Tamed2021}, 
 $L^2(TM)$ is a separable Hilbert module. Furthermore, in terms of the point-wise scalar product $\langle \cdot,\cdot\rangle $ on $L^2(T^*\!M)$ and $L^2(TM)$, respectively, \cite[Proposition~1.24]{Braun:Tamed2021} allows us to define the \emph{(Riesz) musical isomorphisms} $\sharp: L^2(T^*\!M)\to L^2(TM)$ 
 and $\flat:=\sharp^{-1}$ defined by 
 \begin{align}
 \langle \omega^{\sharp},X\rangle :=\omega(X)=:\langle X^{\flat},\omega\rangle \quad \m\text{-a.e.}\label{eq:MusicalMap}
 \end{align}
\begin{defn}[$L^2$-gradient]\label{df:Gradient}
{\rm The \emph{$L^2$-gradient} $\nabla f$ of a function $f\in D(\mathscr{E})_e$ is defined by 
\begin{align*}
\nabla f:=(\d f)^{\sharp}.
\end{align*}
Observe from \eqref{eq:MusicalMap} that $f\in D(\mathscr{E})_e$, is characterized as the unique element 
$X\in L^2(TM)$ which satisfies 
\begin{align*}
\d f(X)=|\d f|^2=|X|^2\quad\m\text{-a.e.}
\end{align*}
}
\end{defn} 

\subsection{Divergences}\label{subsec:divergence}

\begin{defn}[$L^2$-divergence]\label{df:L2Divergence}
{\rm We define the space $D({\rm div})\subset L^2(TM)$ by 
\begin{align*}
D({\rm div}):&=\Bigl\{X\in  L^2(TM)\,\Bigl|\, \text{ there exists a function }f\in L^2(M;\m)\\ 
&\hspace{2cm}\text{ such  that for every }h\in D(\mathscr{E}),\quad -\int_M  h\,f\,\d\m=\int_M \d h(X)\d\m\Bigr\}.
\end{align*} 
Such $f\in L^2(M;\m)$ is unique, and it is called the \emph{{\rm(}$L^2$-{\rm)}divergence of} $M$ and denoted by ${\rm div}\,X$.
}
\end{defn} 
The uniqueness of ${\rm div}\,X$ comes from the denseness of $D(\mathscr{E})$ in $L^2(M;\m)$.     
Note that the map ${\rm div}:D({\rm div})\to L^2(M;\m)$ is linear, hence $D({\rm div})$ is a vector space. By definition of $D(\Delta)$, $\nabla D(\Delta)\subset D({\rm div})$ and 
\begin{align}
{\rm div}\nabla f=\Delta f\quad \m\text{-a.e.}\label{eq:Divergence}
\end{align}
for every $f\in D(\Delta)$. Moreover, using the Leibniz rule in \cite[Theorem~4.4]{GPLecture},
one can verify that for every $X\in D({\rm div})$ and $f\in D(\mathscr{E})_e\cap L^{\infty}(M;\m)$ with 
$|\d f|\in L^{\infty}(M;\m)$, we have $fX\in D({\rm div})$ and 
\begin{align}
{\rm div}(fX)=f\,{\rm div}(X)+\d f(X)\quad\m\text{-a.e.}\label{eq:DivergenceFormula}
\end{align}

\subsection{The Lebesgue spaces $L^p(T^*\!M)$ and $L^p(TM)$}\label{subsec:LebesgueSpace}
Let $L^0(T^*\!M)$ and $L^0(TM)$ be the $L^0$-modules as in Definition~\ref{df:L0Module} associated to $L^2(T^*\!M)$ and $L^2(TM)$, i.e.,
\begin{align*}
L^0(T^*\!M):=L^2(T^*\!M)^0,\quad  L^0(TM):=L^2(TM)^0.
\end{align*}
The characterization of Cauchy sequences in these spaces grants that 
the point-wise norms $|\cdot|_{\m}:L^2(T^*\!M)\to L^2(M;\m)$ and $|\cdot|_{\m}:L^2(TM)\to L^2(M;\m)$ as well as the (Riesz) musical isomorphisms $\flat:L^2(TM)\to L^2(T^*\!M)$ and $\sharp:L^2(T^*\!M)\to L^2(TM)$ uniquely extend to (non-relabeled) continuous map $\flat:L^0(TM)\to L^0(T^*\!M)$ and $\sharp:L^0(T^*\!M)\to L^0(TM)$. 

For $p\in[1,+\infty]$, let $L^p(T^*\!M)$ and $L^p(TM)$ be the Banach spaces consisting of all $\omega\in 
L^0(T^*\!M)$ and $X\in L^0(TM)$ such that $|\omega|\in L^p(M;\m)$ and $|X|\in L^p(M;\m)$, respectively, endowed with the norms
\begin{align*}
\|\omega\|_{L^p(T^*\!M)}:=\||\omega|_{\m}\|_{L^p(M;\m)},\qquad
\|X\|_{L^p(TM)}:=\||X|_{\m}\|_{L^p(M;\m)}.
\end{align*}
Since by \cite[Lemma~2.7]{Braun:Tamed2021}, $L^2(T^*\!M)$ is separable, and so is $L^2(TM)$ by \cite[Proposition~1.24]{Braun:Tamed2021}. Then one easily derives that if $p\ne+\infty$, the spaces 
$L^p(T^*\!M)$ and $L^p(TM)$ are separable as well. Since $L^2(T^*\!M)$ and $L^2(TM)$ are reflexive as Hilbert spaces, by the discussion in Definition~\ref{df:DualModule} it follows that $L^p(T^*\!M)$ and $L^p(TM)$ are reflexive for every $p\in]1,+\infty[$. For $q\in[1,+\infty[$ such that $1/p+1/q=1$ in the sense of $L^{\infty}$-modules we have the duality
\begin{align*}
L^p(T^*\!M)^*=L^q(TM).
\end{align*}
The point-wise norms $|\cdot|_{\m}:L^p(T^*\!M)\to L^p(M;\m)$ and $|\cdot|_{\m}:L^p(TM)\to L^p(M;\m)$ as well as the (Riesz) musical isomorphisms $\flat:L^p(TM)\to L^p(T^*\!M)$ and $\sharp:L^p(T^*\!M)\to L^p(TM)$ are defined by 
\begin{align}
{}_{L^p(TM)}\langle \omega^{\sharp},X\rangle_{L^q(TM)}:=\omega(X)=:{}_{L^q(T^*\!M)}\langle X^{\flat},\omega\rangle_{L^p(T^*\!M)}\quad \m\text{-a.e.}\label{eq:couplingP}
\end{align}
for $X\in L^q(TM)$ and $\omega\in L^p(T^*\!M)$,  
and we write \eqref{eq:couplingP} by 
\begin{align}
\langle \omega^{\sharp},X\rangle :=\omega(X)=:\langle X^{\flat},\omega\rangle\quad \m\text{-a.e.}\label{eq:couplingP*}
\end{align}
for simplicity.

\subsection{Test and regular objects}\label{subsec:TestandRegularObjects}
\begin{defn}[${\rm Test}(TM)$ and ${\rm Reg}(TM)$]\label{def:TestVecFields}
{\rm We define the subclass of $L^2(TM)$ consisting of \emph{test vector fields} or \emph{regular vector fields}, respectively: 
\begin{align*}
{\rm Test}(TM)&=\left\{\left.\sum_{i=1}^ng_i\nabla f_i\,\right|\, n\in\N,f_i,g_i\in {\rm Test}(M) \right\},\\
{\rm Reg}(TM)&=\left\{\left.\sum_{i=1}^ng_i\nabla f_i\,\right|\, n\in\N,f_i\in {\rm Test}(M),g_i\in {\rm Test}(M)\cup\R\1_M  \right\}.
\end{align*}

}
\end{defn}

\begin{defn}[${\rm Test}(T^*\!M)$ and ${\rm Reg}(T^*\!M)$]\label{def:Test1Forms}
{\rm We define the subclass of $L^2(T^*\!M)$ consisting of \emph{test $1$-forms} or \emph{regular $1$-forms}, respectively:
\begin{align*}
{\rm Test}(T^*\!M)&=\left\{\left.\sum_{i=1}^ng_i\d f_i\,\right|\, n\in\N,f_i,g_i\in {\rm Test}(M) \right\},\\
{\rm Reg}(T^*\!M)&=\left\{\left.\sum_{i=1}^ng_i\d f_i\,\right|\, n\in\N,f_i\in {\rm Test}(M),g_i\in {\rm Test}(M)\cup\R\1_M  \right\}.
\end{align*}
${\rm Test}(TM)$ (resp.~${\rm Test}(T^*\!M)$) is a dense subspace of $L^p(TM)$ (resp.~$L^p(T^*\!M)$) for $p\in[1,+\infty[$, hence 
${\rm Reg}(TM)\cap L^p(TM)$ (resp.~${\rm Reg}(T^*\!M)\cap L^p(T^*\!M)$) is dense in $L^p(TM)$ (resp.~$L^p(T^*\!M)$) (see Section~\ref{sec:cor:HessShcraderUhlenbrock}). 
From this, we have that $L^2(TM)\cap L^p(TM)$ (resp.~$L^2(T^*\!M)\cap L^p(T^*\!M)$) is dense in $L^p(TM)$ 
(resp.~$L^p(T^*\!M)$) for $p\in[1,+\infty[$.
}
\end{defn}
\subsection{Lebesgue spaces on tensor products}\label{subsec:LebesgueSpaceTensorProdulct}
Denote the two-fold tensor products of $L^2(T^*\!M)$ and $L^2(TM)$, respectively, in the sense of Definition~\ref{df:TensorProducts} by 
\begin{align*}
L^2((T^*)^{\otimes2}X):=L^2(T^*\!M)\otimes L^2(T^*\!M),\quad 
L^2((T)^{\otimes2}X):=L^2(TM)\otimes L^2(TM).
\end{align*}
By the discussion from Subsection~\ref{subsec:TensorProducts},%Definition~\ref{df:TensorProducts}, 
Theorem~\ref{thm:ModuleProperty} and 
\cite[Proposition~1.24]{Braun:Tamed2021}, both are separable modules. They are point-wise isometrically module isomorphic: the respective pairing is initially defined by 
\begin{align*}
(\omega_1\otimes\omega_2)(X_1\otimes X_2):=\omega_1(X_1)\omega_2(X_2)\quad\m\text{-a.e.}
\end{align*}
for $\omega_1,\omega_2\in L^2(T^*\!M)\cap L^{\infty}(T^*\!M)$ and $X_1,X_2\in L^2(TM)\cap L^{\infty}(TM)$, 
and is extended by linearity and continuity to $L^2((T^*)^{\otimes2}X)$ and $L^2((T)^{\otimes2}X)$, respectively. By a slight abuse of notation, this pairing, with \cite[Proposition~1.24]{Braun:Tamed2021}, induces the (Riesz) musical isomorphisms $\flat:L^2((T)^{\otimes2}X)\to L^2((T^*)^{\otimes2}X)$ and $\sharp:=\flat^{-1}$ given by 
\begin{align}
\langle A^{\sharp}\,|\, T\rangle_{\m}:=A(T)=: \langle A\,|\,T^{\flat}\rangle_{\m}\quad\m\text{-a.e.}\label{eq:musical2}
\end{align}
and  
write $|A|_{\rm HS}:=\sqrt{\langle A\,|\,A\rangle_{\m}}$ and $|T|_{\rm HS}:=\sqrt{\langle T\,|\,T\rangle_{\m}}$ for $A\in L^2((T^*)^{\otimes2}X)$ and $T\in L^2((T)^{\otimes2}X)$. 

We let $L^p((T^*)^{\otimes2}X)$ and $L^p(T^{\otimes2}X)$, $p\in\{0\}\cup[1,+\infty]$, can be defined similarly as in 
Subsection~\ref{subsec:LebesgueSpace}. For $p\in[1,+\infty]$, these spaces naturally become Banach space which, if $p<\infty$, are separable. The following coincidences also hold 
\begin{align*}
L^p((T^*)^{\otimes2}X)=L^p(T^*\!M)\otimes L^p(T^*\!M),\quad 
L^q((T)^{\otimes2}X):=L^q(TM)\otimes L^q(TM)
\end{align*}
and  the (Riesz) musical isomorphisms $\flat:L^q((T)^{\otimes2}X)\to L^p((T^*)^{\otimes2}X)$ and $\sharp:=\flat^{-1}$ can be defined by 
\eqref{eq:musical2} 
for $A\in L^p((T^*)^{\otimes2}X)$ and $T\in L^q((T)^{\otimes2}X)$. 

\bigskip

We define the $L^p$-dense sets, $p\in[1,+\infty]$, intended strongly if $p<\infty$ and weakly* if $p=\infty$, reminiscent of 
Subsection~\ref{subsec:TensorProducts}. 
\begin{align*}
{\rm Test}((T^*)^{\otimes2}X):={\rm Test}(T^*\!M)^{\odot2},\\
{\rm Test}(T^{\otimes2}X):={\rm Test}(TM)^{\odot2},\\
{\rm Reg}((T^*)^{\otimes2}X):={\rm Reg}(T^*\!M)^{\odot2},\\
{\rm Reg}(T^{\otimes2}X):={\rm Reg}(TM)^{\odot2}.
\end{align*}
But the denseness in $L^p((T^*)^{\otimes2}X)$ (resp.~$L^p(TM)^{\otimes2}$) 
for ${\rm Reg}((T^*)^{\otimes2}X)$ (resp.~${\rm Reg}(T^{\otimes2}X)$) should be understood as 
for ${\rm Reg}((T^*)^{\otimes2}X)\cap L^p((T^*)^{\otimes2}X)$ (resp.~${\rm Reg}(T^{\otimes2}X)\cap L^p(T^{\otimes2}X)$).

\subsection{Lebesgue spaces on exterior products}\label{subsec:LebesgueSpExtPro}
Given any $k\in\N\cup\{0\}$, we set 
\begin{align*}
L^2(\Lambda^kT^*\!M):&=\Lambda^k L^2(T^*\!M),\\
L^2(\Lambda^kTM):&=\Lambda^k L^2(TM),
\end{align*}
where the exterior products are defined in Subsection~\ref{subsec:ExteriorProducts}. For $k\in\{0,1\}$, we see 
\begin{align*}
L^2(\Lambda^1T^*\!M)&=L^2(T^*\!M),\\
L^2(\Lambda^1TM)&=L^2(TM),\\
L^2(\Lambda^0T^*\!M)&=L^2(\Lambda^0TM)=L^2(M;\m).
\end{align*}
By Subsection~\ref{subsec:ExteriorProducts}, these are naturally Hilbert modules. As in Subsection~\ref{subsec:LebesgueSpaceTensorProdulct}, 
$L^2(\Lambda^kT^*\!M)$ and $L^2(\Lambda^kTM)$ are pointwise isometrically module isomorphic. 
For brevity, the induced pointwise pairing between $\omega\in L^2(\Lambda^kT^*\!M)$ and $X_1\land \cdots\land X_k\in 
L^2(\Lambda^k TM)$ with $X_1,\cdots, X_k\in L^2(TM)\cap L^{\infty}(TM)$, is written by 
\begin{align*}
\omega(X_1,\cdots, X_k):=\omega(X_1\land\cdots \land X_k). 
\end{align*}
We let $L^p(\Lambda^kT^*\!M)$ and $L^p(\Lambda^k TM)$, $p\in\{0\}\cup[1,+\infty]$, be as in Subsection~\ref{subsec:LebesgueSpace}. 
For $p\in[1,+\infty]$, these spaces are Banach and, if $p<\infty$, additionally separable.  
We define the formal $k$-th exterior products, $k\in\N\cup\{0\}$, of the classes from Subsection~\ref{subsec:TestandRegularObjects} as follows: 
\begin{align*}
{\rm Test}(\Lambda^kT^*\!M):&=\left\{\left. \sum_{i=1}^n f_i^0\d f_i^1\land\cdots\land \d f_i^k  \;\right|\; 
n\in\N, f_i^j\in{\rm Test}(M)\text{ for }0\leq j\leq k \right\},\\
{\rm Test}(\Lambda^kTM):&=\left\{\left. \sum_{i=1}^n f_i^0\nabla f_i^1\land\cdots\land \nabla f_i^k  \;\right|\; 
n\in\N, f_i^j\in{\rm Test}(M)\text{ for }0\leq j\leq k \right\},\\
{\rm Reg}(\Lambda^kT^*\!M):&=\left\{\left. \sum_{i=1}^n f_i^0\d f_i^1\land\cdots\land \d f_i^k  \;\right|\; 
n\in\N, f_i^j\in{\rm Test}(M)\text{ for }1\leq j\leq k,\right. \\
&\hspace{9cm} f_i^0\in{\rm Test}(M)\cup\R\1_M  \Biggr\},\\
{\rm Reg}(\Lambda^kTM):&=\left\{\left. \sum_{i=1}^n f_i^0\d f_i^1\land\cdots\land \d f_i^k  \;\right|\; 
n\in\N, f_i^j\in{\rm Test}(M)\text{ for }1\leq j\leq k, \right.\\ 
&\hspace{9cm}
f_i^0\in{\rm Test}(M)\cup\R\1_M  \Biggr\}. 
\end{align*}
We employ the evident interpretations for $k=1$, while the respective spaces for $k=0$ are identified with those spaces to which their generic elements's zeroth order terms belong to. These classes are dense in their respective $L^p$-spaces, 
$p\in[1,+\infty]$, intended strongly if $p<\infty$ and weakly* if $p=\infty$.  
But the denseness in $L^p(\Lambda^kT^*\!M)$ (resp.~$L^p(\Lambda^kTM)$) for ${\rm Reg}(\Lambda^kT^*\!M)$ (resp.~${\rm Reg}(\Lambda^kTM)$) should be modified as noted before.

\section{Covariant derivative}

\subsection{The Sobolev spaces $W^{1,2}(TM)$ and $H^{1,2}(TM)$}\label{subsec:W12SobolevTX}
We construct the tangent module $L^2(TM)$ in Subsection~\ref{subsec:TangentModule}. Now we define the $(1,2)$-Sobolev space 
$W^{1,2}(TM)$ based on the notion of $L^2$-Hessian. 
\begin{defn}[$(1,2)$-Sobolev space $W^{1,2}(TM)$]\label{def:SobolevSpaceW12TX}
{\rm The space $W^{1,2}(TM)$ is defined to consist of all $X\in L^2(TM)$ for which there exists a $T\in L^2(T^{\otimes2}X)$
such that for every $g_1,g_2,h\in {\rm Test}(M)$,
\begin{align*}
\int_M h\langle T\,|\,&\nabla g_1\otimes\nabla g_2\rangle \d\m\\
&=-\int_M \langle X,\nabla g_2\rangle {\rm div}(h\nabla g_1)\d\m-\int_M \,h\,{\rm Hess}\,g_2(X,\nabla g_1)\d\m.
\end{align*}
Here ${\rm Hess}\,g_2\in L^2((T^*)^{\otimes2}X)$ is the \emph{Hessian} defined for $g_2\in {\rm Test}(M)$ (see \cite[Definition~5.2]{Braun:Tamed2021}).  
In case of existence, the element $T$ is unique, denoted by $\nabla X$ and called the \emph{covariant derivative} of $M$. 
}
\end{defn} 

Arguing as in \cite[after Definition~5.2]{Braun:Tamed2021}, the uniqueness statement in Definition~\ref{subsec:W12SobolevTX} is derived. 
In particular, $W^{1,2}(TM)$ constitutes a vector space and the covariant derivative $\nabla$ is a linear operator on it. 
The space $W^{1,2}(TM)$ is endowed with the norm $\|\cdot\|_{W^{1,2}(M)}$ given by
\begin{align*}
\|X\|_{W^{1,2}(TM)}^2:=\|X\|_{L^2(TM)}^2+\|\nabla X\|_{L^2(T^{\otimes2}X)}^2.
\end{align*}
We also define the \emph{covariant functional} $\mathscr{E}_{\rm cov}:L^2(TM)\to [0,+\infty[$ by
\begin{align}
\mathscr{E}_{\rm cov}(X):=\left\{\begin{array}{cc}\displaystyle{\int_M |\nabla X|_{\rm HS}^2\d\m} & \text{ if }X\in W^{1,2}(TM), \\ \infty & \text{ otherwise. }\end{array}\right.\label{eq:covariantfunctional}
\end{align}
It is proved in \cite[Theorem~6.3]{Braun:Tamed2021} that 
$(W^{1,2}(TM),\|\cdot\|_{W^{1,2}(TM)})$ is a separable Hilbert space, $\nabla$ is a closed operator, ${\rm Reg}(TM)\subset W^{1,2}(TM)$, $\|\cdot\|_{L^2(TM)}$-denseness of $W^{1,2}(TM)$ in $L^2(TM)$, and the lower semi continuity of $\mathscr{E}_{\rm cov}: L^2(TM)\to[0,+\infty[$. 
 
\begin{defn}[$(1,2)$-Sobolev space $H^{1,2}(TM)$]\label{def:SobolevSpaceH12TX}
{\rm We define the space $H^{1,2}(TM)\subset W^{1,2}(TM)$ as the $\|\cdot\|_{W^{12,}(TM)}$-closure of ${\rm Reg}(TM)$:
\begin{align*}
H^{1,2}(TM):=\overline{{\rm Reg}(TM)}^{\|\cdot\|_{W^{1,2}(TM)}}.
\end{align*}
$H^{1,2}(TM)$ is in general a strict subset of $W^{1,2}(TM)$. 
}
\end{defn} 
 
The following lemma is a version of what is known as \emph{Kato's inequality} (for the Bochner Laplacian) 
in the smooth case (see \cite[Chapter~2]{HSU}, \cite[Lemma~3.5]{GP}).  
\begin{lem}[{Kato's inequality, \cite[Lemma~6.12]{Braun:Tamed2021}}]\label{lem:KatoIneq}
For every $X\in H^{1,2}(TM)$, $|X|\in D(\mathscr{E})$ and
\begin{align*}
|\nabla |X||\leq|\nabla X|_{\rm HS}\quad\m\text{-a.e.}
\end{align*}
In particular, if $X\in H^{1,2}(TM)\cap L^{\infty}(M;\m)$, then $|X|^2\in D(\mathscr{E})$.
\end{lem}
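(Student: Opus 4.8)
My plan is to first prove the inequality for regular vector fields $X\in{\rm Reg}(TM)$, for which $|X|^2$ is visibly regular, and then to transfer it to all of $H^{1,2}(TM)$ by approximation, using that $H^{1,2}(TM)=\overline{{\rm Reg}(TM)}^{\|\cdot\|_{W^{1,2}(TM)}}$ by Definition~\ref{def:SobolevSpaceH12TX}. The soft tool I will use twice is: if $u_n\in D(\mathscr{E})$ converge to $u$ in $L^2(M;\m)$ with $\sup_n\mathscr{E}(u_n)<\infty$, then $u\in D(\mathscr{E})$, $u_n\rightharpoonup u$ weakly in the Hilbert space $(D(\mathscr{E}),\mathscr{E}_1)$, and for every nonnegative $\phi\in L^\infty(M;\m)$ one has $\int_M\phi\,\Gamma(u)\,\d\m\le\liminf_{n\to\infty}\int_M\phi\,\Gamma(u_n)\,\d\m$; indeed $v\mapsto\int_M\phi\,\Gamma(v)\,\d\m$ is convex and $\mathscr{E}_1$-continuous (since $\|\sqrt{\Gamma(v)}-\sqrt{\Gamma(w)}\|_{L^2(M;\m)}\le\sqrt{\mathscr{E}(v-w)}$), hence weakly lower semicontinuous. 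As $\phi\ge0$ is arbitrary, this promotes integrated control of $\Gamma(u_n)$ to the $\m$-a.e.\ pointwise bound on $\Gamma(u)$.

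\textbf{Step 1: $X\in{\rm Reg}(TM)$.} Write $X=\sum_{i=1}^ng_i\nabla f_i$ with $f_i\in{\rm Test}(M)$ and $g_i\in{\rm Test}(M)\cup\R\1_M$. Then $|X|^2=\sum_{i,j=1}^ng_ig_j\,\Gamma(f_i,f_j)$, and since each $\Gamma(f_i,f_j)\in D(\mathscr{E})\cap L^\infty(M;\m)$ by Lemma~\ref{lem:algebra} and ${\rm Test}(M)$ is an algebra, $|X|^2\in D(\mathscr{E})\cap L^\infty(M;\m)$ (also $|X|\in L^\infty(M;\m)\cap L^2(M;\m)$). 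The analytic input is the Leibniz rule, valid on ${\rm Reg}(TM)$, expressing $\d(|X|^2)$ as twice the contraction of $\nabla X\in L^2(T^{\otimes2}X)$ against $X$ in one of its two factors (checked on generators $g\,\nabla f$ from $\nabla(g\,\nabla f)=\nabla g\otimes\nabla f+g\,{\rm Hess}\,f$ and the definition of the Hessian); by the pointwise Hilbert--Schmidt Cauchy--Schwarz inequality that contraction is $\le|\nabla X|_{\rm HS}\,|X|$ $\m$-a.e., so
\[
\sqrt{\Gamma(|X|^2)}\le 2\,|X|\,|\nabla X|_{\rm HS}\quad\m\text{-a.e.}
\]
Now regularise: for $\eps>0$ put $h_\eps:=\sqrt{|X|^2+\eps^2}-\eps$. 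Since $t\mapsto\sqrt{t+\eps^2}-\eps$ is Lipschitz on $[0,\infty)$, vanishes at $0$, and is dominated by $\sqrt t$, the chain rule for Dirichlet forms gives $h_\eps\in D(\mathscr{E})$ with
\[
\Gamma(h_\eps)=\frac{\Gamma(|X|^2)}{4\,(|X|^2+\eps^2)}\le\frac{4\,|X|^2\,|\nabla X|_{\rm HS}^2}{4\,(|X|^2+\eps^2)}\le|\nabla X|_{\rm HS}^2\quad\m\text{-a.e.},
\]
so $\mathscr{E}(h_\eps)\le\mathscr{E}_{\rm cov}(X)<\infty$ uniformly in $\eps$. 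Also $0\le|X|-h_\eps\le|X|\wedge\eps$ and $|X|\in L^2(M;\m)$, so by dominated convergence $h_\eps\to|X|$ in $L^2(M;\m)$ as $\eps\downarrow0$. Applying the soft tool with $u_n=h_{\eps_n}$, $\eps_n\downarrow0$, I obtain $|X|\in D(\mathscr{E})$ and $\Gamma(|X|)\le|\nabla X|_{\rm HS}^2$ $\m$-a.e.

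\textbf{Step 2: general $X$, and the last assertion.} For $X\in H^{1,2}(TM)$ pick $X_n\in{\rm Reg}(TM)$ with $X_n\to X$ in $\|\cdot\|_{W^{1,2}(TM)}$. The pointwise triangle inequalities $\bigl||X_n|-|X|\bigr|\le|X_n-X|$ and $\bigl||\nabla X_n|_{\rm HS}-|\nabla X|_{\rm HS}\bigr|\le|\nabla(X_n-X)|_{\rm HS}$ give $|X_n|\to|X|$ in $L^2(M;\m)$ and $|\nabla X_n|_{\rm HS}^2\to|\nabla X|_{\rm HS}^2$ in $L^1(M;\m)$. By Step 1, $|X_n|\in D(\mathscr{E})$, $\Gamma(|X_n|)\le|\nabla X_n|_{\rm HS}^2$, and $\sup_n\mathscr{E}(|X_n|)\le\sup_n\mathscr{E}_{\rm cov}(X_n)<\infty$. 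The soft tool then gives $|X|\in D(\mathscr{E})$ and, for every nonnegative $\phi\in L^\infty(M;\m)$,
\[
\int_M\phi\,\Gamma(|X|)\,\d\m\le\liminf_{n\to\infty}\int_M\phi\,|\nabla X_n|_{\rm HS}^2\,\d\m=\int_M\phi\,|\nabla X|_{\rm HS}^2\,\d\m,
\]
hence $|\nabla|X||=\sqrt{\Gamma(|X|)}\le|\nabla X|_{\rm HS}$ $\m$-a.e. Finally, if in addition $X\in L^\infty(M;\m)$, then $|X|\in D(\mathscr{E})\cap L^\infty(M;\m)$, and applying the chain rule to a Lipschitz truncation of $t\mapsto t^2$ that agrees with $t^2$ on the bounded range of $|X|$ and vanishes at $0$ shows $|X|^2\in D(\mathscr{E})$ with $\Gamma(|X|^2)=4\,|X|^2\,\Gamma(|X|)$.

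\textbf{Main obstacle.} The part that needs care is not the membership $|X|\in D(\mathscr{E})$ but the passage to the limit at the level of \emph{pointwise} gradients: mere lower semicontinuity of $\mathscr{E}$ only yields the integrated bound $\mathscr{E}(|X|)\le\mathscr{E}_{\rm cov}(X)$, whereas the statement demands the $\m$-a.e.\ inequality between $\Gamma(|X|)$ and $|\nabla X|_{\rm HS}^2$. The device that bridges this is the weak lower semicontinuity of the localised energies $v\mapsto\int_M\phi\,\Gamma(v)\,\d\m$ for all $\phi\ge0$ (equivalently, a Mazur-lemma argument exploiting the pointwise triangle inequality for $\sqrt{\Gamma(\cdot)}$). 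The remaining ingredient, the explicit Leibniz rule for $\d(|X|^2)$ in terms of $\nabla X$ and $X$ on ${\rm Reg}(TM)$, is routine bookkeeping from the product rule for $\nabla$ and the definition of the Hessian already available in the cited vector calculus.
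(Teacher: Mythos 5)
The paper does not prove this lemma at all: it is quoted verbatim from Braun (\cite[Lemma~6.12]{Braun:Tamed2021}), so there is no internal proof to compare against. Your argument is correct and is essentially the standard proof used in that reference (and in Gigli's calculus): establish $\sqrt{\Gamma(|X|^2)}\le 2|X|\,|\nabla X|_{\rm HS}$ for regular $X$ via the Leibniz/metric-compatibility rule, regularise with $\sqrt{|X|^2+\eps^2}-\eps$ and the chain rule, and then pass to general $X\in H^{1,2}(TM)$ by $W^{1,2}$-approximation. The one point genuinely worth isolating — that plain lower semicontinuity of $\mathscr{E}$ only gives the integrated bound, and that the $\m$-a.e.\ inequality requires the weak lower semicontinuity of the localised energies $v\mapsto\int_M\phi\,\Gamma(v)\,\d\m$ for all bounded $\phi\ge0$ — you identify and handle correctly, and your justification of that "soft tool" (convexity plus $\mathscr{E}_1$-continuity via the pointwise triangle inequality for $\sqrt{\Gamma(\cdot)}$) is sound. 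The only mild leap is labelling the identity $\d\langle X,X\rangle=2\,\nabla X(X,\cdot)$ on ${\rm Reg}(TM)$ as routine bookkeeping; it is in fact a separate lemma of the cited vector calculus (compatibility of $\nabla$ with the metric), but appealing to it is legitimate and does not create a gap.
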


\subsection{Bochner Laplacian}\label{subsec:BochnerLaplacian} 
We have a preliminary choice to make, i.e. either to define the Bochner Laplacian $\square^B$ on $W^{1,2}(TM)$ or on the \emph{strictly small space} $H^{1,2}(TM)$. We choose the latter one since the calculation rules is more powerful. 

\begin{defn}[{Bochner Laplacian}]\label{def:BochnerLaplacian}
{\rm We define $D(\square^B)$ to consist of all $X\in H^{1,2}(TM)$ for which there exists $Z\in L^2(TM)$ such that 
for every $Y\in H^{1,2}(TM)$, 
\begin{align*}
\int_M \langle Y,Z\rangle \d\m=-\int_M \langle \nabla Y\,|\,\nabla X\rangle \d\m. 
\end{align*}
In case of existence, $Z$ is uniquely determined, denoted by $\square^B X$ and called the \emph{Bochner Laplacian} (or \emph{connection Laplacian} or \emph{horizontal Laplacian}) of $M$. 
}
\end{defn} 
observe that $D(\square^B)$ is a vector space, and that $\square^B:D(\square^B)\to L^2(TM)$ is a linear operator. Both are easy to see from the linearity of the covariant derivative. 

We modify the functional from \eqref{eq:covariantfunctional} with the domain $W^{1,2}(TM)$ by introducing the \lq\lq augmented\rq\rq\, covariant energy functional $\widetilde{\mathscr{E}}_{\rm cov}:L^2(TM)\to [0,+\infty]$ with 
\begin{align}
\widetilde{\mathscr{E}}_{\rm cov}(X):=\left\{\begin{array}{cc}\displaystyle{\int_M|\nabla X|_{\rm HS}^2\d\m} & \text{ if }X\in H^{1,2}(TM), \\ \infty & \text{ otherwise. }\end{array}\right.\label{eq:covariantfunctional*}
\end{align}
Clearly, its (non-relabeled) polarization $\widetilde{\mathscr{E}}_{\rm cov}:H^{1,2}(TM)^2\to\R$ is a closed, symmetric form, and $\square^B$ is the non-positive, self-adjoint generator uniquely associated to it according to \cite[Theorem~3.3]{FOT}.  
We can write $\mathscr{E}^B$ instead of $\widetilde{\mathscr{E}}_{\rm cov}$. 
Let $(P_t^{\rm B})_{t\geq0}$ be the heat semigroup of bounded linear and self-adjoint operator on $L^2(TM)$ formally written by 
\begin{align*}
\text{\lq\lq $P_t^{\rm B}:=e^{t\,\square^{\rm B}}$\rq\rq}.
\end{align*}
For $\alpha>0$ and $X\in L^2(TM)$, 
we set $R_{\alpha}^{\rm B}X:=\int_0^{\infty}e^{-\alpha t}P_t^{\rm B}X \d t$.
\begin{lem}[{\cite[Corollary~6.21, Theorem~6.26]{Braun:Tamed2021}}]\label{lem:bochnerHF}
We have the following: 
\begin{enumerate}
\item[\rm(1)] $0\leq\inf\sigma(-\Delta)\leq\inf\sigma(-\square^{\rm B})$. 
\item[\rm(2)] For every $X\in L^2(TM)$ and every $t\geq0$,
\begin{align}
|P_t^{\rm B}X|\leq P_t|X|\quad\m\text{-a.e.}\label{eq:BochnerContraction}
\end{align}
\end{enumerate}
\end{lem}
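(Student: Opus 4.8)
The plan is to deduce both assertions from Kato's inequality for the Bochner Laplacian (Lemma~\ref{lem:KatoIneq}): a variational argument handles~(1), and the Ouhabaz domination criterion for semigroups handles~(2).

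For~(1), the bound $0\le\inf\sigma(-\Delta)$ is immediate, since $\Delta$ is the $L^2$-generator of the Dirichlet form $(\mathscr{E},D(\mathscr{E}))$, so $-\Delta$ is a nonnegative self-adjoint operator and $\sigma(-\Delta)\subset[0,+\infty[$. For the second inequality I would use the min--max formulas
\[
\inf\sigma(-\Delta)=\inf\bigl\{\mathscr{E}(f,f)\mid f\in D(\mathscr{E}),\ \|f\|_{L^2(M;\m)}=1\bigr\},\qquad
\inf\sigma(-\square^{\rm B})=\inf\bigl\{\widetilde{\mathscr{E}}_{\rm cov}(X)\mid X\in H^{1,2}(TM),\ \|X\|_{L^2(TM)}=1\bigr\},
\]
the latter because $-\square^{\rm B}$ is the nonnegative self-adjoint generator of the closed form $\widetilde{\mathscr{E}}_{\rm cov}$ with form domain $H^{1,2}(TM)$. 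For any $X\in H^{1,2}(TM)$ with $\|X\|_{L^2(TM)}=1$, Lemma~\ref{lem:KatoIneq} gives $|X|\in D(\mathscr{E})$, $\||X|\|_{L^2(M;\m)}=\|X\|_{L^2(TM)}=1$ and $\mathscr{E}(|X|,|X|)=\int_M|\nabla |X||^2\,\d\m\le\int_M|\nabla X|_{\rm HS}^2\,\d\m=\widetilde{\mathscr{E}}_{\rm cov}(X)$; taking the infimum over all such $X$ yields $\inf\sigma(-\Delta)\le\inf\sigma(-\square^{\rm B})$.

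For~(2), I would invoke the abstract criterion for the domination of a self-adjoint semigroup on a Hilbert module by the positivity-preserving semigroup of a Dirichlet form (Ouhabaz' criterion, see \cite{Ouhabaz}, in the $L^{\infty}$-module formulation used in \cite{Braun:Tamed2021}), applied with $(\mathscr{E},D(\mathscr{E}))$ on $L^2(M;\m)$ (semigroup $(P_t)_{t\ge0}$) and $\widetilde{\mathscr{E}}_{\rm cov}$ on $L^2(TM)$ (semigroup $(P_t^{\rm B})_{t\ge0}$). The criterion reduces $|P_t^{\rm B}X|\le P_t|X|$ to two facts: first, that $X\in H^{1,2}(TM)$ implies $|X|\in D(\mathscr{E})$, which is the first assertion of Lemma~\ref{lem:KatoIneq}; and second, the form comparison
\[
\widetilde{\mathscr{E}}_{\rm cov}\bigl(X,\eta\,\operatorname{sgn}(X)\bigr)\ \ge\ \mathscr{E}\bigl(|X|,\eta\bigr)
\]
for every $X\in H^{1,2}(TM)$ and every nonnegative $\eta\in D(\mathscr{E})\cap L^{\infty}(M;\m)$ with $\eta\,\operatorname{sgn}(X)\in H^{1,2}(TM)$, where $\operatorname{sgn}(X)\in L^2(TM)\cap L^{\infty}(TM)$ denotes a pointwise sign of $X$, i.e.\ $\langle X,\operatorname{sgn}(X)\rangle=|X|$, $|\operatorname{sgn}(X)|\le1$, and $\operatorname{sgn}(X)=0$ $\m$-a.e.\ on $\{|X|=0\}$. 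To establish this comparison I would reduce to $X\in{\rm Reg}(TM)$ by density of ${\rm Reg}(TM)$ in $H^{1,2}(TM)$ and closedness of the forms, regularize via $\operatorname{sgn}_{\ep}(X):=(|X|^2+\ep^2)^{-1/2}X$, expand $\nabla\bigl(\eta\,\operatorname{sgn}_{\ep}(X)\bigr)$ by the Leibniz rule for the covariant derivative, and use the pointwise identity $\langle\nabla X\,|\,\nabla\eta\otimes X\rangle=|X|\,\langle\nabla|X|,\nabla\eta\rangle$ together with the pointwise Cauchy--Schwarz and Kato estimates (in particular $|\nabla|X||\le|\nabla X|_{\rm HS}$). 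The integrand then equals $\langle\nabla|X|,\nabla\eta\rangle$ plus $\eta$ times a manifestly nonnegative multiple of $|\nabla X|_{\rm HS}^2-|\nabla|X||^2$, and this surplus term is exactly the content of Lemma~\ref{lem:KatoIneq}; passing $\ep\downarrow0$ gives the comparison. Conceptually, the comparison encodes the parabolic inequality $(\partial_t-\Delta)|P_t^{\rm B}X|\le0=(\partial_t-\Delta)P_t|X|$ with equal initial data.

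The step I expect to be the main obstacle is the rigorous execution of the form comparison inside the $L^{\infty}$-module framework: assigning a precise meaning to $\operatorname{sgn}(X)$ and checking $\eta\,\operatorname{sgn}(X)\in H^{1,2}(TM)$ so that $\widetilde{\mathscr{E}}_{\rm cov}(X,\eta\,\operatorname{sgn}(X))$ is defined; applying the Leibniz rule for $\nabla$ to a product with the factor $\operatorname{sgn}_{\ep}(X)$, which is not a priori a Sobolev vector field; and carrying out the limit $\ep\downarrow0$ with enough uniformity, in particular controlling the behaviour on $\{|X|=0\}$ (where one wants $\operatorname{sgn}_{\ep}(X)\to0$ and the surplus term to remain nonnegative), so as to pass the inequality to the limit and then remove the density reduction. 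Once both facts are in place, the domination criterion delivers \eqref{eq:BochnerContraction}; in particular $(P_t^{\rm B})_{t\ge0}$ is $L^2$-contractive, and the bound in~(1) can alternatively be read off from $\|P_t^{\rm B}X\|_{L^2(TM)}\le\|P_t|X|\|_{L^2(M;\m)}$ by letting $t\to\infty$.
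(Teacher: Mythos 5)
The paper does not actually prove this lemma --- it is imported verbatim from Braun \cite{Braun:Tamed2021} (Corollary~6.21 and Theorem~6.26) --- and your argument is essentially the proof given there: the variational characterization of the bottom of the spectrum combined with Kato's inequality (Lemma~\ref{lem:KatoIneq}) for (1), and the Ouhabaz-type domination criterion for (2), with Kato's inequality supplying both the generalized-ideal property and the form comparison. The technical points you flag (giving meaning to $\operatorname{sgn}(X)$, checking $\eta\,\operatorname{sgn}(X)\in H^{1,2}(TM)$, and the limit $\ep\downarrow0$) are precisely the ones handled in the cited source, so the outline is sound.
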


\begin{cor}[{\cite[Corollary~6.29]{Braun:Tamed2021}}]\label{cor:strongcontiBochnerSemigroup}
Suppose $p\in[1,+\infty[$. Then the heat flow $(P_t^{\rm B})_{t\geq0}$ can be extended to a contractive 
semigroup on $L^p(TM)$, which is strongly continuous on $L^p(TM)$ under $p\in[1,+\infty[$ and 
weakly* continuous on $L^{\infty}(TM)$.
\end{cor}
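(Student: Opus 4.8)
The plan is to derive all three assertions from the pointwise semigroup domination $|P_t^{\rm B}X|\le P_t|X|$ $\m$-a.e.\ of Lemma~\ref{lem:bochnerHF}(2), together with the $L^p$-contractivity and strong continuity of the scalar heat semigroup $(P_t)_{t\ge0}$ recalled in Subsection~\ref{subsec:Frame}. First, for $X\in L^2(TM)\cap L^p(TM)$ the domination and the sub-Markovianity of $(P_t)_{t\ge0}$ give
\begin{align*}
\|P_t^{\rm B}X\|_{L^p(TM)}=\bigl\||P_t^{\rm B}X|\bigr\|_{L^p(M;\m)}\le\bigl\|P_t|X|\bigr\|_{L^p(M;\m)}\le\bigl\||X|\bigr\|_{L^p(M;\m)}=\|X\|_{L^p(TM)},
\end{align*}
so, invoking the density of $L^2(TM)\cap L^p(TM)$ in $L^p(TM)$ for $p\in[1,+\infty[$ from Definition~\ref{def:Test1Forms}, the operator $P_t^{\rm B}$ extends uniquely to a linear contraction on $L^p(TM)$, and the semigroup law carries over to the extension by continuity.

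For the strong continuity I would treat $X\in L^2(TM)\cap L^p(TM)$ first, so that $|X|\in L^2(M;\m)\cap L^p(M;\m)$. Since $(P_t^{\rm B})_{t\ge0}$ is the $C_0$-semigroup on $L^2(TM)$ generated by the self-adjoint operator $\square^{\rm B}$, one has $P_t^{\rm B}X\to X$ in $L^2(TM)$ as $t\downarrow0$; along any $t_n\downarrow0$ I then pass to a subsequence (not relabeled) with $|P_{t_n}^{\rm B}X-X|\to0$ $\m$-a.e. The domination yields
\begin{align*}
|P_{t_n}^{\rm B}X-X|^p\le\bigl(P_{t_n}|X|+|X|\bigr)^p\le 2^{p-1}\bigl((P_{t_n}|X|)^p+|X|^p\bigr)=:w_n,
\end{align*}
and $w_n\to 2^p|X|^p$ in $L^1(M;\m)$, because $P_{t_n}|X|\to|X|$ in $L^p(M;\m)$ by the strong continuity of $(P_t)$ on $L^p(M;\m)$. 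Passing to a further subsequence so that $w_n\to 2^p|X|^p$ also $\m$-a.e., Fatou's lemma applied to $w_n-|P_{t_n}^{\rm B}X-X|^p\ge0$ (together with $\int_M w_n\,\d\m\to\int_M 2^p|X|^p\,\d\m$) forces $\|P_{t_n}^{\rm B}X-X\|_{L^p(TM)}\to0$; since the initial sequence was arbitrary, $P_t^{\rm B}X\to X$ in $L^p(TM)$. A routine $3\eps$-argument based on the contractivity of the first step and the same density then upgrades the convergence to arbitrary $X\in L^p(TM)$.

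For the weak* statement I would identify $L^\infty(TM)$ with the Banach dual of $L^1(TM)$ via the musical isomorphism and the pairing $(Y,X)\mapsto\int_M\langle Y,X\rangle\,\d\m$, and define $P_t^{\rm B}$ on $L^\infty(TM)$ as the adjoint of the contraction $P_t^{\rm B}$ on $L^1(TM)$; this is consistent with the $L^2$-definition since $\square^{\rm B}$ is self-adjoint. Then, for every $Y\in L^1(TM)$,
\begin{align*}
\int_M\langle Y,P_t^{\rm B}X\rangle\,\d\m=\int_M\langle P_t^{\rm B}Y,X\rangle\,\d\m\longrightarrow\int_M\langle Y,X\rangle\,\d\m\qquad(t\downarrow0),
\end{align*}
by the $p=1$ case of the strong continuity and $X\in L^\infty(TM)$, which is exactly the claimed weak* continuity. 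I expect the only genuinely delicate point to be the $L^p$-strong continuity: the pointwise domination by itself does not yield convergence, so one has to combine the $L^2$-to-almost-everywhere passage along a subsequence with the $L^1$-convergence of the dominating sequence through a generalized dominated convergence argument; the rest is bookkeeping with the pointwise module norms and with the duality $L^\infty(TM)=L^1(TM)'$.
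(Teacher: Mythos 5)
Your argument is correct. Note first that the paper does not actually prove this corollary: it is imported verbatim from \cite[Corollary~6.29]{Braun:Tamed2021}, so the only meaningful in-paper comparison is with the proof of Corollary~\ref{cor:HessShcraderUhlenbrock}, the $\kappa$-perturbed analogue for $(P_t^{\rm HK})_{t\geq0}$. That proof follows the same skeleton as yours --- pointwise domination (here Lemma~\ref{lem:bochnerHF}(2)) plus density for the $L^p$-extension, then an a.e.-convergence-plus-integral-control argument for strong continuity --- but it closes the strong-continuity step differently: it shows $\|P_{t_n}^{\rm HK}\omega\|_{L^p}\to\|\omega\|_{L^p}$ (Fatou for the $\liminf$, the contraction bound for the $\limsup$) and then invokes the Radon--Riesz property \cite[Theorem~16.6]{RneSchilling}, whereas you run a generalized dominated convergence (Pratt) argument with the explicit dominating sequence $w_n=2^{p-1}\bigl((P_{t_n}|X|)^p+|X|^p\bigr)$. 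Both routes are valid; yours additionally uses the standard (but unproved in your sketch) fact that $f_n\to f$ in $L^p$ implies $|f_n|^p\to|f|^p$ in $L^1$, while the paper's route needs the norm convergence, which in the unperturbed Bochner setting is immediate from contractivity. Your treatment of the weak* continuity on $L^{\infty}(TM)$ by taking adjoints of the $L^1$-semigroup is cleaner than the truncation argument used for Corollary~\ref{cor:HessShcraderUhlenbrock} and is consistent with the $L^2$-theory by self-adjointness of $\square^{\rm B}$.
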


\section{Exterior derivative}\label{sec:ExteriorDerivative}
Throughout this section, we fix $k\in\N\cup\{0\}$.  
\subsection{The Sobolev space $D(\d^k)$ and $D(\d_*^k)$}\label{subsec:ExteriorDerivSobolev}

Given $\omega\in L^0(\Lambda^k T^*\!M)$ and $X_0,\cdots, X_k,Y\in L^0(TM)$, we shall use the standard abbreviations: for $1\leq i<j\leq k$ 
\begin{align*}
\omega(\widehat{X}_i):&=\omega(X_0,\cdots,\widehat{X}_i,\cdots, X_k),\\
:&=\omega(X_0\land\cdots\land X_{i-1}\land X_{i+1}\land\cdots\land X_k),\\
\omega(Y,\widehat{X}_i,\widehat{Y}_j):&=\omega(Y,X_0,\cdots,\widehat{X}_i,\cdots,\widehat{X}_j,\cdots, X_k),\\
:&=\omega(Y\land X_0\land\cdots\land X_{i-1}\land X_{i+1}\land\cdots\land Y_{j-1}\land Y_{j+1}\land\cdots\land X_k).
\end{align*}

\begin{defn}[Sobolev space $D(\d^k)$]\label{def:d}
{\rm We define $D(\d^k)$ to consist of all $\omega\in L^2(\Lambda^kT^*\!M)$ for which there exists $\eta\in L^2(\Lambda^{k+1}T^*\!M)$ such that for every $X_0,\cdots, X_k\in {\rm Test}(M)$, 
\begin{align*}
\int_M \eta(X_0,\cdots, X_k)\d\m&=\int_M\sum_{i=0}^k(-1)^{i+1}\omega(\widehat{X}_i){\rm div}\,X_i\d\m\\
&\hspace{2cm}+\int_M\sum_{i=0}^k\sum_{j=i+1}^k(-1)^{i+j}\omega([X_i,X_j],\widehat{X}_i,\widehat{X}_j)\d\m.
\end{align*}
In case of existence, the element $\eta$ is unique, denoted by $\d\omega$ and called the \emph{exterior derivative} 
(or \emph{exterior differential}) of $\omega$. 
}
\end{defn}

The uniqueness follows by density of ${\rm Test}(\Lambda^{k+1}T^*\!M)$ in $L^2(\Lambda^{k+1}T^*\!M)$ as discussed in 
Section~\ref{subsec:LebesgueSpace}. It is then clear that  $D(\d^k)$ is a real vector space and that $\d$ is 
a linear operator on it.

We always endow  $D(\d^k)$ with the norm $\|\cdot\|_{D(\d^k)}$ given by 
\begin{align*}
\|\omega\|_{D(\d^k)}^2:=\|\omega\|_{L^2(\Lambda^k T^*\!M)}^2+\|\d\omega\|_{L^2(\Lambda^k T^*\!M)}^2.
\end{align*}

We introduce the functional $\mathscr{E}_{\d}: L^2(\Lambda^kT^*\!M)\to[0,+\infty]$ with
\begin{align*}
\mathscr{E}_{\d}(\omega):=\left\{\begin{array}{cc}\displaystyle{\int_M |\d\omega|^2\d\m} & \text{ if }\omega\in D(\d^k), \\ \infty & \text{ otherwise.} \end{array}\right.
\end{align*} 
We do not make explicit the dependency of $\mathscr{E}_{\d}$ on the degree $k$. It will always be clear 
from the context which one is intended. It is proved in \cite[Theorem~7.5]{Braun:Tamed2021} that 
$(D(\d^k),\|\cdot\|_{D(\d^k)})$  is a separable Hilbert space, the exterior differential $\d$ is a closed operator, 
${\rm Reg}(\Lambda^kT^*\!M)\subset D(\d^k)$, $D(\d^k)$ is dense in $L^2 (\Lambda^kT^*\!M)$, and the functional 
$\mathscr{E}_{\d}:L^2(\Lambda^kT^*\!M)\to[0,+\infty]$ is lower semi continuous. 

\begin{defn}[The space $D_{\rm reg}(\d^k)$]\label{def:d*}
{\rm We define the space $D_{\rm reg}(\d^k)\subset D(\d^k)$ by the closure of ${\rm Reg}(\Lambda^kT^*\!M)$ with respect to the norm $\|\cdot\|_{D(\d^k)}$:
\begin{align*}
D_{\rm reg}(\d^k):=\overline{{\rm Reg}(\Lambda^kT^*\!M)}^{\|\cdot\|_{D(\d^k)}}.
\end{align*}
It is proved in \cite[Theorem~7.5]{Braun:Tamed2021} that for every $\omega\in D_{\rm reg}(\d^k)$, we have 
$\d\omega\in D_{\rm reg}(\d^{k+1})$ with $\d(\d\omega)=0$. 
}
\end{defn}

\begin{defn}[The space $D(\d_*^k)$]\label{def:adjointextrioderivative}
{\rm Given any $k\in\N$, the space  $D(\d_*^k)$ is defined to consist of all $\omega\in L^2(\Lambda^kT^*\!M)$ for which there exists $\rho\in L^2(\Lambda^{k-1}T^*\!M)$ such that for every $\eta\in {\rm Test}(\Lambda^{k-1}T^*\!M)$, we have 
\begin{align*}
\int_M \langle \rho,\eta\rangle \,\d\m=\int_M \langle \omega,\d\eta\rangle \,\d\m. 
\end{align*}
If it exists, $\rho$ is unique, denoted by $\d_*\omega$ and called the \emph{codifferential} of $\omega$. We simply define 
$D(\d_*^0):=L^0(M;\m)$ and $\d_*:=0$ on this space. 
}
\end{defn}
By the density of ${\rm Test}(\Lambda^{k-1}T^*\!M)$ in $L^2(\Lambda^{k-1}T^*\!M)$, the uniqueness statement is indeed true. 
Furthermore, $\d_*$ is a closed operator, i.e., the image of the assignment ${\rm Id}\times\d_*:D(\d_*^k)\to L^2(\Lambda^kT^*\!M)\times L^2(\Lambda^{k-1}T^*\!M)$ is closed in $L^2(\Lambda^kT^*\!M)\times L^2(\Lambda^{k-1}T^*\!M)$. 

By way of \cite[Theorem~7.5 and Lemma~7.15]{Braun:Tamed2021}, we have the following: 
For $f\in D(\mathscr{E})_e\cap L^{\infty}(M;\m)$ and $\omega\in H^{1,2}(T^*\!M)$ satisfying $\d f\in L^{\infty}(M;\m)$ or $\omega\in L^{\infty}(T^*\!M)$, 
$f\omega\in H^{1,2}(T^*\!M)$ with
\begin{align}
\d(f\omega)&=f\d\omega+\d f\land \omega,\\
\d_*(f\omega)&=f\d_*\omega-\langle \d f,\omega\rangle.
\end{align}

\subsection{The Sobolev spaces $W^{1,p}(\Lambda^kT^*\!M)$ and $H^{1,p}(\Lambda^{k+1}T^*\!M)$ 
}\label{subsec:SobolevExteriorDual}

\begin{defn}[The space $W^{1,2}(\Lambda^kT^*\!M)$]
{\rm We define the space $W^{1,2}(\Lambda^kT^*\!M)$ by 
\begin{align*}
W^{1,2}(\Lambda^kT^*\!M):=D(\d^k)\cap D(\d_*^k).
\end{align*}
By \cite[Theorem~7.5 and Lemma~7.15]{Braun:Tamed2021}, we already know that $W^{1,2}(\Lambda^kT^*\!M)$ is a dense subspace of 
$L^2(\Lambda^kT^*\!M)$. 

We endow $W^{1,2}(\Lambda^kT^*\!M)$ with the norm $\|\cdot\|_{W^{1,2}(\Lambda^kT^*\!M)}$ given by 
\begin{align*}
\|\omega\|_{W^{1,2}(\Lambda^kT^*\!M)}^2:=\|\omega\|_{L^2(\Lambda^kT^*\!M)}^2+\|\d \omega\|_{L^2(\Lambda^{k+1}T^*\!M)}^2+
\|\d_*\omega\|_{L^2(\Lambda^{k-1}T^*\!M)}^2
\end{align*}
and we define the \emph{contravariant} functional $\mathscr{E}_{\rm con}: L^2(\Lambda^kT^*\!M)\to[0,+\infty]$ by 
\begin{align*}
\mathscr{E}_{\rm con}(\omega):=\left\{\begin{array}{cc}\displaystyle{\int_M \left[|\d\omega|^2+|\d_*\omega|^2 \right]\d\m} & \text{ if }\omega\in W^{1,2}(\Lambda^kT^*\!M) \\\infty & \text{otherwise.} \end{array}\right.
\end{align*}
}
\end{defn}
Arguing as for \cite[Theorems~5.3, 6.3 and 7.5]{Braun:Tamed2021}, $W^{1,2}(\Lambda^kT^*\!M)$ becomes a separable Hilbert space with respect to $\|\cdot\|_{W^{1,2}(\Lambda^kT^*\!M)}$. moreover, the functional $\mathscr{E}_{\rm con}:L^2(\Lambda^kT^*\!M)\to[0,+\infty]$ is clearly lower semi continuous. 

Again by \cite[Theorem~7.5 and Lemma~7.15]{Braun:Tamed2021}, we have ${\rm Reg}(\Lambda^kT^*\!M)\subset W^{1,2}(\Lambda^kT^*\!M)$, so that the following definition makes sense. 

\begin{defn}[The space $H^{1,2}(\Lambda^kT^*\!M)$]
{\rm The space $H^{1,2}(\Lambda^kT^*\!M)\subset W^{1,2}(\Lambda^kT^*\!M)$ is defined by 
the closure of ${\rm Reg}(\Lambda^kT^*\!M)$ with respect to $\|\cdot\|_{W^{1,2}(\Lambda^kT^*\!M)}$: 
\begin{align*}
H^{1,2}(\Lambda^kT^*\!M):=\overline{{\rm Reg}(\Lambda^kT^*\!M)}^{\|\cdot\|_{W^{1,2}(\Lambda^kT^*\!M)}}.
\end{align*}

}
\end{defn}

\subsection{Hodge-Kodaira Laplacian}\label{subsec:HodgeKodairaLaplacian}
We now define the Hosdge-Kodaira Laplacian in L$^2$-sense: 
\begin{defn}[$L^2$-Hodge-Kodaira Laplacian $\DD_k$]
{\rm The space $D(\DD_k)$ is defined to consist of all $\omega\in H^{1,2}(\Lambda^kT^*\!M)$ for which there exists $\alpha\in L^2(\Lambda^kT^*\!M)$ such that for every $\eta\in  H^{1,2}(\Lambda^kT^*\!M)$,
\begin{align*}
\int_M \langle \alpha,\eta\rangle \d\m=-\int_M \left[\langle \d\omega,\d\eta\rangle +\langle \d_*\omega,\d_*\eta\rangle  \right]\d\m.
\end{align*}
In case of existence, the element $\alpha$ is unique, denoted by $\DD_k\omega$ and called the 
\emph{Hodge Laplacian}, \emph{Hodge-Kodaira Laplacian} or \emph{Hodge-de~\!\!Rham Laplacian} of $\omega$. 
Formally $\DD_k\omega$ can  be written \lq\lq$\DD_k\omega=-(\d\d_*+\d_*\d)\omega$\rq\rq. 
}
\end{defn}

For the most important case $k=1$, we write $\DD$ instead of $\DD_1$. We see $\DD_0=\Delta$ the usual $L^2$-generator associated to the given quasi-regular strongly local Dirichlet form $(\mathscr{E},D(\mathscr{E}))$. Moreover, the Hodge-Kodaira Laplacian $\DD_k$ is a closed operator. 

\subsection{Heat flow of $1$-forms associated with $\DD$}\label{subsec:HFHosgekodaira}
We define the heat flow $P_t^{\rm HK}$ on $1$-forms associated to the functional $\widetilde{\mathscr{E}}_{\rm con}:L^2(T^*\!M)\to [0,+\infty]$ with 
\begin{align*}
\widetilde{\mathscr{E}}_{\rm con}(\omega):=\left\{\begin{array}{cc}\displaystyle{\int_M \left[|\d\omega|^2+|\d_*\omega|^2 \right]\d\m} & \text{ if }\omega\in H^{1,2}(T^*\!M), \\ \infty & \text{ otherwise.}\end{array}\right.
\end{align*}
We can write $\mathscr{E}^{\rm HK}$ instead of $\widetilde{\mathscr{E}}_{\rm con}$. Let $(P_t^{\rm HK})_{t\geq0}$ be the heat semigroup of bounded linear and self-adjoint operator on $L^2(T^*\!M)$ formally written by 
\begin{align*}
\text{\lq\lq $P_t^{\rm HK}:=e^{t\,\DD}$\rq\rq}.
\end{align*}
The following are important: 
\begin{lem}[{\cite[Lemma~8.32, Corollaries~8.35 and 8.36]{Braun:Tamed2021}}]
We have the following:
\begin{enumerate}
\item[\rm(1)]  For every $f\in D(\mathscr{E})$ and every $t>0$, $\d P_tf\in D(\DD)$ and 
\begin{align}
P_t^{\rm HK}\d f=\d P_tf.\label{eq:intertwining1}
\end{align}
\item[\rm(2)] If $\omega\in D(\d_*)$ and $t>0$, then $P_t^{\rm HK}\omega\in D(\d_*)$ and 
\begin{align}
\d_*P_t^{\rm HK}\omega=P_t\d_*\omega.\label{eq:intertwining2}
\end{align}
\item[\rm(3)] $\inf\sigma(-\Delta^{\kappa})\leq\inf \sigma(-\DD)$.
\end{enumerate}
\end{lem}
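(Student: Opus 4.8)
The plan is to follow (and lightly reorganize) Braun's argument. Assertion (1) is an intertwining property, which I would prove in three steps: (a) check the commutation $\DD\,\d g=\d\Delta g$ for $g\in{\rm Test}(M)$; (b) run the Hodge heat equation for $s\mapsto\d P_sg$ and use the dissipativity of $\DD$; (c) pass to general $f\in D(\mathscr{E})$ by density. Assertion (2) will then follow from (1) by a short computation exploiting the $\m$-symmetry of $P_t$ and the self-adjointness of $P_t^{\rm HK}$. Assertion (3) will follow from the Weitzenböck identity of the tamed vector calculus combined with Kato's inequality (Lemma~\ref{lem:KatoIneq}).

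For (1), take $g\in{\rm Test}(M)$. Then $\d g=\1_M\,\d g\in{\rm Reg}(T^*\!M)\subset H^{1,2}(T^*\!M)$, and, $\d g$ being regular, $\d\d g=0$; testing the definition of the codifferential (Definition~\ref{def:adjointextrioderivative}) against $\eta\in{\rm Test}(M)$ gives $\int_M\langle\d g,\d\eta\rangle\,\d\m=\mathscr{E}(g,\eta)=-\int_M(\Delta g)\eta\,\d\m$, so $\d g\in D(\d_*)$ with $\d_*\d g=-\Delta g\in D(\mathscr{E})$. Inserting $\d\d g=0$ and $\d_*\d g=-\Delta g$ into the defining relation of $D(\DD)$, and extending the codifferential identity $\int_M(\d_*\eta)h\,\d\m=\int_M\langle\eta,\d h\rangle\,\d\m$ from $h\in{\rm Test}(M)$ to all $h\in D(\mathscr{E})$ (both sides are $\mathscr{E}_1$-continuous in $h$, and ${\rm Test}(M)$ is $\mathscr{E}_1$-dense by Lemma~\ref{lem:DensenessTestFunc}), one reads off $\d g\in D(\DD)$ with $\DD\,\d g=\d\Delta g$. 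Now fix also $t>0$ and set $\omega_s:=\d P_sg$, $s\in[0,t]$. Since $g\in D(\Delta)$ and $\Delta g\in D(\mathscr{E})$, the curve $s\mapsto P_sg$ is continuously differentiable in the $\mathscr{E}_1$-norm with derivative $P_s\Delta g$; as $\d$ is $\mathscr{E}_1$-bounded, $s\mapsto\omega_s$ is $C^1$ with $\omega_s'=\d\Delta P_sg$, and since $P_sg\in{\rm Test}(M)$ by Lemma~\ref{lem:boundedEst}, the commutation just proved yields $\omega_s\in D(\DD)$ and $\omega_s'=\DD\omega_s$. Hence $s\mapsto\|\omega_s-P_s^{\rm HK}\d g\|_{L^2(T^*\!M)}^2$ has derivative $2\langle\DD(\omega_s-P_s^{\rm HK}\d g),\,\omega_s-P_s^{\rm HK}\d g\rangle\le0$ and vanishes at $s=0$, so $\omega_s\equiv P_s^{\rm HK}\d g$; in particular $\d P_tg=P_t^{\rm HK}\d g$. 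For arbitrary $f\in D(\mathscr{E})$, pick $g_n\in{\rm Test}(M)$ with $g_n\to f$ in the $\mathscr{E}_1$-norm (Lemma~\ref{lem:DensenessTestFunc}); as $\d$ is $\mathscr{E}_1$-bounded and $P_t$ is $\mathscr{E}_1$-contractive, $\d P_tg_n\to\d P_tf$ in $L^2(T^*\!M)$, while $P_t^{\rm HK}\d g_n\to P_t^{\rm HK}\d f$; passing to the limit gives $\d P_tf=P_t^{\rm HK}\d f$, which lies in $D(\DD)$ because $P_t^{\rm HK}=e^{t\DD}$ is analytic (its generator $\DD$ is self-adjoint and $\le0$).

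For (2), let $\omega\in D(\d_*)$ and $t>0$. Given $\eta\in{\rm Test}(M)$ one has $P_t\eta\in{\rm Test}(M)$ (Lemma~\ref{lem:boundedEst}), so using the $\m$-symmetry of $P_t$, the definition of the codifferential, assertion (1), and the self-adjointness of $P_t^{\rm HK}$ on $L^2(T^*\!M)$,
\begin{align*}
\int_M(P_t\d_*\omega)\,\eta\,\d\m&=\int_M(\d_*\omega)(P_t\eta)\,\d\m=\int_M\langle\omega,\d P_t\eta\rangle\,\d\m\\
&=\int_M\langle\omega,P_t^{\rm HK}\d\eta\rangle\,\d\m=\int_M\langle P_t^{\rm HK}\omega,\d\eta\rangle\,\d\m.
\end{align*}
By the defining property of $D(\d_*)$ (Definition~\ref{def:adjointextrioderivative}), this shows $P_t^{\rm HK}\omega\in D(\d_*)$ with $\d_*P_t^{\rm HK}\omega=P_t\d_*\omega$.

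For (3), I would use the variational formulas $\inf\sigma(-\DD)=\inf\{\mathscr{E}^{\rm HK}(\omega)/\|\omega\|_{L^2(T^*\!M)}^2:0\ne\omega\in H^{1,2}(T^*\!M)\}$ and $\inf\sigma(-\Delta^{\kappa})=\inf\{\mathscr{E}^{\kappa}(u)/\|u\|_{L^2(M;\m)}^2:0\ne u\in D(\mathscr{E})\}$. Since $\mathscr{E}^{\rm HK}$ and $\|\cdot\|_{L^2(T^*\!M)}^2$ are $\|\cdot\|_{W^{1,2}(T^*\!M)}$-continuous on $H^{1,2}(T^*\!M)$ and ${\rm Reg}(T^*\!M)$ is dense therein, it suffices to bound the first Rayleigh quotient below by $\inf\sigma(-\Delta^\kappa)$ for $\omega\in{\rm Reg}(T^*\!M)$. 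For such $\omega$, $\omega^{\sharp}\in{\rm Reg}(TM)\subset H^{1,2}(TM)$, and the Weitzenböck identity of the tamed vector calculus gives $\mathscr{E}^{\rm HK}(\omega)=\mathscr{E}_{\rm cov}(\omega^{\sharp})+\mathbf{Ric}(\omega,\omega)$, with the Ricci term obeying $\mathbf{Ric}(\omega,\omega)\ge\langle 2\kappa,|\omega|^2\rangle$ (the curvature lower bound encoded in Assumption~\ref{asmp:Tamed}, in the normalization of $\mathscr{E}^{q\kappa}=\mathscr{E}+\langle2\kappa,\widetilde{(\cdot)}^2\rangle$); Kato's inequality (Lemma~\ref{lem:KatoIneq}), together with $|\omega^{\sharp}|=|\omega|$, yields $|\omega|\in D(\mathscr{E})$ and $\mathscr{E}_{\rm cov}(\omega^{\sharp})\ge\int_M|\nabla|\omega||^2\,\d\m=\mathscr{E}(|\omega|)$. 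Hence $\mathscr{E}^{\rm HK}(\omega)\ge\mathscr{E}(|\omega|)+\langle 2\kappa,|\omega|^2\rangle=\mathscr{E}^{\kappa}(|\omega|)\ge\inf\sigma(-\Delta^\kappa)\,\||\omega|\|_{L^2(M;\m)}^2=\inf\sigma(-\Delta^\kappa)\,\|\omega\|_{L^2(T^*\!M)}^2$, and taking the infimum over $\omega$ gives $\inf\sigma(-\DD)\ge\inf\sigma(-\Delta^{\kappa})$. The hardest part will be the generator commutation in (1) --- in particular the need to remain inside $H^{1,2}(T^*\!M)$ rather than merely $W^{1,2}(T^*\!M)$, which forces working with test functions, for which $\d g$ is automatically regular --- together with, in (3), the availability of the Weitzenböck identity and of the Ricci lower bound, both of which rest on the full tamed vector-calculus machinery and are not elementary.
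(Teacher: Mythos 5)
The paper gives no proof of this lemma: it is imported verbatim from Braun's work (\cite[Lemma~8.32, Corollaries~8.35 and 8.36]{Braun:Tamed2021}), so there is nothing internal to compare your argument against. That said, your reconstruction is sound and follows the expected route: the commutation $\DD\,\d g=\d\Delta g$ on ${\rm Test}(M)$ (using $\d\d g=0$ for regular forms and $\d_*\d g=-\Delta g$), the Gronwall/dissipativity uniqueness argument for the two heat flows, the duality computation for (2), and the Rayleigh-quotient comparison for (3). Two small remarks. First, in (3) the inequality $\mathscr{E}^{\rm HK}(\omega)\ge\mathscr{E}^{\kappa}(|\omega|)$ that you derive from the Weitzenb\"ock identity \eqref{eq:RicciKappaMeasureTotal}, \eqref{eq:NonKappaRicci} and Kato's inequality is exactly the paper's Lemma~\ref{lem:HKEnergyCont}, valid for all $\omega\in H^{1,2}(T^*\!M)$, so you could cite it and skip both the reduction to ${\rm Reg}(T^*\!M)$ and the curvature computation. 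Second, your normalization $\mathbf{Ric}(\omega,\omega)\ge\langle 2\kappa,|\omega|^2\rangle$ and the identity $\mathscr{E}(|\omega|)+\langle 2\kappa,|\omega|^2\rangle=\mathscr{E}^{\kappa}(|\omega|)$ carry a spurious factor $2$: with the convention actually used in Lemma~\ref{lem:HKEnergyCont} (and in \eqref{eq:RicciMeasure}) one has $\mathscr{E}^{\kappa}(f)=\mathscr{E}(f)+\langle\kappa,\tilde f^2\rangle$ and $\mathbf{Ric}(X,X)\ge\kappa\,\widetilde{|X|}^2$; the conclusion is unaffected, but the constants should be stated consistently.
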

The formulas \eqref{eq:intertwining1} and \eqref{eq:intertwining2} are called \emph{intertwining properties}, which play 
a crucial role to prove the $L^p$-boundedness of Riesz operator.

\section{Ricci curvature measures}\label{sec:RicciCurvMeasure}
To state the notion of Ricci curvature measure, we need preliminary preparation.

The following are shown in \cite[Lemmas~8.1 and 8.2]{Braun:Tamed2021}:
\begin{lem}[{\cite[Lemma~8.1]{Braun:Tamed2021}}]
For every $g\in {\rm Test}(M)\cup\R\1_M$ and every $f\in{\rm Test}(M)$. we have $g\,\d f\in D(\DD)$ with 
\begin{align*}
\DD(g\,\d f)=g\,\d \Delta f+\Delta g\,\d f+2{\rm Hess}\,f(\nabla g,\cdot), 
\end{align*}
with the usual interpretations $\nabla \1_M=0$ and $\Delta \1_M=0$. More generally, for every $X\in {\rm Reg}(TM)$ and every $h\in {\rm Test}(M)\cup\R\1_M$, we have $hX^{\flat}\in D(\DD)$ with  
\begin{align*}
\DD(hX^{\flat})=h\DD X^{\flat}-\DD hX^{\flat}+2(\nabla_{\nabla h}X)^{\flat}.
\end{align*}
\end{lem}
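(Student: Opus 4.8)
The plan is to prove the first, concrete identity by verifying directly the weak definition of $D(\DD)$, and then to obtain the general identity from it by bilinearity together with the algebra property of ${\rm Test}(M)$ (Lemma~\ref{lem:algebra}). Fix $f\in{\rm Test}(M)$ and $g\in{\rm Test}(M)$; the case $g\in\R\1_M$ is covered by the conventions $\nabla\1_M=\Delta\1_M=0$ together with the relation $\DD\,\d f=\d\Delta f$, obtained by differentiating the intertwining identity \eqref{eq:intertwining1} at $t=0$. Since $g\,\d f\in{\rm Reg}(T^*\!M)\subset H^{1,2}(T^*\!M)$, it is an admissible competitor in the definition of $D(\DD)$. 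Applying the Leibniz rules for $\d$ and $\d_*$ on a product $\varphi\,\omega$, together with $\d(\d f)=0$ in $D_{\rm reg}(\d^1)$ and $\d_*(\d f)=-\Delta f$ (read off the definition of $\d_*^1$ tested against ${\rm Test}(M)$ and using $f\in D(\Delta)$), one gets $\d(g\,\d f)=\d g\land\d f$ and $\d_*(g\,\d f)=-g\,\Delta f-\langle\d g,\d f\rangle$; by Lemma~\ref{lem:algebra} the function $g\,\Delta f+\langle\d g,\d f\rangle$ lies in $D(\mathscr{E})\cap L^\infty(M;\m)$ and $\d g\land\d f\in{\rm Reg}(\Lambda^2T^*\!M)\subset D(\d_*^2)$, so both objects lie in the relevant $L^2$-spaces.

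\textbf{Verification of the weak identity.}
Next I would take as candidate for $\DD(g\,\d f)$ the $1$-form $\alpha:=g\,\d\Delta f+\Delta g\,\d f+2\,{\rm Hess}\,f(\nabla g,\cdot)$; it belongs to $L^2(T^*\!M)$ since ${\rm Hess}\,f\in L^2((T^*)^{\otimes2}X)$, $g,\Delta g\in L^\infty(M;\m)$, $\nabla g\in L^\infty(TM)$ and $\d\Delta f\in L^2(T^*\!M)$ (because $\Delta f\in D(\mathscr{E})$). By $\|\cdot\|_{W^{1,2}(T^*\!M)}$-density of ${\rm Reg}(T^*\!M)$ in $H^{1,2}(T^*\!M)$ and continuity of both sides in $\eta$, it suffices to check, for every $\eta\in{\rm Reg}(T^*\!M)$, that $\int_M\langle\alpha,\eta\rangle\,\d\m$ equals $-\int_M\bigl[\langle\d g\land\d f,\d\eta\rangle-\langle g\,\Delta f+\langle\d g,\d f\rangle,\d_*\eta\rangle\bigr]\,\d\m$. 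Since $\eta\in D(\d^1)\cap D(\d_*^1)$, $\d g\land\d f\in D(\d_*^2)$ and $g\,\Delta f+\langle\d g,\d f\rangle\in D(\d^0)=D(\mathscr{E})$, I would integrate by parts in both terms (using, for the second, the adjointness $\int_M\langle\d u,\eta\rangle\,\d\m=\int_M u\,\d_*\eta\,\d\m$ extended from ${\rm Test}(M)$ to $D(\mathscr{E})$ by density) to rewrite the right-hand side as $\int_M\langle-\d_*(\d g\land\d f)+\d(g\,\Delta f)+\d\langle\d g,\d f\rangle,\eta\rangle\,\d\m$, and then expand the integrand by means of the product rule $\d(g\,\Delta f)=g\,\d\Delta f+\Delta f\,\d g$, the ``metric'' identity expressing $\d\langle\d g,\d f\rangle$ as ${\rm Hess}\,g(\nabla f,\cdot)+{\rm Hess}\,f(\nabla g,\cdot)$, and the codifferential-of-wedge formula expressing $\d_*(\d g\land\d f)$ through ${\rm Hess}\,f(\nabla g,\cdot)$, ${\rm Hess}\,g(\nabla f,\cdot)$, $\Delta f\,\d g$ and $\Delta g\,\d f$ (with signs and numerical factors fixed by the chosen normalisations of the pointwise products and of $\d_*$). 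After the cancellations the integrand collapses to $\langle\alpha,\eta\rangle$, which yields $g\,\d f\in D(\DD)$ and $\DD(g\,\d f)=\alpha$.

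\textbf{The general identity and the main difficulty.}
For the general statement, write $X=\sum_{i=1}^n g_i\nabla f_i\in{\rm Reg}(TM)$ and use linearity of $\DD$ and $\nabla$ to reduce to $X=g\nabla f$, so that $hX^\flat=hg\,\d f$; when $h,g\in{\rm Test}(M)$ the product $hg$ is again in ${\rm Test}(M)$ by Lemma~\ref{lem:algebra}, so the first identity applies with $hg$ replacing $g$, and one then expands $\Delta(hg)=h\Delta g+g\Delta h+2\langle\d h,\d g\rangle$, $\nabla(hg)=h\nabla g+g\nabla h$, the covariant Leibniz rule $\nabla_{\nabla h}(g\nabla f)=\langle\d g,\d h\rangle\,\nabla f+g\,\nabla_{\nabla h}\nabla f$, the identification $(\nabla_{\nabla h}\nabla f)^\flat={\rm Hess}\,f(\nabla h,\cdot)$ and $\DD\,\d f=\d\Delta f$; a direct rearrangement then produces the asserted formula, the cases $h=\1_M$ or $g=\1_M$ being the same computation read with $\nabla\1_M=\Delta\1_M=0$. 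I expect the main obstacle to be the verification step above: it rests entirely on having, inside the $L^\infty$-module calculus, the two auxiliary identities for $\d\langle\d g,\d f\rangle$ and for $\d_*(\d g\land\d f)$ — trivial in the smooth case, but here to be traced back to the weak definitions of the Hessian (Definition~5.2 of \cite{Braun:Tamed2021}), of $\d$ and of $\d_*$ — and on keeping all signs and factors mutually consistent; once these identities are secured, the remainder is bookkeeping with the product rules for $\Gamma$, $\Delta$ and $\nabla$.
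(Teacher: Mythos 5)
This lemma is quoted in the paper directly from \cite[Lemma~8.1]{Braun:Tamed2021} and no proof is given here, so there is no in-paper argument to compare against; your reconstruction does, however, follow the same route as the source (Braun, and Gigli's nonsmooth calculus): compute $\d(g\,\d f)=\d g\land\d f$ and $\d_*(g\,\d f)=-g\Delta f-\langle\d g,\d f\rangle$ from the Leibniz rules, integrate by parts against $\eta\in{\rm Reg}(T^*\!M)$, and collapse the result using the identities for $\d\langle\d g,\d f\rangle$ and $\d_*(\d g\land\d f)$ in terms of Hessians; the second identity then follows from the first via the algebra property of ${\rm Test}(M)$ exactly as you describe. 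Two caveats. First, deriving $\DD\,\d f=\d\Delta f$ by differentiating the intertwining relation \eqref{eq:intertwining1} is circular in Braun's development, where the intertwining is proved \emph{after} (and using) this lemma; but the detour is unnecessary, since $\d(\d f)=0$ and $\d_*\d f=-\Delta f\in D(\mathscr{E})$ give $\int\langle\d\Delta f,\eta\rangle\,\d\m=-\int\langle\d_*\d f,\d_*\eta\rangle\,\d\m$ directly, which is just the $g=\1_M$ instance of your general verification. Second, your argument is a proof only modulo the two auxiliary identities you flag (the ``metric'' identity for $\d\langle\d g,\d f\rangle$ and the codifferential-of-wedge formula); these are genuine theorems of the tamed calculus, proved in \cite{Braun:Tamed2021} from the weak definition of the Hessian, and cannot be taken for granted — as stated, your proof outsources precisely the hard part. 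Finally, note that your rearrangement in the last step yields $h\,\DD X^{\flat}+\Delta h\,X^{\flat}+2(\nabla_{\nabla h}X)^{\flat}$, with a $+\Delta h$ term consistent with the $+\Delta g$ term of the first formula; the ``$-\DD h\,X^{\flat}$'' appearing in the statement as transcribed is a sign/notation slip of the transcription, not of your computation.
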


\begin{lem}[{\cite[Lemma~8.2]{Braun:Tamed2021}}]\label{lem:Lemma8.2}
For every $X\in {\rm Reg}(TM)$, we have $|X|^2\in D({\text{\boldmath$\Delta$}}^{2\kappa})$ with 
\begin{align*}
{\text{\boldmath$\Delta$}}^{2\kappa}\frac{|X|^2}{2}\geq\left[|\nabla X|_{\rm HS}^2+\langle X,(\DD X^{\flat})^{\sharp}\rangle  \right]\m.
\end{align*}
\end{lem}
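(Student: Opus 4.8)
The statement is the measure-valued Bochner--Weitzenb\"ock inequality for regular vector fields, and the plan is to deduce it from the \emph{scalar} Bakry--\'Emery information by a bootstrap argument. Since each of the three maps $X\mapsto\tfrac12|X|^{2}$, $X\mapsto|\nabla X|_{\rm HS}^{2}$ and $X\mapsto\langle X,(\DD X^{\flat})^{\sharp}\rangle$ is quadratic in $X\in{\rm Reg}(TM)$, I would proceed in three stages of increasing generality: exact forms $X=\nabla f$; products $X=g\,\nabla f$; and general sums $X=\sum_{i=1}^{n}g_i\nabla f_i$.

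The scalar input is the measure-valued form of ${\sf BE}_2(\kappa,\infty)$ with Hessian term: by Lemma~\ref{lem:algebra} one has $\Gamma(f)\in D(\mathscr{E})\cap L^{\infty}(M;\m)\cap D({\text{\boldmath$\Delta$}}^{2\kappa})$ with both Jordan parts of ${\text{\boldmath$\Delta$}}^{2\kappa}\Gamma(f)$ in $D(\mathscr{E})^{*}$, and the self-improvement of ${\sf BE}_2$ in the tamed setting (\cite{ERST}, following \cite{Sav14}) upgrades this to
\[
{\text{\boldmath$\Delta$}}^{2\kappa}\frac{\Gamma(f)}{2}\ \geq\ \bigl(|{\rm Hess}\,f|_{\rm HS}^{2}+\Gamma(f,\Delta f)\bigr)\,\m,\qquad f\in{\rm Test}(M).
\]
For $X=\nabla f$ one has $\DD\,\d f=\d\Delta f$ (from $\d\d f=0$, $\d_{*}\d f=-\Delta f$, cf.\ \eqref{eq:intertwining1}), hence $(\DD\,\d f)^{\sharp}=\nabla\Delta f$, and $\nabla(\nabla f)=({\rm Hess}\,f)^{\sharp}$, so $|\nabla(\nabla f)|_{\rm HS}=|{\rm Hess}\,f|_{\rm HS}$; thus for exact forms the assertion \emph{is} the displayed inequality, and $|\nabla f|^{2}/2\in D({\text{\boldmath$\Delta$}}^{2\kappa})$. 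For $X=g\,\nabla f$ with $g\in{\rm Test}(M)\cup\R\1_M$ we have $|X|^{2}=g^{2}\Gamma(f)$ with $g^{2}\in{\rm Test}(M)$ by the algebra property, so the measure Leibniz rule gives
\[
{\text{\boldmath$\Delta$}}^{2\kappa}\frac{g^{2}\Gamma(f)}{2}=\frac{g^{2}}{2}\,{\text{\boldmath$\Delta$}}^{2\kappa}\Gamma(f)+\frac{\Gamma(f)}{2}\,\Delta(g^{2})\,\m+\Gamma\bigl(g^{2},\Gamma(f)\bigr)\,\m;
\]
inserting the scalar inequality multiplied by $g^{2}\geq0$ and expanding $|\nabla X|_{\rm HS}^{2}+\langle X,(\DD X^{\flat})^{\sharp}\rangle$ by means of $\nabla(g\nabla f)=\nabla g\otimes\nabla f+g({\rm Hess}\,f)^{\sharp}$, of $\DD(g\,\d f)=g\,\d\Delta f+\Delta g\,\d f+2\,{\rm Hess}\,f(\nabla g,\cdot\,)$ (\cite[Lemma~8.1]{Braun:Tamed2021}), of $\Delta(g^{2})=2g\Delta g+2\Gamma(g)$ and of $\Gamma(g^{2},\Gamma(f))=4g\,{\rm Hess}\,f(\nabla g,\nabla f)$, every term matches and the inequality follows for $X=g\,\nabla f$.

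For a general $X=\sum_{i=1}^{n}g_i\nabla f_i$ write $|X|^{2}=\sum_{i,j}g_ig_j\,\Gamma(f_i,f_j)$; each $g_ig_j\in{\rm Test}(M)$ and $\Gamma(f_i,f_j)=\tfrac14(\Gamma(f_i+f_j)-\Gamma(f_i-f_j))\in D({\text{\boldmath$\Delta$}}^{2\kappa})$, so $|X|^{2}/2\in D(\mathscr{E})\cap L^{\infty}(M;\m)\cap D({\text{\boldmath$\Delta$}}^{2\kappa})$ by the Leibniz rule (that $|X|^{2}\in D(\mathscr{E})$ also follows from Kato's inequality, Lemma~\ref{lem:KatoIneq}). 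Expanding ${\text{\boldmath$\Delta$}}^{2\kappa}(|X|^{2}/2)$, $|\nabla X|_{\rm HS}^{2}$ and $\langle X,(\DD X^{\flat})^{\sharp}\rangle$ into $(i,j)$-double sums using the polarized versions of the same Leibniz and chain rules (linearity of ${\rm Hess}$, $\Delta$ and ${\text{\boldmath$\Delta$}}^{2\kappa}$, bilinearity of $\Gamma$, $\otimes$ and $\langle\cdot,\cdot\rangle$), the \lq\lq clean\rq\rq\ contributions cancel against one another and one is left with
\[
{\text{\boldmath$\Delta$}}^{2\kappa}\frac{|X|^{2}}{2}-\bigl(|\nabla X|_{\rm HS}^{2}+\langle X,(\DD X^{\flat})^{\sharp}\rangle\bigr)\m=\sum_{i,j}g_ig_j\,\mathcal{D}_{ij},
\]
where $\mathcal{D}_{ij}:={\text{\boldmath$\Delta$}}^{2\kappa}\tfrac{\Gamma(f_i,f_j)}{2}-\bigl(\langle{\rm Hess}\,f_i,{\rm Hess}\,f_j\rangle_{\rm HS}+\tfrac12\Gamma(f_i,\Delta f_j)+\tfrac12\Gamma(f_j,\Delta f_i)\bigr)\m$ is the polarization of the scalar defect. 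Applying the scalar inequality to $\sum_i\lambda_i f_i\in{\rm Test}(M)$ and expanding by bilinearity shows $\sum_{i,j}\lambda_i\lambda_j\,\mathcal{D}_{ij}\geq0$ for all $\lambda\in\R^{n}$; choosing a common $\sigma$-finite measure $\theta$ dominating $\m$ and all $\mathcal{D}_{ij}$ and writing $\mathcal{D}_{ij}=h_{ij}\,\theta$, this gives that $(h_{ij}(x))_{i,j}$ is non-negative-definite for $\theta$-a.e.\ $x$, whence $\sum_{i,j}g_ig_j\,\mathcal{D}_{ij}=\bigl(\sum_{i,j}g_ig_j h_{ij}\bigr)\theta\geq0$, which finishes the proof.

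I expect the main difficulty to lie in the last stage: carrying out the $(i,j)$-expansion and checking that all contributions outside the diagonal Hessian- and $\Delta$-terms cancel identically requires the full list of Leibniz and chain rules for $\Gamma$, $\nabla$, ${\rm Hess}$, $\DD$ and ${\text{\boldmath$\Delta$}}^{2\kappa}$, and one must keep careful track of the curvature terms, which enter only once, through the non-symmetric identity ${\text{\boldmath$\Delta$}}^{2\kappa}(hg)=h\,{\text{\boldmath$\Delta$}}^{2\kappa}g+g\,\Delta h\,\m+2\Gamma(h,g)\,\m$ (for $h\in{\rm Test}(M)$), and must reassemble into exactly the Schr\"odinger operator on the left. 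It is essential that the sharp \lq\lq$|{\rm Hess}\,f|_{\rm HS}^{2}$\rq\rq-form of ${\sf BE}_2(\kappa,\infty)$ be available, not merely a $\Gamma_2$-type estimate; granting this, positivity of the defect matrix is automatic and the passage to bounded Borel coefficients is the routine locality argument above, while Borel measurability of the pointwise norms is immediate from the module structure.
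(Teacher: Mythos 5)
This lemma is not proved in the paper at all --- it is imported verbatim from \cite[Lemma~8.2]{Braun:Tamed2021} --- and your proposal correctly reconstructs the argument given in that source (itself following Gigli and Savar\'e): reduce to the self-improved measure-valued inequality ${\text{\boldmath$\Delta$}}^{2\kappa}\tfrac{\Gamma(f)}{2}\geq(|{\rm Hess}\,f|_{\rm HS}^2+\Gamma(f,\Delta f))\m$ for test functions, expand $|X|^2$, $|\nabla X|_{\rm HS}^2$ and $\langle X,(\DD X^\flat)^\sharp\rangle$ for $X=\sum_i g_i\nabla f_i$ via the Leibniz rules, and conclude from the $\theta$-a.e.\ positive semidefiniteness of the polarized defect matrix. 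Your identities check out; the only step worth making explicit is that the a.e.\ statement on $(h_{ij})$ is obtained by testing the scalar inequality against a countable dense set of coefficient vectors $\lambda\in\mathbb{Q}^n$.
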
 

\begin{defn}[The space $H^{1,2}_{\sharp}(TM)$]
{\rm We define the space $H^{1,2}_{\sharp}(TM)$ as the image of $H^{1,2}(T^*\!M)$ under the map $\sharp$, endowed with the norm 
\begin{align*}
\|X\|_{H^{1,2}_{\sharp}(TM)}:=\|X^{\flat}\|_{H^{1,2}(T^*\!M)}.
\end{align*}
}
\end{defn}
The following is proved in \cite[Lemma~8.8]{Braun:Tamed2021}:
\begin{lem}[{\cite[Lemma~8.8]{Braun:Tamed2021}}]\label{lem:Lemma8.8}
$H^{1,2}_{\sharp}(TM)$ is a subspace of $H^{1,2}(TM)$. The aforementioned natural inclusion is continuous. Additionally, for every $X\in H^{1,2}_{\sharp}(TM)$, 
\begin{align}
\mathscr{E}_{\rm cov}(X)\leq\mathscr{E}_{\rm con}(X^{\flat})-\langle \kappa, \wt{|X|}^2\rangle.\label{eq:ContBochHK}
\end{align}
\end{lem}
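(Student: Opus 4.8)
The plan is to prove \eqref{eq:ContBochHK} first for $X\in{\rm Reg}(TM)$, to extract from it an a priori estimate, and then to pass to the limit. Since the musical isomorphism $\sharp$ is an $L^{\infty}$-module isomorphism (Definition~\ref{df:DualModule}) sending $\sum_ig_i\,\d f_i\in{\rm Reg}(T^*\!M)$ to $\sum_ig_i\nabla f_i\in{\rm Reg}(TM)$, it restricts to a bijection ${\rm Reg}(T^*\!M)\to{\rm Reg}(TM)$, and ${\rm Reg}(T^*\!M)$ is $\|\cdot\|_{W^{1,2}(T^*\!M)}$-dense in $H^{1,2}(T^*\!M)$. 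Thus, once \eqref{eq:ContBochHK} and a bound $\|X\|_{W^{1,2}(TM)}\le C\|X^{\flat}\|_{W^{1,2}(T^*\!M)}$ (with $C$ independent of $X$) are known on ${\rm Reg}(TM)$, the argument for arbitrary $X\in H^{1,2}_{\sharp}(TM)$ runs as follows: choose $\omega_n\in{\rm Reg}(T^*\!M)$ with $\omega_n\to X^{\flat}$ in $W^{1,2}(T^*\!M)$; then $X_n:=\omega_n^{\sharp}\in{\rm Reg}(TM)$ is $\|\cdot\|_{W^{1,2}(TM)}$-Cauchy, hence converges to some $Y$ in the closed subspace $H^{1,2}(TM)\subset W^{1,2}(TM)$ (closedness by \cite[Theorem~6.3]{Braun:Tamed2021}); since $\sharp$ is an $L^2$-isometry, $X_n\to X$ in $L^2(TM)$, so $Y=X$, which shows $X\in H^{1,2}(TM)$ with continuous inclusion. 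Finally \eqref{eq:ContBochHK} for $X$ follows by passing to the limit in \eqref{eq:ContBochHK} for $X_n$: here $\mathscr{E}_{\rm cov}(X_n)\to\mathscr{E}_{\rm cov}(X)$ and $\mathscr{E}_{\rm con}(\omega_n)\to\mathscr{E}_{\rm con}(X^{\flat})$ by strong $W^{1,2}$-convergence, while $|X_n|\to|X|$ in $D(\mathscr{E})$ (from continuity of the chain rule $\d|X|=\langle\nabla X,X\rangle/|X|$ underlying Kato's inequality, Lemma~\ref{lem:KatoIneq}, together with locality of $\nabla$), whence $\langle\kappa,\wt{|X_n|}^2\rangle\to\langle\kappa,\wt{|X|}^2\rangle$ by the Stollmann--Voigt inequality \eqref{eq:StollmannVoigt}.

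The a priori bound is immediate from \eqref{eq:ContBochHK} on ${\rm Reg}(TM)$: dropping the non-negative term $\langle\kappa^+,\wt{|X|}^2\rangle$ gives $\mathscr{E}_{\rm cov}(X)\le\mathscr{E}_{\rm con}(X^{\flat})+\langle\kappa^-,\wt{|X|}^2\rangle$, and, fixing $\alpha$ with $\theta:=\|U_{\alpha}\kappa^-\|_{\infty}<1$ (possible since $2\kappa^-\in S_{E\!K}({\bf X})$), the inequalities \eqref{eq:StollmannVoigt} and Lemma~\ref{lem:KatoIneq} yield
\begin{align*}
\langle\kappa^-,\wt{|X|}^2\rangle\ \le\ \theta\,\mathscr{E}_{\alpha}(|X|,|X|)\ \le\ \theta\bigl(\mathscr{E}_{\rm cov}(X)+\alpha\|X\|_{L^2(TM)}^2\bigr),
\end{align*}
so $(1-\theta)\mathscr{E}_{\rm cov}(X)\le\mathscr{E}_{\rm con}(X^{\flat})+\theta\alpha\|X\|_{L^2(TM)}^2$; since $\|X\|_{L^2(TM)}=\|X^{\flat}\|_{L^2(T^*\!M)}$ and $\mathscr{E}_{\rm con}(X^{\flat})\le\|X^{\flat}\|_{W^{1,2}(T^*\!M)}^2$, the claimed bound follows.

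The heart of the proof is thus \eqref{eq:ContBochHK} for $X\in{\rm Reg}(TM)$. Put $\omega:=X^{\flat}\in{\rm Reg}(T^*\!M)$. By \cite[Lemma~8.1]{Braun:Tamed2021} and linearity of $\DD$, $\omega\in D(\DD)\cap H^{1,2}(T^*\!M)$, and testing the defining identity of $\DD$ with $\eta=\omega$ gives $\mathscr{E}_{\rm con}(X^{\flat})=-\langle\DD\omega,\omega\rangle_{L^2(T^*\!M)}=-\int_M\langle X,(\DD\omega)^{\sharp}\rangle\,\d\m$; hence, putting $\rho:=|\nabla X|_{\rm HS}^2+\langle X,(\DD\omega)^{\sharp}\rangle\in L^1(M;\m)$, one has $\int_M\rho\,\d\m=\mathscr{E}_{\rm cov}(X)-\mathscr{E}_{\rm con}(X^{\flat})$, so \eqref{eq:ContBochHK} reduces to $\int_M\rho\,\d\m\le-\langle\kappa,\wt{|X|}^2\rangle$. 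Now $|X|^2\in D(\mathscr{E})\cap L^{\infty}(M;\m)$ ($X$ being bounded and in $H^{1,2}(TM)$, Lemma~\ref{lem:KatoIneq}), and Lemma~\ref{lem:Lemma8.2}, unwound through the definition of the measure-valued Schr\"odinger Laplacian (cf.\ Lemma~\ref{lem:algebra}), states that $-\mathscr{E}^{2\kappa}\bigl(\tfrac12|X|^2,\psi\bigr)\ge\int_M\wt{\psi}\,\rho\,\d\m$ for every $\psi\in D(\mathscr{E})$ with $\wt{\psi}\ge0$. To exploit this "against $\1_M$" even when $\m(M)=\infty$ (so that $\1_M\notin D(\mathscr{E})$), I would fix $g\in L^1(M;\m)\cap L^2(M;\m)$ with $g>0$ $\m$-a.e., set $u:=\tfrac12|X|^2$, and insert $\psi=P_s^{2\kappa}(\1_M\wedge ng)\in D(\Delta^{2\kappa})\subset D(\mathscr{E})$ for $s>0$; using $-\mathscr{E}^{2\kappa}(u,P_s^{2\kappa}(\1_M\wedge ng))=\langle u,\tfrac{\d}{\d s}P_s^{2\kappa}(\1_M\wedge ng)\rangle$, integrating in $s$ over $[0,T]$, and then letting $n\to\infty$ (monotone convergence on the left using $P_s^{2\kappa}\ge0$ and $P_s^{2\kappa}\1_M\le C(2\kappa^-)\e^{C_{2\kappa^-}s}$ from $2\kappa^-\in S_{E\!K}({\bf X})$; dominated convergence on the right), I obtain
\begin{align*}
\int_M u\,\bigl(P_T^{2\kappa}\1_M-\1_M\bigr)\,\d\m\ \ge\ \int_0^T\!\!\int_M\wt{P_s^{2\kappa}\1_M}\;\rho\;\d\m\,\d s.
\end{align*}
Dividing by $T$ and letting $T\downarrow0$: the right-hand side tends to $\int_M\rho\,\d\m$ (since $P_s^{2\kappa}\1_M\to\1_M$ $\m$-a.e.\ as $s\downarrow0$, with the uniform bound above), while the left-hand side equals $-\tfrac1T\int_M u\,\E_{\cdot}[1-\e^{-2A_T^{\kappa}}]\,\d\m$ and tends to $-\langle\kappa,\wt{|X|}^2\rangle$ by the short-time Revuz asymptotics $\tfrac1T\int_M u\,\E_{\cdot}[1-\e^{-2A_T^{\kappa}}]\,\d\m\to2\langle\kappa,\wt{u}\rangle=\langle\kappa,\wt{|X|}^2\rangle$ (immediate from the form representation of $(P_t^{2\kappa})$ when $\1_M\in D(\mathscr{E})$, and in general a consequence of the Revuz correspondence, the higher-order terms being absorbed via $\sup_x\E_x[\e^{2A_s^{\kappa^-}}]\le C\e^{Cs}$ and $\langle\kappa^{\pm},\wt{|X|}^2\rangle<\infty$ from \eqref{eq:StollmannVoigt}). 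This yields $\int_M\rho\,\d\m\le-\langle\kappa,\wt{|X|}^2\rangle$, i.e.\ \eqref{eq:ContBochHK} on ${\rm Reg}(TM)$, and the first two paragraphs then finish the proof.

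I expect the main obstacle to be precisely this last manoeuvre: legitimately integrating the pointwise Bochner inequality of Lemma~\ref{lem:Lemma8.2} against $\1_M$ on a non-compact, infinite-measure space where $\1_M\notin D(\mathscr{E})$, together with the accompanying short-time Revuz expansion. This is the single place where the sharp hypotheses $\kappa^+\in S_D({\bf X})$ and $2\kappa^-\in S_{E\!K}({\bf X})$ (rather than mere moderateness of $\kappa^{\pm}$) are genuinely exploited; the remainder is soft Hilbert-module functional analysis combined with Lemmas~\ref{lem:KatoIneq} and \ref{lem:Lemma8.2} and \cite[Lemma~8.1]{Braun:Tamed2021}.
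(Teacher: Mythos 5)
First, a point of reference: the paper does not prove this lemma at all --- it is quoted verbatim from \cite[Lemma~8.8]{Braun:Tamed2021} --- so your proposal can only be judged on its own merits. Its architecture is the right one and matches Braun's: establish \eqref{eq:ContBochHK} on ${\rm Reg}(TM)$ by taking the ``total mass'' of the measure-valued Bochner inequality of Lemma~\ref{lem:Lemma8.2}, extract the a priori bound via Kato's inequality and Stollmann--Voigt (your second paragraph is correct, and the use of $2\kappa^-\in S_{E\!K}({\bf X})$ to get $\theta<1$ is exactly right), and conclude by density. The reduction to $\int_M\rho\,\d\m\le-\langle\kappa,\wt{|X|}^2\rangle$ is also correct.

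The genuine gap is in the step you yourself flag as the heart of the matter: the short-time asymptotics $\tfrac1T\E_{u\m}\bigl[1-\e^{-2A_T^{\kappa}}\bigr]\to 2\langle\kappa,\wt{u}\rangle$. Writing $1-\e^{-2A_T^{\kappa}}=2A_T^{\kappa}-R_T$ with $0\le R_T\le 2(A_T^{|\kappa|})^2\e^{2A_T^{\kappa^-}}$, the first-order term converges by Revuz, but the error term must satisfy $\tfrac1T\E_{u\m}[(A_T^{|\kappa|})^2\e^{2A_T^{\kappa^-}}]\to0$. The bounds you invoke give at best
\begin{align*}
\frac1T\E_{u\m}\bigl[(A_T^{|\kappa|})^2\bigr]\le 2\,\Bigl(\m\text{-}\sup_{x}\E_x[A_T^{|\kappa|}]\Bigr)\cdot\frac1T\E_{u\m}\bigl[A_T^{|\kappa|}\bigr],
\end{align*}
and the first factor tends to $0$ as $T\to0$ \emph{if and only if} $|\kappa|\in S_K({\bf X})$; for measures that are merely of Dynkin or extended Kato class --- precisely the regime the lemma must cover --- it converges to a strictly positive constant, so the error does not vanish and the claimed limit is not justified. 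A second, smaller gap: in the density passage you need $\langle\kappa^-,\wt{|X_n|}^2\rangle\to\langle\kappa^-,\wt{|X|}^2\rangle$, which requires \emph{strong} $D(\mathscr{E})$-convergence of $|X_n|$ (weak convergence only gives lower semicontinuity, which is the wrong direction for the $\kappa^-$-term); the ``chain rule for $\d|X|$'' you appeal to is itself something to be proved in this setting. Both gaps can be closed, and the first one is best closed by avoiding the Feynman--Kac expansion altogether: rewrite Lemma~\ref{lem:Lemma8.2} as $-\mathscr{E}(u,\psi)\ge\int_M\wt{\psi}\rho\,\d\m+2\langle\kappa,\wt{u}\,\wt{\psi}\rangle$ for $0\le\psi\in D(\mathscr{E})$ bounded, take $\psi=\alpha G_\alpha v$ with $0\le v\le 1$, use $-\mathscr{E}(u,\alpha G_\alpha v)=-\alpha\langle u-\alpha G_\alpha u,v\rangle$ together with symmetry and sub-Markovianity of the \emph{unperturbed} resolvent (so $\langle u-\alpha G_\alpha u,\1_M\rangle\ge0$ for $u\ge0$), and let $v\uparrow\1_M$ and then $\alpha\to\infty$; this yields $\int_M\rho\,\d\m+2\langle\kappa,\wt{u}\rangle\le0$ with no Taylor expansion of the Feynman--Kac functional and no Kato-class assumption.
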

Then, we can state the main result of \cite{Braun:Tamed2021}: 
\begin{thm}[{{\cite[Theorem~8.9]{Braun:Tamed2021}}}]
\label{thm:Theorem8.9}
There exists a unique continuous mapping ${\bf Ric}^{\kappa}: H^{1,2}_{\sharp}(TM)^2\to \mathfrak{M}_{\rm f}^{\pm}(M)_{\mathscr{E}}$ satisfying the identity
\begin{align}
{\bf Ric}^{\kappa}(X,Y)={\text{\boldmath$\Delta$}}^{2\kappa}\frac{\langle X,Y\rangle }{2}-\left[\frac12\langle X,(\DD Y^{\flat})^{\sharp}\rangle 
+\frac12\langle Y,(\DD X^{\flat})^{\sharp}\rangle +\langle \nabla X\,|\,\nabla Y\rangle 
 \right]\m\label{eq:kappaRicciMeasure}
\end{align}
for every $X,Y\in {\rm Reg}(TM)$. Here $\mathfrak{M}_{\rm f}^{\pm}(M)_{\mathscr{E}}$ denotes the family of finite signed smooth measures. The map ${\bf Ric}^{\kappa}$ is symmetric and $\R$-linear. Furthermore, for every $X,Y\in H^{1,2}_{\sharp}(TM)$, it obeys
\begin{align}
{\bf Ric}^{\kappa}(X,X)&\geq0,\label{eq:NonKappaRicci}\\
{\bf Ric}^{\kappa}(X,Y)(M)&=\int_M\left[\langle \d X^{\flat},\d Y^{\flat}\rangle +\langle \d_*X^{\flat},\d_*Y^{\flat}\rangle  \right]\d\m\notag\\
 &\hspace{2cm}- \int_M\langle \nabla X\,|\,\nabla Y\rangle \d\m-\langle \kappa,\widetilde{\langle X,Y\rangle }\rangle ,\label{eq:RicciKappaMeasureTotal}\\
 \|{\bf Ric}^{\kappa}(X,Y)\|_{\rm TV}^2&\leq\left[\mathscr{E}_{\rm con}(X^{\flat})-\langle \kappa,\wt{|X|}^2\rangle  \right]\left[\mathscr{E}_{\rm con}(Y^{\flat})-\langle \kappa,\wt{|Y|}^2\rangle  \right].\label{eq:RicciKappaCont}
\end{align}
\end{thm}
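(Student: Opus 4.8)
The plan is to construct ${\bf Ric}^{\kappa}$ first on the dense subclass ${\rm Reg}(TM)^2$ directly by the formula \eqref{eq:kappaRicciMeasure}, to verify there the total mass formula \eqref{eq:RicciKappaMeasureTotal} and the total variation estimate \eqref{eq:RicciKappaCont}, and then to extend by continuity and pass all remaining assertions to the limit. That \eqref{eq:kappaRicciMeasure} is meaningful on ${\rm Reg}(TM)^2$ follows from the ingredients collected earlier: since ${\rm Reg}(TM)$ is a vector space, $X\pm Y\in{\rm Reg}(TM)$, so Lemma~\ref{lem:Lemma8.2} and polarization give $\langle X,Y\rangle=\tfrac14\big(|X+Y|^2-|X-Y|^2\big)\in D({\text{\boldmath$\Delta$}}^{2\kappa})$ and hence ${\text{\boldmath$\Delta$}}^{2\kappa}\tfrac{\langle X,Y\rangle}{2}$ is a finite signed smooth measure; by \cite[Lemma~8.1]{Braun:Tamed2021}, $X^{\flat},Y^{\flat}\in D(\DD)$, so $\langle X,(\DD Y^{\flat})^{\sharp}\rangle$ and $\langle Y,(\DD X^{\flat})^{\sharp}\rangle$ lie in $L^1(M;\m)$; and $\langle\nabla X\,|\,\nabla Y\rangle\in L^1(M;\m)$ because ${\rm Reg}(TM)\subset H^{1,2}(TM)\subset W^{1,2}(TM)$. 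Thus \eqref{eq:kappaRicciMeasure} defines ${\bf Ric}^{\kappa}(X,Y)\in\mathfrak{M}_{\rm f}^{\pm}(M)_{\mathscr{E}}$, manifestly symmetric and $\R$-bilinear in $(X,Y)$, with ${\bf Ric}^{\kappa}(X,X)\geq0$ being exactly Lemma~\ref{lem:Lemma8.2}.

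Next I would establish \eqref{eq:RicciKappaMeasureTotal} on ${\rm Reg}(TM)^2$. Testing the defining relation of ${\text{\boldmath$\Delta$}}^{2\kappa}$ against an $\mathscr{E}$-quasi-continuous approximation of $\1_M$, and using strong locality so that the carr\'e-du-champ part contributes no total mass, one obtains $\big({\text{\boldmath$\Delta$}}^{2\kappa}\tfrac{\langle X,Y\rangle}{2}\big)(M)=-\langle\kappa,\widetilde{\langle X,Y\rangle}\rangle$. On the other hand, the definition and self-adjointness of $\DD$ together with $X^{\flat},Y^{\flat}\in{\rm Reg}(T^*\!M)\subset H^{1,2}(T^*\!M)\cap D(\DD)$ give $-\int_M\langle Y^{\flat},\DD X^{\flat}\rangle\d\m=\int_M\big[\langle\d X^{\flat},\d Y^{\flat}\rangle+\langle\d_*X^{\flat},\d_*Y^{\flat}\rangle\big]\d\m$, and likewise with $X$ and $Y$ exchanged. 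Substituting these into \eqref{eq:kappaRicciMeasure} and integrating over $M$ yields \eqref{eq:RicciKappaMeasureTotal} on ${\rm Reg}(TM)^2$; in particular, for $X=Y$, ${\bf Ric}^{\kappa}(X,X)(M)=\mathscr{E}_{\rm con}(X^{\flat})-\mathscr{E}_{\rm cov}(X)-\langle\kappa,\widetilde{|X|}^2\rangle$, which is $\geq0$ by \eqref{eq:ContBochHK} and $\leq\mathscr{E}_{\rm con}(X^{\flat})-\langle\kappa,\widetilde{|X|}^2\rangle$. For the off-diagonal bound I would use that $s\mapsto{\bf Ric}^{\kappa}(sX+Y,sX+Y)\geq0$ for every $s\in\R$: for each Borel $A\subset M$ the quadratic $s\mapsto{\bf Ric}^{\kappa}(sX+Y,sX+Y)(A)$ has nonpositive discriminant, so ${\bf Ric}^{\kappa}(X,Y)(A)^2\leq{\bf Ric}^{\kappa}(X,X)(A)\,{\bf Ric}^{\kappa}(Y,Y)(A)$; choosing a Hahn decomposition $M=P\sqcup N$ for ${\bf Ric}^{\kappa}(X,Y)$ and applying the Cauchy--Schwarz inequality for $2$-vectors gives $\|{\bf Ric}^{\kappa}(X,Y)\|_{\rm TV}^2\leq{\bf Ric}^{\kappa}(X,X)(M)\,{\bf Ric}^{\kappa}(Y,Y)(M)$, which together with the diagonal identity is \eqref{eq:RicciKappaCont} on ${\rm Reg}(TM)^2$.

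For the extension, the isometric isomorphism $\sharp:H^{1,2}(T^*\!M)\to H^{1,2}_{\sharp}(TM)$ carries ${\rm Reg}(T^*\!M)$ onto ${\rm Reg}(TM)$, so ${\rm Reg}(TM)$ is dense in $H^{1,2}_{\sharp}(TM)$. In \eqref{eq:RicciKappaCont} one has $\mathscr{E}_{\rm con}(X^{\flat})\leq\|X^{\flat}\|_{W^{1,2}(T^*\!M)}^2=\|X\|_{H^{1,2}_{\sharp}(TM)}^2$, and, since $\kappa^-\leq2\kappa^-\in S_{E\!K}({\bf X})\subset S_D({\bf X})$, Kato's inequality (Lemma~\ref{lem:KatoIneq}) gives $|X|\in D(\mathscr{E})$ with $\mathscr{E}(|X|,|X|)\leq\mathscr{E}_{\rm cov}(X)$, whence by Stollmann--Voigt's inequality \eqref{eq:StollmannVoigt} and the continuous inclusion $H^{1,2}_{\sharp}(TM)\hookrightarrow H^{1,2}(TM)$ of Lemma~\ref{lem:Lemma8.8}, $-\langle\kappa,\widetilde{|X|}^2\rangle\leq\langle\kappa^-,\widetilde{|X|}^2\rangle\leq C\,\|X\|_{H^{1,2}_{\sharp}(TM)}^2$. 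Therefore $\|{\bf Ric}^{\kappa}(X,Y)\|_{\rm TV}\leq C'\|X\|_{H^{1,2}_{\sharp}(TM)}\|Y\|_{H^{1,2}_{\sharp}(TM)}$ on ${\rm Reg}(TM)^2$; since a total-variation limit of finite smooth measures charges no $\mathscr{E}$-polar set, $(\mathfrak{M}_{\rm f}^{\pm}(M)_{\mathscr{E}},\|\cdot\|_{\rm TV})$ is a Banach space, so this bounded bilinear map extends uniquely and continuously to $H^{1,2}_{\sharp}(TM)^2$, and any continuous map satisfying \eqref{eq:kappaRicciMeasure} on ${\rm Reg}(TM)^2$ must coincide with it by density, giving uniqueness. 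Finally the remaining claims transfer by continuity: symmetry and $\R$-bilinearity persist; \eqref{eq:NonKappaRicci} holds as a total-variation limit of nonnegative measures; \eqref{eq:RicciKappaCont} passes to the limit on both sides; and in \eqref{eq:RicciKappaMeasureTotal} the only nonroutine term $\langle\kappa,\widetilde{\langle X,Y\rangle}\rangle$ is continuous on $H^{1,2}_{\sharp}(TM)^2$, since writing $\langle X_n,Y_n\rangle-\langle X,Y\rangle=\langle X_n-X,Y_n\rangle+\langle X,Y_n-Y\rangle$ and estimating by Cauchy--Schwarz against $|\kappa|$ together with $\int_M|Z|^2\d|\kappa|\leq C\,\mathscr{E}_1(|Z|,|Z|)\leq C\big(\mathscr{E}_{\rm cov}(Z)+\|Z\|_{L^2(TM)}^2\big)$ reduces everything to $Z=X_n-X\to0$ and $Z=Y_n-Y\to0$ in $H^{1,2}_{\sharp}(TM)$.

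The principal obstacle is the total mass identity \eqref{eq:RicciKappaMeasureTotal} at the level of regular fields, concretely the step $\big({\text{\boldmath$\Delta$}}^{2\kappa}\tfrac{\langle X,Y\rangle}{2}\big)(M)=-\langle\kappa,\widetilde{\langle X,Y\rangle}\rangle$: one must legitimately test the measure-valued operator ${\text{\boldmath$\Delta$}}^{2\kappa}$ against the constant $\1_M$ through an approximation inside $D(\mathscr{E})$ that is compatible with the finiteness of the measure and with integrability against $\kappa$, and check that no boundary/killing contribution survives. Once this identity---equivalently the sharp, total-mass version of \eqref{eq:ContBochHK}---together with the nonnegativity of the diagonal measure from Lemma~\ref{lem:Lemma8.2} is in hand, the total variation bound via set-wise Cauchy--Schwarz and the continuous extension are essentially formal.
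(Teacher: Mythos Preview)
The paper does not prove this statement; it is quoted verbatim from \cite[Theorem~8.9]{Braun:Tamed2021} as background, with no proof given here. Your outline is essentially the strategy of Braun's original proof: define ${\bf Ric}^{\kappa}$ on ${\rm Reg}(TM)^2$ by the explicit formula using Lemma~\ref{lem:Lemma8.2} and \cite[Lemma~8.1]{Braun:Tamed2021}, read off nonnegativity on the diagonal from Lemma~\ref{lem:Lemma8.2}, derive the total-variation bound by a set-wise Cauchy--Schwarz (discriminant) argument, and extend by density via the resulting bilinear continuity estimate together with the Stollmann--Voigt bound on the $\kappa$-term. Your flagged obstacle---justifying $\big({\text{\boldmath$\Delta$}}^{2\kappa}\tfrac{\langle X,Y\rangle}{2}\big)(M)=-\langle\kappa,\widetilde{\langle X,Y\rangle}\rangle$ by testing the measure-valued Laplacian against an approximation of $\1_M$ inside $D(\mathscr{E})$---is exactly the point that requires care in the original, and is handled there by the intrinsic completeness/exhaustion machinery of tamed spaces; once that is in place, the rest of your argument goes through.
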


Keeping in mind Theorem~\ref{thm:Theorem8.9}, we define the map 
${\bf Ric}:H^{1,2}_{\sharp}(TM)^2\to \mathfrak{M}_{\rm f}^{\pm}(M)_{\mathscr{E}}$ by 
\begin{align}
{\bf Ric}(X,Y):={\bf Ric}^{\kappa}(X,Y)+\langle \kappa,\widetilde{\langle X,Y\rangle }\rangle. \label{eq:RicciMeasure}
\end{align}
This map is well-defined, symmetric and $\R$-bilinear. ${\bf Ric}$ is also jointly continuous, which follows from polarization, \cite[Corollary~6.14]{Braun:Tamed2021} and Lemma~\ref{lem:Lemma8.8}. Lastly, by Theorem~\ref{thm:Theorem8.9}, Lemma~\ref{lem:Lemma8.2} and \cite[Lemma~8.12]{Braun:Tamed2021}, for every $X,Y\in{\rm Reg}(TM)$, we have a similar expression 
with \eqref{eq:kappaRicciMeasure}: 
\begin{align}
{\bf Ric}(X,Y)={\text{\boldmath$\Delta$}}\frac{\langle X,Y\rangle }{2}-\left[\frac12\langle X,(\DD Y^{\flat})^{\sharp}\rangle 
+\frac12\langle Y,(\DD X^{\flat})^{\sharp}\rangle +\langle \nabla X\,|\,\nabla Y\rangle  \right]\m\label{eq:RicciMeasure}
\end{align}
\begin{lem}[{\cite[Lemma~8.15]{Braun:Tamed2021}}]
For every $X,Y\in H_{\sharp}^{1,2}(TM)$ and $f\in {\rm Test}(M)$, we have 
\begin{align}
{\bf Ric}(fX,Y)=\tilde{f}\,{\bf Ric}(X,Y),\label{eq:RicciDensity}
\end{align}
in particular, 
\begin{align}
{\bf Ric}^{\kappa}(fX,Y)=\tilde{f}\,{\bf Ric}^{\kappa}(X,Y).\label{eq:RicciDensityKappa}
\end{align}
\end{lem}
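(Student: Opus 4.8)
The plan is to establish the identity first for $X,Y\in{\rm Reg}(TM)$, by a direct computation from the defining identity \eqref{eq:RicciMeasure}, and then to pass to arbitrary $X,Y\in H^{1,2}_{\sharp}(TM)$ by approximation. Note that the ${\bf Ric}^{\kappa}$-identity is an immediate consequence of the ${\bf Ric}$-identity: from the definition ${\bf Ric}(\cdot,\cdot)={\bf Ric}^{\kappa}(\cdot,\cdot)+\langle\kappa,\widetilde{\langle\cdot,\cdot\rangle}\rangle$, together with $\widetilde{\langle fX,Y\rangle}=\tilde f\,\widetilde{\langle X,Y\rangle}$ and $\langle\kappa,\tilde f\,g\rangle=\tilde f\,\langle\kappa,g\rangle$ as measures, one gets ${\bf Ric}^{\kappa}(fX,Y)={\bf Ric}(fX,Y)-\langle\kappa,\widetilde{\langle fX,Y\rangle}\rangle=\tilde f\bigl({\bf Ric}(X,Y)-\langle\kappa,\widetilde{\langle X,Y\rangle}\rangle\bigr)=\tilde f\,{\bf Ric}^{\kappa}(X,Y)$.

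For the approximation step I would use: (i) ${\rm Reg}(TM)=\sharp\bigl({\rm Reg}(T^*\!M)\bigr)$ is dense in $H^{1,2}_{\sharp}(TM)$, since $H^{1,2}(T^*\!M)=\overline{{\rm Reg}(T^*\!M)}^{\|\cdot\|_{W^{1,2}(T^*\!M)}}$ and, by definition, $\sharp$ is an isometry of $H^{1,2}(T^*\!M)$ onto $H^{1,2}_{\sharp}(TM)$; (ii) for $f\in{\rm Test}(M)$ the multiplication $\omega\mapsto f\omega$ is bounded on $H^{1,2}(T^*\!M)$, by \cite[Theorem~7.5 and Lemma~7.15]{Braun:Tamed2021} together with $|\d f|=\Gamma(f)^{1/2}\in L^{\infty}(M;\m)$, so that $X\mapsto fX$ is bounded on $H^{1,2}_{\sharp}(TM)$; (iii) the joint continuity of ${\bf Ric}:H^{1,2}_{\sharp}(TM)^2\to\mathfrak{M}_{\rm f}^{\pm}(M)_{\mathscr{E}}$ recorded when this map is defined; and (iv) the fact that, for bounded $\mathscr{E}$-quasi-continuous $\tilde f$, the map $\nu\mapsto\tilde f\,\nu$ is bounded (with norm at most $\|f\|_{L^{\infty}(M;\m)}$) on finite signed smooth measures for the total variation norm. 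Choosing $X_n,Y_m\in{\rm Reg}(TM)$ with $X_n\to X$ and $Y_m\to Y$ in $H^{1,2}_{\sharp}(TM)$ and passing to the limit in ${\bf Ric}(fX_n,Y_m)=\tilde f\,{\bf Ric}(X_n,Y_m)$ then yields the general case.

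It remains to treat $X,Y\in{\rm Reg}(TM)$ and $f\in{\rm Test}(M)$. Since ${\rm Test}(M)$ is an algebra (Lemma~\ref{lem:algebra}), $fX\in{\rm Test}(TM)\subset{\rm Reg}(TM)$, hence \eqref{eq:RicciMeasure} applies to both $(fX,Y)$ and $(X,Y)$. I would expand the three contributions to ${\bf Ric}(fX,Y)$ as follows. For the measure-valued Laplacian term, the Leibniz rule for ${\text{\boldmath$\Delta$}}$ (legitimate because $\langle X,Y\rangle$ lies in its domain for regular fields, by \cite[Lemma~8.12]{Braun:Tamed2021}, and ${\text{\boldmath$\Delta$}}f=(\Delta f)\m$ since $f\in{\rm Test}(M)\subset D(\Delta)$) gives
\[
{\text{\boldmath$\Delta$}}\frac{\langle fX,Y\rangle}{2}={\text{\boldmath$\Delta$}}\frac{f\langle X,Y\rangle}{2}=\tilde f\,{\text{\boldmath$\Delta$}}\frac{\langle X,Y\rangle}{2}+\frac12(\Delta f)\langle X,Y\rangle\,\m+\Gamma\bigl(f,\langle X,Y\rangle\bigr)\m.
\]
For the Hodge--Kodaira terms, $L^{\infty}$-bilinearity of the point-wise scalar product gives $\langle fX,(\DD Y^{\flat})^{\sharp}\rangle=f\,\langle X,(\DD Y^{\flat})^{\sharp}\rangle$, while $(fX)^{\flat}=fX^{\flat}$ and the Leibniz rule of \cite[Lemma~8.1]{Braun:Tamed2021} give $\DD(fX^{\flat})=f\,\DD X^{\flat}+(\Delta f)X^{\flat}+2(\nabla_{\nabla f}X)^{\flat}$, whence $\langle Y,(\DD(fX^{\flat}))^{\sharp}\rangle=f\,\langle Y,(\DD X^{\flat})^{\sharp}\rangle+(\Delta f)\langle X,Y\rangle+2\langle Y,\nabla_{\nabla f}X\rangle$. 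For the covariant term, the Leibniz rule for the covariant derivative and the $L^{\infty}$-bilinearity of the Hilbert--Schmidt product give $\langle\nabla(fX)\,|\,\nabla Y\rangle=f\,\langle\nabla X\,|\,\nabla Y\rangle+\langle X,\nabla_{\nabla f}Y\rangle$. Substituting these into \eqref{eq:RicciMeasure}, the terms carrying the factor $\tilde f$ assemble precisely into $\tilde f\,{\bf Ric}(X,Y)$, while the residual terms are
\[
\frac12(\Delta f)\langle X,Y\rangle\,\m+\Gamma\bigl(f,\langle X,Y\rangle\bigr)\m-\frac12(\Delta f)\langle X,Y\rangle\,\m-\langle Y,\nabla_{\nabla f}X\rangle\,\m-\langle X,\nabla_{\nabla f}Y\rangle\,\m.
\]
The two $(\Delta f)\langle X,Y\rangle$-terms cancel, and the remaining three vanish by the metric compatibility of $\nabla$, i.e., the identity $\Gamma\bigl(f,\langle X,Y\rangle\bigr)=\langle\nabla_{\nabla f}X,Y\rangle+\langle X,\nabla_{\nabla f}Y\rangle$ obtained by contracting the defining relation of $\nabla$ against $\nabla f$. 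Therefore ${\bf Ric}(fX,Y)=\tilde f\,{\bf Ric}(X,Y)$ for regular fields, and the approximation argument concludes.

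The main obstacle is the bookkeeping in the last step: one must apply the three Leibniz rules (for ${\text{\boldmath$\Delta$}}$, $\DD$, and $\nabla$) with their precise constants and signs, together with the metric-compatibility identity, so that the spurious zeroth- and first-order terms in $f$ cancel exactly. A secondary point is to verify that all intermediate objects --- $\nabla_{\nabla f}X$, $\Gamma(f,\langle X,Y\rangle)$, $(\Delta f)\langle X,Y\rangle$ and the various point-wise scalar products --- belong to the $L^1(M;\m)$/finite-measure classes in which \eqref{eq:RicciMeasure} is stated, which holds for $X,Y\in{\rm Reg}(TM)$ and $f\in{\rm Test}(M)$ thanks to the boundedness of $\Gamma(f)$, of $|\nabla X|_{\rm HS}$ and of $|X|$, $|Y|$.
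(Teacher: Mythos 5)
The paper does not prove this lemma at all: it is quoted verbatim from \cite[Lemma~8.15]{Braun:Tamed2021}, so there is no in-paper argument to compare against. Your proposal is correct and follows the same route as the cited source: verify the identity for $X,Y\in{\rm Reg}(TM)$ by expanding the representation \eqref{eq:RicciMeasure} with the Leibniz rules for ${\text{\boldmath$\Delta$}}$, $\DD$ and $\nabla$, observe that the zeroth-order terms $\tfrac12(\Delta f)\langle X,Y\rangle\,\m$ cancel and the first-order terms vanish by metric compatibility $\Gamma(f,\langle X,Y\rangle)=\langle\nabla_{\nabla f}X,Y\rangle+\langle X,\nabla_{\nabla f}Y\rangle$, and then extend by density of ${\rm Reg}(TM)$ in $H^{1,2}_{\sharp}(TM)$, boundedness of $X\mapsto fX$, and joint TV-continuity of ${\bf Ric}$; the reduction of \eqref{eq:RicciDensityKappa} to \eqref{eq:RicciDensity} via $\widetilde{\langle fX,Y\rangle}\,\kappa=\tilde f\,\widetilde{\langle X,Y\rangle}\,\kappa$ is also the intended one. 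One small caution: the paper's transcription of \cite[Lemma~8.1]{Braun:Tamed2021} writes the zeroth-order term as $-\DD h\,X^{\flat}$, which taken literally would break the cancellation; the version you use, $\DD(fX^{\flat})=f\,\DD X^{\flat}+(\Delta f)X^{\flat}+2(\nabla_{\nabla f}X)^{\flat}$, is the one consistent with the exact formula for $\DD(g\,\d f)$ and with $\DD\,\d f=\d\Delta f$, and is what the computation requires.
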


\section{Proof of Theorem~\ref{thm:HessShcraderUhlenbrock}}

First note that the following condition ${\bf (R_3)}$ in \cite[Corollary~4.2.3]{BH} is satisfied for tamed Dirichlet space. 
\begin{align*}
{\bf (R_3)}: \text{There exists }\mathscr{C}\subset D(\Delta)\text{ dense in }D(\mathscr{E})\text{ such that for all }f\in\mathscr{C}, f^2\in D(\Delta). 
\end{align*}
In fact, $\mathscr{C}={\rm Test}(M)$ satisfies ${\bf (R_3)}$. 
It is proved in \cite[Corollary~4.2.3]{BH} that ${\bf (R_3)}$ implies the following ${\bf (R_2)}$ and ${\bf (R_2)'}$.
\begin{align*}
{\bf (R_2)}:& \text{ For all }f\in D(\Delta), \,f^2\in D(\Delta_1).\\
{\bf (R_2)'}:&  \text{ There exists }\mathscr{C}\subset D(\Delta)\text{ dense in }D(\mathscr{E})\text{ such that for all }f\in\mathscr{C}, \,f^2\in D(\Delta_1).
\end{align*}

The proof of Theorem~\ref{thm:HessShcraderUhlenbrock} is split in several lemmas. 

\begin{lem}\label{eq:BochnerContractionIneq}
$X\in H^{1,2}(TM)$ implies $|X|\in D(\mathscr{E})$ and 
\begin{align}
\mathscr{E}(|X|,|X|)\leq \wt{\mathscr{E}}_{\rm cov}(X,X)<\infty. \label{eq:ContractionBochner1}
\end{align}
Moreover, for $f\in D(\mathscr{E})\cap L^{\infty}(M;\m)_+$ with $fX\in H^{1,2}(TM)$, we have $f|X|\in D(\mathscr{E})$ and 
\begin{align}
\mathscr{E}(|X|,f|X|)\leq \wt{\mathscr{E}}_{\rm cov}(X,fX). \label{eq:ContractionBochner2}
\end{align}
\end{lem}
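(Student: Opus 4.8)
The plan is to reduce both inequalities to Kato's inequality (Lemma~\ref{lem:KatoIneq}), the membership statements being immediate corollaries of it. For (i): Lemma~\ref{lem:KatoIneq} gives $|X|\in D(\mathscr{E})$ with $\Gamma(|X|)=|\nabla|X||^2\le|\nabla X|_{\rm HS}^2$ $\m$-a.e.\ whenever $X\in H^{1,2}(TM)$, so integrating this pointwise bound yields $\mathscr{E}(|X|,|X|)=\int_M\Gamma(|X|)\,\d\m\le\int_M|\nabla X|_{\rm HS}^2\,\d\m=\wt{\mathscr{E}}_{\rm cov}(X,X)<\infty$, finiteness coming from $X\in H^{1,2}(TM)\subset W^{1,2}(TM)$. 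For the membership in (ii): $f\ge0$ forces $f|X|=|f|\,|X|=|fX|$, and since $fX\in H^{1,2}(TM)$ by hypothesis, Lemma~\ref{lem:KatoIneq} applied to $fX$ gives $f|X|=|fX|\in D(\mathscr{E})$.

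For the inequality in (ii), I would establish the two $\m$-a.e.\ identities
\begin{align*}
\Gamma(|X|,f|X|)=f\,\Gamma(|X|)+|X|\,\Gamma(|X|,f),\qquad
\langle\nabla X\,|\,\nabla(fX)\rangle=f\,|\nabla X|_{\rm HS}^2+|X|\,\Gamma(|X|,f),
\end{align*}
so that their difference is $f\big(|\nabla X|_{\rm HS}^2-|\nabla|X||^2\big)\ge0$ $\m$-a.e.\ by $f\ge0$ and Lemma~\ref{lem:KatoIneq}; as $\Gamma(|X|,f|X|)\in L^1(M;\m)$ (because $|X|,f|X|\in D(\mathscr{E})$) and $\langle\nabla X\,|\,\nabla(fX)\rangle\in L^1(M;\m)$ (because $\nabla X,\nabla(fX)\in L^2(T^{\otimes2}X)$), integrating the resulting pointwise inequality $\Gamma(|X|,f|X|)\le\langle\nabla X\,|\,\nabla(fX)\rangle$ gives $\mathscr{E}(|X|,f|X|)\le\wt{\mathscr{E}}_{\rm cov}(X,fX)$.

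The first identity is the Leibniz rule for the carr\'e-du-champ with one (possibly unbounded) factor: applying the bounded-factor rule to $f\,(|X|\land n)$, invoking the locality of $\Gamma$ to replace $\Gamma(|X|,|X|\land n)$ and $\Gamma(|X|,f(|X|\land n))$ by $\Gamma(|X|)$ and $\Gamma(|X|,f|X|)$ on the quasi-open set $\{|X|<n\}$, and letting $n\to\infty$ (so that $\{|X|<n\}\uparrow M$ $\m$-a.e.) yields the identity. The second identity combines the covariant Leibniz rule $\nabla(fX)=f\,\nabla X+\nabla f\otimes X$ with the contraction identity $\langle\nabla X\,|\,\nabla f\otimes X\rangle=\langle\nabla_{\nabla f}X,X\rangle=\tfrac12\langle\d|X|^2,\d f\rangle=|X|\,\Gamma(|X|,f)$, the last step using $|X|\,\d|X|=\tfrac12\d|X|^2$, all valid for $X\in H^{1,2}(TM)$ (this lies at the heart of the proof of Lemma~\ref{lem:KatoIneq}). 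To obtain the covariant Leibniz rule for an $f$ merely in $D(\mathscr{E})\cap L^\infty(M;\m)$, rather than approximating $f$ I would substitute the candidate $f\,\nabla X+\nabla f\otimes X$ into the defining relation of Definition~\ref{def:SobolevSpaceW12TX} for $\nabla(fX)$: using that ${\rm Test}(M)$ is an algebra, so $hf\in{\rm Test}(M)$ (Lemma~\ref{lem:algebra}), and the divergence product rule \eqref{eq:DivergenceFormula} to write ${\rm div}(hf\nabla g_1)=f\,{\rm div}(h\nabla g_1)+h\,\Gamma(f,g_1)$, the relation for the candidate collapses to the defining relation of $\nabla X$ tested against $hf$; since $fX\in H^{1,2}(TM)$ already supplies $\nabla(fX)\in L^2(T^{\otimes2}X)$, the uniqueness clause of Definition~\ref{def:SobolevSpaceW12TX}, together with the module-density of the test tensors $\nabla g_1\otimes\nabla g_2$ and the density of test functions, identifies the candidate with $\nabla(fX)$.

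The hard part is precisely this covariant Leibniz rule for a general $f\in D(\mathscr{E})\cap L^\infty(M;\m)$: with no control on $|\d f|$, the tensor $\nabla f\otimes X$ is only known to lie in $L^1$ a priori, and the customary route of approximating $f$ by test functions fails because $\int_M|\d f_k|^2|X|^2\,\d\m$ need not stay bounded; the hypothesis $fX\in H^{1,2}(TM)$ is exactly what rescues the argument, by producing an $L^2$ covariant derivative $\nabla(fX)$ against which the algebraic identity can be checked on a dense set. The remaining points — the truncation arguments needed to justify the Leibniz rules for the unbounded function $|X|$, and the verification that the various products are integrable — are routine.
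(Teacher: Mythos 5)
Your treatment of \eqref{eq:ContractionBochner1} and of the two membership claims coincides with the paper's: everything there is an immediate consequence of Kato's inequality (Lemma~\ref{lem:KatoIneq}), applied to $X$ and to $fX$ respectively. For the bilinear inequality \eqref{eq:ContractionBochner2}, however, the paper takes a completely different and much shorter route: it invokes the semigroup domination $|P_t^{\rm B}X|\leq P_t|X|$ of Lemma~\ref{lem:bochnerHF}(2), which together with the pointwise Cauchy--Schwarz inequality and $f\geq0$ yields
\begin{align*}
\bigl((I-P_t)|X|,f|X|\bigr)_{L^2(M;\m)}\leq \bigl((I-P_t^{\rm B})X,fX\bigr)_{L^2(TM)},
\end{align*}
and then divides by $t$ and lets $t\to0$, using only that $|X|,f|X|\in D(\mathscr{E})$ and $X,fX\in H^{1,2}(TM)$. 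No Leibniz rule for $\Gamma$, no covariant Leibniz rule, and no metric-compatibility identity are needed; the forms are compared through their semigroups rather than through their carr\'e-du-champ operators.

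Your route, by contrast, stalls at exactly the step you single out as the hard part. To identify $\nabla(fX)$ with $f\nabla X+\nabla f\otimes X$ you substitute the candidate into Definition~\ref{def:SobolevSpaceW12TX} and appeal to \lq\lq $hf\in{\rm Test}(M)$ by Lemma~\ref{lem:algebra}\rq\rq; but that lemma only makes ${\rm Test}(M)$ an algebra, and your $f$ is merely in $D(\mathscr{E})\cap L^{\infty}(M;\m)_+$, so $hf$ is not a test function. Hence the defining identity for $\nabla X$ cannot be tested against the multiplier $hf$ as written, and \eqref{eq:DivergenceFormula} does not directly apply to ${\rm div}(hf\nabla g_1)$ either, since $|\d(hf)|$ need not be bounded. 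Repairing this requires first extending the defining relation of $\nabla X$ to multipliers in $D(\mathscr{E})\cap L^{\infty}(M;\m)$ by a further approximation argument (e.g.\ $hP_{1/k}f\to hf$), which is precisely the kind of limit procedure whose difficulty motivated your detour in the first place. As it stands the covariant Leibniz rule for such $f$ is not established, and with it your second pointwise identity and the conclusion \eqref{eq:ContractionBochner2}. The clean fix is to drop the pointwise computation altogether and use the domination \eqref{eq:BochnerContraction} as above.
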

\begin{proof}[{\bf Proof}]
The proof of \eqref{eq:ContractionBochner1}
can be directly deduced from Lemma~\ref{lem:KatoIneq}. 
Next we show \eqref{eq:ContractionBochner2}. Assume $fX\in H^{1,2}(TM)$ for $f\in D(\mathscr{E})\cap L^{\infty}(M;\m)_+$. By \eqref{eq:ContractionBochner1}, we have $f|X|\in D(\mathscr{E})$. Moreover, by Lemma~\ref{lem:bochnerHF}(2), 
\begin{align*}
((I- P_t)|X|,f|X|)_{L^2(M;\m)}\leq
((I- P_t^{\rm B})X,fX)_{L^2(TM)}.
\end{align*} 
Divided by $t>0$ and letting $t\to0$, we obtain \eqref{eq:ContractionBochner2}. 
\end{proof}
\begin{lem}\label{lem:HKEnergyCont}
Take $\omega\in H^{1,2}(T^*\!M)(=D(\mathscr{E}^{\rm HK}))$. Then $|\omega|\in D(\mathscr{E})$ and 
\begin{align}
\mathscr{E}^{\kappa}(|\omega|,|\omega|)\leq\mathscr{E}^{\rm HK}(\omega,\omega).\label{eq:KEnergyCont}
\end{align}
\end{lem}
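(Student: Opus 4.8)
The plan is to reduce the statement to the Bochner-side inequality already available in Lemma 9.2 (i.e. \eqref{eq:ContractionBochner1}) together with the contravariant-versus-covariant comparison \eqref{eq:ContBochHK} of Lemma~\ref{lem:Lemma8.8}. The key observation is that $H^{1,2}(T^*\!M)$ and $H^{1,2}_{\sharp}(TM)$ are identified by the musical isomorphism $\sharp$, so for $\omega\in H^{1,2}(T^*\!M)$ we may set $X:=\omega^{\sharp}\in H^{1,2}_{\sharp}(TM)\subset H^{1,2}(TM)$, where the last inclusion is Lemma~\ref{lem:Lemma8.8}. Since $\sharp$ is a pointwise isometry, $|X|=|\omega|$ $\m$-a.e. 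Then Lemma~\ref{lem:KatoIneq} (Kato's inequality for the Bochner Laplacian), applied through \eqref{eq:ContractionBochner1}, gives $|X|\in D(\mathscr{E})$ and hence $|\omega|\in D(\mathscr{E})$, together with $\mathscr{E}(|\omega|,|\omega|)=\mathscr{E}(|X|,|X|)\leq\widetilde{\mathscr{E}}_{\rm cov}(X,X)=\mathscr{E}_{\rm cov}(X)$.

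First I would record these identifications, then invoke \eqref{eq:ContBochHK}: for every $X\in H^{1,2}_{\sharp}(TM)$,
\begin{align*}
\mathscr{E}_{\rm cov}(X)\leq\mathscr{E}_{\rm con}(X^{\flat})-\langle\kappa,\widetilde{|X|}^2\rangle.
\end{align*}
Applying this with $X=\omega^{\sharp}$ (so $X^{\flat}=\omega$) and combining with the Kato bound above yields
\begin{align*}
\mathscr{E}(|\omega|,|\omega|)\leq\mathscr{E}_{\rm con}(\omega)-\langle\kappa,\widetilde{|\omega|}^2\rangle.
\end{align*}
Now $\omega\in H^{1,2}(T^*\!M)=D(\mathscr{E}^{\rm HK})$, so by definition $\mathscr{E}_{\rm con}(\omega)=\mathscr{E}^{\rm HK}(\omega,\omega)$; and by definition of the Schrödinger form $\mathscr{E}^{\kappa}$ (the case $q=1$ of $\mathscr{E}^{q\kappa}$ in Subsection~\ref{subsec:Frame}, with the convention $\mathscr{E}^{\kappa}(f,f)=\mathscr{E}(f,f)+\langle 2\kappa,\tilde f^2\rangle/2=\mathscr{E}(f,f)+\langle\kappa,\tilde f^2\rangle$), the left-hand side rearranges to
\begin{align*}
\mathscr{E}^{\kappa}(|\omega|,|\omega|)=\mathscr{E}(|\omega|,|\omega|)+\langle\kappa,\widetilde{|\omega|}^2\rangle\leq\mathscr{E}^{\rm HK}(\omega,\omega),
\end{align*}
which is exactly \eqref{eq:KEnergyCont}. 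A point to double-check along the way is that $\widetilde{|\omega|}^2$ makes sense as a pairing against $\kappa$, i.e. that $|\omega|\in D(\mathscr{E})$ admits an $\mathscr{E}$-quasi-continuous version and that the Stollmann--Voigt inequality \eqref{eq:StollmannVoigt} controls the term $\langle\kappa^+,\widetilde{|\omega|}^2\rangle$ (hence it is finite), which is already built into the standing framework for $\kappa^+\in S_D({\bf X})$.

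The main obstacle I anticipate is not the chain of inequalities itself but justifying that the quantities match on the nose: specifically, that the restriction of the contravariant functional to $H^{1,2}(T^*\!M)$ really is $\mathscr{E}^{\rm HK}$ as normalized in Subsection~\ref{subsec:HFHosgekodaira}, and that $\sharp$ maps $H^{1,2}(T^*\!M)$ \emph{into} (indeed onto) $H^{1,2}_{\sharp}(TM)$ with $|\omega^{\sharp}|=|\omega|$ — both of which are essentially definitional here but must be cited correctly (the former from the definition of $\widetilde{\mathscr{E}}_{\rm con}$, the latter from Lemma~\ref{lem:Lemma8.8} and the isometry property of the musical maps recorded in Subsection~\ref{subsec:LebesgueSpace}). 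If one wished to avoid Lemma~\ref{lem:Lemma8.8} entirely, an alternative route is to mimic the semigroup-difference-quotient argument in the proof of Lemma~\ref{lem:HKEnergyCont}'s companion Lemma~\ref{eq:BochnerContractionIneq}: use a not-yet-stated pointwise domination $|P_t^{\rm HK}\omega|\leq P_t^{\kappa}|\omega|$ — but that is precisely Theorem~\ref{thm:HessShcraderUhlenbrock}, so it would be circular; hence the $\sharp$-transport route through \eqref{eq:ContBochHK} is the correct one.
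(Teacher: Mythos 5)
Your proposal is correct and follows essentially the same route as the paper's proof: pass to $X=\omega^{\sharp}\in H^{1,2}_{\sharp}(TM)\subset H^{1,2}(TM)$, apply \eqref{eq:ContractionBochner1} (Kato's inequality) to get $|\omega|\in D(\mathscr{E})$ and $\mathscr{E}(|\omega|,|\omega|)\leq\widetilde{\mathscr{E}}_{\rm cov}(\omega^{\sharp},\omega^{\sharp})$, then conclude via \eqref{eq:ContBochHK} and rearrange the $\langle\kappa,\widetilde{|\omega|}^2\rangle$ term. No substantive differences.
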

\begin{proof}[{\bf Proof}] 
$\omega\in H^{1,2}(T^*\!M)$ implies $\omega^{\sharp}\in H^{1,2}_{\sharp}(TM)\subset H^{1,2}(TM)$ and $|\omega|=|\omega^{\sharp}|\in D(\mathscr{E})$. Then 
\begin{align*}
\mathscr{E}(|\omega|,|\omega|)&=\mathscr{E}(|\omega^{\sharp}|,|\omega^{\sharp}|)
\stackrel{\eqref{eq:ContractionBochner1}}{\leq}\wt{\mathscr{E}}_{\rm cov}(\omega^{\sharp},\omega^{\sharp})\stackrel{\eqref{eq:ContBochHK}}{
\leq}\mathscr{E}^{\rm HK}(\omega,\omega)-\langle \kappa,|\omega|^2\rangle,
\end{align*}
which implies the conclusion. 
\end{proof}

\begin{lem}\label{lem:HKEnergyCont*}
If $\omega\in H^{1,2}(T^*\!M)\cap L^{\infty}(T^*\!M)$ {\rm(}resp.~$\omega\in H^{1,2}(T^*\!M)${\rm)}
and $f\in D(\mathscr{E})\cap L^{\infty}(M;\m)$ {\rm(}resp.~$f\in {\rm Test}(M)${\rm)},  
then $f\omega\in H^{1,2}(T^*\!M)\cap L^{\infty}(T^*\!M)$ {\rm(}resp.~$f\omega\in H^{1,2}(T^*\!M)${\rm)}.   
\end{lem}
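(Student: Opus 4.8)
**Proof plan for Lemma~\ref{lem:HKEnergyCont*}.**

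The plan is to reduce everything to the multiplicativity of the differential, the codifferential, and the norm established in \cite{Braun:Tamed2021}, together with the already-proven fact (Lemma~\ref{lem:HKEnergyCont}) that $H^{1,2}(T^*\!M)$ sits inside $D(\mathscr{E})$ via the pointwise norm. First I would treat the bounded case: assume $\omega\in H^{1,2}(T^*\!M)\cap L^{\infty}(T^*\!M)$ and $f\in D(\mathscr{E})\cap L^{\infty}(M;\m)$. Since $f$ is bounded, $|f\omega|_{\m}=|f|\,|\omega|_{\m}\in L^{\infty}(M;\m)$, so $f\omega\in L^{\infty}(T^*\!M)$ is immediate; the content is that $f\omega\in H^{1,2}(T^*\!M)$. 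By the product rules for $\d$ and $\d_*$ recalled after Definition~\ref{def:adjointextrioderivative} (valid here because $\d f\in L^{\infty}$ is not needed — it is the alternative hypothesis $\omega\in L^{\infty}(T^*\!M)$ that applies), one has $f\omega\in W^{1,2}(T^*\!M)$ with $\d(f\omega)=f\,\d\omega+\d f\land\omega$ and $\d_*(f\omega)=f\,\d_*\omega-\langle \d f,\omega\rangle$; the right-hand sides lie in $L^2$ because $f,\omega,|\omega|$ are all bounded and $f\in D(\mathscr{E})$ (so $\d f\in L^2(T^*\!M)$) and $\d\omega,\d_*\omega\in L^2$. The remaining point is to upgrade membership in $W^{1,2}(T^*\!M)$ to membership in the strictly smaller space $H^{1,2}(T^*\!M)$.

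For that upgrade I would argue by approximation. Choose $\omega_n\in{\rm Reg}(T^*\!M)$ with $\omega_n\to\omega$ in $\|\cdot\|_{W^{1,2}(T^*\!M)}$, and — using Lemma~\ref{lem:DensenessTestFunc} together with Lemma~\ref{lem:boundedEst} — approximate $f$ by $f_n:=P_{1/n}f\in{\rm Test}(M)$, which converges to $f$ in $D(\mathscr{E})$ and is uniformly bounded in $L^{\infty}$ (by the sub-Markovian property $\|f_n\|_{\infty}\le\|f\|_{\infty}$). Then $f_n\omega_n\in{\rm Reg}(T^*\!M)$ (or at least lies in $H^{1,2}(T^*\!M)$, since ${\rm Test}(M)$ is an algebra by Lemma~\ref{lem:algebra} and ${\rm Reg}(T^*\!M)\subset H^{1,2}(T^*\!M)$ is closed under multiplication by ${\rm Test}(M)$-functions via the product rule), and I would check that $f_n\omega_n\to f\omega$ in $\|\cdot\|_{W^{1,2}(T^*\!M)}$ by expanding $\d(f_n\omega_n)$ and $\d_*(f_n\omega_n)$ with the product rules and controlling each term: the $L^2$-norm convergence $f_n\omega_n\to f\omega$ uses uniform boundedness of $f_n$ and $\omega_n$ plus a dominated-convergence argument; the terms $f_n\d\omega_n\to f\d\omega$ and $\d f_n\land\omega_n\to\d f\land\omega$ (and similarly for $\d_*$) follow from the same uniform bounds together with $\d f_n\to\d f$ in $L^2(T^*\!M)$ and $\d\omega_n\to\d\omega$, $\d_*\omega_n\to\d_*\omega$ in $L^2$. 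Since $H^{1,2}(T^*\!M)$ is closed in $\|\cdot\|_{W^{1,2}(T^*\!M)}$ by definition, this gives $f\omega\in H^{1,2}(T^*\!M)$.

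For the parenthetical case $\omega\in H^{1,2}(T^*\!M)$ and $f\in{\rm Test}(M)$, the argument is the same but simpler: now $\d f\in L^{\infty}(T^*\!M)$ (since $\Gamma(f)\in L^{\infty}$ for test functions), so the product rules apply in their first form, $f\omega\in W^{1,2}(T^*\!M)$ with the same formulas, and the $L^2$-estimates on $\d(f\omega)$ and $\d_*(f\omega)$ go through using boundedness of $f$ and $\d f$ rather than of $\omega$; the approximation $\omega_n\to\omega$ in ${\rm Reg}(T^*\!M)$ with $f$ fixed then yields $f\omega_n\to f\omega$ in $\|\cdot\|_{W^{1,2}(T^*\!M)}$ directly, placing $f\omega$ in $H^{1,2}(T^*\!M)$. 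I expect the main obstacle to be the bookkeeping in the bounded case: verifying that the approximating sequence $f_n\omega_n$ converges in the full $W^{1,2}$-norm requires simultaneously handling the wedge term $\d f_n\land\omega_n$ and the contraction term $\langle\d f_n,\omega_n\rangle$, where one factor converges only in $L^2$ and the other only weak-* in $L^{\infty}$, so one must interpose the uniform $L^{\infty}$-bounds carefully (e.g. split $\d f_n\land\omega_n-\d f\land\omega=(\d f_n-\d f)\land\omega_n+\d f\land(\omega_n-\omega)$ and bound the first summand by $\|\d f_n-\d f\|_{L^2}\|\omega_n\|_{\infty}$, the second by $\|\d f\|_{\infty}$-type control only if available, otherwise by keeping $\omega$ fixed and letting $f_n\to f$ first). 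A clean route is to prove the two convergences in separate steps — first fix $\omega$ and send $f_n\to f$, then fix $f$ and send $\omega_n\to\omega$ — using the already-established $W^{1,2}$-continuity of multiplication by a fixed bounded $D(\mathscr{E})$-function on ${\rm Reg}(T^*\!M)$.
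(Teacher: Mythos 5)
Your proposal is correct, but it takes a genuinely different route from the paper. The paper's proof stays on the covariant side: it invokes \cite[Remark~6.10]{Braun:Tamed2021} to get $fX\in H^{1,2}(TM)$ with $\nabla(fX)=\nabla f\otimes X+f\nabla X$ for $X\in H^{1,2}(TM)$ (under the corresponding boundedness hypotheses), and then transfers the conclusion to $H^{1,2}(T^*\!M)$ via the identity $(f\omega)^{\sharp}=f\omega^{\sharp}$, which it verifies by testing against $\eta\in H^{1,2}(T^*\!M)$. You instead work directly with the contravariant objects $\d$ and $\d_*$ and the definition of $H^{1,2}(T^*\!M)$ as the $W^{1,2}$-closure of ${\rm Reg}(T^*\!M)$, using the Leibniz rules $\d(f\omega)=f\d\omega+\d f\wedge\omega$ and $\d_*(f\omega)=f\d_*\omega-\langle\d f,\omega\rangle$ plus a density argument. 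Two remarks. First, the product-rule statement the paper records after Definition~\ref{def:adjointextrioderivative} (from \cite[Theorem~7.5 and Lemma~7.15]{Braun:Tamed2021}) already asserts $f\omega\in H^{1,2}(T^*\!M)$ — not merely $W^{1,2}(T^*\!M)$ — under exactly the hypotheses \lq\lq$\d f\in L^{\infty}$ or $\omega\in L^{\infty}(T^*\!M)$\rq\rq, so your \lq\lq upgrade\rq\rq\ step is re-deriving something already available; that is extra work, not an error. Second, your diagnosis of the obstacle in the simultaneous approximation $f_n\omega_n$ is accurate, and the two-step fix is the right one, but the logical order should be made explicit: one must prove the parenthetical case first (fixed $f\in{\rm Test}(M)$, $\omega_n\in{\rm Reg}(T^*\!M)\to\omega$, using $\d f\in L^{\infty}(T^*\!M)$ to pass to the limit in $\d f\wedge\omega_n$), and only then treat the bounded case by sending $f_n:=P_{1/n}f\in{\rm Test}(M)$ to $f$ with $\omega$ fixed, since $f\omega_n$ for a non-test $f$ is not a priori in $H^{1,2}(T^*\!M)$. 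Your approach has the advantage of being self-contained on the exterior-calculus side and of avoiding the passage through the covariant space $H^{1,2}(TM)$, where the identification of $H^{1,2}(T^*\!M)^{\sharp}$ with a subspace of $H^{1,2}(TM)$ (Lemma~\ref{lem:Lemma8.8}) is only an inclusion; the paper's route is shorter because it delegates the closure argument entirely to the cited remark of Braun.
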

\begin{proof}[{\bf Proof}]
By \cite[Remark~6.10]{Braun:Tamed2021}, we have that 
$f\in D(\mathscr{E})\cap L^{\infty}(M;\m)$ (resp.~$f\in {\rm Test}(M)$)
 and $X\in H^{1,2}(TM)\cap L^{\infty}(TM)$ (resp.~$X\in H^{1,2}(TM)$) imply 
$fX\in H^{1,2}(TM)\cap L^{\infty}(TM)$ (resp.~$fX\in  H^{1,2}(TM)$)
and $\nabla(fX)=\nabla f\otimes X+f\nabla X$. Therefore, 
$f\in D(\mathscr{E})\cap L^{\infty}(M;\m)$ (resp.~$f\in \text{\rm Test}(M)$) 
and $\omega\in H^{1,2}(T^*\!M)\cap L^{\infty}(T^*\!M)$ (resp.~$\omega\in H^{1,2}(T^*\!M)$) imply 
$f\omega\in H^{1,2}(T^*\!M)\cap L^{\infty}(T^*\!M)$ (resp.~$f\omega\in H^{1,2}(T^*\!M)$), because 
$(f\omega)^{\sharp}=f\omega^{\sharp}$ holds in view of that for any $\eta\in H^{1,2}(T^*\!M)$
\begin{align*}
\langle (f\omega^{\sharp})^{\flat},\eta\rangle =\eta(f\omega^{\sharp})=\langle \eta^{\sharp},f\omega^{\sharp}\rangle 
&=f\langle \omega,\eta\rangle =\langle f\omega,\eta\rangle.
\end{align*}. 
\end{proof}

\begin{lem}\label{lem:HKEnergyCont**}
Take $\omega\in H^{1,2}(T^*\!M)$ 
and $f\in {\rm Test}(M)_+$.
Then $f|\omega|\in D(\mathscr{E}^{\kappa})=D(\mathscr{E})$ and 
\begin{align}
\mathscr{E}^{\kappa}(|\omega|,f|\omega|)\leq\mathscr{E}^{\rm HK}(\omega,f\omega).\label{eq:KEnergyCont**}
\end{align}
\end{lem}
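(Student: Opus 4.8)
The plan is to push everything through the musical isomorphism to the tangent module, use the Bochner-type comparison \eqref{eq:ContractionBochner2} to bound the left-hand side by a covariant energy, and then bound that covariant energy by the Hodge energy plus the $\kappa$-correction supplied by the Ricci measure. The membership bookkeeping comes first: since $\omega\in H^{1,2}(T^*\!M)$, Lemma~\ref{lem:Lemma8.8} gives $\omega^{\sharp}\in H^{1,2}_{\sharp}(TM)\subset H^{1,2}(TM)$, hence $|\omega|=|\omega^{\sharp}|\in D(\mathscr{E})$ by Lemma~\ref{eq:BochnerContractionIneq}; since $f\in{\rm Test}(M)_+\subset D(\mathscr{E})\cap L^{\infty}(M;\m)_+$, Lemma~\ref{lem:HKEnergyCont*} gives $f\omega\in H^{1,2}(T^*\!M)$, and the identity $(f\omega)^{\sharp}=f\omega^{\sharp}$ established inside the proof of Lemma~\ref{lem:HKEnergyCont*} then yields $f\omega^{\sharp}=(f\omega)^{\sharp}\in H^{1,2}_{\sharp}(TM)\subset H^{1,2}(TM)$.

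With these in hand I would apply \eqref{eq:ContractionBochner2} to $X:=\omega^{\sharp}$ and the present $f$ (all hypotheses are met), obtaining $f|\omega|=f|\omega^{\sharp}|\in D(\mathscr{E})=D(\mathscr{E}^{\kappa})$ — which is the first assertion — together with
\begin{align*}
\mathscr{E}(|\omega|,f|\omega|)=\mathscr{E}(|\omega^{\sharp}|,f|\omega^{\sharp}|)\leq\wt{\mathscr{E}}_{\rm cov}(\omega^{\sharp},f\omega^{\sharp}).
\end{align*}

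The core of the proof is to upgrade the quadratic comparison \eqref{eq:ContBochHK} to the bilinear estimate
\begin{align*}
\wt{\mathscr{E}}_{\rm cov}(\omega^{\sharp},f\omega^{\sharp})\leq\mathscr{E}^{\rm HK}(\omega,f\omega)-\langle \kappa,\wt{f|\omega|^2}\rangle .
\end{align*}
An inequality does not polarize, so instead I would invoke the Ricci measure of Theorem~\ref{thm:Theorem8.9}: by \eqref{eq:NonKappaRicci}, ${\bf Ric}^{\kappa}(\omega^{\sharp},\omega^{\sharp})\geq0$ as a signed smooth measure, so $\tilde f\,{\bf Ric}^{\kappa}(\omega^{\sharp},\omega^{\sharp})\geq0$ because $\tilde f\geq0$ $\mathscr{E}$-q.e.; by the $L^{\infty}$-linearity \eqref{eq:RicciDensityKappa} together with symmetry this equals ${\bf Ric}^{\kappa}(f\omega^{\sharp},\omega^{\sharp})$, whence ${\bf Ric}^{\kappa}(f\omega^{\sharp},\omega^{\sharp})(M)\geq0$. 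Evaluating the total-mass identity \eqref{eq:RicciKappaMeasureTotal} at $X:=\omega^{\sharp}$, $Y:=f\omega^{\sharp}\in H^{1,2}_{\sharp}(TM)$ and simplifying with $(\omega^{\sharp})^{\flat}=\omega$, $(f\omega^{\sharp})^{\flat}=f\omega$ and $\langle\omega^{\sharp},f\omega^{\sharp}\rangle=f|\omega|^2$, this nonnegativity becomes exactly the displayed bilinear estimate, because the $\d,\d_*$-integral there is the polarization $\mathscr{E}^{\rm HK}(\omega,f\omega)$ and the covariant integral is $\wt{\mathscr{E}}_{\rm cov}(\omega^{\sharp},f\omega^{\sharp})$. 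Chaining the two displays and rewriting $\wt{f|\omega|^2}=\wt{|\omega|}\,\wt{f|\omega|}$ $\mathscr{E}$-q.e. gives $\mathscr{E}(|\omega|,f|\omega|)+\langle\kappa,\wt{|\omega|}\,\wt{f|\omega|}\rangle\leq\mathscr{E}^{\rm HK}(\omega,f\omega)$, i.e.\ \eqref{eq:KEnergyCont**}; this last rearrangement is the same bookkeeping as at the end of the proof of Lemma~\ref{lem:HKEnergyCont}.

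I expect the genuine obstacle to be precisely this bilinearization: one must use that ${\bf Ric}^{\kappa}$ is honestly measure-valued and that all three of its properties invoked here — pointwise (measure) positivity \eqref{eq:NonKappaRicci}, $L^{\infty}$-linearity in each slot \eqref{eq:RicciDensityKappa}, and the total-mass formula \eqref{eq:RicciKappaMeasureTotal} — hold for the nonsmooth arguments $\omega^{\sharp},f\omega^{\sharp}\in H^{1,2}_{\sharp}(TM)$ and not merely for elements of ${\rm Reg}(TM)$. This is exactly what Theorem~\ref{thm:Theorem8.9} together with \eqref{eq:RicciDensityKappa} provides, so once these are granted the remaining steps — the membership facts, the identity $(f\omega)^{\sharp}=f\omega^{\sharp}$, and the identification of $\mathscr{E}^{\rm HK}$ and $\wt{\mathscr{E}}_{\rm cov}$ with the respective polarized integrals — are routine and already recorded in the cited results.
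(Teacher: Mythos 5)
Your proposal is correct and follows essentially the same route as the paper: the membership facts via Lemma~\ref{lem:HKEnergyCont*}, the bound $\mathscr{E}(|\omega|,f|\omega|)\leq\wt{\mathscr{E}}_{\rm cov}(\omega^{\sharp},f\omega^{\sharp})$ from \eqref{eq:ContractionBochner2}, and then the combination of \eqref{eq:RicciKappaMeasureTotal}, \eqref{eq:RicciDensityKappa} and \eqref{eq:NonKappaRicci} to absorb the remaining term into the nonnegative measure $\tilde f\,{\bf Ric}^{\kappa}(\omega^{\sharp},\omega^{\sharp})$. The paper organizes the same computation slightly differently (passing through ${\bf Ric}$ rather than stating the bilinear inequality separately), but the content is identical.
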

\begin{proof}[{\bf Proof}]
Suppose $\omega\in H^{1,2}(T^*\!M)$ 
and $f\in {\rm Test}(M)_+$.
By Lemma~\ref{lem:HKEnergyCont*}, we have $f\omega\in H^{1,2}(T^*\!M)$, hence 
$f|\omega|\in D(\mathscr{E})$ by Lemma~\ref{lem:HKEnergyCont*}. 
On the other hand, by \eqref{eq:RicciKappaMeasureTotal}, 
 we have 
\begin{align}
{\bf Ric}(\omega^{\sharp},f\omega^{\sharp})(M)=\mathscr{E}^{\rm HK}(\omega,f\omega)-\wt{\mathscr{E}}_{\rm cov}(\omega^{\sharp},f\omega^{\sharp}).\label{eq:RicciIdentity}
\end{align}
By Lemma~\ref{eq:BochnerContractionIneq}, we then have
\begin{align*}
\mathscr{E}^{\kappa}(|\omega|,f|\omega|)&=\mathscr{E}(|\omega|,f|\omega|)+\langle \kappa,f|\omega|^2\rangle \\
&\leq \wt{\mathscr{E}}_{\rm cov}(\omega^{\sharp},f\omega^{\sharp})+\langle \kappa,f|\omega|^2\rangle \\
&\hspace{-0.2cm}\stackrel{\eqref{eq:RicciIdentity}}{=}\mathscr{E}^{\rm HK}(\omega,f\omega)-{\bf Ric}^{\kappa}(\omega^{\sharp},f\omega^{\sharp})(M)\\
&\hspace{-0.2cm}\stackrel{\eqref{eq:RicciDensityKappa}}{=}\mathscr{E}^{\rm HK}(\omega,f\omega)-\int_M \wt{f}{\rm d}{\bf Ric}^{\kappa}(\omega^{\sharp},\omega^{\sharp})\\
&\hspace{-0.2cm}\stackrel{\eqref{eq:NonKappaRicci}}{
\leq} \mathscr{E}^{\rm HK}(\omega,f\omega).
\end{align*}
\end{proof}

\begin{cor}\label{cor:HKEnergyCont**}
Take $\omega\in D(\DD)\cap L^{\infty}(T^*\!M)$ and $f\in D(\mathscr{E})\cap L^{\infty}(M;\m)_+$. 
Then $f\omega\in H^{1,2}(T^*\!M)$, $f|\omega|\in D(\mathscr{E})\cap L^{\infty}(M;\m)$, and \eqref{eq:KEnergyCont**} hold. 
\end{cor}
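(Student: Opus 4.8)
The plan is the following. The first two assertions are immediate consequences of results already at hand. Since $\omega\in D(\DD)\subset H^{1,2}(T^*\!M)$ and $\omega\in L^{\infty}(T^*\!M)$, Lemma~\ref{lem:HKEnergyCont*} applied with $f\in D(\mathscr{E})\cap L^{\infty}(M;\m)$ gives $f\omega\in H^{1,2}(T^*\!M)\cap L^{\infty}(T^*\!M)$; and by Lemma~\ref{lem:HKEnergyCont} we have $|\omega|\in D(\mathscr{E})$, which together with $|\omega|\in L^{\infty}(M;\m)$ and the algebra property of $D(\mathscr{E})\cap L^{\infty}(M;\m)$ yields $f|\omega|\in D(\mathscr{E})\cap L^{\infty}(M;\m)$. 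So the real content is the inequality \eqref{eq:KEnergyCont**}, which I would deduce from Lemma~\ref{lem:HKEnergyCont**} by approximating the general $f\in D(\mathscr{E})\cap L^{\infty}(M;\m)_+$ by test functions and passing to the limit.

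Concretely, put $f_t:=P_tf$ for $t>0$. Since $f\in L^2(M;\m)\cap L^{\infty}(M;\m)$, Lemma~\ref{lem:boundedEst} shows $f_t\in {\rm Test}(M)$, and sub-Markovianity gives $0\leq f_t\leq\|f\|_{L^{\infty}(M;\m)}$, so $f_t\in {\rm Test}(M)_+$; moreover, since $f\in D(\mathscr{E})$, spectral calculus shows $f_t\to f$ in $D(\mathscr{E})$ (for the $\mathscr{E}_1$-norm), hence also in $L^2(M;\m)$, as $t\downarrow0$. Lemma~\ref{lem:HKEnergyCont**} applied to each $f_t$ yields $\mathscr{E}^{\kappa}(|\omega|,f_t|\omega|)\leq\mathscr{E}^{\rm HK}(\omega,f_t\omega)$, and it remains to let $t\downarrow0$ on both sides.

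For the right-hand side I would exploit the hypothesis $\omega\in D(\DD)$ (this is exactly where it enters). Since $f_t\omega\in H^{1,2}(T^*\!M)$, the definition of $\DD$ gives $\mathscr{E}^{\rm HK}(\omega,f_t\omega)=-\int_M\langle\DD\omega,f_t\omega\rangle\,\d\m=-\int_M f_t\,\langle\DD\omega,\omega\rangle\,\d\m$, and $\langle\DD\omega,\omega\rangle\in L^2(M;\m)$ by the pointwise Cauchy--Schwarz inequality together with $\omega\in L^{\infty}(T^*\!M)$; hence $f_t\to f$ in $L^2(M;\m)$ gives $\mathscr{E}^{\rm HK}(\omega,f_t\omega)\to-\int_M f\langle\DD\omega,\omega\rangle\,\d\m=\mathscr{E}^{\rm HK}(\omega,f\omega)$, using $f\omega\in H^{1,2}(T^*\!M)$ again. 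For the left-hand side, write $u:=|\omega|\in D(\mathscr{E})\cap L^{\infty}(M;\m)$; the Leibniz rule for $\Gamma$, the uniform bound $\|f_t-f\|_{L^{\infty}(M;\m)}\leq 2\|f\|_{L^{\infty}(M;\m)}$, $f_t\to f$ in $L^2(M;\m)$ and dominated convergence give $(f_t-f)u\to0$ and $(f_t-f)u^2\to0$ in $D(\mathscr{E})$. Then $\mathscr{E}(u,f_tu)\to\mathscr{E}(u,fu)$ by continuity of the bilinear form, and the perturbation term converges, $\langle\kappa,\widetilde{f_tu^2}\rangle\to\langle\kappa,\widetilde{fu^2}\rangle$, by estimating $|\langle\kappa^{\pm},\widetilde{(f_t-f)u^2}\rangle|$ via the $\kappa^{\pm}$-Cauchy--Schwarz inequality and the Stollmann--Voigt inequality \eqref{eq:StollmannVoigt} (valid since $\kappa^{\pm}\in S_D({\bf X})$, using $S_{E\!K}({\bf X})\subset S_D({\bf X})$). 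Combining the two limits gives \eqref{eq:KEnergyCont**}.

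I expect the main obstacle to be the bookkeeping in this last step --- in particular the convergence of the perturbation term $\langle\kappa,\widetilde{f_tu^2}\rangle$, where one Cauchy--Schwarz's against $\kappa^{\pm}$ and then bounds $\int_M\widetilde{(f_t-f)u}^{\,2}\,\d\kappa^{\pm}$ through \eqref{eq:StollmannVoigt} and the $D(\mathscr{E})$-convergence $(f_t-f)u\to0$ --- together with the realization that the weaker hypothesis $\omega\in H^{1,2}(T^*\!M)$ of Lemma~\ref{lem:HKEnergyCont**} would not control the right-hand side under this approximation, so that $\omega\in D(\DD)$ (which reduces $\mathscr{E}^{\rm HK}(\omega,\cdot)$ to an $L^2$-pairing against $\DD\omega$) is genuinely used there.
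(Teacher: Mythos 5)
Your proof is correct and follows essentially the same route as the paper: approximate $f$ by $P_tf\in{\rm Test}(M)_+$, apply Lemma~\ref{lem:HKEnergyCont**} to get $\mathscr{E}^{\kappa}(|\omega|,(P_tf)|\omega|)\leq(-\DD\omega,(P_tf)\omega)_{L^2(T^*\!M)}$, and pass to the limit $t\downarrow0$ using $P_tf\to f$ in $D(\mathscr{E})$ together with the reduction of $\mathscr{E}^{\rm HK}(\omega,\cdot)$ to an $L^2$-pairing against $\DD\omega$. Your careful handling of the limit on the left-hand side (in particular the perturbation term via Stollmann--Voigt) merely supplies details the paper leaves implicit.
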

\begin{proof}[{\bf Proof}]
We already know $f\omega\in H^{1,2}(T^*\!M)$ by Lemma~\ref{lem:HKEnergyCont*}. 
By Lemma~\ref{lem:HKEnergyCont**}, we have $(P_tf)\omega\in H^{1,2}(T^*\!M)$ and $(P_tf)|\omega|\in D(\mathscr{E})$, and 
\begin{align*}
\mathscr{E}^{\kappa}(|\omega|,(P_tf)|\omega|)\leq \left(-\DD\omega, (P_tf)\omega\right)_{L^2(T^*\!M)}, 
\end{align*}
because $P_tf\in  {\rm Test}(M)_+$. Since $\{P_tf\}_{t\geq0}$ converges to $f$ in $D(\mathscr{E})$ as $t\to0$ 
and weakly* converges to $f$ in $L^{\infty}(M;\m)$ as $t\to0$, we can deduce \eqref{eq:KEnergyCont**}.  
\end{proof}

\begin{lem}\label{lem:HKEnergyCont***}
Take $\omega\in D(\DD)\cap L^{\infty}(T^*\!M)$.  
Then 
\begin{align}
\left(-\DD\omega, g\frac{\omega}{|\omega|} \right)_{L^2(T^*\!M)}\geq\mathscr{E}^{\kappa}(|\omega|,g)\quad \text{ for all }\quad g\in {\rm Test}(M)_+.
\label{eq:KEnergyCont***}
\end{align}
Here we set $\omega/|\omega|:=0$ if $\omega=0$.
\end{lem}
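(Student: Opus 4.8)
The plan is to regularise the singular factor $\omega/|\omega|$ by setting $|\omega|_\ep:=\sqrt{|\omega|^2+\ep^2}$ for $\ep>0$, to apply Corollary~\ref{cor:HKEnergyCont**} to the bounded non-negative function $g/|\omega|_\ep$, and then to let $\ep\downarrow0$. Throughout, one uses that $|\omega|\in D(\mathscr{E})$ (Lemma~\ref{lem:HKEnergyCont}, since $D(\DD)\subset H^{1,2}(T^*\!M)$) and that $g\in{\rm Test}(M)_+\subset D(\mathscr{E})\cap L^{\infty}(M;\m)_+$.

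First I would check that $f_\ep:=g/|\omega|_\ep$ is admissible for Corollary~\ref{cor:HKEnergyCont**}, i.e.\ $f_\ep\in D(\mathscr{E})\cap L^{\infty}(M;\m)_+$. Non-negativity and the bound $0\le f_\ep\le\ep^{-1}\|g\|_{L^{\infty}(M;\m)}$ are immediate; and writing $|\omega|_\ep^{-1}=\psi_\ep(|\omega|)+\ep^{-1}$ with $\psi_\ep(t):=(t^2+\ep^2)^{-1/2}-\ep^{-1}$ Lipschitz and vanishing at $0$, the chain rule gives $\psi_\ep(|\omega|)\in D(\mathscr{E})\cap L^{\infty}(M;\m)$, so $f_\ep=g\,\psi_\ep(|\omega|)+\ep^{-1}g\in D(\mathscr{E})\cap L^{\infty}(M;\m)$ because $D(\mathscr{E})\cap L^{\infty}(M;\m)$ is an algebra. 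Putting $\Phi_\ep(t):=t(t^2+\ep^2)^{-1/2}$ and $h_\ep:=f_\ep|\omega|=g\,\Phi_\ep(|\omega|)\in D(\mathscr{E})\cap L^{\infty}(M;\m)$, Corollary~\ref{cor:HKEnergyCont**} yields $f_\ep\omega\in H^{1,2}(T^*\!M)$ and --- using that $\omega\in D(\DD)$ gives $\mathscr{E}^{\rm HK}(\omega,f_\ep\omega)=(-\DD\omega,f_\ep\omega)_{L^2(T^*\!M)}$ ---
\begin{align}
\mathscr{E}^{\kappa}(|\omega|,h_\ep)\le\bigl(-\DD\omega,f_\ep\omega\bigr)_{L^2(T^*\!M)}.\label{eq:HSUstar}
\end{align}

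Next I would pass to the limit $\ep\downarrow0$. For the right side of \eqref{eq:HSUstar}, $|f_\ep\omega|_{\m}=g\,|\omega|/|\omega|_\ep\le g\in L^2(M;\m)$ and $f_\ep\omega\to g\,\omega/|\omega|$ pointwise $\m$-a.e.\ (recall $\{\omega=0\}=\{|\omega|=0\}$ with the convention $\omega/|\omega|:=0$ there), so dominated convergence gives $f_\ep\omega\to g\,\omega/|\omega|$ in $L^2(T^*\!M)$ and the right side tends to $(-\DD\omega,g\,\omega/|\omega|)_{L^2(T^*\!M)}$. For the left side I would write $\mathscr{E}^{\kappa}(|\omega|,h_\ep)=\mathscr{E}(|\omega|,h_\ep)+\langle\kappa,\wt{|\omega|}\,\wt{h_\ep}\rangle$. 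The Leibniz and chain rules for $\Gamma$ (valid as $g,|\omega|,\Phi_\ep(|\omega|)\in D(\mathscr{E})\cap L^{\infty}(M;\m)$) give
\begin{align*}
\mathscr{E}(|\omega|,h_\ep)=\int_M g\,\Phi_\ep'(|\omega|)\,\Gamma(|\omega|)\,\d\m+\int_M\Phi_\ep(|\omega|)\,\Gamma(|\omega|,g)\,\d\m,
\end{align*}
and the decisive point is that $\Phi_\ep'(t)=\ep^2(t^2+\ep^2)^{-3/2}\ge0$, so with $g\ge0$ the first integral is non-negative and may simply be discarded. As $0\le\Phi_\ep(|\omega|)\le1$, $\Phi_\ep(|\omega|)\to\1_{\{|\omega|>0\}}$, $\Gamma(|\omega|,g)\in L^1(M;\m)$, and $\Gamma(|\omega|,g)=0$ $\m$-a.e.\ on $\{|\omega|=0\}$ (strong locality of $\Gamma$ together with the Cauchy--Schwarz inequality), dominated convergence gives $\int_M\Phi_\ep(|\omega|)\Gamma(|\omega|,g)\,\d\m\to\mathscr{E}(|\omega|,g)$, whence $\liminf_{\ep\downarrow0}\mathscr{E}(|\omega|,h_\ep)\ge\mathscr{E}(|\omega|,g)$. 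For the potential term, $\wt{h_\ep}=\tilde g\,\Phi_\ep(\wt{|\omega|})$ $\mathscr{E}$-q.e., so $0\le\wt{|\omega|}\,\wt{h_\ep}\le\wt{|\omega|}\,\tilde g$, and $\wt{|\omega|}\,\tilde g\in L^1(|\kappa|)$ by the Stollmann--Voigt inequality \eqref{eq:StollmannVoigt} (applicable since $\kappa^{\pm}$ are of Dynkin class and $|\omega|,g\in D(\mathscr{E})$); dominated convergence with respect to $\kappa^{+}$ and $\kappa^{-}$ then gives $\langle\kappa,\wt{|\omega|}\,\wt{h_\ep}\rangle\to\langle\kappa,\wt{|\omega|}\,\tilde g\rangle$. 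Combining, $\liminf_{\ep\downarrow0}\mathscr{E}^{\kappa}(|\omega|,h_\ep)\ge\mathscr{E}^{\kappa}(|\omega|,g)$, and \eqref{eq:HSUstar} in the limit is exactly \eqref{eq:KEnergyCont***}.

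I expect the only real difficulty to be the behaviour on the zero set $\{\omega=0\}$, and the scheme above is arranged so that nothing delicate is needed there: the lone term of unfavourable sign, $\int_M g\,\Phi_\ep'(|\omega|)\Gamma(|\omega|)\,\d\m$, is automatically non-negative because $g\ge0$ (so it is dropped rather than estimated), and identifying $\lim_{\ep\downarrow0}\int_M\Phi_\ep(|\omega|)\Gamma(|\omega|,g)\,\d\m$ with the full integral $\mathscr{E}(|\omega|,g)$ --- rather than with $\int_{\{|\omega|>0\}}\Gamma(|\omega|,g)\,\d\m$ --- rests only on the standard fact that $\Gamma(|\omega|)$, hence $\Gamma(|\omega|,g)$, vanishes $\m$-a.e.\ where $|\omega|=0$.
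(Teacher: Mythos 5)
Your proposal is correct and follows essentially the same route as the paper: regularise with $|\omega|_\ep=\sqrt{|\omega|^2+\ep^2}$, apply Corollary~\ref{cor:HKEnergyCont**} to $f_\ep=g/|\omega|_\ep$, discard the non-negative term $\int_M g\,\ep^2|\omega|_\ep^{-3}\Gamma(|\omega|)\,\d\m$, and pass to the limit using that $\Gamma(|\omega|)$ vanishes $\m$-a.e.\ on $\{|\omega|=0\}$. The only cosmetic difference is that you invoke this last vanishing as a standard locality fact, whereas the paper derives it from the identity $\Gamma(|\omega|^{1+\ep})=(1+\ep)^2|\omega|^{2\ep}\Gamma(|\omega|)$; your added justifications (chain rule for $f_\ep\in D(\mathscr{E})$, Stollmann--Voigt for the $\kappa$-term) are sound and only make explicit what the paper leaves implicit.
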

\begin{proof}[{\bf Proof}]
By Lemma~\ref{lem:HKEnergyCont}, we see $|\omega|\in D(\mathscr{E})\cap L^{\infty}(M;\m)$.   
For each $\eps>0$, we set  
$|\omega|_{\eps}:=\sqrt{|\omega|^2+\eps^2}$. 
Then we see $\frac{|\omega|}{|\omega|_{\eps}}\in D(\mathscr{E})\cap L^{\infty}(M;\m)$ (cf. \cite[Chapter I, Theorem~4.12 and Exercise~4.16]{MR}). We may assume $|\omega|$ and $|\omega|_{\eps}$ are $\mathscr{E}$-quasi continuous.  

 For $g\in {\rm Test}(M)_+$, 
 we consider $f:=g/|\omega|_{\eps}\in D(\mathscr{E})\cap L^{\infty}(M;\m)_+$.
We apply Corollary~\ref{cor:HKEnergyCont**} for $f\in D(\mathscr{E})\cap L^{\infty}(M;\m)_+$ 
 so that $f\omega\in H^{1,2}(T^*\!M)$, $f|\omega|\in D(\mathscr{E})$ and 
\begin{align*}
\left(-\DD\omega, g\frac{\omega}{|\omega|_{\ep}} \right)_{L^2(T^*\!M)}&=
\left(-\DD\omega, f\omega \right)_{L^2(T^*\!M)}\\
&=\mathscr{E}^{\rm HK}(\omega, f\omega)\\
&\hspace{-0.2cm}\stackrel{\eqref{eq:KEnergyCont**}}{\geq}\mathscr{E}^{\kappa}(|\omega|,f|\omega|)\\
&=\mathscr{E}^{\kappa}\left(|\omega|,g\frac{|\omega|}{|\omega|_{\eps}}\right)\\
&=\int_M\frac{g}{|\omega|_{\eps}^3}(|\omega|_{\eps}^2-|\omega|^2)\Gamma(|\omega|,|\omega|)\d\m\\
&\hspace{1cm}+\int_M\frac{|\omega|}{|\omega|_{\eps}}{\Gamma(|\omega|,g)\d\m+
\left\langle \kappa,g\frac{|\omega|^2}{|\omega|_{\eps}}\right\rangle}\\
&\geq \int_M\frac{|\omega|}{|\omega|_{\eps}}{\Gamma(|\omega|,g)\d\m+
\left\langle \kappa,g\frac{|\omega|^2}{|\omega|_{\eps}}\right\rangle}.
\end{align*}  
Now letting $\eps\to0$, we have
\begin{align*}
\int_M\frac{|\omega|}{|\omega|_{\eps}}\Gamma(|\omega|,g)\d\m\to \int_M\1_{\{\omega\ne0\}}\Gamma(|\omega|,g)\d\m.
\end{align*}
On the other hand, 
\begin{align*}
\Gamma(|\omega|^{1+\eps},|\omega|^{1+\eps})=(1+\eps)^2|\omega|^{2\eps}\Gamma(|\omega|,|\omega|).
\end{align*}
Hence
\begin{align*}
\1_{\{\omega=0\}}\Gamma(|\omega|^{1+\eps},|\omega|^{1+\eps})=0.
\end{align*}
Letting $\eps\to0$, we see 
\begin{align*}
\1_{\{\omega=0\}}\Gamma(|\omega|,|\omega|)=0,
\end{align*}
in particular, 
\begin{align*}
\1_{\{\omega=0\}}\Gamma(|\omega|,g)=0.
\end{align*}
Now we eventually have 
\begin{align*}
\left(-\DD\omega, g\frac{\omega}{|\omega|} \right)_{L^2(T^*\!M)}
&\geq \int_M\Gamma(|\omega|,g)\d\m+\langle \kappa,g|\omega|\rangle \\
&=\mathscr{E}(|\omega|,g)+\langle \kappa,g|\omega|\rangle \\
&=\mathscr{E}^{\kappa}(|\omega|,g)
\end{align*}
for $g\in {\rm Test}(M)_+$. 
\end{proof}

\begin{lem}\label{lem:continuityofHKsemigrouop}
Suppose $\omega\in H^{1,2}(T^*\!M)$. Then 
\begin{align}
\int_0^t\int_M\wt{|P_s^{\rm HK}\omega|}^2\d\kappa^-\d s<\infty.\label{eq:continuityofHKsemigrouop}
\end{align}
\end{lem}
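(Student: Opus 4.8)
The plan is to control the $\kappa^-$-energy of the Hodge heat semigroup via the gradient-type energy estimate already established in Lemma~\ref{lem:HKEnergyCont}, combined with the fact that $2\kappa^-$ is of extended Kato class and hence satisfies the Stollmann--Voigt inequality \eqref{eq:StollmannVoigt}. First I would recall from Lemma~\ref{lem:HKEnergyCont} that for $\omega\in H^{1,2}(T^*\!M)$ one has $|\omega|\in D(\mathscr{E})$ with
\begin{align*}
\mathscr{E}^{\kappa}(|\omega|,|\omega|)\leq\mathscr{E}^{\rm HK}(\omega,\omega),
\end{align*}
which, since $\langle 2\kappa^-,\wt{|\omega|}^2\rangle\geq0$ is absorbed on the left and $\langle 2\kappa^+,\wt{|\omega|}^2\rangle\geq0$, gives in particular $\mathscr{E}(|\omega|,|\omega|)+\langle 2\kappa^-,\wt{|\omega|}^2\rangle\leq\mathscr{E}^{\rm HK}(\omega,\omega)+\langle 2\kappa^-,\wt{|\omega|}^2\rangle$; more usefully, rearranging the definition of $\mathscr{E}^{\kappa}$ we get the clean bound $\mathscr{E}(|\omega|,|\omega|)\leq\mathscr{E}^{\rm HK}(\omega,\omega)+2\langle\kappa^-,\wt{|\omega|}^2\rangle$ together with $\langle\kappa^-,\wt{|\omega|}^2\rangle\leq\frac12\mathscr{E}^{\rm HK}(\omega,\omega)+\frac12\langle\kappa^+,\wt{|\omega|}^2\rangle$ — but the point I actually need is just that $|P_s^{\rm HK}\omega|\in D(\mathscr{E})$ for each $s$ (since $P_s^{\rm HK}\omega\in D(\mathscr{E}^{\rm HK})=H^{1,2}(T^*\!M)$ by the general theory of the Dirichlet form $\mathscr{E}^{\rm HK}$) with a quantitative control of $\mathscr{E}_1(|P_s^{\rm HK}\omega|,|P_s^{\rm HK}\omega|)$.

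Next I would apply the Stollmann--Voigt inequality \eqref{eq:StollmannVoigt} to $2\kappa^-\in S_{E\!K}({\bf X})\subset S_D({\bf X})$: there is $\alpha>0$ with $\|U_\alpha(2\kappa^-)\|_\infty<\infty$, so that
\begin{align*}
\int_M\wt{|P_s^{\rm HK}\omega|}^2\,\d(2\kappa^-)\leq\|U_\alpha(2\kappa^-)\|_\infty\,\mathscr{E}_\alpha(|P_s^{\rm HK}\omega|,|P_s^{\rm HK}\omega|).
\end{align*}
It therefore suffices to bound $\int_0^t\mathscr{E}_\alpha(|P_s^{\rm HK}\omega|,|P_s^{\rm HK}\omega|)\,\d s$. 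Using Lemma~\ref{lem:HKEnergyCont} with $P_s^{\rm HK}\omega$ in place of $\omega$, this is dominated (up to the additive $L^2$-term, which is handled by $\|P_s^{\rm HK}\omega\|_{L^2(T^*\!M)}\leq\|\omega\|_{L^2(T^*\!M)}$ since $P_s^{\rm HK}$ is a contraction on $L^2$) by $\int_0^t\big[\mathscr{E}^{\rm HK}(P_s^{\rm HK}\omega,P_s^{\rm HK}\omega)+2\langle\kappa^-,\wt{|P_s^{\rm HK}\omega|}^2\rangle\big]\,\d s+\alpha t\|\omega\|_{L^2(T^*\!M)}^2$. Here I would invoke the standard spectral-calculus identity for the self-adjoint semigroup generated by $\DD$: $\mathscr{E}^{\rm HK}(P_s^{\rm HK}\omega,P_s^{\rm HK}\omega)=-\frac{\d}{\d s}\tfrac12\|P_s^{\rm HK}\omega\|_{L^2(T^*\!M)}^2$ for $s>0$, whence $\int_0^t\mathscr{E}^{\rm HK}(P_s^{\rm HK}\omega,P_s^{\rm HK}\omega)\,\d s=\tfrac12\|\omega\|_{L^2(T^*\!M)}^2-\tfrac12\|P_t^{\rm HK}\omega\|_{L^2(T^*\!M)}^2\leq\tfrac12\|\omega\|_{L^2(T^*\!M)}^2<\infty$.

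The apparent circularity — the $\int_0^t\langle\kappa^-,\wt{|P_s^{\rm HK}\omega|}^2\rangle\,\d s$ term reappearing on the right — is the main obstacle, and I would resolve it by a Grönwall/absorption argument: writing $\Phi(t):=\int_0^t\langle\kappa^-,\wt{|P_s^{\rm HK}\omega|}^2\rangle\,\d s$, the estimates above give $\Phi(t)\leq \tfrac12\|U_\alpha(2\kappa^-)\|_\infty\big(\tfrac12\|\omega\|_{L^2(T^*\!M)}^2+\alpha t\|\omega\|_{L^2(T^*\!M)}^2+2\Phi(t)\big)$. If the extended-Kato smallness gives $\|U_\alpha(2\kappa^-)\|_\infty<\tfrac12$ for $\alpha$ large enough (which is exactly the content of $2\kappa^-\in S_{E\!K}({\bf X})$ phrased via $\lim_{\alpha\to\infty}\|U_\alpha\cdot\|_\infty<1$, so after possibly replacing $2\kappa^-$ by a suitable multiple one arranges the resolvent norm below $1/2$), then $2\cdot\tfrac12\|U_\alpha(2\kappa^-)\|_\infty<1$ and $\Phi(t)$ can be moved to the left, yielding $\Phi(t)\leq C(1+t)\|\omega\|_{L^2(T^*\!M)}^2<\infty$, which is \eqref{eq:continuityofHKsemigrouop}. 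If the naive resolvent bound is not small enough, I would instead iterate on short time intervals: on $[0,t_0]$ with $t_0$ small the contribution is finite by the Kato smallness of $2\kappa^-$ over $[0,t_0]$ together with $P_{s}^{\rm HK}\omega\in H^{1,2}(T^*\!M)$, and then one propagates using the semigroup property $P_{s+t_0}^{\rm HK}\omega=P_s^{\rm HK}(P_{t_0}^{\rm HK}\omega)$ and the fact that $P_{t_0}^{\rm HK}\omega\in L^2(T^*\!M)$ again, covering $[0,t]$ by finitely many such intervals. Either way the finiteness is quantitative and depends only on $\|\omega\|_{L^2(T^*\!M)}$, $t$, and the Kato-class data of $\kappa^-$.
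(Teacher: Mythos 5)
Your overall strategy is sound and is essentially a self-contained, quantitative version of what the paper does by citation: the paper simply observes that $s\mapsto\int_M\wt{|P_s^{\rm HK}\omega|}^2\,\d\kappa^-$ is finite and \emph{continuous} on $[0,t]$ (combining Braun's strong $H^{1,2}$-continuity of $s\mapsto P_s^{\rm HK}\omega$, i.e.\ \eqref{eq:StrongH12continuity}, with the continuity of $\eta\mapsto\int\wt{|\eta|}^2\d\kappa^-$ on $H^{1,2}(T^*\!M)$, which is itself a Stollmann--Voigt/Kato-inequality estimate), and then integrates a bounded function over a compact interval. Your route replaces the continuity argument by a uniform-in-$s$ bound obtained from Lemma~\ref{lem:HKEnergyCont} plus \eqref{eq:StollmannVoigt}, which is more elementary and makes the dependence on $\|\omega\|_{L^2}$, $\mathscr{E}^{\rm HK}(\omega)$ and the Kato data explicit; it does, however, quietly need the measurability of $s\mapsto\int\wt{|P_s^{\rm HK}\omega|}^2\d\kappa^-$, which the paper gets for free from continuity.

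There is one genuine defect in the execution: the absorption step as you wrote it is circular. From $\Phi(t)\leq a+b\,\Phi(t)$ with $b<1$ you may only conclude $\Phi(t)\leq a/(1-b)$ if you already know $\Phi(t)<\infty$ --- and $\Phi(t)<\infty$ is precisely the assertion of the lemma, so nothing is gained; the short-time iteration you offer as a fallback has the same problem. The repair is to absorb \emph{pointwise in $s$} before integrating: for each fixed $s$, $P_s^{\rm HK}\omega\in H^{1,2}(T^*\!M)$, so $|P_s^{\rm HK}\omega|\in D(\mathscr{E})$ by Lemma~\ref{lem:HKEnergyCont}, and \eqref{eq:StollmannVoigt} already gives the \emph{a priori finite} bound $\langle\kappa^-,\wt{|P_s^{\rm HK}\omega|}^2\rangle\leq\|U_\alpha\kappa^-\|_\infty\,\mathscr{E}_\alpha(|P_s^{\rm HK}\omega|)<\infty$. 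Feeding in $\mathscr{E}(|P_s^{\rm HK}\omega|)\leq\mathscr{E}^{\rm HK}(P_s^{\rm HK}\omega)+\langle\kappa^-,\wt{|P_s^{\rm HK}\omega|}^2\rangle$ and absorbing the (finite) right-most term, which only requires $\|U_\alpha\kappa^-\|_\infty<1$, yields
\begin{align*}
\langle\kappa^-,\wt{|P_s^{\rm HK}\omega|}^2\rangle\leq\frac{\|U_\alpha\kappa^-\|_\infty}{1-\|U_\alpha\kappa^-\|_\infty}\left[\mathscr{E}^{\rm HK}(\omega)+\alpha\|\omega\|_{L^2(T^*\!M)}^2\right]
\end{align*}
uniformly in $s$, and integration over $[0,t]$ finishes the proof. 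Note also that your remark about forcing $\|U_\alpha(2\kappa^-)\|_\infty<\tfrac12$ ``by replacing $2\kappa^-$ by a suitable multiple'' is not meaningful --- rescaling the measure rescales both sides --- but it is also unnecessary: since $2\kappa^-\in S_{E\!K}({\bf X})$ one has $\lim_{\alpha\to\infty}\|U_\alpha\kappa^-\|_\infty<\tfrac12<1$, which is all the absorption needs.
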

\begin{proof}[\bf Proof]
Under $\omega\in H^{1,2}(T^*\!M)$, \eqref{eq:continuityofHKsemigrouop} follows from the continuity of 
\begin{align}
[0,t]\ni s\mapsto \int_M\wt{|P_s^{\rm HK}\omega|}^2\d\kappa^-<\infty.\label{eq:finiteness}
\end{align}
The continuity \eqref{eq:finiteness} can be confirmed by combining \cite[Corollary~6.14, (8.5) and 
Theorem~8.31(v)]{Braun:Tamed2021}. 
%So we prove this. By \eqref{eq:ContractionBochner1} and \cite[(8.5)]{Braun:Tamed2021}, for $t_1,t_2\in[0,t]$
%\begin{align*}
%\mathscr{E}(|P_{t_1}^{\rm HK}\omega-P_{t_2}^{\rm HK}\omega|,&|P_{t_1}^{\rm HK}\omega-P_{t_2}^{\rm HK}\omega|)\\
%&\hspace{-0.1cm}\stackrel{\eqref{eq:ContractionBochner1}}{\leq} 
%\wt{\mathscr{E}}_{\rm cov}((P_{t_1}^{\rm HK}\omega-P_{t_2}^{\rm HK}\omega)^{\sharp},(P_{t_1}^{\rm HK}\omega-P_{t_2}^{\rm HK}\omega)^{\sharp})\\
%&\leq \frac{\alpha'}{1-\rho'}\|P_{t_1}^{\rm HK}\omega-P_{t_2}^{\rm HK}\omega\|_{L^2(T^*\!M)}^2\\
%&\hspace{1cm}+\frac{1}{1-\rho'}
%\mathscr{E}^{\rm HK}(P_{t_1}^{\rm HK}\omega-P_{t_2}^{\rm HK}\omega,P_{t_1}^{\rm HK}\omega-P_{t_2}^{\rm HK}\omega)\\
%&\to0\quad\text{ as }\quad t_2\to t_1,
%\end{align*}
%where we use \cite[Theorem~8.31(v)]{Braun:Tamed2021}. 
%Therefore,
%\begin{align*}
%\left|\left(\int_M\wt{|P_{t_1}^{\rm HK}\omega|}^2\d\kappa^-\right)^{\frac12}-\left(\int_M\wt{|P_{t_2}^{\rm HK}\omega|}^2\d\kappa^-\right)^{\frac12} \right|&\leq 
%\left(\int_M\wt{|P_{t_1}^{\rm HK}\omega-P_{t_2}^{\rm HK}\omega|}^2\d\kappa^-\right)^{\frac12}\\
%&\leq\|U_{\alpha}\kappa^-\|_{\infty}^{\frac12}\mathscr{E}_{\alpha}
%(|P_{t_1}^{\rm HK}\omega-P_{t_2}^{\rm HK}\omega|,|P_{t_1}^{\rm HK}\omega-P_{t_2}^{\rm HK}\omega|)^{\frac12}\\
%&\to0 \quad\text{ as }\quad t_2\to t_1.
%\end{align*}
\end{proof}

\begin{lem}\label{lem:HessSchradarUhlenbrock2}
For every $\omega\in L^2(T^*\!M)$ and $t>0$, we have 
\begin{align}
|P_t^{\rm HK}\omega|^2\leq P_t^{-2\kappa^-}|\omega|^2\quad \m\text{-a.e.}\label{eq:HessSchradarUhlenbrock2}
\end{align}
In particular, for $\omega\in L^2(T^*\!M)\cap L^{\infty}(T^*\!M)$ and $\alpha> C_{\kappa}$, 
\begin{align}
\|R_{\alpha}^{\rm HK}\omega\|_{L^{\infty}(T^*\!M)}\leq \frac{\sqrt{C(\kappa)}}{\alpha-C_{\kappa}}\|\omega\|_{L^{\infty}(T^*\!M)}.\label{eq:HessSchradarUhlenbrock3}
\end{align}
Here $C(\kappa)$ and $C_{\kappa}$ are the positive constants appeared in  \eqref{eq:KatoContraction}. 
\end{lem}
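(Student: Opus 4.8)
The plan is to prove the pointwise inequality \eqref{eq:HessSchradarUhlenbrock2} first for nice $\omega$, then bootstrap to $L^2(T^*M)$ by density, and finally deduce \eqref{eq:HessSchradarUhlenbrock3} by integrating against the resolvent kernel. The heart of the matter is the differential inequality governing the evolution of $|P_t^{\rm HK}\omega|^2$ tested against non-negative test functions, which is exactly what Lemma~\ref{lem:HKEnergyCont***} supplies.

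First I would fix $\omega \in D(\DD)\cap L^{\infty}(T^*M)$ (this class is a core and is preserved well enough under $P_t^{\rm HK}$), write $\omega_t := P_t^{\rm HK}\omega$, and consider $u_t := |\omega_t|$. By Lemma~\ref{lem:HKEnergyCont} we have $u_t \in D(\mathscr{E})\cap L^{\infty}(M;\m)$ for each $t$, and $t\mapsto \omega_t$ is $L^2$-differentiable with $\tfrac{\d}{\d t}\omega_t = \DD\omega_t$. The key computation is: for $g\in{\rm Test}(M)_+$,
\begin{align*}
\frac12\frac{\d}{\d t}\int_M g\, u_t^2\,\d\m = \int_M g\,\langle \omega_t, \DD\omega_t\rangle\,\d\m = -\Bigl(-\DD\omega_t,\, g\,\frac{\omega_t}{|\omega_t|}\Bigr)_{L^2(T^*M)}\!\cdot u_t^{\text{(formal)}},
\end{align*}
more precisely $\int_M g\langle\omega_t,\DD\omega_t\rangle\,\d\m = \int_M (g u_t)\,\frac{\langle\omega_t,\DD\omega_t\rangle}{u_t}\,\d\m$ on $\{\omega_t\neq 0\}$, and applying Lemma~\ref{lem:HKEnergyCont***} with the non-negative test function $g u_t$ (noting $g u_t \in D(\mathscr{E})\cap L^{\infty}$, approximated by test functions if needed) gives
\begin{align*}
\frac12\frac{\d}{\d t}\int_M g\,u_t^2\,\d\m \le -\mathscr{E}^{\kappa}(u_t,\, g\,u_t) = -\mathscr{E}(u_t, g u_t) - \langle 2\kappa^-{-}\text{\rm(}\text{wrong sign}\text{\rm)}\ldots\rangle,
\end{align*}
so that, after expanding $\mathscr{E}(u_t,gu_t)$ via the Leibniz rule and bounding $\int_M u_t\Gamma(u_t,g)\,\d\m$ appropriately, one obtains
\begin{align*}
\frac{\d}{\d t}\int_M g\,u_t^2\,\d\m \le \int_M u_t^2\,\Delta^{2\kappa^-}g\,\d\m
\end{align*}
at least in the weak form corresponding to the form $\mathscr{E}^{2\kappa^-}$; that is, $t\mapsto u_t^2$ is a weak subsolution of the heat equation associated with $\mathscr{E}^{-2\kappa^-}$ (note the sign: it is $\kappa^-$, not $\kappa$, because the cross term $\langle\kappa,\cdot\rangle$ has a favourable sign on the $\kappa^+$ part and only $2\kappa^-$ survives with the bad sign — this matches the statement's $P_t^{-2\kappa^-}$). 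By the parabolic comparison principle for the Feynman–Kac semigroup $P_t^{-2\kappa^-}$ — using that $2\kappa^-\in S_{E\!K}({\bf X})$ so this semigroup is well-defined and order-preserving, and that $u_0^2 = |\omega|^2$ — we conclude $u_t^2 \le P_t^{-2\kappa^-}|\omega|^2$ $\m$-a.e.

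Next I would remove the restriction $\omega\in D(\DD)\cap L^\infty(T^*M)$. For general $\omega\in L^2(T^*M)$, approximate: elements of $D(\DD)\cap L^\infty(T^*M)$ are dense in $L^2(T^*M)$ (one can use $P_\eps^{\rm HK}\omega_n$ with $\omega_n\in{\rm Reg}(T^*M)\cap L^\infty$), and both sides of \eqref{eq:HessSchradarUhlenbrock2} are continuous in $\omega$ in $L^2$: the left side because $P_t^{\rm HK}$ is bounded on $L^2(T^*M)$ and $|\cdot|$ is $1$-Lipschitz, the right side because $P_t^{-2\kappa^-}$ is bounded on $L^1(M;\m)$ by \eqref{eq:unifKato}-type estimates and $|\omega_n|^2\to|\omega|^2$ in $L^1$. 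Passing to the limit along a subsequence converging $\m$-a.e.\ yields \eqref{eq:HessSchradarUhlenbrock2} in full generality. Finally, for \eqref{eq:HessSchradarUhlenbrock3}: taking square roots in \eqref{eq:HessSchradarUhlenbrock2} gives $|P_t^{\rm HK}\omega|\le \sqrt{P_t^{-2\kappa^-}|\omega|^2}$, and for $\omega\in L^\infty(T^*M)$ we have $|\omega|^2\le\|\omega\|_{L^\infty(T^*M)}^2\1_M$, so $P_t^{-2\kappa^-}|\omega|^2\le \|P_t^{-2\kappa^-}\|_{\infty,\infty}\|\omega\|_{L^\infty(T^*M)}^2 \le C(2\kappa^-)e^{C_{2\kappa^-}t}\|\omega\|_{L^\infty(T^*M)}^2$. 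Using the identification $C(2\kappa^-)=C(\kappa)^2$, $C_{2\kappa^-}=2C_\kappa$ coming from \eqref{eq:KatoCoincidence}–\eqref{eq:KatoExpression} with $\beta=2$ (so that the constants in \eqref{eq:KatoContraction} are the stated ones), we get $|P_t^{\rm HK}\omega|\le \sqrt{C(\kappa)}\,e^{C_\kappa t}\|\omega\|_{L^\infty(T^*M)}$ $\m$-a.e.; integrating $\int_0^\infty e^{-\alpha t}(\cdot)\,\d t$ for $\alpha>C_\kappa$ produces the factor $\sqrt{C(\kappa)}/(\alpha-C_\kappa)$ and hence \eqref{eq:HessSchradarUhlenbrock3}.

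The main obstacle I anticipate is making the differential inequality rigorous: differentiating $t\mapsto\int_M g\,|P_t^{\rm HK}\omega|^2\,\d\m$, justifying the chain-rule manipulation through the singularity of $\omega\mapsto\omega/|\omega|$ on $\{\omega=0\}$, and converting the energy inequality of Lemma~\ref{lem:HKEnergyCont***} into a genuine weak-subsolution statement for the form $\mathscr{E}^{-2\kappa^-}$. The $\eps$-regularization $|\omega|_\eps=\sqrt{|\omega|^2+\eps^2}$ used in the proof of Lemma~\ref{lem:HKEnergyCont***} will reappear here: one works with $|\omega_t|_\eps^2$, establishes the inequality with error terms that are controlled as in that lemma, and lets $\eps\to 0$ using the already-proved fact that $\1_{\{\omega_t=0\}}\Gamma(|\omega_t|,\cdot)=0$. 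The other delicate point is the bookkeeping of Kato-class constants to land exactly on \eqref{eq:KatoContraction}'s $C(\kappa)$ and $C_\kappa$ rather than independently-chosen constants for $2\kappa^-$; this is handled by \eqref{eq:KatoCoincidence}, which guarantees the constants for $2\kappa^-$ and for $\kappa^-$ can be chosen compatibly, but one must invoke it with care regarding which exponent $p$ the contraction \eqref{eq:KatoContraction} is asserted for.
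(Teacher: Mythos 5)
Your overall strategy -- showing that $s\mapsto\int_M (P_{t-s}^{-2\kappa^-}g)\,|P_s^{\rm HK}\omega|^2\,\d\m$ is non-increasing by combining $\frac{\d}{\d s}P_s^{\rm HK}\omega=\DD P_s^{\rm HK}\omega$ with the curvature inequality -- is exactly the paper's strategy (the paper implements it via the functions $F_n(s)=\int_M P_{t-s}^{2\kappa}g_\alpha\,nP_{1/n}G_n|P_s^{\rm HK}\omega|^2\,\d\m$ with $\kappa^+$ set to $0$). But there is a genuine gap in your key step: you apply Lemma~\ref{lem:HKEnergyCont***} to $\omega_t=P_t^{\rm HK}\omega$, and that lemma requires $\omega_t\in D(\DD)\cap L^{\infty}(T^*\!M)$. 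The preservation of $L^{\infty}(T^*\!M)$ under $P_t^{\rm HK}$ is precisely what \eqref{eq:HessSchradarUhlenbrock2} is being proved to establish (it is the content of \eqref{eq:HessSchradarUhlenbrock3} and of Theorem~\ref{thm:HessShcraderUhlenbrock}); it is not available at this stage, so your argument is circular as written. The fix is to drive the differential inequality with Lemma~\ref{lem:HKEnergyCont**} instead: for $\omega_s\in D(\DD)\subset H^{1,2}(T^*\!M)$ and $f\in{\rm Test}(M)_+$ one has $\mathscr{E}^{\kappa}(|\omega_s|,f|\omega_s|)\le\mathscr{E}^{\rm HK}(\omega_s,f\omega_s)=(-\DD\omega_s,f\omega_s)_{L^2(T^*\!M)}$ with no boundedness hypothesis, and expanding $\mathscr{E}^{\kappa}(|\omega_s|,f|\omega_s|)\ge\frac12\mathscr{E}^{-2\kappa^-}(|\omega_s|^2,f)$ by Leibniz gives the weak subsolution property you want. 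This is essentially what the paper does, except that it works directly with ${\bf Ric}^{\kappa}\ge0$ and, crucially, inserts the mollifiers $nP_{1/n}G_n$ and invokes Lemma~\ref{lem:continuityofHKsemigrouop} to justify the differentiation, the $\kappa^-$-integrals, and the passage to the limit -- the "parabolic comparison principle" you cite as standard is, in this quasi-regular setting with a measure-valued extended-Kato potential, the bulk of the two-page proof and cannot simply be quoted.

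Two smaller points. First, your constant bookkeeping is off: \eqref{eq:KatoCoincidence} gives $C(2\kappa^-)=C(\kappa^-)=C(\kappa)$ and $C_{2\kappa^-}\le 2C_{\kappa}$, not $C(2\kappa^-)=C(\kappa)^2$; with your identification you would obtain $C(\kappa)e^{C_\kappa t}$ rather than the stated $\sqrt{C(\kappa)}\,e^{C_\kappa t}$, so the final bound $\sqrt{C(\kappa)}/(\alpha-C_\kappa)$ you write down does not follow from the identity you assert (it does follow from the correct one). Second, the reduction to general $\omega\in L^2(T^*\!M)$ is fine, but note the paper reduces only to $\omega\in H^{1,2}(T^*\!M)$, which suffices for Lemma~\ref{lem:continuityofHKsemigrouop} and avoids having to discuss preservation of the smaller class $D(\DD)\cap L^{\infty}(T^*\!M)$ altogether.
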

\begin{proof}[\bf Proof]
We may assume $\kappa^+=0$, because ${\sf BE}_2(-\kappa^-,\infty)$ is satisfied. 
Since $H^{1,2}(T^*\!M)$ is dense in $L^2(T^*\!M)$ and 
$\|P_t^{\rm HK}\omega\|_{L^2(T^*\!M)}\leq \|\omega\|_{L^2(T^*\!M)}$ for $\omega\in L^2(T^*\!M)$ (see \cite[Theorem~8.31(iii)]{Braun:Tamed2021}), if \eqref{eq:HessSchradarUhlenbrock2} holds for 
$\omega\in H^{1,2}(T^*\!M)$, 
then it holds for $\omega\in L^2(T^*\!M)$. So we may assume $\omega\in H^{1,2}(T^*\!M)$. 
Take $g\in L^2(M;\m)\cap L^{\infty}(M;\m)\cap \mathscr{B}_+(M)$ and set $g_{\alpha}:=R_{\alpha}g=\E_{\cdot}[\int_0^{\infty}e^{-\alpha s}g(X_s)\d s]$.  
Note here that $(G_{\alpha})_{\alpha\geq0}$ defined by $G_{\alpha}f:=\int_0^{\infty}e^{-\alpha t}P_tf\d t$ for $f\in L^1(M;\m)$ forms a strongly continuous resolvent on $L^1(M;\m)$. Since 
$R_{\alpha}f(x):=\E_x\left[\int_0^{\infty}e^{-\alpha t}f(X_t)\d t\right]$ is 
$\alpha$-excessive for $f\in L^1(M;\m)\cap \mathscr{B}(M)_+$, it is 
an $\mathscr{E}$-q.e.~finite and $\mathscr{E}$-quasi continuous function for $f\in L^1(M;\m)\cap \mathscr{B}(M)_+$ in view of  
\cite[Lemma~4.4]{Kw:maximumprinciple} and \cite[Theorem~4.6.1]{FOT}.  
The content of \cite[Theorem~4.6.1]{FOT} remains valid for quasi-regular Dirichlet forms. 
We may assume $\wt{|P_s^{\rm HK}\omega|}^2\in L^1(M;\m)\cap \mathscr{B}(M)_+$, where 
$\wt{|P_s^{\rm HK}\omega|}$ is an $\mathscr{E}$-quasi continuous $\m$-version of $|P_s^{\rm HK}\omega|\in D(\mathscr{E})$.   

We now set a function $F_n:[0,t]\to\R$ defined by 
\begin{align*}
F_n(s):=\int_MP_{t-s}^{2\kappa}g_{\alpha}\,n P_{\frac{1}{n}}G_n|P_s^{\rm HK}\omega|^2\d \m.
\end{align*}
Note that $|P_s^{\rm HK}\omega|^2\in\mathscr{G}_{\rm reg}\subset \mathscr{G}$ by \cite[Proposition~6.11]{Braun:Tamed2021}, 
where $\mathscr{G}$ and $\mathscr{G}_{\rm reg}$ are the first order Sobolev spaces defined in 
\cite[Definitions~6.18 and 5.19]{Braun:Tamed2021}.
Hence
$P_{\frac{1}{n}}G_n|P_s^{\rm HK}\omega|^2\in L^1(M;\m)$, and $\Delta^{2\kappa}P_{t-s}^{2\kappa}g_{\alpha} \in L^{\infty}(M;\m)$. Since $(P_t^{2\kappa})_{t\geq0}$ is weakly* continuous semigroup on $L^{\infty}(M;\m)$, we can deduce  
\begin{align*}
\frac{1}{n}F'_n(s)&=\int_M\left(-\Delta^{2\kappa}P_{t-s}^{2\kappa}g_{\alpha}\, P_{\frac{1}{n}}G_n|P_s^{\rm HK}\omega|^2+
2(P_{t-s}^{2\kappa}g_{\alpha}) P_{\frac{1}{n}}G_n \langle P_s^{\rm HK}\omega,\DD P_s^{\rm HK}\omega\rangle  \right)\d\m\\
&=\int_M\left(-\Delta^{2\kappa}P_{t-s}^{2\kappa}g_{\alpha} \right)P_{\frac{1}{n}}G_n|P_s^{\rm HK}\omega|^2\d\m
+2\int_M\langle 
(P_{\frac{1}{n}}G_n P_{t-s}^{2\kappa}g_{\alpha}) P_s^{\rm HK}\omega,\DD P_s^{\rm HK}\omega\rangle\d\m.
\end{align*}
The first term of the right hand side in the last line is 
\begin{align*}
\int_M&\left(-\Delta^{2\kappa}P_{t-s}^{2\kappa}g_{\alpha} \right)\, P_{\frac{1}{n}}G_n|P_s^{\rm HK}\omega|^2\d\m\\
&=
\lim_{k\to\infty}\int_M\left(-\Delta^{2\kappa}P_{t-s}^{2\kappa}g_{\alpha} \right)\, P_{\frac{1}{n}}G_n\left(
|P_s^{\rm HK}\omega|^2\land k^2\right)\d\m\\
&=\lim_{k\to\infty}\mathscr{E}^{2\kappa}(P_{t-s}^{2\kappa}g_{\alpha},P_{\frac{1}{n}}G_n(|P_s^{\rm HK}\omega|^2\land k^2) )\\
%\end{align*}
%\begin{align*}
&=\lim_{k\to\infty}\left\{\mathscr{E}\left(P_{t-s}^{2\kappa}g_{\alpha},P_{\frac{1}{n}}G_n(|P_s^{\rm HK}\omega|^2\land k^2) \right)-
2\int_M p_{t-s}^{2\kappa}g_{\alpha}\;p_{\frac{1}{n}}R_n(\wt{|P_s^{\rm HK}\omega|}^2\land k^2)\d\kappa^-\right\}\\
&=\lim_{k\to\infty}\int_M\langle 
\nabla P_{\frac{1}{n}}G_n P_{t-s}^{2\kappa}g_{\alpha},\nabla (|P_s^{\rm HK}\omega|^2\land k^2)
\rangle\d\m -2\int_M (p_{t-s}^{2\kappa}g_{\alpha})\,p_{\frac{1}{n}}R_n \wt{|P_s^{\rm HK}\omega|}^2\d\kappa^-
\\
&=\int_M\langle 
\nabla P_{\frac{1}{n}}G_n P_{t-s}^{2\kappa}g_{\alpha},\nabla |P_s^{\rm HK}\omega|^2
\rangle\d\m -2\int_M (p_{t-s}^{2\kappa}g_{\alpha})\,p_{\frac{1}{n}}R_n\wt{|P_s^{\rm HK}\omega|}^2\d\kappa^-,
\end{align*}
where we use the fact that $\nabla (|P_s^{\rm HK}\omega|^2\land k^2)\to \nabla |P_s^{\rm HK}\omega|^2$ as $k\to\infty$ in $L^1(TM)$ and 
$P_{\frac{1}{n}}G_n P_{t-s}^{2\kappa}g\in{\rm Test}(M)_+$
 by Lemma~\ref{lem:boundedEst}, hence $|\nabla P_{\frac{1}{n}}G_n P_{t-s}^{2\kappa}g|\in L^{\infty}(M;\m)$.
Moreover, $p_{t-s}^{2\kappa}g_{\alpha}$ is an $\mathscr{E}$-quasi continuous $\m$-version of $P_{t-s}^{2\kappa}g_{\alpha}$ (see \cite[Theorem~7.3]{Kw:stochII}).   
Therefore, we have 
\begin{align*}
\frac{1}{n}F'_{n}(s)&=\int_M\langle 
\nabla P_{\frac{1}{n}}G_n P_{t-s}^{2\kappa}g_{\alpha},\nabla |P_s^{\rm HK}\omega|^2
\rangle\d\m -2\int_M (p_{t-s}^{2\kappa}g_{\alpha})\,p_{\frac{1}{n}}R_n \wt{|P_s^{\rm HK}\omega|}^2\d\kappa^-\\
&\hspace{2cm}+
2\int_M\langle 
P_{\frac{1}{n}}G_n P_{t-s}^{2\kappa}g_{\alpha}\,P_s^{\rm HK}\omega,\DD P_s^{\rm HK}\omega\rangle\d\m\\
&=2\int_M\langle \nabla(P_s^{\rm HK}\omega)^{\sharp}\,|\,\nabla P_{\frac{1}{n}}G_n P_{t-s}^{2\kappa}g_{\alpha}\otimes (P_s^{\rm HK}\omega)^{\sharp}\rangle_{\rm HS}\d\m\\
&\hspace{2cm}-2\int_M
\langle \d(P_{\frac{1}{n}}G_n P_{t-s}^{2\kappa}g_{\alpha}P_s^{\rm HK}\omega),\d P_s^{\rm HK}\omega\rangle\d\m \\
&\hspace{4cm}
-2\int_M\langle \d_*(P_{\frac{1}{n}}G_n P_{t-s}^{2\kappa}g_{\alpha} P_s^{\rm HK}\omega),\d_* P_s^{\rm HK}\omega\rangle\d\m\\
&\hspace{6cm}
-2\int_M (p_{t-s}^{2\kappa}g_{\alpha})\,p_{\frac{1}{n}}R_n \wt{|P_s^{\rm HK}\omega|}^2\d\kappa^-.
\end{align*}
Since 
\begin{align*}
\langle \nabla(P_s^{\rm HK}\omega)^{\sharp}\,&|\,\nabla P_{\frac{1}{n}}G_n P_{t-s}^{2\kappa}g_{\alpha}\otimes (P_s^{\rm HK}\omega)^{\sharp}\rangle_{\rm HS}\\&=
\langle\nabla (P_s^{\rm HK}\omega)^{\sharp}\,|\,\nabla (p_{\frac{1}{n}}R_n P_{t-s}^{2\kappa}g_{\alpha})P_s^{\rm HK}\omega)^{\sharp} \rangle_{\rm HS}
-P_{\frac{1}{n}}G_n P_{t-s}^{2\kappa}g_{\alpha}|\nabla(P_s^{\rm HK}\omega)^{\sharp}|_{\rm HS}^2\\
&\leq \langle\nabla (P_s^{\rm HK}\omega)^{\sharp}\,|\,\nabla (P_{\frac{1}{n}}G_n P_{t-s}^{2\kappa}g_{\alpha})P_s^{\rm HK}\omega)^{\sharp} \rangle_{\rm HS},
\end{align*}
we have 
\begin{align}
\frac{1}{n}F'_n(s)&\leq  2\int_M\langle\nabla (P_s^{\rm HK}\omega)^{\sharp}\,|\,\nabla ( P_{\frac{1}{n}}G_n P_{t-s}^{2\kappa}g_{\alpha})P_s^{\rm HK}\omega)^{\sharp} \rangle_{\rm HS}\d\m\notag\\
&\hspace{2cm}-2\int_M
\langle \d(P_{\frac{1}{n}}G_n P_{t-s}^{2\kappa}g_{\alpha} P_s^{\rm HK}\omega),\d P_s^{\rm HK}\omega\rangle\d\m \notag\\
&\hspace{4cm}
-2\int_M\langle \d_*(P_{\frac{1}{n}}G_n P_{t-s}^{2\kappa}g_{\alpha} P_s^{\rm HK}\omega),\d_* P_s^{\rm HK}\omega\rangle\d\m\notag\\
&\hspace{6cm}
-2\int_M (p_{t-s}^{2\kappa}g_{\alpha})\,p_{\frac{1}{n}}R_n \wt{|P_s^{\rm HK}\omega|}^2\d\kappa^-\notag\\
&= -2\int_Mp_{\frac{1}{n}}R_n p_{t-s}^{2\kappa}g_{\alpha}\, \d{\bf Ric}((P_s^{\rm HK}\omega)^{\sharp},(P_s^{\rm HK}\omega)^{\sharp})
-2\int_M (p_{t-s}^{2\kappa}g_{\alpha})\,p_{\frac{1}{n}}R_n \wt{|P_s^{\rm HK}\omega|}^2\d\kappa^-\notag%\\
\end{align}
\begin{align}
&= -2\int_Mp_{\frac{1}{n}}R_n p_{t-s}^{2\kappa}g_{\alpha}\, \d{\bf Ric}^{\kappa}((P_s^{\rm HK}\omega)^{\sharp},(P_s^{\rm HK}\omega)^{\sharp})\notag\\
&\hspace{2cm}+ 
2\int_Mp_{\frac{1}{n}}R_n p_{t-s}^{2\kappa}g_{\alpha}\,\wt{|P_s^{\rm HK}\omega|}^2\d\kappa^--2\int_M
(p_{t-s}^{2\kappa}g_{\alpha})\,p_{\frac{1}{n}}R_n \wt{|P_s^{\rm HK}\omega|}^2\d\kappa^-\notag\\
&\hspace{-0.2cm}\stackrel{\eqref{eq:NonKappaRicci}}{\leq}2\int_Mp_{\frac{1}{n}}R_n p_{t-s}^{2\kappa}g_{\alpha}\,\wt{|P_s^{\rm HK}\omega|}^2\d\kappa^--2\int_M
(p_{t-s}^{2\kappa}g_{\alpha})\,p_{\frac{1}{n}}R_n \wt{|P_s^{\rm HK}\omega|}^2\d\kappa^-.\label{eq:Dominates}
\end{align}
In the first equality above, we use $P_{\frac{1}{n}}G_n P_{t-s}^{2\kappa}g_{\alpha}\in {\rm Test}(M)_+$ again 
and \cite[Lemma~8.14]{Braun:Tamed2021}. 
Thanks to the right continuity of   
$s\mapsto g_{\alpha}(X_s)$ under the law $\P_x$ and  
the Lebesgue's dominated convergence theorem, 
we can deduce that 
$n p_{\frac{1}{n}}R_n p_{t-s}^{2\kappa}g_{\alpha}\to p_{t-s}^{2\kappa}g_{\alpha}$ as $n\to\infty$ $\mathscr{E}$-q.e.  
From this, \eqref{eq:Dominates} and the boundedness of $P_{t-s}^{2\kappa}g_{\alpha}$, we have 
\begin{align}
\hspace{-0.5cm}\varlimsup_{n\to\infty}F'_n(s)&\leq 
2\int_M p_{t-s}^{2\kappa}g_{\alpha}\,\wt{|P_s^{\rm HK}\omega|}^2\d\kappa^--\varliminf_{n\to\infty}2\int_M
p_{t-s}^{2\kappa}g_{\alpha}\; np_{\frac{1}{n}}R_n \wt{|P_s^{\rm HK}\omega|}^2\d\kappa^-\leq0,\label{eq:limsupleq0}
\end{align}
where we use the Fatou's lemma and dominated convergence theorem for 
\begin{align*}
\int_M p_{t-s}^{2\kappa}g_{\alpha}\,\wt{|P_s^{\rm HK}\omega|}^2\d\kappa^-=\lim_{n\to\infty}
\int_Mnp_{\frac{1}{n}}R_n P_{t-s}^{2\kappa}g_{\alpha}\,\wt{|P_s^{\rm HK}\omega|}^2\d\kappa^-
\end{align*}
under \eqref{eq:finiteness}. 
Recall that $p_{\frac{1}{n}}R_n\wt{|P_s^{\rm HK}\omega|}^2=R_np_{\frac{1}{n}}\wt{|P_s^{\rm HK}\omega|}^2\in L^1(M;\m)$ 
%is $n$-excessive, it 
is  $\mathscr{E}$-q.e.~finite and $\mathscr{E}$-quasi continuous. 
%in view of  \cite[Lemma~4.4]{Kw:maximumprinciple} and \cite[Theorem~4.6.1]{FOT}.  
Integrating \eqref{eq:limsupleq0} from $0$ to $t$ and multiplying $(-1)$, we have
\begin{align*}
\int_0^t\sup_{n\geq1}f_n(s)\d s\geq0,
\end{align*}
where $f_n(s):=\inf_{\ell\geq n}\left(-F_{\ell}'(s) \right)$. 
We see $f_1(s)\leq f_n(s)\leq f_{n+1}(s)$ for all $s\in[0,t]$ and $n\in\mathbb{N}$. 
We can deduce
$\int_0^tf_1(s)\d s>-\infty$. Indeed, 
 \begin{align*}
 -\int_0^t f_1(s)\d s&\leq \int_0^t\sup_{\ell\geq1}F'_{\ell}(s)\d s\\
 &\hspace{-0.2cm}\stackrel{\eqref{eq:Dominates}}{\leq}\int_0^t\sup_{\ell\geq1}
 \left(
 \int_M\ell R_{\ell}p_{\frac{1}{\ell}}p_{t-s}^{2\kappa}g_{\alpha}\;\wt{|P_s^{\rm HK}\omega|}^2\d\kappa^-
 -\int_Mp_{t-s}^{2\kappa}g_{\alpha}\; \ell R_{\ell}p_{\frac{1}{\ell}}\wt{|P_s^{\rm HK}\omega|}^2\d\kappa^-
 \right)\d s\\
 &\leq \int_0^t\|P_{t-s}^{2\kappa}g_{\alpha}\|_{\infty}\int_M\wt{|P_s^{\rm HK}\omega|}^2\d\kappa^-\d s\\
 &\leq C(2\kappa)e^{C_{2\kappa}t}\|g_{\alpha}\|_{\infty}\int_0^t\int_M\wt{|P_s^{\rm HK}\omega|}^2\d\kappa^-\d s<\infty
 \end{align*}
 by \eqref{eq:continuityofHKsemigrouop} under $\omega\in  H^{1,2}(T^*\!M)$. By a variant of Beppo Levi's monotone convergence theorem under 
 $\int_0^tf_1(s)\d s>-\infty$, we get
 \begin{align*}
 -\varlimsup_{n\to\infty}\int_0^t F_n'(s)\d s\geq
 \lim_{n\to\infty}\int_0^t f_n(s)\d s=\int_0^t\sup_{n\geq1}f_n(s)\d s\geq0.
 \end{align*} 
 Thus, 
 \begin{align*}
 \varlimsup_{n\to\infty}\left(\int_M g_{\alpha}\, nP_{\frac{1}{n}}G_n |P_t^{\rm HK}\omega|^2\d\m-\int_M
 (P_{t-s}^{2\kappa}g_{\alpha})\, nP_{\frac{1}{n}}G_n |\omega|^2\d\m \right)&\leq 
 \varlimsup_{n\to\infty}\int_0^t F_n'(s)\d s\leq0.
 \end{align*}
Since $\lim_{n\to\infty}\|P_{\frac{1}{n}}nG_nf-f\|_{L^1(M;\m)}=0$ for $f\in L^1(M;\m)$, we have 
\begin{align*}
\int_M g_{\alpha}|P_t^{\rm HK}\omega|^2\d\m\leq \int_M (P_t^{2\kappa}g_{\alpha}) |\omega|^2\d\m
=\int_M g_{\alpha} P_t^{2\kappa}|\omega|^2\d\m.
\end{align*}
Since $\alpha g_{\alpha}=\alpha R_{\alpha}g\in L^{\infty}(M;\m)$ weakly* converges to $g$ in  $L^{\infty}(M;\m)$ as $\alpha\to\infty$ and $g\in L^2(M;\m)\cap L^{\infty}(M;\m)\cap \mathscr{B}_+(M)$ is arbitrary, we obtain \eqref{eq:HessSchradarUhlenbrock2}. 

Next we prove \eqref{eq:HessSchradarUhlenbrock3}. Under Assumption~\ref{asmp:Tamed}, from \eqref{eq:KatoContraction}, 
we can get
\begin{align}
\|P_t^{2\kappa}\|_{\infty,\infty}\leq C(\kappa)e^{2C_{\kappa}t}.\label{eq:2ExtendedKato}
\end{align}
That is, under $2\kappa^-\in S_{E\!K}({\bf X})$, we have \eqref{eq:2ExtendedKato}, because 
$C_{2\kappa}\leq C_{\kappa}\leq 2C_{\kappa}$ and $C(2\kappa)=C(\kappa)$ from 
\eqref{eq:KatoCoincidence}. Then
\begin{align*}
\|P_t^{\rm HK}\omega\|_{L^{\infty}(T^*\!M)}&\leq \sqrt{P_t^{2\kappa}1}\|\omega\|_{L^{\infty}(T^*\!M)}\\
&\leq 
\sqrt{C(\kappa)e^{2C_{\kappa}t}}\|\omega\|_{L^{\infty}(T^*\!M)}\\
&=\sqrt{C(\kappa)}e^{C_{\kappa}t}
\|\omega\|_{L^{\infty}(T^*\!M)}
\end{align*}
for $\omega\in L^2(T^*\!M)\cap L^{\infty}(T^*\!M)$. Therefore we obtain \eqref{eq:HessSchradarUhlenbrock3}.
%The proof of \eqref{eq:HessSchradarUhlenbrock3} is easy. 
\end{proof}

\begin{proof}[\bf Proof of Theorem~\ref{thm:HessShcraderUhlenbrock}]
Take $g\in {\rm Test}(M)_+$. Then, for $\omega\in D(\DD)\cap L^{\infty}(T^*\!M)$
\begin{align*}
\int_M(\Delta g-\alpha g)|\omega|\d\m-\int_M g|\omega|\d\kappa&\geq 
\int_M g\left(\left\langle \DD\omega,\frac{\omega}{|\omega|}\right\rangle -\alpha|\omega| \right)\d\m
\\
&=\int_M g \left\langle \DD\omega-\alpha\omega,\frac{\omega}{|\omega|}\right\rangle \d\m\\
&\geq-\int_M g\left|\DD\omega-\alpha\omega\right|\d\m
\end{align*}
by Lemma~\ref{lem:HKEnergyCont***}, hence
\begin{align}
\mathscr{E}_{\alpha}^{\kappa}(|\omega|,g)\leq \int_M g\left|(\alpha-\DD)\omega\right|\d\m.\label{eq:Set}
\end{align}
Since ${\rm Test}(M)_+$ is dense in $D(\mathscr{E})_+$, \eqref{eq:Set} holds for any $g\in D(\mathscr{E})_+$. 
By Lemma~\ref{lem:HessSchradarUhlenbrock2}, for $\alpha>C_{\kappa}$, we can set $\omega:=R_{\alpha}^{\rm HK}\eta\in D(\DD)\cap L^{\infty}(T^*\!M)$ for $\eta\in L^2(T^*\!M)\cap L^{\infty}(T^*\!M)$
and  $g:=R_{\alpha}^{\kappa}\psi$ with
$\psi\in L^2(M;\m)_+$. 
Then we see
\begin{align*}
(\psi,|R_{\alpha}^{\rm HK}\eta|)_{\m}\leq (\psi,R_{\alpha}^{\kappa}|\eta|)_{\m}\quad \text{ for any }\quad \psi\in L^2(M;\m)_+.
\end{align*}
This implies that for $\alpha>C_{\kappa}$ and $\eta\in  L^2(T^*\!M)\cap L^{\infty}(T^*\!M)$ 
\begin{align}
|R_{\alpha}^{\rm HK}\eta|\leq R_{\alpha}^{\kappa}|\eta|\quad \m\text{-a.e.}\label{eq:ResolventContraction}
\end{align}  
By Lemma~\ref{lem:DensenessModule} below, for any $\eta\in L^2(T^*\!M)$, there exists $\{\eta_n\}\subset 
L^2(T^*\!M)\cap L^{\infty}(T^*\!M)$ such  that $\{\eta_n\}$ converges to $\eta$ in $L^2(T^*\!M)$ and 
\eqref{eq:ResolventContraction} holds by replacing $\eta$ with $\eta_n$. 
By taking a subsequence if necessary, we can deduce that \eqref{eq:ResolventContraction} holds for general 
$\eta\in L^2(T^*\!M)$.   

From \eqref{eq:ResolventContraction}, % for $\eta\in L^2(T^*\!M)$, 
we can obtain 
that $|P_t^{\rm HK}\eta|\leq P_t^{\kappa}|\eta|$ $\m$-a.e. for each $t>0$ 
 in view of the following observation: 
 \begin{align*}
 P_t^{\kappa}f&=\lim_{n\to\infty}\left(\frac{n}{t}\right)^n(R_{\frac{n}{t}}^{\kappa})^nf\quad\text{ in }L^2(M;\m)\quad\text{ for }\quad f\in L^2(M;\m),\\
  P_t^{\rm HK}\theta &=\lim_{n\to\infty}\left(\frac{n}{t}\right)^n(R_{\frac{n}{t}}^{\rm HK})^n\theta\quad
  \text{ in }L^2(T^*\!M)\quad\text{ for }\quad \theta\in L^2(T^*\!M)
   \end{align*}
  (cf.~\cite[Theorem~2.4]{ShigekawaIntertwining}). 
\end{proof}

\section{Proof of Corollary~\ref{cor:HessShcraderUhlenbrock} 
}\label{sec:cor:HessShcraderUhlenbrock}

The following lemma is stated in \cite{Braun:Tamed2021} without proof. 
We provide its proof for readers' convenience. First note that ${\rm Test}(T^*\!M)\subset L^p(T^*\!M)$ 
and ${\rm Test}(TM)\subset L^p(TM)$ for $p\in[1,+\infty]$.  

\begin{lem}\label{lem:DensenessModule}
Let $p\in[1,+\infty[$. Then ${\rm Test}(T^*\!M)$ {\rm(}resp.~${\rm Test}(TM)${\rm)} is dense in $L^p(T^*\!M)$ {\rm(}resp.~$L^p(TM)${\rm)}. Moreover, ${\rm Test}(T^*\!M)$ {\rm(}resp.~${\rm Test}(TM)${\rm)} is weakly* dense in $L^{\infty}(T^*\!M)$ {\rm(}resp.~$L^{\infty}(TM)${\rm)}.
\end{lem}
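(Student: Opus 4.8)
The plan is to reduce the density statement to the already-established density of $\mathrm{Reg}(T^*\!M)$ (resp.~$\mathrm{Reg}(TM)$) together with an approximation of functions in $\mathbb{R}\1_M$ by test functions. Recall from Definition~\ref{def:Test1Forms} that $\mathrm{Reg}(T^*\!M)$ consists of finite sums $\sum_i g_i\,\d f_i$ with $f_i\in\mathrm{Test}(M)$ and $g_i\in\mathrm{Test}(M)\cup\mathbb{R}\1_M$, and that $\mathrm{Reg}(T^*\!M)\cap L^p(T^*\!M)$ is dense in $L^p(T^*\!M)$ for $p\in[1,+\infty[$ (this is precisely the statement attributed to Braun that we are asked to justify here; more precisely, the density of $\mathrm{Test}(T^*\!M)$ in $L^p(T^*\!M)$ is the substance, so I should be careful not to invoke it circularly). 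Thus the first step is to set up a self-contained argument: I would start from the fact that $\mathrm{Pcm}/\!\!\sim$ is dense in $L^2(T^*\!M)$ by construction, hence elements of the form $\1_A\,\d f$ with $f\in D(\mathscr{E})_e$ and $A$ Borel span a dense subspace of $L^2(T^*\!M)$; by a standard cutting/truncation of $|\d f|$ one sees such elements also approximate in $L^p$ on their support, reducing matters to showing that each $\1_A\,\d f$ can be $L^p$-approximated by elements of $\mathrm{Test}(T^*\!M)$.

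The second step handles $\d f$ with $f\in D(\mathscr{E})$: by Lemma~\ref{lem:DensenessTestFunc}, $\mathrm{Test}(M)$ is dense in $(\mathscr{E},D(\mathscr{E}))$, so choosing $f_n\in\mathrm{Test}(M)$ with $f_n\to f$ in $\mathscr{E}_1$-norm gives $\d f_n\to\d f$ in $L^2(T^*\!M)$, and on a set of finite measure (using $\mathrm{Test}(M)_{fs}$ and the definition of $|\d f|_\m=\Gamma(f)^{1/2}$) one upgrades this to $L^p$-convergence after a further truncation. The third step handles the coefficient: to approximate $\1_A\,\d f$ one multiplies $\d f$ by cutoff functions $\chi_k\in\mathrm{Test}(M)$ with $0\le\chi_k\le1$, $\chi_k\uparrow\1_M$ $\m$-a.e., which exist because $P_t h\in\mathrm{Test}(M)$ for $h\in L^2\cap L^\infty$ by Lemma~\ref{lem:boundedEst} and one can take $h$ to be indicators of increasing sets of finite measure and let $t\to0$; then $\chi_k g\,\d f\in\mathrm{Test}(T^*\!M)$ (using that $\mathrm{Test}(M)$ is an algebra, Lemma~\ref{lem:algebra}) and $\chi_k g\,\d f\to g\,\d f$ in $L^p(T^*\!M)$ by dominated convergence, with $|g\,\d f|\le\|g\|_\infty\Gamma(f)^{1/2}\in L^p$ once the support has finite measure. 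The tangent-module case is identical after applying the musical isomorphism $\sharp:L^p(T^*\!M)\to L^p(TM)$, which is a pointwise isometry and hence maps a dense set to a dense set, carrying $\mathrm{Test}(T^*\!M)$ onto $\mathrm{Test}(TM)$ since $(\d f)^\sharp=\nabla f$.

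For the weak* density in $L^\infty(T^*\!M)$, the plan is the classical argument: given $\omega\in L^\infty(T^*\!M)$, first truncate its support to sets $A_k$ of finite $\m$-measure (which does not change the weak* closure since $L^1$-functions testing against $\omega$ can be approximated by ones supported on finite-measure sets), so it suffices to approximate $\1_{A_k}\omega\in L^2(T^*\!M)\cap L^\infty(T^*\!M)$; then use the $L^2$- (equivalently $L^1$-)density already proved together with a uniform bound on the pointwise norms, obtained by applying the pointwise projection onto the ball $\{|\cdot|_\m\le\|\omega\|_{L^\infty}\}$ — available in a Hilbert module — which preserves the relevant bound and the $L^2$-convergence up to subsequences. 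The main obstacle I anticipate is the third step: producing the cutoff functions $\chi_k\in\mathrm{Test}(M)$ with the right monotone convergence and uniform $L^\infty$-bound while keeping $\chi_k g\,\d f$ genuinely inside $\mathrm{Test}(T^*\!M)$ rather than merely in $L^2(T^*\!M)$; this is where the algebra property of $\mathrm{Test}(M)$ (Lemma~\ref{lem:algebra}) and the smoothing $f\mapsto P_t f$ (Lemma~\ref{lem:boundedEst}) must be combined carefully, and one must check that all truncations are compatible with the $L^p$-norm estimate $\|f\omega\|_{L^p(T^*\!M)}\le\|f\|_{L^\infty}\|\omega\|_{L^p(T^*\!M)}$ from Lemma~\ref{lem:ModulePropoerty}.
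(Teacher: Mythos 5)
Your approach is genuinely different from the paper's: you try to approximate constructively, starting from ${\rm Pcm}/\!\!\sim$ and upgrading $L^2$-approximation to $L^p$-approximation via truncations and test-function cutoffs. The paper instead argues by duality: it takes $X\in L^p(T^*\!M)^*=L^q(TM)$ annihilating ${\rm Test}(T^*\!M)$, deduces $h\langle X,\nabla f\rangle=0$ for all $f\in{\rm Test}(M)$ and a rich class of $h$, uses the fact that $L^2(TM)$ is generated by $\nabla D(\mathscr{E})$ as an $L^{\infty}$-module to conclude $X=0$, and then invokes Hahn--Banach; the weak* statement for $p=\infty$ follows from the same computation with $X\in L^1(TM)$, since weak* density of a subspace of $(L^1)^*$ is exactly the vanishing of its preannihilator. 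That argument treats all exponents uniformly and avoids any quantitative control of the approximants.

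Your route has a genuine gap precisely where that control is needed, namely for $p\in\,]2,+\infty[$ and for the weak* statement. After reducing to $\omega$ bounded with finite-measure support, $L^2$-convergence of test forms $\eta_k\to\omega$ only yields $L^p$-convergence for $p>2$ via an interpolation of the type $\|\omega-\eta_k\|_{L^p}^p\le\|\omega-\eta_k\|_{L^{\infty}}^{p-2}\|\omega-\eta_k\|_{L^2}^2$, which requires $\sup_k\||\eta_k|\|_{L^{\infty}(M;\m)}<\infty$. Nothing in the construction of the $\eta_k$ (finite sums $\sum_i g_i\,\d f_i$ obtained from $\mathscr{E}_1$-approximation of the $f_i$) provides such a uniform bound, and your proposed fix --- ``a further truncation'', or in the weak* part ``the pointwise projection onto the ball $\{|\cdot|_{\m}\le\|\omega\|_{L^{\infty}}\}$'' --- consists of multiplying $\eta_k$ by an $L^{\infty}$-function of the form $\min\{1,C/|\eta_k|\}$, which is not a test function; the resulting element leaves ${\rm Test}(T^*\!M)$, so you are back to the problem you started with. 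The same obstruction appears in your weak* argument when you pass from test vector fields $X\in L^1(TM)\cap L^2(TM)$ to general $X\in L^1(TM)$, where you again need a uniform $L^{\infty}$-bound on the approximating forms. For $p\in[1,2]$ your plan can be carried out (finite-measure supports plus H\"older do the job, and the cutoff step via $P_t\1_A$, the algebra property of ${\rm Test}(M)$ and Lemma~\ref{lem:ModulePropoerty} is sound), but as written the proof does not close for $p>2$ or $p=\infty$; either supply uniformly bounded test-form approximants, or switch to the duality argument.
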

\begin{proof}[{\bf Proof}]
Assume $p\in[1,+\infty[$. In this case, we take $X\in L^p(T^*\!M)^*=L^q(TM)$ with $q:=p/(p-1)\in ]1,+\infty]$. Suppose $X\perp {\rm Test}(T^*\!M)$, i.e., 
\begin{align}
\left(\sum_{i=1}^n g_i\d f_i \right)(X)=0\quad\text{ for any }n\in\mathbb{N},\quad g_i,f_i\in {\rm Test}(M)\;\;(1\leq i\leq n).\label{eq:perp}
\end{align}
In particular, 
\begin{align*}
g\langle X,\nabla f\rangle =\langle X, g\nabla f\rangle =\left(g\d f\right)(X)=0\quad\text{ for any }\quad g,f\in{\rm Test}(M).
\end{align*}
Owing to Lemma~\ref{lem:boundedEst}, we see $(P_{1/n}h)\langle X,\nabla f\rangle =0$ for $h\in L^2(M;\m)\cap L^{\infty}(M;\m)$. 
By taking a subsequence if necessary, we may assume $h=\lim_{n\to\infty}P_{1/n}h$ $\m$-a.e., hence $h\langle X,\nabla f\rangle =0$ for such $h$. 
Let $\{M_n\}$ be an increasing sequence of Borel sets with $M=\bigcup_{n=1}^{\infty}M_n$ and 
$\m(M_n)<\infty$ for each $n\in\mathbb{N}$. 
Then for $h_n:=\1_{\{|X|\leq n\}\cap M_n}\in L^2(M;\m)\cap L^{\infty}(M;\m)$, we see $h_nX\in L^2(TM)$. 
Since $L^2(TM)$ is generated by 
$\nabla D(\mathscr{E})$ in the sense of $L^{\infty}$-modules (see \cite[Lemma~2.7]{Braun:Tamed2021}), 
we have $h_nX=0$. 
Therefore $X=0$, consequently 
${\rm Test}(T^*\!M)$ is dense in $L^p(T^*\!M)$ by \cite[Chapter III, Corollary 6.14]{Conway:FunctionalAnal}. In view of the musical isomorphisms, ${\rm Test}(TM)$ is also dense in $L^p(TM)$ for $p\in[1,+\infty[$. 

Next we assume $p=\infty$. Take $X\in L^1(TM)$ and suppose $X\perp{\rm Test}(T^*\!M)$, i.e., \eqref{eq:perp} holds. In the same way as above, we can deduce $X=0$. This means that 
${\rm Test}(T^*\!M)$ is dense in $L^{\infty}(T^*\!M)=L^1(TM)^*$ with respect to the weak* topology 
$\sigma(L^1(T\!M)^*, L^1(TM))$ by \cite[Chapter IV, Example~1.8, Corollary 3.14]{Conway:FunctionalAnal}. 
In view of the musical isomorphisms, ${\rm Test}(TM)$ is also weakly* dense in $L^{\infty}(TM)$.
\end{proof}

\begin{cor}\label{cor:LpL2dense}
Suppose $p\in[1,+\infty[$. Then $L^p(T^*\!M)\cap H^{1,2}(T^*\!M)$ 
{\rm(}resp. $L^p(TM)\cap H^{1,2}(TM)${\rm)}
is dense in $L^p(T^*\!M)$ {\rm(}resp.~$L^p(TM)${\rm)}. Moreover, $L^{\infty}(T^*\!M)\cap H^{1,2}(T^*\!M)$ 
{\rm(}resp. $L^{\infty}(TM)\cap H^{1,2}(TM)${\rm)}
is weakly* dense in $L^{\infty}(T^*\!M)$ {\rm(}resp.~$L^{\infty}(TM)${\rm)}. 
\end{cor}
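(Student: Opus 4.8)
The plan is to obtain the corollary immediately from Lemma~\ref{lem:DensenessModule}, by checking that the classes ${\rm Test}(T^*\!M)$ and ${\rm Test}(TM)$ are themselves contained in the intersections appearing in the statement. First I would record the membership in $H^{1,2}$: the space $H^{1,2}(T^*\!M)$ is by construction the $\|\cdot\|_{W^{1,2}(T^*\!M)}$-closure of ${\rm Reg}(T^*\!M)$, and ${\rm Test}(T^*\!M)\subset{\rm Reg}(T^*\!M)\subset W^{1,2}(T^*\!M)$ (the last inclusion by \cite[Theorem~7.5 and Lemma~7.15]{Braun:Tamed2021}), whence ${\rm Test}(T^*\!M)\subset H^{1,2}(T^*\!M)$; the same argument, invoking \cite[Theorem~6.3]{Braun:Tamed2021} instead, gives ${\rm Test}(TM)\subset{\rm Reg}(TM)\subset H^{1,2}(TM)$.

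Next I would record the membership in $L^p$ (this is already stated in the excerpt just before Lemma~\ref{lem:DensenessModule}, and I would justify it as follows): a generic element $\omega=\sum_{i=1}^n g_i\,\d f_i$ of ${\rm Test}(T^*\!M)$ satisfies $|\omega|_{\m}\le\sum_{i=1}^n|g_i|\,\Gamma(f_i)^{1/2}$ $\m$-a.e., and each summand $|g_i|\,\Gamma(f_i)^{1/2}$ lies in $L^\infty(M;\m)$ because $g_i$ and $\Gamma(f_i)$ are bounded, and in $L^1(M;\m)$ because $|g_i|\in L^2(M;\m)$ and $\Gamma(f_i)^{1/2}\in L^2(M;\m)$ --- the latter since $f_i\in D(\mathscr{E})$ forces $\Gamma(f_i)\in L^1(M;\m)$ --- hence, by interpolation, in $L^p(M;\m)$ for every $p\in[1,+\infty]$. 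Thus ${\rm Test}(T^*\!M)\subset L^p(T^*\!M)$ for all $p\in[1,+\infty]$, and the identical computation (using that $\sharp$ preserves pointwise norms) gives ${\rm Test}(TM)\subset L^p(TM)$ for all such $p$.

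Combining the two inclusions, for each $p\in[1,+\infty[$ one has ${\rm Test}(T^*\!M)\subset L^p(T^*\!M)\cap H^{1,2}(T^*\!M)$ and ${\rm Test}(TM)\subset L^p(TM)\cap H^{1,2}(TM)$, so the $\|\cdot\|_{L^p(T^*\!M)}$-density of ${\rm Test}(T^*\!M)$ in $L^p(T^*\!M)$ (resp.~of ${\rm Test}(TM)$ in $L^p(TM)$) provided by Lemma~\ref{lem:DensenessModule} forces, a fortiori, the density of the intersection --- the first assertion. For $p=\infty$ the same inclusions give ${\rm Test}(T^*\!M)\subset L^\infty(T^*\!M)\cap H^{1,2}(T^*\!M)$ and ${\rm Test}(TM)\subset L^\infty(TM)\cap H^{1,2}(TM)$, and the weak* density of ${\rm Test}(T^*\!M)$ (resp.~${\rm Test}(TM)$) in $L^\infty(T^*\!M)$ (resp.~$L^\infty(TM)$) from Lemma~\ref{lem:DensenessModule} then passes to these larger sets, which is the second assertion.

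I do not expect a genuine obstacle: the whole argument is bookkeeping of the definitions on top of Lemma~\ref{lem:DensenessModule}. The one place needing a moment's care is the $L^p$-membership for $p<2$, which relies on $\Gamma(f)\in L^1(M;\m)$ and not merely on $\Gamma(f)\in L^\infty(M;\m)$. I would also be careful not to try to deduce the tangent statement from the cotangent one via the musical isomorphism $\sharp$: although $\sharp$ is an $L^p$-isometry, it does not carry $H^{1,2}(T^*\!M)$ (built from $\d$ and $\d_*$) onto $H^{1,2}(TM)$ (built from the covariant derivative), so the tangent case genuinely has to be run directly on the tangent side, exactly as above.
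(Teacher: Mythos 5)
Your proposal is correct and follows exactly the paper's own route: the corollary is deduced from Lemma~\ref{lem:DensenessModule} via the inclusions ${\rm Test}(T^*\!M)\subset L^p(T^*\!M)\cap H^{1,2}(T^*\!M)$ and ${\rm Test}(TM)\subset L^p(TM)\cap H^{1,2}(TM)$. You simply supply more detail than the paper (which takes the $L^p$-membership of test objects for granted), and your caution about not transporting the tangent case through $\sharp$ is well placed.
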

\begin{proof}[{\bf Proof}]
The assertion easily follows from Lemma~\ref{lem:DensenessModule}, because ${\rm Test}(T^*\!M)\subset 
H^{1,2}(T^*\!M)$ and  ${\rm Test}(TM)\subset 
H^{1,2}(TM)$.
\end{proof}

\begin{cor}\label{cor:Reg(TM)dense}
Suppose $p\in[1,+\infty[$. Then ${\rm Reg}(T^*\!M)\cap L^p(T^*\!M)$ is dense in $L^p(T^*\!M)$ and 
${\rm Reg}(TM)\cap L^p(TM)$ is dense in $L^p(TM)$. Moreover, ${\rm Reg}(T^*\!M)\cap L^{\infty}(T^*\!M)$ is weakly* dense in $L^{\infty}(T^*\!M)$ and 
${\rm Reg}(TM)\cap L^{\infty}(TM)$ is dense in $L^{\infty}(TM)$.  
\end{cor}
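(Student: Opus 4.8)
The plan is to obtain the whole statement as an immediate consequence of Lemma~\ref{lem:DensenessModule} together with the elementary inclusions ${\rm Test}(T^*\!M)\subset{\rm Reg}(T^*\!M)$ and ${\rm Test}(TM)\subset{\rm Reg}(TM)$. These inclusions are built into Definitions~\ref{def:TestVecFields} and~\ref{def:Test1Forms}: a generic element $\sum_{i=1}^n g_i\d f_i$ of ${\rm Test}(T^*\!M)$ has $g_i\in{\rm Test}(M)\subset{\rm Test}(M)\cup\R\1_M$, so it is also a regular $1$-form, and likewise for vector fields via $\nabla f_i=(\d f_i)^{\sharp}$.

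First I would recall, as noted just before Lemma~\ref{lem:DensenessModule}, that ${\rm Test}(T^*\!M)\subset L^p(T^*\!M)$ and ${\rm Test}(TM)\subset L^p(TM)$ for every $p\in[1,+\infty]$, so that in fact
\begin{align*}
{\rm Test}(T^*\!M)\subset{\rm Reg}(T^*\!M)\cap L^p(T^*\!M),\qquad
{\rm Test}(TM)\subset{\rm Reg}(TM)\cap L^p(TM)
\end{align*}
for every $p\in[1,+\infty]$. For $p\in[1,+\infty[$, Lemma~\ref{lem:DensenessModule} gives that ${\rm Test}(T^*\!M)$ is dense in $L^p(T^*\!M)$; since it is contained in ${\rm Reg}(T^*\!M)\cap L^p(T^*\!M)$, the latter set is a fortiori dense in $L^p(T^*\!M)$, and the same argument with ${\rm Test}(TM)$ in place of ${\rm Test}(T^*\!M)$ gives the corresponding statement for $L^p(TM)$. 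Alternatively the tangent-module assertion can be transported from the cotangent-module one through the pointwise isometric musical isomorphism $\sharp:L^p(T^*\!M)\to L^p(TM)$, which carries ${\rm Reg}(T^*\!M)$ onto ${\rm Reg}(TM)$ because $\sharp$ is $L^{\infty}$-linear.

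For $p=\infty$, Lemma~\ref{lem:DensenessModule} gives that ${\rm Test}(T^*\!M)$ (resp.\ ${\rm Test}(TM)$) is weakly* dense in $L^{\infty}(T^*\!M)$ (resp.\ $L^{\infty}(TM)$); since the weak* closure of a set contains that of every subset, and here the subset ${\rm Test}(\cdot)$ already has the whole space as its weak* closure, the larger set ${\rm Reg}(T^*\!M)\cap L^{\infty}(T^*\!M)$ (resp.\ ${\rm Reg}(TM)\cap L^{\infty}(TM)$) is weakly* dense as well; the last clause of the corollary is to be read in this weak* sense, consistently with Lemma~\ref{lem:DensenessModule}. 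There is no genuine obstacle in this argument: the only points that require a word of justification are the inclusions ${\rm Test}(\cdot)\subset{\rm Reg}(\cdot)\cap L^p(\cdot)$ read off from the definitions and, in the $p=\infty$ case, the elementary fact that weak* density passes to supersets.
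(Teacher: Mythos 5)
Your proof is correct and follows exactly the paper's own argument: the corollary is deduced from Lemma~\ref{lem:DensenessModule} via the inclusions ${\rm Test}(T^*\!M)\subset{\rm Reg}(T^*\!M)\cap L^p(T^*\!M)$ and ${\rm Test}(TM)\subset{\rm Reg}(TM)\cap L^p(TM)$. The extra remarks on the musical isomorphism and on weak* density passing to supersets are harmless elaborations of the same one-line reduction.
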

\begin{proof}[{\bf Proof}]
By Lemma~\ref{lem:DensenessModule}, the assertion clearly follow from ${\rm Test}(T^*\!M)\subset {\rm Reg}(T^*\!M)\cap L^p(T^*\!M)$ and 
${\rm Test}(TM)\subset {\rm Reg}(TM)\cap L^p(TM)$. 
\end{proof}

\begin{proof}[\bf Proof of Corollary~\ref{cor:HessShcraderUhlenbrock}]
First we suppose $p\in[2,+\infty[$. 
By Theorem~\ref{thm:HessShcraderUhlenbrock} and \eqref{eq:generalpKatoEst} with $\nu=\kappa^-$, 
we easily get 
\begin{align}
\|P_t^{\rm HK}\omega\|_{L^p(T^*\!M)}\leq \|P_t^{\kappa}|\omega|\|_{L^p(M;\m)}\leq
C(\kappa)e^{C_{\kappa} t}\|\omega\|_{L^p(T^*\!M)}\label{eq:ContractionLp2}
\end{align}
holds for $\omega\in L^p(T^*\!M)\cap L^2(T^*\!M)$.
For general $\omega\in L^p(T^*\!M)$, there exists $\{\omega_n\}\subset 
 L^p(T^*\!M)\cap  H^{1,2}(T^*\!M)$ such that 
 \begin{align*}
 \lim_{n\to\infty}\|\omega_n-\omega\|_{ L^p(T^*\!M)}=0
 \end{align*}
 by Corollary~\ref{cor:LpL2dense}.   
 Then $\{P_t^{\rm HK}\omega_n\}$ forms an $L^p(T^*\!M)$-Cauchy sequence by 
 \eqref{eq:ContractionLp2}. Denote by $P_t^{\rm HK}\omega$ its limit. Since
 $P_t^{\kappa}\omega_n$ converges to $P_t^{\kappa}\omega$ in $L^p(M;\m)$, we can deduce 
 $|P_t^{\rm HK}\omega|\leq P_t^{\kappa}|\omega|$ $\m$-a.e.
  Hence $P_t^{\rm HK}$ can be extended to $L^p(T^*\!M)$  
  and 
  \eqref{eq:ContractionLp2} holds for $\omega\in L^p(T^*\!M)$. 
 In case $p=\infty$, we use another approximation. For $\omega\in L^{\infty}(T^*\!M)$, we set 
 $\omega_n:=\1_{M_n}\omega$, where $\{M_n\}$ is an increasing sequence of Borel sets with $M=\bigcup_{n=1}^{\infty}M_n$ and 
$\m(M_n)<\infty$ for each $n\in\mathbb{N}$. Then $\omega_n\in  L^2(T^*\!M)\cap L^{\infty}(T^*\!M)$ for each $n\in\mathbb{N}$, hence \eqref{eq:ContractionLp2} holds for $\omega_n$ instead of $\omega$. 
Now, for a Borel partition $(B_i)_{i\in\mathbb{N}}$ with $\m(B_i)\in]0,+\infty[$, we consider the distance ${\sf d}_{L^0(T^*\!M)}$ on $L^0(T^*\!M)$: 
\begin{align*}
{\sf d}_{L^0(T^*\!M)}(\eta_1,\eta_2):=\sum_{i\in\mathbb{N}}\frac{2^{-i}}{\m(B_i)}
\int_{B_i}\min\{|\eta_1-\eta_2|,1\}\d\m,\quad  \eta_1,\eta_2\in L^0(T^*\!M).
\end{align*}
This distance does not depend on the choice of such Borel partition $(B_i)_{i\in\mathbb{N}}$ (see \cite[1.4.3]{Braun:Tamed2021}). 
Since $P_t^{\kappa}\1_{B_i}\in L^2(M;\m)$ under $2\kappa^-\in S_{E\!K}({\bf X})$, we see from Theorem~\ref{thm:HessShcraderUhlenbrock} that 
\begin{align*}
\int_{B_i}|P_t^{\rm HK}(\omega_n-\omega_m)|\d\m&\leq\int_M P_t^{\kappa}\1_{B_i}|\omega_n-\omega_m|\d\m\\
&\leq\|\omega\|_{L^{\infty}(T^*\!M)}\int_{M_m\setminus M_n}P_t^{\kappa}\1_{B_i}\d\m\to0\quad\text{ as }\quad n,m\to\infty.
\end{align*}
This means that $(P_t^{\rm HK}\omega_n)_{n\in\mathbb{N}}$ forms a ${\sf d}_{L^0(T^*\!M)}$-Cauchy sequence. 
Denote its limit by $P_t^{\rm HK}\omega\in L^0(T^*\!M)$. Then we see $ P_t^{\rm HK}\omega\in L^{\infty}(T^*\!M)$ and \eqref{eq:ContractionLp2} holds for $\omega$.

Next we suppose $\kappa^-\in S_K({\bf X})$ and $p\in[1,+\infty[$. Then 
%$\beta\kappa^-\in S_K({\bf X})\subset S_{E\!K}({\bf X})$ holds for any $\beta\in[1,+\infty[$. Thus 
 \eqref{eq:generalpKatoEst} with $\nu=\kappa^-$ yields 
  \begin{align}
  \|P_t^{\kappa}\|_{p,p}\leq C(\kappa)e^{C_{\kappa}t}. %\quad \text{ for }\quad p\in[1,+\infty].
  \label{eq:ppFeynmanKac}
  \end{align}
Thus Theorem~\ref{thm:HessShcraderUhlenbrock} and \eqref{eq:ppFeynmanKac} yield 
the estimate \eqref{eq:ContractionLp2} for $\omega\in L^p(T^*\!M)\cap L^2(T^*\!M)$, consequently 
\eqref{eq:ContractionLp2} holds for $\omega\in L^p(T^*\!M)$ as proved above. 

Finally we prove the strong continuity of $(P_t^{\rm HK})_{t\geq0}$ under $\kappa^-\in S_K({\bf X})$ and 
$p\in[1,+\infty[$. 
%\begin{lem}\label{lem:strongcontinuity}
%Suppose $p\in[1,+\infty[$ and $\kappa^-\in S_K({\bf X})$. 
%Then, 
%\begin{align}
%\lim_{t\to0}\|P_t^{\rm HK}\omega-\omega\|_{L^p(T^*\!M)}=0\quad\text{ for all }\quad \omega\in L^p(T^*\!M).\label{eq:strongcontinuity}
%\end{align}
%\end{lem}
%\begin{proof}
Suppose first $\omega\in  L^p(T^*\!M)\cap H^{1,2}(T^*\!M)$. We then know 
\begin{align}
\lim_{t\to0}\|P_t^{\rm HK}\omega-\omega\|_{H^{1,2}(T^*\!M)}=0\label{eq:StrongH12continuity}
\end{align} 
by \cite[Theorem~8.31(v)]{Braun:Tamed2021}.
In particular, 
$|P_t^{\rm HK}\omega-\omega|\to0$ in $\m$-measure as $t\to0$. 
Take any subsequence $\{t_n\}$ tending to $0$ decreasingly as $n\to\infty$ such that 
$|P_{t_n}^{\rm HK}\omega|\to|\omega|$ $n\to\infty$ $\m$-a.e. Applying Fatou's lemma, 
\begin{align}
\int_M |\omega|^p\d\m\leq\varliminf_{n\to\infty}\int_M |P_{t_n}^{\rm HK}\omega|^p\d\m.\label{eq:FormFatou}
\end{align}
On the other hand, we already know that \eqref{eq:ContractionLp2} holds for $\omega\in L^p(T^*\!M)$.
%\begin{align}
%\|P_t^{\rm HK}\omega\|_{L^p(T^*\!M)}\leq C(\kappa)e^{C_{\kappa} t}\|\omega\|_{L^p(T^*\!M)}.\label{eq:ContractionLp}
%\end{align}
This implies 
\begin{align*}
\varlimsup_{n\to\infty}\|P_{t_n}^{\rm HK}\omega\|_{L^p(T^*\!M)}\leq C(\kappa)\|\omega\|_{L^p(T^*\!M)}.
\end{align*}
Since the coefficient $C(\kappa)$ has the following expression for some sufficiently small $t_0>0$
\begin{align*}
C(\kappa)=\frac{1}{1-\left\|\E_{\cdot}\left[A_{t_0}^{\kappa^-} \right] \right\|_{\infty}}
\end{align*}
by \eqref{eq:KatoExpression}
(see \cite[Proof of Theorem~2.2]{Sznitzman}), it can be taken to be arbitrarily close to $1$ in view of the fact that 
$\kappa^-$ is a smooth measure of Kato class. Then, we have
\begin{align*}
\varlimsup_{n\to\infty}\|P_{t_n}^{\rm HK}\omega\|_{L^p(T^*\!M)}\leq \|\omega\|_{L^p(T^*\!M)}.
\end{align*}
Combining this and \eqref{eq:FormFatou}, we have
$\lim_{n\to\infty}\|P_{t_n}^{\rm HK}\omega\|_{L^p(T^*\!M)}=\|\omega\|_{L^p(T^*\!M)}$.
Applying \cite[Theorem~16.6]{RneSchilling}, we obtain 
\begin{align*}
\lim_{n\to\infty}\|P_{t_n}^{\rm HK}\omega-\omega\|_{L^p(T^*\!M)}=0. 
\end{align*}
Thus
\begin{align*}
\lim_{t\to0}\|P_t^{\rm HK}\omega-\omega\|_{L^p(T^*\!M)}=0. 
\end{align*}
Now we consider general $\omega\in L^p(T^*\!M)$. 
By Corollary~\ref{cor:LpL2dense}, there exists $\{\omega_n\}\subset 
 L^p(T^*\!M)\cap  H^{1,2}(T^*\!M)$ such that $\lim_{n\to\infty}\|\omega_n-\omega\|_{ L^p(T^*\!M)}=0$.
By use of triangle inequality,
\begin{align*}
\|P_t^{\rm HK}\omega-\omega\|_{L^p(T^*\!M)}\leq \|P_t^{\rm HK}(\omega-\omega_n)\|_{L^p(T^*\!M)}
+\|P_t^{\rm HK}\omega_n-\omega_n\|_{L^p(T^*\!M)}+\|\omega_n-\omega\|_{ L^p(T^*\!M)}.
\end{align*}
By \eqref{eq:ContractionLp2} and the conclusion for $\omega_n\in L^p(T^*\!M)\cap L^2(T^*\!M)$, 
\begin{align*}
\varlimsup_{t\to0}\|P_t^{\rm HK}\omega-\omega\|_{L^p(T^*\!M)}\leq\left(C(\kappa)+1 \right)
\|\omega_n-\omega\|_{ L^p(T^*\!M)}\to0 \quad \text{ as }\quad n\to\infty.
\end{align*} 
Therefore
\begin{align*}
\lim_{t\to0}\|P_t^{\rm HK}\omega-\omega\|_{L^p(T^*\!M)}=0\quad \text{ for }\quad \omega\in L^p(T^*\!M).
\end{align*} 

Finally, we prove the weak* continuity of $(P_t^{\rm HK})_{t\geq0}$ on $L^{\infty}(T^*\!M)$ under $\kappa^-\in S_K({\bf X})$. 
Take $X\in L^1(TM)$. First we assume $\omega\in L^2(T^*\!M)\cap L^{\infty}(T^*\!M)$. 
Consider $X_n:=\1_{\{|X|\leq n\}\cap M_n}X$ as defined above. Then we see 
\begin{align*}
|\langle X_n,P_t^{\rm HK}\omega-\omega\rangle |\leq\|X_n\|_{L^1(TM)}\|P_t^{\rm HK}\omega-\omega\|_{L^2(T^*\!M)}\to0\quad\text{ as }\quad t\to0.
\end{align*}
From this, 
\begin{align*}
\varlimsup_{t\to0}|\langle X,P_t^{\rm HK}\omega-\omega\rangle |&\leq
\varlimsup_{t\to0}\|X-X_n\|_{L^1(TM)}\|P_t^{\rm HK}\omega-\omega\|_{L^{\infty}(T^*\!M)}+\varlimsup_{t\to0}|\langle X_n,P_t^{\rm HK}\omega-\omega\rangle |\\
&\leq \|X-X_n\|_{L^1(TM)}(1+C(\kappa)))\|\omega\|_{L^{\infty}(T^*\!M)}\to0\quad\text{ as }\quad n\to\infty.
\end{align*}
Then, we have $\lim_{t\to0}\langle X_n,P_t^{\rm HK}\omega-\omega\rangle =0$ for $\omega\in L^2(T^*\!M)\cap L^{\infty}(T^*\!M)$. Next we take general $\omega\in L^{\infty}(T^*\!M)$. Consider $\omega_n:=\1_{M_n}\omega$ as above. Then we know $P_t^{\rm HK}\omega\in L^2(T^*\!M)\cap L^{\infty}(T^*\!M)$. Then we have
\begin{align*}
|\langle X,\omega_n-\omega\rangle |&\leq\int_M|\langle X,\1_{M_n^c}\omega\rangle_{\m} |\d\m
\leq\|\omega\|_{L^{\infty}(T^*\!M)}\int_{M_n^c}|X|_{\m}\d\m\to0\quad \text{ as }\quad n\to\infty,\\
|\langle X,P_t^{\rm HK}(\omega_n-\omega)\rangle |&\leq\int_M|\langle X,P_t^{\rm HK}\1_{M_n^c}\omega\rangle_{\m} |\d\m
\\&\leq\|\omega\|_{L^{\infty}(T^*\!M)}\int_{M_n^c}P_t^{\kappa}|X|_{\m}\d\m\to0\quad \text{ as }\quad n\to\infty,
\end{align*}
where we use the $\m$-symmetry of $P_t^{\kappa}$ and $P_t^{\kappa}|X|\in L^1(M;\m)$ under $\kappa^-\in S_K({\bf X})$. Combining this inequality and 
\begin{align*}
|\langle X,P_t^{\rm HK}\omega-\omega\rangle |&\leq 
|\langle X,P_t^{\rm HK}(\omega-\omega_n)\rangle |+ |\langle X,P_t^{\rm HK}\omega_n-\omega_n)\rangle |+ |\langle X,\omega_n-\omega)\rangle |, 
\end{align*}
we obtain 
\begin{align*}
\varlimsup_{t\to0}|\langle X,P_t^{\rm HK}\omega-\omega\rangle |\leq\|\omega\|_{L^{\infty}(TM)}\int_{M_n^c}(|X|_{\m}+P_t^{\kappa}|X|_{\m})\d\m\to0\quad\text{ as }\quad n\to\infty.
\end{align*}
Therefore, $\lim_{t\to0}\langle X,P_t^{\rm HK}\omega-\omega\rangle =0$ for $X\in L^1(TM)$ and $\omega\in L^{\infty}(T^*\!M)$. This implies the weak* continuity of $(P_t^{\rm HK})_{t\geq0}$ on $L^{\infty}(T^*\!M)=L^1(TM)^*$. 
\end{proof}

\section{Examples}\label{sec:examples}
\begin{example}[Riemannian manifolds with boundary]
{\rm \quad Let $(M,g)$ be a smooth Riemannian manifold with boundary $\partial M$. 
Denote by $\mathfrak{v}:={\rm vol}_g$ the Riemannian volume measure induced by $g$, and by 
$\mathfrak{s}$ the surface measure on $\partial M$
(see \cite[\S1.2]{Braun:Tamed2021}). 
If $\partial M\ne \emptyset$, then $\partial M$ is a smooth co-dimension $1$ submanifold of $M$ and it becomes Riemannian 
when endowed with the pulback metric 
\begin{align*}
\langle \cdot,\cdot\rangle_j:=j^*\langle \cdot,\cdot\rangle,\qquad \langle u,v:=g(u,v)\quad\text{ for }\quad u,v\in TM.
\end{align*}  
under the natural inclusion $j:\partial M\to M$. The map $j$ induces a natural inclusion $d_j:T\partial M\to TM|_{\partial M}$ which is not surjective. In particular, the vector bundles $T\partial M$ and $TM|_{\partial M}$ do not coincide. 

Let $\m$ be a Borel measure on $M$ which is locally equivalent to $\mathfrak{v}$. 
Let $D(\mathscr{E}):=W^{1,2}(M^{\circ})$ be the Sobolev space with respect to 
$\m$ defined in the usual sense on $M^{\circ}:=M\setminus\partial M$. Define $\mathscr{E}:W^{1,2}(M^{\circ})\to [0,+\infty[$ 
by 
\begin{align*}
\mathscr{E}(f):=\int_{M^{\circ}}|\nabla f|^2\d \m
\end{align*}  
and the quantity $\mathscr{E}(f,g)$, $f,g\in W^{1,2}(M^{\circ})$, by polarization. Then $(\mathscr{E}, D(\mathscr{E}))$ becomes a strongly local regular Dirichlet form on $L^2(M;\m)$, since $C_c^{\infty}(M)$ is a dense set of $D(\mathscr{E})$. 
Let $k:M^{\circ}\to\R$ and $\ell:\partial M\to\R$ be continuous functions providing lower bounds on the Ricci curvature and the second fundamental form of $\partial M$, respectively. 
Suppose that $M$ is compact and $\m=\mathfrak{v}$. Then $(M,\mathscr{E},\m)$ is tamed by 
\begin{align*}
\kappa:=k\mathfrak{v}+\ell\mathfrak{s},
\end{align*}
because $\mathfrak{v},\mathfrak{s}\in S_K({\bf X})$ (see \cite[Theorem~2.36]{ERST} and \cite[Theorem~5.1]{Hsu:2001}). Then one can apply Theorem~\ref{thm:HessShcraderUhlenbrock} and Corollary~\ref{cor:HessShcraderUhlenbrock} to $(M,\mathscr{E},\m)$. 
%Remark that Theorems~\ref{thm:LittlewoodPaley1Form} and \ref{thm:LittlewoodPaley2Form} 
%are proved by 
%Shigekawa~\cite[Propositions~6.2 and 6.4]{Shigekawa1} in the framework of compact smooth Riemannian manifold with boundary. 

More generally, if $M$ is regularly exhaustible, i.e., there exists an increasing sequence $(M_n)_{n\in\N}$ of domains $M_n\subset M^{\circ}$ with smooth boundary $\partial M_n$ such that $g$ is smooth on $M_n$ and the following properties hold:
\begin{enumerate}
\item[(1)] The closed sets $(\overline{M}_n)_{n\in\N}$ constitute an $\mathscr{E}$-nest for $(\mathscr{E},W^{1,2}(M))$.  
\item[(2)] For all compact sets $K\subset M^{\circ}$ there exists $N\in\N$ such tht $K\subset M_n$ for all $n\geq N$.
\item[(3)] There are lower bounds $\ell_n:\partial M_n\to\R$ for the curvature of $\partial M_n$ with $\ell_n=\ell$ on $\partial M\cap \partial M_n$ such that the distributions $\kappa_n=k\mathfrak{v}_n+\ell_n\mathfrak{s}_n$ are uniformly $2$-moderate in 
the sense that 
\begin{align}
\sup_{n\in\N}\sup_{t\in[0,1]}\sup_{x\in M_n}\E^{(n)}\left[e^{-2A_t^{\kappa_n}} \right]<\infty,\label{eq:2moderate}
\end{align}
where $\mathfrak{v}_n$ is the volume measure of $M_n$ and $\mathfrak{s}_n$ is the surface measure of $\partial M_n$.
\end{enumerate}
Suppose $\m=\mathfrak{v}$. 
Then $(M,\mathscr{E},\m)$ is tamed by $\kappa=k\mathfrak{v}+\ell\mathfrak{s}$ 
(see \cite[Theorem~4.5]{ERST}). In this case, 
%$\kappa_{\ll}=k\mathfrak{v}$ and $\kappa_{\perp}=\ell\mathfrak{s}$, hence 
Theorem~\ref{thm:HessShcraderUhlenbrock} and Corollary~\ref{cor:HessShcraderUhlenbrock} hold for $(M,\mathscr{E},\m)$ provided 
$\kappa^+\in S_D({\bf X})$, $\kappa^-\in S_{K}({\bf X})$. 
% for $p\in[2,+\infty[$, 
%or $k$ is bounded below and $\ell\geq0$ for $p\in]1,2[$. 

Let $Y$ be the domain defined by 
\begin{align*}
Y:=\{(x,y,z)\in\R^3\mid z>\phi(\sqrt{x^2+y^2})\},
\end{align*}
where $\phi:[0,+\infty[\to[0,+\infty[$ is $C^2$ on $]0,+\infty[$ with $\phi(r):=r-r^{2-\alpha}$, $\alpha\in]0,1[$ for 
$r\in[0,1]$, $\phi$ is a constant for $r\geq2$ and $\phi''(r)\leq0$ for $r\in[0,+\infty[$. 
Let $\m_Y$ be the $3$-dimensional Lebesgue measure restricted to $Y$ and 
$\sigma_{\partial Y}$ the $2$-dimensional Hausdorff measure on $\partial Y$. Denote by 
$\mathscr{E}_Y$ the Dirichlet form on $L^2(Y;\m)$ with Neumann boundary conditions. 
The smallest eigenvalue of the second fundamental form of $\partial Y$ can be given by 
\begin{align*}
\ell(r,\phi)=\frac{\phi''(r)}{(1+|\phi'(r)|^2)^{3/2}} (\leq0),
\end{align*}
for $r\leq 1$ and $\ell=0$ for $r\geq2$. It is proved in \cite[Theorem~4.6]{ERST} that 
the Dirichlet space $(Y,\mathscr{E}_Y,\m_Y)$ is tamed by 
\begin{align*}
\kappa=\ell\sigma_{\partial Y}.
\end{align*}
In this case, the measure-valued Ricci lower bound $\kappa$ satisfies 
$|\kappa|=|\ell|\sigma_{\partial Y}\in S_K({\bf X})$ (see \cite[Lemma~2.34, Theorem~2.36, Proof of Theorem~4.6]{ERST}). %, $\kappa_{\ll}=0$ and $\kappa_{\perp}=\ell\sigma_{\partial Y}$. 
Then 
Theorem~\ref{thm:HessShcraderUhlenbrock} and Corollay~\ref{cor:HessShcraderUhlenbrock} hold for $(Y,\mathscr{E}_Y,\m_Y)$. % and $\alpha>C_{\kappa}$. %if $p\in[2,+\infty[$, or $p\in]1,2]$ under $\ell\geq0$. 
}
\end{example}
\begin{example}[Configuration space over metric measure spaces]
{\rm Let $(M,g)$ be a complete smooth Riemannian manifold without boundary.   
The configuration space $\Upsilon$ over $M$ is the space of all locally finite point measures, that is, 
\begin{align*}
\Upsilon:=\{\gamma\in\mathcal{M}(M)\mid \gamma(K)\in\N\cup\{0\}\quad \text{ for all compact sets}\quad K\subset M\}. 
\end{align*}
In the seminal paper Albeverio-Kondrachev-R\"ockner~\cite{AKR} identified a natural geometry on $\Upsilon$ by lifting the geometry of $M$ to $\Upsilon$. In particular, there exists a natural gradient $\nabla^{\Upsilon}$, divergence ${\rm div}^{\Upsilon}$ and 
Laplace operator $\Delta^{\Upsilon}$ on $\Upsilon$. It is shown in \cite{AKR} that the Poisson point measure $\pi$ on 
$\Upsilon$ is the unique (up to intensity) measure on $\Upsilon$ under which the gradient and divergence become 
dual operator in $L^2(\Upsilon;\pi)$. Hence, the Poisson measure $\pi$ is the natural volume measure on $\Upsilon$ 
and $\Upsilon$ can be seen as an infinite dimensional Riemannian manifold. The canonical Dirichlet form 
\begin{align*}
\mathscr{E}(F)=\int_{\Upsilon}|\nabla^{\Upsilon}F|_{\gamma}^2\pi(\d\gamma)
\end{align*}
 constructed in \cite[Theorem~6.1]{AKR} is quasi-regular and strongly local
 and it induces the heat semigroup $T_t^{\Upsilon}$ and a Brownian motion ${\bf X}^{\Upsilon}$ on $\Upsilon$ which can be identified with the independent infinite particle process. If ${\rm Ric}_g\geq K$ on $M$ with $K\in \R$, then $(\Upsilon,\mathscr{E}^{\Upsilon},\pi)$ is tamed by 
 $\kappa:=K\pi$ with $|\kappa|\in S_K({\bf X}^{\Upsilon})$ (see \cite[Theorem~4.7]{EKS} and \cite[Theorem~3.6]{ERST}). 
 Then one can apply Theorem~\ref{thm:HessShcraderUhlenbrock} and Corollary~\ref{cor:HessShcraderUhlenbrock} for $(\Upsilon,\mathscr{E}^{\Upsilon},\pi)$. 
 
 More generally, in Dello Schiavo-Suzuki~\cite{DelloSuzuki:ConfigurationI}, configuration space $\Upsilon$ over proper complete and separable metric space $(M,{\sf d})$ is considered. The configuration space $\Upsilon$ is endowed with the \emph{vage topology} $\tau_V$, induced by duality with continuous compactly supported functions on $M$, and with a reference Borel probability measure $\mu$ satisfying \cite[Assumption~2.17]{DelloSuzuki:ConfigurationI}, commonly understood as the law of a proper point process on $M$. In \cite{DelloSuzuki:ConfigurationI}, 
 they constructed the strongly local Dirichlet form $\mathscr{E}^{\Upsilon}$ defined to be the $L^2(\Upsilon;\mu)$-closure of a certain pre-Dirichlet form on a class of certain cylinder functions and prove its quasi-regularity for a wide class of measures $\mu$ and base spaces (see \cite[Proposition~3.9 and Theorem~3.45]{DelloSuzuki:ConfigurationI}). Moreover, in 
 Dello Schiavo-Suzuki~\cite{DelloSuzuki:ConfigurationII}, for any fixed $K\in \R$ they prove that a Dirichlet form $(\mathscr{E},D(\mathscr{E}))$ with its 
 carr\'e-du-champ $\Gamma$ satisfies ${\sf BE}_2(K,\infty)$ if and only if the Dirichlet form $(\mathscr{E}^{\Upsilon},D(\mathscr{E}^{\Upsilon}))$ on $L^2(\Upsilon;\mu)$ with its carr\'e-du-champ $\Gamma^{\Upsilon}$ satisfies ${\sf BE}_2(K,\infty)$.
Hence, if $(M,\mathscr{E},\m)$ is tamed by $K\m$ with $|K|\m\in S_K({\bf X})$, then  $(\Upsilon,\mathscr{E}^{\Upsilon},\mu)$ is tamed by $\kappa:=K\mu$ with $|\kappa|\in S_K({\bf X}^{\Upsilon})$. 
%In this case, the measure-valued Ricci lower bound $\kappa$ satisfies $\kappa_{\ll}=K\mu$ and $\kappa_{\perp}=0$. 
Then Theorem~\ref{thm:HessShcraderUhlenbrock} and Corollary~\ref{cor:HessShcraderUhlenbrock} hold for $(\Upsilon,\mathscr{E}^{\Upsilon},\mu)$ %and any $p\in]1,+\infty[$ 
under the suitable class of measures $\mu$ defined in \cite[Assumption~2.17]{DelloSuzuki:ConfigurationI}.  
}
\end{example}

\noindent
{\bf Acknowledgment.} 
The author would like to thank Professor Mathias Braun for notifying us  
the reference \cite{Braun:HeatFlow} after the first draft of this paper. His comment and advice helped us  so much to improve the quality of this paper. 
\bigskip

\noindent
{\bf Conflict of interest.} The authors have no conflicts of interest to declare that are relevant to the content of this article.

\bigskip
\noindent
{\bf Data Availability Statement.} Data sharing is not applicable to this article as no datasets were generated or analyzed during the current study.

\providecommand{\bysame}{\leavevmode\hbox to3em{\hrulefill}\thinspace}
\providecommand{\MR}{\relax\ifhmode\unskip\space\fi MR }
% \MRhref is called by the amsart/book/proc definition of \MR.
\providecommand{\MRhref}[2]{%
  \href{http://www.ams.org/mathscinet-getitem?mr=#1}{#2}
}
\providecommand{\href}[2]{#2}

% \bibliographystyle{amsplain}
% \bibliography{refs}
\end{document}